%% file: LLC-with-stability-main-current.tex
\documentclass{amsart}

\usepackage[utf8]{inputenc}
\usepackage{amsmath,amssymb,amsthm,amstext,mathtools}
\usepackage{geometry}
\usepackage{mathrsfs}
\usepackage{xr-hyper}
\usepackage[colorlinks=true,linkcolor=blue,citecolor=blue,urlcolor=blue]{hyperref}
\usepackage{comment}


\numberwithin{equation}{section}
\numberwithin{figure}{section}

\theoremstyle{plain}
\newtheorem{theorem}{Theorem}[section]
\newtheorem{lemma}[theorem]{Lemma}
\newtheorem{proposition}[theorem]{Proposition}

\theoremstyle{definition}
\newtheorem{definition}[theorem]{Definition}
\theoremstyle{remark}
\newtheorem{remark}[theorem]{Remark}
\newtheorem{hypothesis}[theorem]{Hypothesis}

\newcounter{thmA}

\newtheorem{theoremA}[thmA]{Theorem}

\newcommand{\C}{\mathbb{C}}

\newcommand{\J}{\mathbb{J}}

\newcommand{\Q}{\mathbb{Q}}

\newcommand{\cA}{\mathcal{A}}
\newcommand{\cB}{\mathcal{B}}

\newcommand{\cE}{\mathcal{E}}

\newcommand{\cS}{\mathcal{S}}

\newcommand{\ff}{\mathfrak{f}}

\newcommand{\fh}{\mathfrak{h}}

\newcommand{\rB}{\mathrm{B}}

\newcommand{\rG}{\mathrm{G}}
\newcommand{\rH}{\mathrm{H}}

\newcommand{\rL}{\mathrm{L}}

\newcommand{\rP}{\mathrm{P}}

\newcommand{\rS}{\mathrm{S}}
\newcommand{\rT}{\mathrm{T}}

\newcommand{\rX}{\mathrm{X}}
\newcommand{\rY}{\mathrm{Y}}

\DeclareMathOperator{\Hom}{Hom}

\DeclareMathOperator{\Fr}{Ext}

\DeclareMathOperator{\Ind}{Ind}
\DeclareMathOperator{\Irr}{Irr}
\DeclareMathOperator{\cInd}{c\text{-}Ind}
\DeclareMathOperator{\Res}{Res}

\DeclareMathOperator{\red}{red}

\DeclareMathOperator{\Stab}{Stab}

\DeclareMathOperator{\Uch}{Uch}

\DeclareMathOperator{\Ad}{Ad}

\DeclareMathOperator{\Frob}{Frob}

\DeclareMathOperator{\LLC}{LLC}
\DeclareMathOperator{\der}{der}
\DeclareMathOperator{\cusp}{cusp}
\DeclareMathOperator{\cind}{c-ind}
\DeclareMathOperator{\unip}{unip}
\DeclareMathOperator{\Lie}{Lie}
\DeclareMathOperator{\Rep}{Rep}

\newcommand{\GL}{\mathrm{GL}}
\newcommand{\SL}{\mathrm{SL}}

\title{A Pinned Local Langlands Correspondence for Depth-Zero Supercuspidal Representations}

\author{Manish Mishra}

\address{Department of Mathematics, IISER Pune,
Dr. Homi Bhabha Road, Pashan, Pune 411008, India}

\email{manish@iiserpune.ac.in}

\thanks{The author was partially supported by SERB Core Research Grant
(CRG/2022/000415) and ANRF MATRICS Grant ANRF/ARGM/2025/000365/MTR}
\date{April 2026}

\begin{document}

\begin{abstract}
We construct a pinning-normalized local Langlands correspondence for
depth-zero supercuspidal representations of a connected reductive group over a
non-archimedean local field.  After fixing a pinned splitting of the
quasi-split inner form, we obtain a canonical bijection between irreducible
depth-zero supercuspidal representations and relevant cuspidal enhanced
depth-zero Langlands parameters.

The construction is organized around the two pieces naturally present in a
depth-zero type: a tame toral part and a finite cuspidal representation of a
parahoric quotient.  The toral part is matched using the local Langlands
correspondence for maximally unramified elliptic tori and normalized
\(L\)-embeddings.  The finite cuspidal part is compared with the parameter
side by a pinned Jordan decomposition for the relevant finite reductive
quotients.  Since these quotients may be disconnected, the finite comparison
must retain the Clifford-theoretic data which records the possible extension
ambiguity.  On the connected unipotent part we use the correspondence of
Feng--Opdam--Solleveld for supercuspidal unipotent representations.  Combining
the toral, unipotent, and Clifford-theoretic pieces gives the enhanced
parameter attached to a depth-zero supercuspidal representation, and the inverse
map is obtained by reversing the same construction.

The correspondence is independent of auxiliary choices apart from the fixed
pinned normalization.  It is compatible with the tame inertial parameter
attached to the depth-zero character, with weakly unramified twists, and with
central characters via the torus correspondence.  Under the DeBacker--Reeder
logarithm hypothesis, the dimension-weighted packet distributions attached to
the resulting packets are stable.
\end{abstract}

\maketitle
\tableofcontents
\section{Introduction}
\label{sec:introduction}

The purpose of this paper is to construct a pinned form of the local
Langlands correspondence for depth-zero supercuspidal representations.  Let
\(F\) be a non-archimedean local field with residue field \(\ff\), and let
\(G\) be a connected reductive group over \(F\).  We fix the quasi-split inner
form \(G^\ast\) of \(G\), together with a pinned splitting of \(G^\ast\).  This
pinning fixes the \(W_F\)-action on the dual group \(\widehat G\), normalizes
the toral \(L\)-embeddings used in the depth-zero construction, and supplies
compatible pinnings for the finite reductive quotients which occur at vertices
of the Bruhat--Tits building.

Depth-zero supercuspidal representations are the first nontrivial testing
ground for a canonical construction of Langlands parameters beyond the torus
case.  On the representation-theoretic side, the Moy--Prasad filtration and
the theory of level-zero types reduce the construction to cuspidal
representations of reductive quotients of parahoric subgroups
\cite{MoyPrasad1994,MoyPrasad1996,Morris1999}.  If
\(x\in \mathcal B(G,F)\) is a vertex, then the full quotient
\[
        G(F)_x/G(F)_{x,0+}
\]
is the group of \(\ff\)-points of a possibly disconnected reductive group
\(\rG_x\), whose identity component is
\[
        G(F)_{x,0}/G(F)_{x,0+}.
\]
Thus the finite object attached to a depth-zero type is naturally a
representation of the full quotient, not only of its connected component.  The
component group of this full quotient is nevertheless abelian: it identifies
with \(G(F)_x/G(F)_{x,0}\), which injects into the target of the Kottwitz
homomorphism.  The disconnected Jordan-decomposition theorem used below also
requires a rational pinned-component condition: after fixing the pinning of the
connected quotient, every rational component must admit a representative whose
conjugation action preserves that pinning.  For full parahoric quotients this
is exactly the rational pinned-component condition formulated in
\cite[Hypothesis~\ref{JD-hyp:rational-pinned-component-condition}]{arotemishra}, and it is verified in
\cite[Lemma~\ref{JD-lem:parahoric-component-action-pinned}]{arotemishra}; the abelianity of the component group
is recorded in \cite[Lemma~\ref{JD-lem:parahoric-component-abelian}]{arotemishra}.  These two facts are the
structural reason why the disconnected finite Jordan decomposition used below
applies to all vertex quotients arising in the paper.

On the Galois side, a depth-zero parameter is trivial on wild inertia.  Its
restriction to tame inertia is encoded by a finite-order semisimple element of
\(\widehat G\), and the remaining unramified part is controlled by a unipotent
parameter for the centralizer of that semisimple element.  This separation of
the tame toral part from the unramified unipotent part is already visible in
DeBacker--Reeder's construction of depth-zero packets and in Kaletha's
construction of regular supercuspidal representations
\cite{DeBackerReeder2009,KalethaRegSC}.  In the present paper the toral part is
normalized by the local Langlands correspondence for tori, in a form suited to
comparison with finite tori \cite{CB2020,ImaiFiniteLC}, while the unramified
unipotent part is supplied by the correspondence of Feng--Opdam--Solleveld for
supercuspidal unipotent representations \cite{FOS2020}.  The notion of
cuspidality for enhanced parameters is the one developed by
Aubert--Moussaoui--Solleveld through Lusztig's generalized Springer theory
\cite{AMS18}.

The bridge between the two sides is finite Jordan decomposition.  For connected
finite reductive groups, Lusztig's Jordan decomposition relates characters in a
rational Lusztig series \(\cE(\rX(\ff),s)\) to unipotent characters of the dual
centralizer \(C_{\rX^\ast}(s)(\ff)\); it is rooted in Deligne--Lusztig theory
\cite{DL76,Lusztig1984Characters}, and we use the standard terminology recalled
in \cite{GM20}.  Since the finite quotients \(\rG_x(\ff)\) occurring in depth
zero are generally disconnected, the connected theory is not sufficient for our
purposes.  We use instead the pinned Jordan decomposition for disconnected
finite reductive groups with rationally pinned abelian component group from
\cite[Theorem~\ref{JD-thm:disc-JD-bijection}]{arotemishra}.
For a pinned finite reductive group \(\rX\) and a semisimple element \(s\) on
the dual side, it gives a pinning-dependent canonical bijection
\[
        J^{\mathbb P}_{\rX,s}:
        \cE(\rX(\ff),s)
        \xrightarrow{\;\sim\;}
        \Uch\bigl(C_{\rX^{\!*}}(s)(\ff)\bigr),
\]
compatible with Harish--Chandra series and with the Clifford-theoretic data
carried by the full disconnected quotient.  This finite input is what allows us
to convert an arbitrary finite cuspidal label in a depth-zero type first into a
cuspidal unipotent label on the finite dual centralizer and then, by pinned
unipotent duality, into the corresponding unipotent label on the finite
\(H\)-side.

The dependence on a pinning is part of the statement, not a cosmetic choice.
Without such a normalization, Jordan decomposition is canonical only up to the
usual choices of representatives and extensions.  The pinning fixes preferred
finite labels.  On the \(p\)-adic side, the same pinned splitting of the
quasi-split inner form fixes the Whittaker normalization of the
Langlands--Shelstad \(L\)-embeddings for the maximally unramified elliptic tori
which occur in depth zero \cite{LanglandsShelstad1987,KalethaRegSC}.  Since
ramified symmetric roots do not occur for these tori, the minimally ramified
\(\chi\)-data are canonical.  The correspondence constructed here is therefore
canonical relative to this pinned normalization.

Let us also make explicit how inner forms enter the formulation.  The paper is
written for a fixed connected reductive \(F\)-group \(G\).  The parameter set
\(\Phi^{\mathrm e}_{0,\cusp}(G)\) is the set of \(G\)-relevant cuspidal enhanced
parameters in the sense of Aubert--Moussaoui--Solleveld; it is not the
Adams--Vogan, DeBacker--Reeder, or rigid-inner-form formulation in which one
simultaneously distributes one abstract parameter over all pure or rigid inner
forms.  Thus the inner form on the representation side is already fixed.  The
auxiliary group \(H_\varphi\) which appears after removing the tame inertial
semisimple part is likewise not selected from a family of inner forms.  It is
the unramified connected group determined by the pinned Frobenius action on
\(C_{\widehat G}(\varphi(I_F))^\circ\), together with the adapted
\(L\)-embedding into \({}^LG\).  At the finite level the group
\(\mathbf H_x\) is defined as the pinned finite reductive group whose pinned dual
is
\[
        \mathbf H_x^\vee=C_{\rG_x^\vee}(s_x).
\]
This finite dual centralizer is related to the complex tame centralizer by the
depth-zero specialization \(s\mapsto s_x\); we do not require, and do not assert,
a literal isomorphism between \(\mathbf H_x^\vee\) and \(Z_{\widehat G}(s)\).  The
vertex \(x\) on the \(G\)-side is the vertex attached to the toral realization of
the same \(G\)-relevant inertial datum.  Hence the construction does not involve
an additional choice of an inner form of \(H_\varphi\), nor an independent matching
of unrelated vertices.

We now state the main results in introductory form.  Let
\[
        \Irr_{0,\cusp}(G(F))
\]
denote the set of isomorphism classes of irreducible depth-zero supercuspidal
representations of \(G(F)\), and let
\[
        \Phi^{\mathrm e}_{0,\cusp}(G)
\]
denote the set of \(\widehat G\)-conjugacy classes of relevant cuspidal enhanced
depth-zero Langlands parameters for \(G\).

\begin{theoremA}[Main results]
\label{thm:intro-main}
The fixed pinned normalization determines a canonical bijection
\[
        \LLC^{0,\cusp}_{G}:
        \Irr_{0,\cusp}(G(F))
        \xrightarrow{\;\sim\;}
        \Phi^{\mathrm e}_{0,\cusp}(G),
        \qquad
        \pi\longmapsto (\varphi_\pi,\rho_\pi).
\]
Its inverse is the map
\[
        \Pi^G_{0,\cusp}:
        \Phi^{\mathrm e}_{0,\cusp}(G)
        \xrightarrow{\;\sim\;}
        \Irr_{0,\cusp}(G(F))
\]
constructed from enhanced depth-zero cuspidal parameters.

More explicitly, if
\[
        \pi=\pi(S,\theta;\tau)
        =
        \cInd_{G(F)_x}^{G(F)}\tau
\]
is represented by a depth-zero datum, with \(x=x_S\), finite quotient
representation
\[
        \bar\tau\in \Irr(\rG_x(\ff)),
\]
and toral parameter \(\varphi_\theta:W_F\to {}^LS\), then
\[
        \varphi_\pi
        =
        \varphi_\theta\star \lambda_{x,\tau}.
\]
Here
\[
        \mathfrak u^{\mathrm{enh}}_{x,\tau}
        =
        \mathcal J^{\mathbb P_x}_{x,s_x}(\bar\tau)
        =
        [u^\circ_{x,\tau},[\alpha_{x,\tau}],E_{x,\tau}]
        \in
        \Uch^{\mathrm{enh}}(\mathbf H_x(\ff))_{\cusp}
\]
is the enriched cuspidal unipotent finite datum obtained by enriched pinned
Jordan decomposition followed by enriched pinned unipotent duality.  The
connected shadow gives the unramified FOS parameter \(\lambda_{x,\tau}\), and
the projective Clifford label \(([\alpha_{x,\tau}],E_{x,\tau})\) determines the
remaining part of the enhancement \(\rho_\pi\).  The bijection is compatible with
tame inertial restriction, weakly unramified twists, and central characters.

\end{theoremA}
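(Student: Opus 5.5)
We will assemble the bijection in three stages, mirroring the decomposition of a depth-zero type into a toral part and a finite cuspidal part, and then verify that the resulting maps are mutually inverse and independent of choices.

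\textbf{Step 1 (parametrization of the representation side).} By Moy--Prasad and Morris \cite{MoyPrasad1996,Morris1999}, every \(\pi\in\Irr_{0,\cusp}(G(F))\) is \(\cInd_{G(F)_x}^{G(F)}\tau\) for a vertex \(x\) and a \(\tau\) inflated from \(\bar\tau\in\Irr(\rG_x(\ff))\) whose restriction to \(\rG_x^\circ(\ff)\) is a sum of cuspidal representations. The pair \((x,\tau)\) is unique up to \(G(F)\)-conjugacy. Deligne--Lusztig theory places the constituents of \(\bar\tau|_{\rG^\circ_x(\ff)}\) in a series \(\cE(\rG_x^\circ(\ff),s_x)\). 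Lifting \(s_x\) to a maximally unramified elliptic torus \(S\) with \(x=x_S\) and a depth-zero character \(\theta\) gives the datum \((S,\theta;\tau)\). The first claim to check is that the \(G(F)\)-orbit of \((S,\theta;\tau)\), modulo the ambiguity in the choice of \((S,\theta)\) realizing \(s_x\), is in bijection with \(\Irr_{0,\cusp}(G(F))\).

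\textbf{Step 2 (the forward map).} Apply the enriched pinned Jordan decomposition \cite[Theorem~\ref{JD-thm:disc-JD-bijection}]{arotemishra}, which is applicable because the component group is abelian and rationally pinned \cite[Lemmas~\ref{JD-lem:parahoric-component-abelian}, \ref{JD-lem:parahoric-component-action-pinned}]{arotemishra}. Follow it with pinned unipotent duality to obtain \(\mathfrak u^{\mathrm{enh}}_{x,\tau}\). Since the decomposition is compatible with Harish--Chandra series, cuspidality is preserved, so the connected shadow \(u^\circ_{x,\tau}\) is a cuspidal unipotent label of \(\mathbf H_x(\ff)\).

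The Feng--Opdam--Solleveld correspondence \cite{FOS2020} for the unramified group \(H_\varphi\) then turns this label into an unramified cuspidal enhanced parameter \(\lambda_{x,\tau}\). The toral correspondence, combined with the pinned Langlands--Shelstad embedding \({}^LS\to{}^LG\), gives \(\varphi_\theta\). We form \(\varphi_\pi=\varphi_\theta\star\lambda_{x,\tau}\), whose tame inertial restriction is \(\varphi_\theta|_{I_F}\) by construction. The enhancement \(\rho_\pi\) is obtained from the FOS enhancement by Clifford theory with respect to the projective label \(([\alpha_{x,\tau}],E_{x,\tau})\). This uses the identification of \(\pi_0\) of \(Z_{\widehat G}(\varphi_\pi)\) as an extension of the FOS component group by the group dual to \(\rG_x(\ff)/\rG_x^\circ(\ff)\), which is controlled by the Kottwitz map.

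Cuspidality in the sense of \cite{AMS18} follows from two facts: generalized Springer cuspidality is preserved under the FOS correspondence, and ellipticity of \(S\) forces the parameter to be discrete. \(G\)-relevance follows because the Kottwitz character of \(G\) is encoded in the central character matching.

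\textbf{Step 3 (inverse and well-definedness).} Given \((\varphi,\rho)\), restriction to \(I_F\) produces a semisimple element \(s\). The pinned \(L\)-embedding recovers \((S,\theta)\) up to conjugacy. The quotient \(\varphi\) modulo the toral part gives an FOS parameter for \(H_\varphi\), hence a cuspidal unipotent label. The component of \(\rho\) transverse to FOS gives the Clifford label. Inverting the duality and the Jordan decomposition then yields \(\bar\tau\), and hence \(\Pi^G_{0,\cusp}(\varphi,\rho)\).

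Each step is bijective on the relevant orbits, so it remains to show that the two composites are the identity. This reduces to compatibility of the choices. Changing \((S,\theta)\) by an element of the stabilizer of the vertex must change both sides by the same twist, and this holds because pinned Jordan decomposition is equivariant for pinned automorphisms. Compatibility with weakly unramified twists follows because such twists act on the finite side by central characters of \(\rG_x(\ff)/\rG^\circ_x(\ff)\) and on the parameter side by \(Z(\widehat G)^{I_F}\)-twists, and these actions correspond under Kottwitz duality. Compatibility with central characters is inherited from the torus correspondence.

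\textbf{Main obstacle.} The hardest step is matching the Clifford data. One must show that the projective label \(([\alpha_{x,\tau}],E_{x,\tau})\) on the finite side corresponds bijectively to the extra enhancement data in \(\pi_0(Z_{\widehat G}(\varphi))\), that is, the data beyond what FOS sees. This includes checking that the cocycle classes agree. I would first attempt a direct comparison of the two abelian groups through the Kottwitz isomorphism, verifying equality of cocycles on the pinned-component representatives. If the cocycles are not visibly equal, the fallback is to reduce to the adjoint and simply connected cases, where one of the two cocycles is trivial.
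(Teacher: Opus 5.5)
Your outline follows the paper's architecture: torus LLC composed with $\iota_S^{\mathrm{can}}$, enriched pinned Jordan decomposition plus unipotent duality, FOS on the connected shadow, $\varphi_\pi=\varphi_\theta\star\lambda_{x,\tau}$, and Clifford theory for $\rho_\pi$. The gap is in the step you flag as the main obstacle, and the mechanism you propose there is false. The part of $\pi_0(Z_{\widehat G}(\varphi_\pi))$ not seen by FOS is not governed by the dual of $\rG_x(\ff)/\rG_x^\circ(\ff)$. Take $G=\SL_2$ with $p$ odd. The Kottwitz group is trivial, so $\rG_x=\rG_x^\circ=\SL_2$ at both non-conjugate hyperspecial vertices $x_0,x_1$, and every finite Clifford datum is trivial. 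For a regular depth-zero parameter, $H_\varphi$ is a torus, so the FOS enhancement group is trivial as well. Yet the packet has two members, one induced from each vertex, matching the two relevant characters of $\mathcal S_\varphi\cong\mathbb Z/4$. For the tame parameter with projective image $\mathbb Z/2\times\mathbb Z/2$, one has $\mathcal S_\varphi\cong Q_8$ and four members: the two cuspidal representations of $\SL_2(\ff)$ of degree $(q-1)/2$ at each vertex. Your forward map sends all members of such a packet to the same enhanced parameter, and your inverse map has nowhere to record $\rho$.

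Two sources of enhancement data are missing.
\begin{itemize}
\item In the $Q_8$ example, one factor of $Q_8/\{\pm1\}$ is the image of $\widehat S^{W_F}$. It selects the rational class of $S$ in its stable class, and hence the vertex. Your Step~3 says the pinned $L$-embedding recovers $(S,\theta)$ up to conjugacy, but it only recovers it up to \emph{stable} conjugacy.
\item The other factor comes from the disconnectedness of the tame inertial centralizer $Z_{\widehat G}(\varphi(I_F))$. In the example, what separates the two cuspidals at a vertex is the disconnectedness of $C_{(\rG_x^\circ)^\vee}(s_x)$, not $\Omega_x$. This is why the paper works with $\mathcal S^{\mathrm{full}}_\lambda$, the component group of the centralizer of $\lambda$ in the full inertial centralizer, with the FOS group normal in it (Lemmas~\ref{lem:parameter-factorization} and~\ref{lem:component-group-comparison-unipotentization}). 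It does not use the Kottwitz group here.
\end{itemize}

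Characters of $\Omega_x$ play a different role, and it is the one your Step~3 uses. By Proposition~\ref{prop:supp-depthzero-type} and the discussion after Theorem~\ref{thm:FOS-unipotent-LLC}, changing the extension across $N_x/P$ realizes weakly unramified twists $z\lambda$ with the enhancement fixed. So your Steps~2 and~3 contradict each other. For the same reason, $u^\circ_{x,\tau}$ alone determines $\lambda_{x,\tau}$ only up to such twists.

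Your fallback also fails. For simply connected $G$ the finite side is trivial while $\mathcal S_\varphi$ is not, so there is nothing on the finite side to compare against.

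Following the paper more closely will not close this gap by itself. Lemma~\ref{lem:FOS-cuspidal-parameter-to-finite-H-side} moves the parameter-side Clifford entries to the finite side by transport through $\jmath_x$. It never compares them with $I_{\rG_x(\ff)}(\rho^\circ)/\rG_x^\circ(\ff)$, and it never says how $\rho$ chooses the rational class of $S$. Both matchings have to be constructed explicitly. The one for rational classes can come from the Tate--Nakayama description of the rational classes in the stable class of $S$, as in DeBacker--Reeder.
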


The proof of the bijection is organized around two inverse constructions.  In
one direction, one starts from a relevant cuspidal enhanced depth-zero
parameter \((\varphi,\rho)\).  The tame inertial part of \(\varphi\) determines
a maximally unramified elliptic torus and a depth-zero character \(\theta\) of
that torus.  The enhancement \(\rho\), through the
Aubert--Moussaoui--Solleveld description of cuspidal enhanced parameters and
the Feng--Opdam--Solleveld unipotent correspondence, determines an enriched
cuspidal unipotent finite datum on the relevant \(H\)-side group.  Applying
inverse enriched pinned unipotent duality and then the inverse enriched pinned
Jordan decomposition gives a cuspidal representation \(\bar\tau\) of the
full finite quotient \(\rG_x(\ff)\).  Inflation to \(G(F)_x\), followed by
compact induction, gives the representation
\[
        \Pi^G_{0,\cusp}(\varphi,\rho).
\]

In the opposite direction, one starts with a depth-zero supercuspidal
representation \(\pi=\pi(S,\theta;\tau)\).  The character \(\theta\) gives the
toral parameter \(\varphi_\theta\).  Its tame inertial value determines a
semisimple element \(s_x\) in the dual of the finite reductive quotient
\(\rG_x\).  The finite representation \(\bar\tau\) belongs to the Lusztig
series labelled by \(s_x\).  Applying the enriched pinned disconnected Jordan
decomposition converts \(\bar\tau\) into an enriched cuspidal unipotent datum on
the finite dual-centralizer side; pinned unipotent duality then transports this
datum to the corresponding finite \(H\)-side group.  Its connected constituent
gives, through the Feng--Opdam--Solleveld correspondence, the unramified factor
\(\lambda_{x,\tau}\), while its Clifford cohomology class and projective label
supply the enhancement.  The two constructions are inverse because both the
toral part and the full enriched finite unipotent datum are recovered
separately.

This construction is related to earlier depth-zero parametrizations, but its
emphasis is different.  DeBacker--Reeder construct depth-zero packets and prove
stability in their setting \cite{DeBackerReeder2009}.  Kaletha's regular
supercuspidal construction gives a refined parametrization for regular data and
fixes the toral normalizations used there \cite{KalethaRegSC}.  Feng--Opdam--
Solleveld construct the unipotent supercuspidal part of the correspondence and
prove formal-degree compatibilities \cite{FOS2020}.  The present paper combines
these ingredients with the pinned Jordan decomposition for disconnected finite
reductive quotients.  The outcome is a pinned construction of the depth-zero
supercuspidal correspondence, with compatibility for tame inertial restriction,
weakly unramified twists, and central characters; under the DeBacker--Reeder
logarithm hypothesis we also prove stability of the associated
packet distributions.  Related explicit parametrizations in some simple adjoint
cases appear in \cite{fujii2025}.
More recently, Fujii constructed the local Langlands correspondence for
essentially unipotent supercuspidal representations in the framework of rigid
inner forms, and extended this correspondence to certain disconnected
reductive groups \cite{Fujii2026EUCDisconnected}.  The overlap with the present
paper lies in the unipotent centralizer input.  The emphasis here is different:
we treat depth-zero supercuspidal representations of connected groups with
arbitrary tame semisimple inertial class, and handle the resulting disconnected
centralizers at the finite level through the finite dual centralizer
\(C_{G_x^\vee}(s_x)\), pinned Jordan decomposition, and pinned unipotent duality,
rather than by constructing an LLC for a disconnected \(p\)-adic centralizer
group.

We finish by outlining the organization of the paper.  The first section fixes the toral normalizations used later: the usual local Langlands correspondence for \(F\)-tori and its depth-zero specialization to
finite tori.  Section~\ref{sec:parahoric-preliminaries} records the parahoric and
maximally unramified torus facts needed to pass between depth-zero toral
characters and finite tori.  Section~\ref{sec:lusztig-jordan-decomposition}
recalls the pinned Jordan decomposition for disconnected finite reductive
groups with abelian component group.  Section~\ref{sec:Whittaker-canonical-chi-data}
fixes the Whittaker-normalized \(\chi\)-data and \(L\)-embeddings for the
maximally unramified elliptic tori used in depth zero.  The next sections set up enhanced parameters and depth-zero data on the
representation side.  Section~\ref{sec:FOS-unipotent} recalls the part of the
Feng--Opdam--Solleveld correspondence used for the unipotent factor and isolates
the finite \(H\)-side character which will be needed in the forward construction.
Section~\ref{sec:LLC-enhanced-depth-zero-cuspidal} constructs the representation
attached to an enhanced depth-zero cuspidal parameter.
Section~\ref{sec:construction-langlands-parameter} constructs the enhanced
parameter attached to a depth-zero supercuspidal representation.  Section~\ref{sec:pinned-depth-zero-supercuspidal-LLC}
combines the two constructions and proves the pinned depth-zero
supercuspidal local Langlands correspondence.  The final section proves
stability of the resulting dimension-weighted packet distributions under the
DeBacker--Reeder logarithm hypothesis.

%----------------
\section{Toral normalizations}
\label{sec:toral-normalizations}

We recall the form of the local Langlands correspondence for tori used in this
paper.  Let \(S\) be an \(F\)-torus.  We write
\[
        \widehat S = X_*(S)\otimes_{\mathbb Z}\C^\times
\]
with its natural \(W_F\)-action.  The local Langlands correspondence for tori
gives a canonical isomorphism
\[
        \Hom_{\mathrm{cts}}(S(F),\C^\times)
        \xrightarrow{\;\sim\;}
        H^1(W_F,\widehat S).
\]
We shall use this correspondence in the following direction: a depth-zero
character
\[
        \theta:S(F)\longrightarrow \C^\times
\]
determines a Langlands parameter
\[
        \varphi_\theta:W_F\longrightarrow {}^LS .
\]

Only the depth-zero part of this correspondence is needed below.  If \(S\) is a
maximally unramified elliptic maximal \(F\)-torus of \(G\), and if \(x\) is the
corresponding vertex of the Bruhat--Tits building, then a depth-zero character
\(\theta\) is trivial on \(S(F)_{0+}\).  Hence it factors through the finite
torus
\[
        \rS_x(\ff)=S(F)_0/S(F)_{0+}.
\]
On the dual side, the tame finite-order part of \(\varphi_\theta\) determines,
via Imai's specialization map \cite{ImaiFiniteLC}, a semisimple element
\[
        s_{\theta,x}\in \rS_x^\vee(\ff).
\]
Equivalently, \(s_{\theta,x}\) is characterized by the equality
\[
        \theta_x(t)=\langle t,s_{\theta,x}\rangle,
        \qquad
        t\in \rS_x(\ff),
\]
under the perfect duality between the finite torus \(\rS_x(\ff)\) and its dual
finite torus \(\rS_x^\vee(\ff)\).  This is the finite toral datum which enters
the Lusztig series of the parahoric quotient \(\rG_x(\ff)\).

Thus, throughout the paper, the phrase ``the toral part of the parameter''
means the depth-zero character \(\theta\), its torus parameter
\(\varphi_\theta\), and the associated finite semisimple element
\(s_{\theta,x}\in \rS_x^\vee(\ff)\), all identified through the above
normalizations.
%--------------------
\section{Preliminaries about parahorics}\label{sec:parahoric-preliminaries}
\begin{definition}\cite[Fact 3.4.1 and Lemma 3.4.2]{KalethaRegSC}
Let \(G\) be a connected reductive group over a non-archimedean local field \(F\).
A maximal torus \(S\subset G\) is said to be \emph{maximally unramified} if, writing
\(S'\subset S\) for its maximal unramified subtorus, any (hence all) of the
following equivalent conditions hold:
\begin{enumerate}
  \item \(S'\) has maximal dimension among the unramified subtori of \(G\);
  \item \(S'\) is not properly contained in a larger unramified subtorus of \(G\);
  \item \(S=\operatorname{Cent}_G(S')\);
  \item \(S\times_F F^{\mathrm u}\) is a minimal Levi subgroup of \(G\times_F F^{\mathrm u}\);
  \item the inertia group \(I_F\) acts on the root system \(R(S,G)\) preserving
        some set of positive roots.
\end{enumerate}

\end{definition}

We now recall a correspondence between vertices and maximally unramified elliptic maximal tori. We will denote by \(S(F)_b\) the maximal bounded subgroup of \(S\) .
\subsection{Vertices and maximally unramified tori}
\label{subsec:maxunramified}

Let \(F^{\mathrm u}\) denote the maximal unramified extension of \(F\).  If
\(S\subset G\) is a maximal \(F\)-torus, let
\[
        S^{\mathrm{ur}}\subset S
\]
be its maximal unramified subtorus.

\begin{definition}[Maximally unramified tori]
A maximal \(F\)-torus \(S\subset G\) is called \emph{maximally unramified} if
\(S^{\mathrm{ur}}\) is maximal among the unramified subtori of \(G\).
Equivalently,
\[
        S=C_G(S^{\mathrm{ur}}),
\]
or, equivalently, \(S_{F^{\mathrm u}}\) is a minimal Levi subgroup of
\(G_{F^{\mathrm u}}\).  These conditions are also equivalent to the inertia
group \(I_F\) preserving a set of positive roots in \(R(S,G)\);
see \cite[Fact~3.4.1 and Def.~3.4.2]{KalethaRegSC}.
\end{definition}

We shall use this notion only for elliptic maximal tori.  Let
\(S\subset G\) be a maximally unramified elliptic maximal \(F\)-torus.  Since
\(S^{\mathrm{ur}}_{F^{\mathrm u}}\) is a maximal split torus of
\(G_{F^{\mathrm u}}\), it determines an apartment
\[
        \mathcal A^{\mathrm{red}}(S^{\mathrm{ur}},F^{\mathrm u})
        \subset
        \cB^{\mathrm{red}}(G,F^{\mathrm u}).
\]
This apartment is Frobenius-stable, and ellipticity of \(S\) implies that the
Frobenius action has a unique fixed point.  Following
\cite[\S3.4.1]{KalethaRegSC}, we denote this point by
\[
        x_S\in \cB^{\mathrm{red}}(G,F).
\]

For \(x=x_S\), let
\[
        \rG_x^\circ(\ff)=G(F)_{x,0}/G(F)_{x,0+}
\]
be the connected reductive quotient of the parahoric at \(x\).  Kaletha's
construction gives an elliptic maximal \(\ff\)-torus
\[
        \rS_x\subset \rG_x^\circ
\]
obtained by reducing \(S\) at \(x\); concretely,
\(\rS_x(\ff)\) is the image of \(S(F)\cap G(F)_{x,0}\) in
\(G(F)_{x,0}/G(F)_{x,0+}\), as in
\cite[Lem.~3.4.4(1)]{KalethaRegSC}.  In
Lemma~\ref{lem:supp-bounded-torus} below this group will be identified with
\(S(F)_0/S(F)_{0+}\).

\begin{lemma}[Vertices and maximally unramified elliptic tori]
\label{lem:vertex-torus-bijection}
Let \(G\) be a connected reductive group over \(F\).
\begin{enumerate}
\item If \(S\subset G\) is a maximally unramified elliptic maximal \(F\)-torus,
then the associated point
\[
        x_S\in \cB^{\mathrm{red}}(G,F)
\]
is a vertex.

\item Conversely, if \(x\in \cB^{\mathrm{red}}(G,F)\) is a vertex, then there
exists a maximally unramified elliptic maximal \(F\)-torus
\(S\subset G\) such that
\[
        x_S=x .
\]
\end{enumerate}
\end{lemma}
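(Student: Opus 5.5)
We work throughout over \(F^{\mathrm u}\) and then descend using Frobenius, following \cite[\S3.4.1]{KalethaRegSC} and the standard facts on unramified descent in Bruhat--Tits theory. The key input is a comparison of the two reductive quotients: for any \(x\in\cB^{\mathrm{red}}(G,F)\), the reductive quotient \(\rG_x^\circ\) over \(\ff\) has base change to \(\bar\ff\) equal to the reductive quotient of \(G_{F^{\mathrm u}}\) at \(x\), regarded in \(\cB^{\mathrm{red}}(G,F^{\mathrm u})\). Moreover, \(x\) is a vertex of \(\cB^{\mathrm{red}}(G,F)\) if and only if \(\rG_x^\circ\) has \(\ff\)-rank equal to that of the centre. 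Equivalently, the maximal \(\ff\)-split central torus of \(\rG_x^\circ\) is the image of the maximal split central torus of \(G\), so \(\rG_x^\circ\) is anisotropic modulo centre. I will use this characterization in both directions.

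For part (1), let \(S\) be maximally unramified and elliptic, and set \(x=x_S\), the unique Frobenius-fixed point of \(\mathcal A^{\mathrm{red}}(S^{\mathrm{ur}},F^{\mathrm u})\). Since \(S_{F^{\mathrm u}}\) is a minimal Levi subgroup, \(S^{\mathrm{ur}}_{F^{\mathrm u}}\) is a maximal split torus. Its reduction at \(x\) therefore gives a maximal torus \(\rS_x\) of \(\rG_x^\circ\), as in \cite[Lem.~3.4.4]{KalethaRegSC}, and this torus is defined over \(\ff\) because \(x\) and the apartment are Frobenius-stable. The cocharacters of \(\rS_x\) defined over \(\ff\) correspond to Frobenius-invariant cocharacters of \(S^{\mathrm{ur}}\), that is, to cocharacters of the maximal \(F\)-split subtorus of \(S\). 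By ellipticity these are central. Hence \(\rS_x\) is elliptic in \(\rG_x^\circ\), and \(\rG_x^\circ\) contains no noncentral \(\ff\)-split torus. By the characterization above, \(x\) is a vertex. A cleaner way to say this is that any facet of \(\cB^{\mathrm{red}}(G,F)\) containing \(x\) in its closure yields a proper \(\ff\)-parabolic of \(\rG_x^\circ\). An anisotropic-mod-centre group admits none, so the facet of \(x\) is minimal, i.e. \(x\) is a vertex.

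For part (2), let \(x\) be a vertex. Then \(\rG_x^\circ\) is anisotropic modulo its centre, so by Lang's theorem and the classification of \(\ff\)-tori it contains an elliptic maximal \(\ff\)-torus \(\rT\), for instance one obtained from a Coxeter-type twist of a maximally split torus. Next I lift \(\rT\) to \(F\). Take a maximal \(F^{\mathrm u}\)-split torus \(\mathcal T\) of \(G_{F^{\mathrm u}}\) whose apartment contains \(x\) and whose reduction at \(x\) is \(\rT_{\bar\ff}\). Its Frobenius twist is again such a torus. The set of such tori is a torsor under a pro-unipotent-by-smooth group \(G(F^{\mathrm u})_{x,0+}\) extended by \(N\), so Lang's theorem for the parahoric (with Greenberg's smoothness) produces a Frobenius-stable choice, i.e. an unramified \(F\)-torus \(S^{\mathrm{ur}}\) with reduction \(\rT\). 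This step follows DeBacker's lifting of \(\ff\)-tori to unramified \(F\)-tori. Then put \(S=C_G(S^{\mathrm{ur}})\). This is a maximal torus, since \(S^{\mathrm{ur}}_{F^{\mathrm u}}\) is maximal split and \(G\) is quasi-split over \(F^{\mathrm u}\), and it is maximally unramified. It is elliptic because its \(F\)-split cocharacters reduce to \(\ff\)-split cocharacters of \(\rT\), which are central. Finally, \(x\) lies in \(\mathcal A^{\mathrm{red}}(S^{\mathrm{ur}},F^{\mathrm u})\) and is Frobenius-fixed, so uniqueness gives \(x_S=x\).

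The main obstacle is this lifting step in (2): producing a Frobenius-stable maximal \(F^{\mathrm u}\)-split torus through \(x\) with prescribed reduction. I would either cite \cite[Lemma~3.4.4–3.4.5]{KalethaRegSC} and DeBacker's parametrization of unramified tori directly, or prove it via Lang's theorem on the smooth connected \(\mathcal O\)-model of the parahoric. A secondary point is that ellipticity of \(S\) forces only central \(\ff\)-split cocharacters of \(\rT\); this requires matching \(F\)-rational cocharacters of \(S\) with those of \(S^{\mathrm{ur}}\), which holds because the maximal split subtorus of \(S\) lies in \(S^{\mathrm{ur}}\).
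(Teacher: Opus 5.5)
Your route differs from the paper's, which simply cites \cite[Lem.~3.4.3]{KalethaRegSC} for (1) and \cite[Lem.~3.4.4(2)]{KalethaRegSC} for the lifting in (2). Reproving these through the reductive quotient and a Lang-type lifting is legitimate in principle. However, the vertex criterion you build on is inverted.

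At \emph{every} point \(x\), the group \(\rG_x^\circ\) contains the reduction of a maximal \(F\)-split torus whose apartment contains \(x\), so its \(\ff\)-rank is always \(\mathrm{rank}_F G\). What singles out vertices is that the relative root system of \(\rG_x^\circ\) (the gradients of affine roots vanishing at \(x\)) has full rank. Equivalently, the maximal \(\ff\)-split \emph{central} torus of \(\rG_x^\circ\) is as small as possible, namely the reduction of the maximal split central torus \(A_G\) of \(G\). That is your second formulation, and it is correct. It is not equivalent to ``\(\rG_x^\circ\) has \(\ff\)-rank equal to that of its centre'' or ``\(\rG_x^\circ\) is anisotropic modulo centre''; those describe the opposite extreme, points of chambers.

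Your own facet argument shows this. Facets having \(x\) in their closure correspond to \(\ff\)-parabolics of \(\rG_x^\circ\), so having no proper parabolic means the facet of \(x\) is maximal, not minimal. Concretely, take split \(\SL_2\) and let \(S\) be the norm-one torus of the unramified quadratic extension. Then \(x_S\) is a hyperspecial vertex, and \(\rG_x^\circ=\SL_2\) over \(\ff\) is split.

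Consequently, two claims in your proposal are false. In (1), the assertion that \(\rG_x^\circ\) contains no noncentral \(\ff\)-split torus fails. In (2), so does the claim that a vertex has anisotropic-mod-centre reductive quotient; that claim is also unnecessary, since every connected reductive group over \(\ff\) has elliptic maximal tori.

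More seriously, in (2) you pass from ``the \(\ff\)-split cocharacters of \(\rT\) are central in \(\rG_x^\circ\)'' to ``the \(F\)-split cocharacters of \(S\) are central in \(G\)''. This is exactly where the vertex hypothesis has to enter, and as written it does not. At an interior point of an edge of the tree of split \(\SL_2\), \(\rG_x^\circ\) is a split rank-one torus, elliptic in itself, yet every maximally unramified torus lifting it is split.

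The repair is to use the correct criterion throughout, together with \(X_*(\rS_x)\cong X_*(S)^{I_F}\) Frobenius-equivariantly. This gives that the \(\ff\)-split rank of \(\rS_x\) equals \(\mathrm{rank}_F S\).
\begin{enumerate}
\item In (1), ellipticity gives \(\mathrm{rank}_F S=\dim A_G\). Since \(Z(\rG_x^\circ)^\circ\subset\rS_x\), the maximal \(\ff\)-split central torus of \(\rG_x^\circ\) has dimension at most \(\dim A_G\). Hence it is the reduction of \(A_G\), and \(x\) is a vertex.
\item In (2), ellipticity of \(\rT\) in \(\rG_x^\circ\) together with the vertex criterion forces the \(\ff\)-split part of \(\rT\) to be the reduction of \(A_G\). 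So \(\mathrm{rank}_F S=\dim A_G\), and \(S\) is elliptic.
\end{enumerate}
With these corrections, your argument goes through, provided the lifting step is either cited from Kaletha or proved by Lang's theorem on \(G(F^{\mathrm u})_{x,0+}\).
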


\begin{proof}
The first assertion is precisely \cite[Lem.~3.4.3]{KalethaRegSC}, applied to
the point associated in \cite[\S3.4.1]{KalethaRegSC} to a maximally
unramified elliptic maximal torus.

For the converse, let \(x\) be a vertex.  Choose an elliptic maximal
\(\ff\)-torus
\[
        \rS\subset \rG_x^\circ .
\]
Such tori exist in every connected reductive group over a finite field; for
instance, one may take a rational maximal torus corresponding to an elliptic
element of the absolute Weyl group.  By \cite[Lem.~3.4.4(2)]{KalethaRegSC},
every elliptic maximal torus of \(\rG_x^\circ\) arises by reduction from a
maximally unramified elliptic maximal \(F\)-torus \(S\subset G\) whose
associated point is \(x\).  Hence \(x_S=x\), as required.
\end{proof}

For later use, we write \(S(F)_b\) for the maximal bounded subgroup of
\(S(F)\), and \(S(F)_0\), \(S(F)_{0+}\) for the parahoric subgroup of \(S(F)\)
and its pro-unipotent radical.

\begin{lemma}[Bounded and parahoric subgroups of an elliptic torus]\label{lem:supp-bounded-torus}
Let $S\subset G$ be a maximally unramified elliptic maximal $F$--torus and $x=x_S$.
Then
\[
S(F)_b \;=\; S(F)_0 \;=\; S(F)\cap G(F)_{x,0}
\qquad\text{and}\qquad
S(F)_{0+}\;=\; S(F)\cap G(F)_{x,0+}.
\]
In particular $S(F)_b\subset G(F)_x$, and reduction gives a canonical identification
$S(F)_b/S(F)_{0+}\cong \rS_x(\ff)$.
\end{lemma}
\begin{proof}
Let \(\mathcal O=\mathcal O_F\).  For a torus \(T/F\), we write
\(T(F)_0\) for the parahoric subgroup of \(T(F)\), and \(T(F)_{0+}\) for its
pro-unipotent radical.  Equivalently, if \(\mathcal T^0\) is the connected
Neron--Bruhat--Tits model of \(T\), then
\[
        T(F)_0=\mathcal T^0(\mathcal O),
\]
and \(T(F)_{0+}\) is the kernel of reduction to the maximal reductive quotient
of the special fibre.  The valuation map
\[
        \nu_T:T(F)\longrightarrow
        X_*(T)_{I_F}^{\Fr}\otimes_{\mathbb Z}\mathbb R
\]
has kernel \(T(F)_0\), and its image is a discrete lattice.  Hence
\(T(F)_0\) is the maximal bounded subgroup of \(T(F)\).  Applying this to
\(T=S\), we obtain
\[
        S(F)_b=S(F)_0 .
\]

It remains to compare this subgroup with the parahoric filtration of \(G(F)\)
at \(x=x_S\).  Let \(\mathcal G_x^0\) be the connected parahoric
\(\mathcal O\)-group scheme attached to \(x\), so that
\[
        \mathcal G_x^0(\mathcal O)=G(F)_{x,0}.
\]
By the construction of the point \(x_S\) for a maximally unramified torus, the
schematic closure of \(S\) in \(\mathcal G_x^0\) is the connected parahoric
model \(\mathcal S^0\) of \(S\).  Equivalently, the inclusion \(S\subset G\)
extends to a closed immersion
\[
        \mathcal S^0\hookrightarrow \mathcal G_x^0 .
\]
Taking \(\mathcal O\)-points gives
\[
        S(F)_0
        =
        \mathcal S^0(\mathcal O)
        =
        S(F)\cap \mathcal G_x^0(\mathcal O)
        =
        S(F)\cap G(F)_{x,0}.
\]
Together with the first paragraph this proves
\[
        S(F)_b=S(F)_0=S(F)\cap G(F)_{x,0}.
\]

We next compare the \(0+\)-subgroups.  Let
\[
        \red_x:G(F)_{x,0}\longrightarrow \rG_x^\circ(\ff)
\]
be the reduction map to the reductive quotient of the special fibre of
\(\mathcal G_x^0\); its kernel is \(G(F)_{x,0+}\).  The closed immersion
\(\mathcal S^0\hookrightarrow \mathcal G_x^0\) induces a closed immersion of
reductive quotients
\[
        \rS_x\hookrightarrow \rG_x^\circ .
\]
Under this immersion, the restriction of \(\red_x\) to \(S(F)_0\) is precisely
the reduction map
\[
        S(F)_0\longrightarrow \rS_x(\ff).
\]
Therefore
\[
\begin{aligned}
        S(F)\cap G(F)_{x,0+}
        &=
        \{s\in S(F)\cap G(F)_{x,0}:\red_x(s)=1\}  \\
        &=
        \{s\in S(F)_0:\red_{\rS_x}(s)=1\}          \\
        &=
        S(F)_{0+}.
\end{aligned}
\]

Finally, since \(S(F)_0\subset G(F)_{x,0}\subset G(F)_x\), the equality
\(S(F)_b=S(F)_0\) gives \(S(F)_b\subset G(F)_x\).  The reduction map for
\(\mathcal S^0\) is surjective onto the reductive quotient, with kernel
\(S(F)_{0+}\).  Hence it induces the canonical identification
\[
        S(F)_b/S(F)_{0+}
        =
        S(F)_0/S(F)_{0+}
        \cong
        \rS_x(\ff).
\]
\end{proof}
\smallskip
\begin{lemma}[Abelianity of the component group at a vertex]
\label{lem:vertex-component-abelian}
Let \(x\in \mathcal B(G,F)\) be a vertex, and put
\[
        K_x=G(F)_x,\qquad
        K_x^0=G(F)_{x,0},\qquad
        K_x^+=G(F)_{x,0+}.
\]
Then the component group of the full reductive quotient
\[
        \rG_x(\ff)=K_x/K_x^+
\]
is abelian. More precisely, the Kottwitz homomorphism induces an injective
homomorphism
\[
        K_x/K_x^0
        \hookrightarrow
        X^\ast(Z(\widehat G)^I)^\Phi.
\]
Consequently
\[
        \rG_x(\ff)/\rG_x^\circ(\ff)
        \cong
        K_x/K_x^0
\]
is abelian.
\end{lemma}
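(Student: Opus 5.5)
The plan is to work with the Kottwitz homomorphism
\[
        \kappa_G: G(F)\longrightarrow X^\ast(Z(\widehat G)^I)^\Phi ,
\]
and to show that on the stabilizer \(K_x\) its kernel is exactly the parahoric \(K_x^0\).

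The key input is the theorem of Haines--Rapoport (appendix to Pappas--Rapoport): for any point \(x\) of the building,
\[
        G(F)_{x,0}=G(F)_x\cap \ker\kappa_G .
\]
Here \(G(F)_x\) is the stabilizer of \(x\) in the reduced building; this is the group denoted \(K_x\) in the statement. Granting this identity, restricting \(\kappa_G\) to \(K_x\) gives a homomorphism with kernel \(K_x^0\). Hence it induces an injection
\[
        K_x/K_x^0\hookrightarrow X^\ast(Z(\widehat G)^I)^\Phi .
\]
The target is an abelian group, being the \(\Phi\)-invariants of a character group. Therefore any subgroup of it is abelian, and so \(K_x/K_x^0\) is abelian. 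Note that the vertex hypothesis is not needed for this step; it only ensures that \(x\) is the kind of point occurring in the paper.

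Next we identify the component group of the full quotient. Since \(K_x^+\subset K_x^0\subset K_x\), and \(K_x^+\) is normal in \(K_x\), the third isomorphism theorem gives
\[
        (K_x/K_x^+)\big/(K_x^0/K_x^+)\;\cong\; K_x/K_x^0 .
\]
By definition \(\rG_x(\ff)=K_x/K_x^+\) and \(\rG_x^\circ(\ff)=K_x^0/K_x^+\). Hence
\[
        \rG_x(\ff)/\rG_x^\circ(\ff)\cong K_x/K_x^0 ,
\]
and this group is abelian by the previous paragraph.

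One point needs care. The component group of the algebraic group \(\rG_x\) may differ from the group of \(\ff\)-points of \(\pi_0(\rG_x)\). However, the paper defines the component group of the full quotient at the level of \(\ff\)-points, \(\rG_x(\ff)/\rG_x^\circ(\ff)\), so the above suffices. If one wants the geometric statement, note that \(\rG_x^\circ\) is connected, so Lang's theorem shows that \(\rG_x(\ff)\to\pi_0(\rG_x)(\ff)\) is surjective. The \(\ff\)-points of \(\pi_0\) then inject as above, and the geometric \(\pi_0\) can be treated by passing to \(F^{\mathrm u}\), where the same Kottwitz argument applies.

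The main obstacle is the Haines--Rapoport identity itself, and in particular matching conventions. One must check that \(G(F)_x\) in this paper means the full stabilizer, and that \(G(F)_{x,0}\) means the Bruhat--Tits connected parahoric, so that it agrees with \(\mathcal G_x^0(\mathcal O)\) as in Lemma~\ref{lem:supp-bounded-torus}. With those conventions fixed, we simply cite the identity rather than reprove it. As an independent cross-check, this is the statement recorded in \cite[Lemma~\ref{JD-lem:parahoric-component-abelian}]{arotemishra}, which one could also invoke directly.
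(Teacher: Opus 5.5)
Your proposal is correct and follows the paper's proof essentially verbatim. The paper also uses the identity $G(F)_{x,0}=G(F)_x\cap\ker\kappa_G$ (cited there as the Bruhat--Tits--Kottwitz description of parahorics, i.e.\ the Haines--Rapoport result) to get the injection $K_x/K_x^0\hookrightarrow X^\ast(Z(\widehat G)^I)^\Phi$, and then identifies $\rG_x(\ff)/\rG_x^\circ(\ff)\cong K_x/K_x^0$ exactly as you do. Your side remarks on the geometric $\pi_0$ and on dropping the vertex hypothesis are not needed for the lemma as stated.
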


\begin{proof}
Let
\[
        \kappa_G:G(F)\longrightarrow X^\ast(Z(\widehat G)^I)^\Phi
\]
be the Kottwitz homomorphism, and let \(G(F)_1=\ker(\kappa_G)\).  The
Bruhat--Tits--Kottwitz description of parahoric subgroups gives
\[
        G(F)_{x,0}=G(F)_x\cap G(F)_1.
\]
Hence the kernel of \(\kappa_G|_{K_x}\) is exactly \(K_x^0\), and therefore
\(\kappa_G|_{K_x}\) descends to an injective homomorphism
\[
        K_x/K_x^0\hookrightarrow X^\ast(Z(\widehat G)^I)^\Phi.
\]
The target is abelian, so \(K_x/K_x^0\) is abelian.  Since
\[
        \rG_x^\circ(\ff)=K_x^0/K_x^+,
        \qquad
        \rG_x(\ff)=K_x/K_x^+,
\]
we have
\[
        \rG_x(\ff)/\rG_x^\circ(\ff)
        \cong
        K_x/K_x^0,
\]
and the result follows.
\end{proof}

\begin{remark}[The rational pinned-component condition at vertices]
\label{rem:parahoric-rational-pinned-condition}
The disconnected Jordan decomposition used below requires more than the
abelianity in Lemma~\ref{lem:vertex-component-abelian}.  It also requires the
rational pinned-component condition of \cite[Hypothesis~\ref{JD-hyp:rational-pinned-component-condition}]{arotemishra}.
For full parahoric quotients this extra condition is verified in
\cite[Lemma~\ref{JD-lem:parahoric-component-action-pinned}]{arotemishra}: after choosing the pinning of the connected
quotient induced from the fixed global pinning and an alcove adjacent to the
vertex, every component of
\[
        G(F)_x/G(F)_{x,0+}
\]
has a representative whose conjugation action preserves that pinning.
Equivalently, the component action on the connected quotient is
pinning-preserving modulo inner conjugacy by
\[
        G(F)_{x,0}/G(F)_{x,0+}.
\]
Together with Lemma~\ref{lem:vertex-component-abelian}--or equivalently
\cite[Lemma~\ref{JD-lem:parahoric-component-abelian}]{arotemishra}--this verifies the hypotheses of the disconnected
Jordan decomposition theorem for the full vertex quotients used in the paper.
\end{remark}

\section{Lusztig's Jordan Decomposition}
\label{sec:lusztig-jordan-decomposition}

This section records the finite-field Jordan decomposition input used below.
The point of the formulation is that the reductive quotients which occur at
vertices of the building need not be connected: if
\[
K=G(F)_x,\qquad K^+=G(F)_{x,0+},
\]
then the finite quotient $K/K^+$ is usually a possibly disconnected reductive
group over the residue field $\ff$.  We therefore use the pinned Jordan
decomposition for disconnected finite reductive groups with rationally pinned
abelian component group from \cite{arotemishra}.

Throughout the section $\rX$ denotes a reductive group over $\ff$ with Frobenius
$\sigma$, not necessarily connected.  We write
\[
X:=\rX(\ff)=\rX^\sigma,\qquad
X^\circ:=(\rX^\circ)^\sigma,
\]
for the group of rational points and the rational points of the identity
component.  When $\rX$ is disconnected, the finite component group is
\[
\Omega_X:=X/X^\circ.
\]
A fixed $\sigma$--stable pinning
\[
\mathbb P=(\rX^\circ,\rB^\circ,\rT^\circ,\{x_\alpha\}_{\alpha\in\Delta})
\]
of $\rX^\circ$ determines a dual pinned datum for $(\rX^\circ)^{\!*}$.  We shall
say that $(\rX,\mathbb P)$ satisfies the \emph{rational pinned-component
condition} if
\[
X=X^\circ\cdot X_{\mathbb P},
\qquad
X_{\mathbb P}:=
\{x\in X: \Ad(x)|_{\rX^\circ}\text{ preserves }\mathbb P\}.
\]
This is the condition imposed in \cite[Hypothesis~\ref{JD-hyp:rational-pinned-component-condition}]{arotemishra}.  Under
this condition the pinned component action gives the semidirect-product dual
model
\[
\rX^{\!*}:=(\rX^\circ)^{\!*}\rtimes \Omega_X.
\]
The dual Frobenius will be denoted by $\sigma^\ast$.  Thus
$(\rX^{\!*})^\circ=(\rX^\circ)^{\!*}$.

\subsection{Lusztig and unipotent series}

We first recall the connected case, because it fixes the normalization.  If $\rX$ is connected and $s\in \rX^{\!*}(\ff)$ is semisimple, the Lusztig series $\cE(X,s)$ consists of all $\rho\in\Irr(X)$ such that
\[
\bigl\langle R^{\rX}_{\rT^{\!*}}(s),\rho\bigr\rangle_X\neq 0
\]
for some $\sigma^\ast$--stable maximal torus $\rT^{\!*}\subset \rX^{\!*}$ containing $s$.  The unipotent characters are
\[
\Uch(X):=\cE(X,1).
\]
The pinned Jordan decomposition of \cite[Theorem~\ref{JD-thm:JD-connected-groups}]{arotemishra} refines Lusztig's
Jordan decomposition and gives, for a connected $\rX$ and possibly
disconnected centralizer
\[
\rH:=C_{\rX^{\!*}}(s),
\]
a canonical bijection
\[
J^{\mathbb P}_{\rX,s}:\cE(X,s)\xrightarrow{\sim}\Uch(\rH(\ff)).
\]
With the normalization fixed in \cite[Theorem~\ref{JD-thm:JD-connected-groups}]{arotemishra}, this bijection satisfies
the Deligne--Lusztig scalar-product identity
\begin{equation}\label{eq:JD-connected-DL-formula}
\bigl\langle R^{\rX}_{\rT^{\!*}}(s),\rho\bigr\rangle_X
=
\varepsilon_{\rX}\varepsilon_{\rH}
\bigl\langle R^{\rH}_{\rT^{\!*}}(1),J^{\mathbb P}_{\rX,s}(\rho)\bigr\rangle_{\rH(\ff)}
\end{equation}
for every $\rho\in\cE(X,s)$ and every $\sigma^\ast$--stable maximal torus
$\rT^{\!*}\subset \rH$.

We now pass to the disconnected source group.  Let $s\in ((\rX^{\!*})^\circ)^{\sigma^\ast}$ be semisimple.  The disconnected Lusztig series attached to $s$ is defined by restriction to the identity component:
\begin{equation}\label{eq:disc-lusztig-series}
\cE(X,s):=
\left\{
\rho\in\Irr(X)\;\middle|\;
\Res^{X}_{X^\circ}\rho\text{ has an irreducible constituent in }
\cE(X^\circ,s)
\right\}.
\end{equation}
Equivalently,
\[
\cE(X,s)=\bigcup_{\rho^\circ\in\cE(X^\circ,s)}\Irr(X\mid \rho^\circ),
\]
where $\Irr(X\mid \rho^\circ)$ denotes irreducible characters of $X$ lying above $\rho^\circ$.  This definition depends only on the $\rX^{\!*}(\ff)$--conjugacy class of $s$, and the sets $\cE(X,s)$ partition $\Irr(X)$ as $s$ runs through semisimple $\rX^{\!*}(\ff)$--classes in $((\rX^{\!*})^\circ)^{\sigma^\ast}$.

For a possibly disconnected reductive group $\rY/\ff$, an irreducible character $\gamma\in\Irr(\rY(\ff))$ is called unipotent if its restriction to $(\rY^\circ)(\ff)$ has a unipotent irreducible constituent.  Equivalently, $\gamma$ occurs in
\[
\Ind_{(\rY^\circ)(\ff)}^{\rY(\ff)}(\gamma^\circ)
\]
for some $\gamma^\circ\in\Uch((\rY^\circ)(\ff))$.  We denote the set of such characters by $\Uch(\rY(\ff))$.

\subsection{Harish--Chandra series in the disconnected setting}

We use the standard notion of regular Levi subgroup for a disconnected reductive group.  Thus a parabolic subgroup $\rP\subset\rX$ is a closed subgroup containing a Borel subgroup of $\rX^\circ$; if $\rL^\circ$ is a Levi subgroup of $\rP^\circ$, then
\[
\rL:=N_{\rP}(\rL^\circ)
\]
is a regular Levi subgroup of $\rP$.  When $\rP$ and $\rL$ are $\sigma$--stable, Lusztig induction gives an additive homomorphism on Grothendieck groups
\[
R^{\rX}_{\rL\subset\rP}:
K_0\bigl(\Rep_{\C}(\rL(\ff))\bigr)
\longrightarrow
K_0\bigl(\Rep_{\C}(X)\bigr).
\]
Equivalently, after identifying these Grothendieck groups with groups of
virtual characters, it gives a map
\[
R^{\rX}_{\rL\subset\rP}:
\mathbb Z\,\Irr(\rL(\ff))
\longrightarrow
\mathbb Z\,\Irr(X).
\]

For a cuspidal character $\tau\in\Irr(\rL(\ff))$, we write
\[
\Irr\bigl(X,(\rL(\ff),\tau)\bigr)
\]
for the set of irreducible constituents of $R^{\rX}_{\rL\subset\rP}(\tau)$.  In the situations considered below, these Harish--Chandra series are independent of the auxiliary parabolic up to the usual canonical identifications.

Let $s\in ((\rX^{\!*})^\circ)^{\sigma^\ast}$ be semisimple.  We denote by $\Sigma_X(s)$ the set of $X$--conjugacy classes of cuspidal pairs $(\rL,\tau)$ such that $\rL$ is a $\sigma$--stable regular Levi subgroup and
\[
\tau\in\cE(\rL(\ff),s),
\]
where the Lusztig series on $\rL$ is defined by the same restriction-to-the-identity-component convention.

\subsection{Pinned enriched Jordan decomposition for disconnected groups}

We now state the finite-field input from
\cite[Theorem~\ref{JD-thm:disc-JD-bijection}]{arotemishra} in the form used in
this paper.  The rational pinned-component hypothesis appearing in the statement
is \cite[Hypothesis~\ref{JD-hyp:rational-pinned-component-condition}]{arotemishra}.
The point of the revised form is that the disconnected theorem is used in its
enriched form.  The target keeps the connected unipotent Jordan datum together
with the source Clifford cohomology class and the corresponding projective
Clifford label.  It becomes an ordinary set of unipotent characters only under
the additional ordinary-recovery condition of
\cite[Remark~\ref{JD-rmk:ordinary-disc-JD-recovery}]{arotemishra}.

\begin{theorem}[Enriched pinned Jordan decomposition for disconnected finite reductive groups]
\label{thm:disc-JD}
Let $\rG$ be a possibly disconnected reductive group over $\ff$ with Frobenius
$\sigma$.  Fix a $\sigma$--stable pinning $\mathbb P$ of $\rG^\circ$, and assume
that the finite component group
\[
\Omega_G:=\rG(\ff)/\rG^\circ(\ff)
\]
is abelian.  Assume moreover that $(\rG,\mathbb P)$ satisfies the rational
pinned-component condition
\[
\rG(\ff)=\rG^\circ(\ff)\cdot \rG_{\mathbb P}(\ff),
\qquad
\rG_{\mathbb P}(\ff):=
\{g\in \rG(\ff):\Ad(g)|_{\rG^\circ}\text{ preserves }\mathbb P\}.
\]
Whenever a regular $\sigma$--stable Levi subgroup occurs in the
Harish--Chandra decomposition below, we impose the same condition on it with
the induced pinning.  Form the pinned dual semidirect product
\[
\rG^{\!*}=(\rG^\circ)^{\!*}\rtimes \Omega_G.
\]
Let
\[
s\in ((\rG^{\!*})^\circ)^{\sigma^\ast}
\]
be semisimple, and put
\[
\rH:=C_{\rG^{\!*}}(s).
\]
Then there is a canonical bijection, depending only on the pinning $\mathbb P$,
\[
J^{\mathbb P,\mathrm{enh}}_{\rG,s}:
\cE(\rG(\ff),s)
\xrightarrow{\ \sim\ }
\Uch^{\mathrm{enh}}_{\rG,s}(\rH(\ff)).
\]
Here the target is the full enriched unipotent target of
\cite[Definition~\ref{JD-def:full-enriched-target}]{arotemishra}.  It is
obtained by decomposing the disconnected Lusztig series into Harish--Chandra
series and, on each cuspidal support, retaining the connected Jordan image, the
source Clifford class, and the projective Clifford label.

On the cuspidal part this bijection restricts to
\[
J^{\mathbb P,\mathrm{enh},\cusp}_{\rG,s}:
\cE(\rG(\ff),s)^{\cusp}
\xrightarrow{\ \sim\ }
\Uch^{\mathrm{enh}}_{\rG,s}(\rH(\ff))^{\cusp},
\]
the cuspidal enriched Jordan decomposition of
\cite[Theorem~\ref{JD-thm:disc-cusp-JD}]{arotemishra}.  Thus, if
$\rho\in\cE(\rG(\ff),s)^{\cusp}$ and
$\rho^\circ\prec\Res^{\rG(\ff)}_{\rG^\circ(\ff)}\rho$, then
\[
        J^{\mathbb P,\mathrm{enh},\cusp}_{\rG,s}(\rho)
        =
        \left[
        J^{\rG^\circ}_{s}(\rho^\circ),
        [\alpha_{\rho^\circ}],
        E_\rho
        \right].
\]
Here $[\alpha_{\rho^\circ}]$ is the Clifford cohomology class for the normal
inclusion
\[
        \rG^\circ(\ff)\triangleleft I_{\rG(\ff)}(\rho^\circ),
\]
and $E_\rho$ is the corresponding irreducible projective representation of
$I_{\rG(\ff)}(\rho^\circ)/\rG^\circ(\ff)$.  Replacing $\rho^\circ$ by a conjugate
transports all three entries, and the enriched target is the resulting
conjugacy class.

The disconnected Lusztig series is a disjoint union of Harish--Chandra series:
\begin{equation}\label{eq:disc-lusztig-HC-union}
\cE(\rG(\ff),s)
=
\bigsqcup_{[(\rL,\tau)]\in\Sigma_G(s)}
\Irr\bigl(\rG(\ff),(\rL(\ff),\tau)\bigr).
\end{equation}
For $[(\rL,\tau)]\in\Sigma_G(s)$ put
\[
\rH_{\rL}:=C_{\rL^{\!*}}(s).
\]
If $\tau^\circ$ is a connected constituent of
$\Res^{\rL(\ff)}_{\rL^\circ(\ff)}\tau$, define the enriched cuspidal datum
\[
        \mathfrak u^{\mathrm{enh}}_\tau
        :=
        \left[
        J^{\rL^\circ}_{s}(\tau^\circ),
        [\alpha_{\tau^\circ}],
        E_\tau
        \right]
        \in
        \Uch^{\mathrm{enh}}_{\rL,s}(\rH_{\rL}(\ff))^{\cusp}.
\]
Then $J^{\mathbb P,\mathrm{enh}}_{\rG,s}$ carries the Harish--Chandra series
attached to $(\rL,\tau)$ to the corresponding enriched Harish--Chandra target:
\begin{equation}\label{eq:disc-JD-HC-compatibility}
J^{\mathbb P,\mathrm{enh}}_{\rG,s}
\left(
\Irr\bigl(\rG(\ff),(\rL(\ff),\tau)\bigr)
\right)
=
\left\{
        [\rH_{\rL},\mathfrak u^{\mathrm{enh}}_\tau,\varphi]
        :
        \varphi\in\Irr(W_\tau)
\right\},
\end{equation}
where $W_\tau$ is the relative Harish--Chandra stabilizer group matched on the
two sides.  More precisely, the bijection on this Harish--Chandra series is
obtained by identifying the stabilizer relative Weyl groups and preserving the
same irreducible Weyl-group label.
\end{theorem}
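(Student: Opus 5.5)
This theorem is imported from \cite{arotemishra}, so the plan is to assemble it from the connected pinned Jordan decomposition and Clifford theory rather than build anything new.

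\emph{Step 1 (partition into Harish--Chandra series).} I would first prove \eqref{eq:disc-lusztig-HC-union}. Given $\rho\in\cE(\rG(\ff),s)$, pick a constituent $\rho^\circ\in\cE(\rG^\circ(\ff),s)$ of its restriction. Take the cuspidal support $(\rL^\circ,\tau^\circ)$ of $\rho^\circ$ in $\rG^\circ(\ff)$. By the connected theory, $\tau^\circ$ lies in the Lusztig series of $\rL^\circ$ attached to a conjugate of $s$.

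Set $\rL=N_{\rP}(\rL^\circ)$. The rational pinned-component condition, applied to $\rG$ and to its Levis with the induced pinning, should give $\rL(\ff)=\rL^\circ(\ff)\cdot\rL_{\mathbb P}(\ff)$. Frobenius reciprocity together with the commutation of Harish--Chandra induction with $\Ind_{\rG^\circ(\ff)}^{\rG(\ff)}$ then gives a cuspidal $\tau\in\Irr(\rL(\ff)\mid\tau^\circ)$ with $\rho$ a constituent of $R^{\rG}_{\rL\subset\rP}(\tau)$. Disjointness follows from uniqueness of cuspidal support up to $\rG^\circ(\ff)$-conjugacy, upgraded to $\rG(\ff)$-conjugacy.

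\emph{Step 2 (cuspidal case).} For cuspidal $\rho$, I would use Clifford theory for the normal inclusion $\rG^\circ(\ff)\triangleleft\rG(\ff)$, whose quotient $\Omega_G$ is abelian. Irreducible characters above the orbit of $\rho^\circ$ correspond bijectively to irreducible $\alpha$-projective representations $E$ of $I_{\rG(\ff)}(\rho^\circ)/\rG^\circ(\ff)$, where $[\alpha]$ is the obstruction class. The pinning makes this parametrization canonical: $\rG_{\mathbb P}(\ff)$ supplies preferred coset representatives, and hence preferred intertwiners. We then send $\rho$ to the triple $[J^{\rG^\circ}_s(\rho^\circ),[\alpha_{\rho^\circ}],E_\rho]$. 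Changing $\rho^\circ$ within its orbit conjugates all three entries, so the class is well defined.

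Bijectivity onto the enriched cuspidal target then comes from two facts. First, the connected pinned $J$ is bijective. Second, it is equivariant for the pinning-preserving $\Omega_G$-action, which acts on the dual through the semidirect product $\rG^{\!*}$. This equivariance should follow from the scalar-product identity \eqref{eq:JD-connected-DL-formula} and the uniqueness of the pinned normalization, since a pinned automorphism transports $R_{\rT^*}(s)$ compatibly.

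\emph{Step 3 (general case).} On the series of $(\rL,\tau)$, I would use the disconnected Howlett--Lehrer comparison theorem. The constituents of $R^{\rG}_{\rL}(\tau)$ are indexed by $\Irr(W_\tau)$ once the endomorphism algebra is normalized, again using pinned representatives to split the cocycle. On the dual side, the target of \eqref{eq:disc-JD-HC-compatibility} is indexed the same way.

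The key input is an identification of the relative Weyl group $W_\tau$ in $\rG$ with the corresponding stabilizer for $\mathfrak u^{\mathrm{enh}}_\tau$ in $\rH$. This I would obtain from Step 2 applied to $\rL$, combined with the connected identification of relative Weyl groups, $W_{\rG^\circ}(\rL^\circ,\tau^\circ)\cong W_{\rH^\circ}(\cdots)$, from Lusztig's theory.

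\emph{Main obstacle.} The hardest step is Step 3. It requires showing that the Hecke-algebra cocycles on both sides are trivialized compatibly, so that matching by the same label in $\Irr(W_\tau)$ is canonical rather than defined only up to a linear character. To handle this, I would first normalize the intertwiners via the pinned representatives in $\rL_{\mathbb P}(\ff)$. I would then compare with the normalization in \cite{arotemishra}, in the spirit of Lusztig's normalization of the Hecke algebra parameters in the connected case.
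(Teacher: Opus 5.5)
The paper gives no proof of this statement. It quotes it from \cite{arotemishra} and checks only its hypotheses for parahoric quotients, via Lemma~\ref{lem:vertex-component-abelian} and the pinned-component lemma of \cite{arotemishra}. Its remark on dependence on the pinning adds only that the source fixes compatibly the pinned representatives, Lusztig's preferred extensions, the Clifford data and the relative Weyl-group identifications. Your outline has this architecture, and like the paper it leaves the decisive normalization of Step~3 to \cite{arotemishra}. But several steps you do spell out fail as written.

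In Step~2, pinned coset representatives do not give ``preferred intertwiners''. A representative $g\in\rG_{\mathbb P}(\ff)$ of a component is unique up to $Z(\rG^\circ)(\ff)$, which is harmless. However, it only fixes the automorphism $\Ad(g)$; an operator intertwining $\rho^\circ$ with ${}^g\rho^\circ$ is still defined only up to a scalar. Rescaling by a function $c$ on $A=I_{\rG(\ff)}(\rho^\circ)/\rG^\circ(\ff)$ multiplies the factor set by $dc$ and replaces $E_\rho$ by $E_\rho\otimes c^{-1}$. So even with the cocycle fixed, $E_\rho$ is determined only up to twisting by $\Hom(A,\C^\times)$. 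Hence $[\alpha_{\rho^\circ}]$ is canonical but $E_\rho$ is not, and canonicity is exactly what the theorem claims. You need an additional normalization of these scalars; this is the role the paper assigns to Lusztig's preferred extensions.

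Similarly, in your equivariance argument only a characterization of the pinned $J$ by pinning-dependent data, via transport of structure, can do the work. The identity \eqref{eq:JD-connected-DL-formula} does not determine $J$. Take $q=|\ff|$ odd and $\theta_0$ the quadratic character of the split torus $\rT\subset\SL_2$. The two constituents of $R^{\SL_2}_{\rT}(\theta_0)$ have the same multiplicity in every Deligne--Lusztig character, so both possible matchings satisfy the identity.

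In Step~3, suppose $\rH^\circ$ denotes the identity component of $\rH$. Then the identification $W_{\rG^\circ}(\rL^\circ,\tau^\circ)\cong W_{\rH^\circ}(\cdots)$ is false whenever $C_{(\rG^\circ)^*}(s)$ is disconnected, which is a case the theorem must cover. In the same example, let $s\in\mathrm{PGL}_2(\ff)$ be the image of $\mathrm{diag}(1,-1)$. Then $W_{\SL_2}(\rT,\theta_0)\cong\mathbb Z/2$, while $C_{\mathrm{PGL}_2}(s)^\circ=\rT^*$ has trivial relative Weyl group. The nontrivial element comes from $\pi_0(C_{\mathrm{PGL}_2}(s))$. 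The match must therefore be made inside the full, possibly disconnected, centralizer $C_{(\rG^\circ)^*}(s)$, and then inside $\rH$.

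Finally, your disjointness argument in Step~1 requires a fixed convention for $\rL$, for instance the full normalizer of $(\rP^\circ,\rL^\circ)$. With the paper's definition, $\rP=\rP^\circ$ is allowed and gives $\rL=\rL^\circ$. The series of $(\rL^\circ,\tau^\circ)$ then contains the series of every $\tau\in\Irr(\rL(\ff)\mid\tau^\circ)$, so uniqueness of cuspidal support alone cannot give a disjoint union.
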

\begin{remark}\label{rem:disc-JD-not-DL-formula}
Theorem~\ref{thm:disc-JD} is the form of the Jordan decomposition needed for the
full reductive quotients $G(F)_x/G(F)_{x,0+}$.  Its essential content is the
replacement of the non-canonical orbit-valued Jordan decomposition by a
pinning-dependent canonical enriched bijection.  Unlike the ordinary connected
Jordan decomposition recalled above, this theorem is not a parametrization by
ordinary unipotent characters of the full disconnected group
$C_{\rG^{\!*}}(s)(\ff)$.  Instead, the connected unipotent character, the source
Clifford class, and the projective Clifford label are retained as part of the
finite datum.  The Deligne--Lusztig scalar-product formula
\eqref{eq:JD-connected-DL-formula} is part of the connected-source normalization
recalled above; the disconnected-source theorem is used below through the
enriched canonical bijection and the Harish--Chandra compatibility
\eqref{eq:disc-JD-HC-compatibility}.  For full parahoric quotients, the abelian
component-group hypothesis and the rational pinned-component condition are
verified in
\cite[Lemmas~\ref{JD-lem:parahoric-component-abelian} and~\ref{JD-lem:parahoric-component-action-pinned}]{arotemishra}.
\end{remark}

\begin{remark}[Dependence on pinning]
The notation $J^{\mathbb P,\mathrm{enh}}_{\rG,s}$ records the only external
choice in the construction.  Once the pinning of $\rG^\circ$ is fixed and the
rational pinned-component condition is satisfied, the pinned representatives of
the component action, Lusztig preferred extensions, Clifford classes, projective
Clifford labels, and relative Weyl-group identifications are all fixed
compatibly.  In particular, for the reductive quotients attached to parahoric
normalizers, the same global pinning fixed for the quasi-split form of $G$
induces the pinnings used in Theorem~\ref{thm:disc-JD}; the required
factorization by pinning-preserving representatives is precisely
\cite[Lemma~\ref{JD-lem:parahoric-component-action-pinned}]{arotemishra}.
\end{remark}

\section{Whittaker--canonical \(\chi\)-data}
\label{sec:Whittaker-canonical-chi-data}

We fix the normalization of toral \(L\)-embeddings used throughout the
depth-zero construction.  The only case needed below is that of maximally
unramified elliptic maximal tori.  In this case the usual ambiguity in
minimally ramified \(\chi\)-data disappears: ramified symmetric roots do not
occur.

Let \(G^\ast\) be the quasi-split inner form of \(G\), equipped with the fixed
pinned splitting and compatible Whittaker datum.  Let
\[
        S\subset G
\]
be a maximally unramified elliptic maximal \(F\)-torus, and let
\(S^\ast\subset G^\ast\) be the corresponding stable conjugate.  We write
\(R(S,G)\) for the absolute root system, transported to \(G^\ast\) when needed.
For \(\alpha\in R(S,G)\), let \(F_\alpha\) be the field of definition of
\(\alpha\), and let \(F_{\pm\alpha}\) be the field of definition of the unordered
pair \(\{\alpha,-\alpha\}\).  Thus \(F_\alpha/F_{\pm\alpha}\) is either trivial
or quadratic.  In the quadratic case let
\[
        \kappa_\alpha:F_{\pm\alpha}^{\times}\longrightarrow \{\pm 1\}
\]
be the character attached to \(F_\alpha/F_{\pm\alpha}\) by local class field
theory.

Recall that a root \(\alpha\) is called \emph{asymmetric} if
\(F_\alpha=F_{\pm\alpha}\), and \emph{symmetric} otherwise.  Since \(S\) is
maximally unramified, inertia preserves a system of positive roots in
\(R(S,G)\); equivalently, \(S_{F^{\mathrm u}}\) is a minimal Levi subgroup of
\(G_{F^{\mathrm u}}\).  Hence no element of inertia sends a root to its negative.
It follows that, if \(\alpha\) is symmetric, then
\[
        F_\alpha/F_{\pm\alpha}
\]
is unramified.  Thus, in the present depth-zero setting, every symmetric root
is unramified symmetric.  We shall use this observation to choose the
minimally ramified \(\chi\)-data canonically; compare
\cite[Fact~3.4.1 and Def.~4.6.1]{KalethaRegSC}.

\begin{definition}[Canonical minimally ramified \(\chi\)-data]
\label{def:Wcan-chi}
For a maximally unramified elliptic maximal \(F\)-torus \(S\subset G\), define
\[
        \chi_S=\{\chi_\alpha\}_{\alpha\in R(S,G)}
\]
as follows.
\begin{enumerate}
\item If \(\alpha\) is asymmetric, set
\[
        \chi_\alpha=1.
\]

\item If \(\alpha\) is symmetric, then \(F_\alpha/F_{\pm\alpha}\) is unramified.
We take \(\chi_\alpha\) to be the unramified quadratic character of
\(F_\alpha^\times\).  Equivalently, for any uniformizer
\(\varpi_\alpha\in F_\alpha\),
\[
        \chi_\alpha(\varpi_\alpha)=-1 .
\]
\end{enumerate}
The characters are extended over Galois orbits by
\[
        \chi_{\sigma\alpha}=\chi_\alpha\circ \sigma^{-1},
        \qquad
        \chi_{-\alpha}=\chi_\alpha^{-1}.
\]
\end{definition}

\begin{lemma}
\label{lem:Wcan-chi-is-LS}
The collection \(\chi_S\) of Definition~\ref{def:Wcan-chi} is
Langlands--Shelstad \(\chi\)-data for \(R(S,G)\).
\end{lemma}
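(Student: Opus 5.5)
The plan is to verify the Langlands--Shelstad axioms for \(\chi\)-data directly from Definition~\ref{def:Wcan-chi}.  Recall that \(\chi\)-data for \(R(S,G)\) is a family of characters \(\chi_\alpha:F_\alpha^\times\to\C^\times\) satisfying three conditions:
\begin{enumerate}
\item \(\chi_{-\alpha}=\chi_\alpha^{-1}\);
\item \(\chi_{\sigma\alpha}=\chi_\alpha\circ\sigma^{-1}\) for \(\sigma\in\Gamma_F\);
\item if \(\alpha\) is symmetric, then \(\chi_\alpha|_{F_{\pm\alpha}^\times}=\kappa_\alpha\).
\end{enumerate}
The first two conditions are built into the definition by transport along Galois orbits.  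Two things therefore have to be checked: that this transport is well defined, and that the symmetric-root condition holds.

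For well-definedness, fix representatives of the \(\Gamma_F\)-orbits on \(R(S,G)\), and work with the \(\Gamma_F\times\{\pm1\}\)-orbits.  Suppose \(\sigma\alpha=\alpha\).  Then \(\sigma\) fixes \(F_\alpha\) pointwise, so \(\chi_\alpha\circ\sigma^{-1}=\chi_\alpha\) automatically, and similarly for stabilizers of \(-\alpha\).  The only possible conflict is an element \(\sigma\) with \(\sigma\alpha=-\alpha\), which happens exactly when \(\alpha\) is symmetric and \(\sigma\) induces the nontrivial automorphism \(c\) of \(F_\alpha/F_{\pm\alpha}\).  In that case we need
\[
\chi_\alpha\circ c=\chi_\alpha^{-1}.
\]
For asymmetric \(\alpha\) no such \(\sigma\) exists and \(\chi_\alpha=1\) is trivially consistent.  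For symmetric \(\alpha\), the character \(\chi_\alpha\) is unramified quadratic, so \(\chi_\alpha^{-1}=\chi_\alpha\).  Moreover \(c\) preserves valuations, so \(\chi_\alpha\circ c=\chi_\alpha\).  The required identity therefore holds.

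It also has to be checked that the symmetric/asymmetric dichotomy and the unramifiedness of \(\chi_\alpha\) are constant along orbits.  This is clear, because \(\sigma\) gives an isomorphism \(F_\alpha\to F_{\sigma\alpha}\) carrying \(F_{\pm\alpha}\) to \(F_{\pm\sigma\alpha}\), and it preserves valuations.  Hence the transported characters agree with the recipe of Definition~\ref{def:Wcan-chi} at every root.

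The main point is condition (3).  Let \(\alpha\) be symmetric.  By the discussion preceding Definition~\ref{def:Wcan-chi}, which uses that \(I_F\) preserves a positive system because \(S\) is maximally unramified, the extension \(F_\alpha/F_{\pm\alpha}\) is unramified quadratic.  Hence \(\kappa_\alpha\) is the unramified quadratic character of \(F_{\pm\alpha}^\times\), namely \(\kappa_\alpha(x)=(-1)^{v_{\pm\alpha}(x)}\).  Since the extension is unramified, the valuation of \(F_\alpha\) restricts to that of \(F_{\pm\alpha}\), so a uniformizer \(\varpi\) of \(F_{\pm\alpha}\) is also a uniformizer of \(F_\alpha\).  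Therefore \(\chi_\alpha(x)=(-1)^{v_\alpha(x)}\) restricts on \(F_{\pm\alpha}^\times\) to \((-1)^{v_{\pm\alpha}(x)}=\kappa_\alpha(x)\).

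The only subtle step is justifying the unramifiedness of \(F_\alpha/F_{\pm\alpha}\), which the paper has already recorded.  I would restate it briefly: if the extension were ramified, some element of \(I_F\) would send \(\alpha\) to \(-\alpha\).  This contradicts the fact that \(I_F\) preserves a positive system.  The statement then follows.
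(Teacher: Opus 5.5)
Your proposal is correct and follows the same route as the paper: the asymmetric case is trivial, and for symmetric \(\alpha\) the unramifiedness of \(F_\alpha/F_{\pm\alpha}\) makes the restriction of the unramified quadratic character of \(F_\alpha^\times\) to \(F_{\pm\alpha}^\times\) equal to \(\kappa_\alpha\). Your extra check that the Galois and negation transport is consistent, via \(\chi_\alpha\circ c=\chi_\alpha=\chi_\alpha^{-1}\), spells out what the paper simply calls ``built into the definition.''
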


\begin{proof}
For asymmetric roots there is no restriction in the definition of
Langlands--Shelstad \(\chi\)-data, and the trivial character is compatible with
Galois conjugacy and with \(\alpha\mapsto -\alpha\).  If \(\alpha\) is symmetric,
then \(F_\alpha/F_{\pm\alpha}\) is unramified, and the restriction of the
unramified quadratic character of \(F_\alpha^\times\) to
\(F_{\pm\alpha}^\times\) is precisely the unramified quadratic character
\(\kappa_\alpha\) attached to \(F_\alpha/F_{\pm\alpha}\).  Thus the
Langlands--Shelstad restriction condition
\[
        \chi_\alpha|_{F_{\pm\alpha}^\times}=\kappa_\alpha
\]
is satisfied.  The stated Galois equivariance and the relation
\(\chi_{-\alpha}=\chi_\alpha^{-1}\) are built into the definition.  Hence
\(\chi_S\) is valid \(\chi\)-data in the sense of Langlands--Shelstad.
\end{proof}

Let
\[
        \widehat j:\widehat S\hookrightarrow \widehat G
\]
be an admissible dual embedding in the \(\widehat G\)-conjugacy class determined
by the stable class of \(S^\ast\subset G^\ast\) and by the fixed pinned
splitting of \(G^\ast\).  Applying the Langlands--Shelstad construction to
\(\widehat j\) and to the \(\chi\)-data \(\chi_S\) gives a
\(\widehat G\)-conjugacy class of \(L\)-embeddings
\[
        {}^LS\longrightarrow {}^LG;
\]
see \cite[\S2.6]{LanglandsShelstad1987} and
\cite[\S6.1]{KalethaRegSC}.  We fix one representative of this conjugacy class
and denote it by
\[
        \iota_S^{\mathrm{can}}:{}^LS\hookrightarrow {}^LG .
\]
Only its \(\widehat G\)-conjugacy class will enter the construction of
parameters, so this choice of representative is harmless.

\begin{proposition}[Inertial normalization of the toral \(L\)-embedding]
\label{prop:canonical-LS}
Let \(S\subset G\) be a maximally unramified elliptic maximal \(F\)-torus.  The
embedding
\[
        \iota_S^{\mathrm{can}}:{}^LS\hookrightarrow {}^LG
\]
has the following properties.

\begin{enumerate}
\item Its restriction to the dual torus is the chosen admissible embedding:
\[
        \iota_S^{\mathrm{can}}|_{\widehat S}=\widehat j.
\]

\item For every \(\sigma\in I_F\) and every \(s\in \widehat S\),
\[
        \iota_S^{\mathrm{can}}(s\rtimes \sigma)
        =
        \widehat j(s)\rtimes \sigma .
\]
Equivalently, the Langlands--Shelstad correction term for
\(\iota_S^{\mathrm{can}}\) is trivial on inertia.

\item If
\[
        \varphi_\theta:W_F\longrightarrow {}^LS
\]
is the toral parameter attached to a depth-zero character
\(\theta:S(F)\to \C^\times\), and if
\[
        \varphi_\theta(w)=a_\theta(w)\rtimes w,
        \qquad a_\theta(w)\in \widehat S,
\]
then, for \(\sigma\in I_F\),
\[
        \bigl(\iota_S^{\mathrm{can}}\circ\varphi_\theta\bigr)(\sigma)
        =
        \widehat j(a_\theta(\sigma))\rtimes \sigma .
\]
Thus the tame inertial element used below may be regarded literally as an
element
\[
        a_\theta(\iota)\in \widehat S\subset \widehat G
\]
after the fixed admissible embedding \(\widehat j\).

\item The construction is compatible with rational conjugacy of tori.  If
\(g\in G(F)\) and \(S'=gSg^{-1}\), then the two embeddings
\[
        \iota_{S'}^{\mathrm{can}}\circ {}^L(\operatorname{Ad}(g))
        \quad\text{and}\quad
        \iota_S^{\mathrm{can}}
\]
are \(\widehat G\)-conjugate.
\end{enumerate}
\end{proposition}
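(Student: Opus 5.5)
The plan is to unwind the Langlands--Shelstad construction of \(L\)-embeddings from \(\chi\)-data \cite[\S2.6]{LanglandsShelstad1987} and to use that every \(\chi_\alpha\) in Definition~\ref{def:Wcan-chi} is unramified.

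Recall the shape of the construction. Given \(\widehat j\) and \(\chi\)-data, one defines \(\iota(s\rtimes w)=\widehat j(s)\,r_\chi(w)\,\widehat j(w)\). Here the action of \(W_F\) on \(\widehat S\) is transported through \(\widehat j\) and realized in \({}^LG\) via the fixed pinned splitting. The correction \(r_\chi:W_F\to\widehat S\) is a product over Galois orbits of (symmetric and asymmetric) roots. Each factor is built by transfer from the Weil groups \(W_{F_{\pm\alpha}}\) or \(W_{F_\alpha}\) of the characters \(\chi_\alpha\), viewed as characters of \(W_{F_\alpha}\) by class field theory, times coroot values and the Tits-type section \(n(\cdot)\) of the Weyl group attached to the pinning.

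Property (1) is immediate from this formula, since \(r_\chi(1)=1\) and \(\widehat j(1)=1\). Property (3) follows formally from (2) by substituting \(\varphi_\theta(\sigma)=a_\theta(\sigma)\rtimes\sigma\) and using multiplicativity of \(\iota\) on \({}^LS\). Property (4) holds because \(\operatorname{Ad}(g)\) transports the root system, the fields \(F_\alpha,F_{\pm\alpha}\), the symmetric/asymmetric dichotomy and the canonical \(\chi\)-data of Definition~\ref{def:Wcan-chi} to those of \(S'\). It also identifies the admissible classes \(\widehat j\), since \(S\) and \(S'\) are stably (indeed rationally) conjugate. Langlands--Shelstad show the resulting class of \(L\)-embeddings depends only on these data up to \(\widehat G\)-conjugacy, and this gives (4).

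The main step is (2): the correction term is trivial on \(I_F\). I would argue in two parts.
\begin{enumerate}
\item \textbf{The characters.} Every \(\chi_\alpha\) is either trivial or the unramified quadratic character of \(F_\alpha^\times\). Hence \(\chi_\alpha\) is trivial on \(\mathcal O_{F_\alpha}^\times\), so as a Weil-group character it is trivial on \(I_{F_\alpha}\).
\item \textbf{The transfer.} For \(\sigma\in I_F\), the transfer map \(W_{F_{\pm\alpha}}\to W_{F_\alpha}^{\mathrm{ab}}\) evaluated on coset representatives lands in the image of inertia. The reason is that \(F_\alpha/F_{\pm\alpha}\) is unramified, by the maximal-unramifiedness argument preceding Definition~\ref{def:Wcan-chi}, and \(F_{\pm\alpha}/F\) can be handled via the restriction of \(I_F\).
\end{enumerate}

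This is where the real obstacle lies. The section \(w\mapsto\widehat j(w)\) and the Weyl-group cocycle \(n(\omega_S(w))\) also contribute, and one must check they cancel on inertia. For this I would use that \(I_F\) preserves a positive system in \(R(S,G)\) (maximal unramifiedness). One may then choose \(\widehat j\) so that \(I_F\) acts on \(\widehat S\) through pinning-preserving automorphisms. Then \(\omega_S(\sigma)\) is trivial in the relevant Weyl group, its Tits lift is \(1\), and the gauge factors in Langlands--Shelstad's formula also vanish on inertia. Combining these shows \(r_\chi|_{I_F}=1\), which is (2); replacing \(\widehat j\) by a \(\widehat G\)-conjugate does not affect the statements up to the harmless choice of representative noted before the proposition.
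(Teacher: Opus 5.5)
Your arguments for (1), (3) and (4) match the paper's. For (2) you take a different route, and it is the sounder one.

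The paper uses only the $I_F$-stable positive system. This makes the Weyl element trivial, and from ``no root is carried from the positive system to the negative system'' the paper concludes directly that the $\chi$-data correction vanishes on $I_F$. You use the positive system only for the Weyl/Tits part, after normalizing $\widehat j$ so that this positive system corresponds to the pinned Borel subgroup. That normalization is genuinely needed for $\iota_S^{\mathrm{can}}$ to be $s\rtimes\sigma\mapsto\widehat j(s)\rtimes\sigma$ on $\widehat S\rtimes I_F$. You then kill the correction using the unramifiedness of the canonical $\chi_\alpha$ of Definition~\ref{def:Wcan-chi}, which the paper's proof never invokes.

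That input is essential. Positivity controls the flip data of the construction. It does not control the values of $\chi_\alpha$ on the elements $u_i(\sigma)\in W_{F_{\pm\alpha}}$ defined by $w_i\sigma=u_i(\sigma)w_{j(i)}$. Consider an unramified elliptic torus in $\SL_2$: there $I_F$ acts trivially on $R(S,G)$. Yet a suitable tamely ramified $\chi_\alpha$ with $\chi_\alpha|_{F^\times}=\kappa_\alpha$, which is legitimate $\chi$-data, gives an embedding whose restriction to $I_F$ is nontrivial even after conjugation. So your route explains why the specific normalization of Definition~\ref{def:Wcan-chi} matters, which the paper's argument leaves implicit.

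The step to sharpen is your part (b). That $u_i(\sigma)$ lies in inertia does not follow from $F_\alpha/F_{\pm\alpha}$ being unramified. It depends on the coset representatives $w_i$ of $W_{F_{\pm\alpha}}\backslash W_F$. When $F_{\pm\alpha}/F$ is ramified, $I_F$ permutes these cosets. This happens for symmetric roots of maximally unramified elliptic tori in ramified groups, e.g. a ramified quasi-split $U_3$. If two representatives in one $I_F$-orbit have different Frobenius degree, then $u_i(\sigma)$ has nonzero degree, and the unramified quadratic $\chi_\alpha$ need not be trivial on the resulting factor.

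The fix is to choose the representatives in each $I_F$-orbit so that they differ by elements of $I_F$. Then $u_i(\sigma)\in W_{F_{\pm\alpha}}\cap I_F=I_{F_{\pm\alpha}}=I_{F_\alpha}$, the last equality because $F_\alpha/F_{\pm\alpha}$ is unramified. Every symmetric factor is then $1$, and asymmetric factors are trivial since $\chi_\alpha=1$. This choice is allowed because the Langlands--Shelstad embedding changes only by $\widehat S$-conjugation under such changes of choices.

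Accordingly, your closing sentence should say that (2) holds for the representative normalized by these choices of $\widehat j$ and of coset representatives. Property (2) is not invariant under arbitrary $\widehat G$-conjugation.
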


\begin{proof}
The first assertion is part of the Langlands--Shelstad construction of the
embedding attached to \(\widehat j\) and \(\chi_S\).

For the inertial statement, choose the positive system in \(R(S,G)\) preserved
by \(I_F\), whose existence is equivalent to \(S\) being maximally unramified.
In the Langlands--Shelstad formula, the value of the \(L\)-embedding on
\(1\rtimes w\) is written as a product of two terms: the Weyl representative
attached to the action of \(w\) on the chosen chamber, and the correction
cochain determined by the \(\chi\)-data.  When \(w=\sigma\in I_F\), the chosen
positive system is fixed by \(\sigma\).  Hence the Weyl element is trivial, and
no root is carried from the positive system to the negative system.  Therefore
the correction cochain is also trivial on \(I_F\).  This gives
\[
        \iota_S^{\mathrm{can}}(s\rtimes \sigma)
        =
        \widehat j(s)\rtimes \sigma,
        \qquad \sigma\in I_F,
\]
which proves (2).  Assertion (3) is immediate after composing this equality
with the toral parameter \(\varphi_\theta\).

Finally, rational conjugacy of tori transports the root system, the fields
\(F_\alpha\), \(F_{\pm\alpha}\), and the canonical \(\chi\)-data.  The
Langlands--Shelstad construction is functorial for this transport, up to
\(\widehat G\)-conjugacy.  This proves (4).
\end{proof}

\begin{remark}
For general tame elliptic tori one must also make choices at ramified symmetric
roots; minimally ramified \(\chi\)-data are then not unique.  Those choices are
irrelevant in the present paper because all tori used in the depth-zero
construction are maximally unramified.  The role of the fixed pinning and
Whittaker datum is therefore to keep the toral \(L\)-embeddings normalized
compatibly with the rest of the pinned construction, not to introduce additional
ramified \(\chi\)-data.
\end{remark}
\medskip
\section{Langlands parameters}
\subsection{Enhanced parameters}

An \emph{enhanced \(L\)-parameter} for \({}^LH\) is a pair
\((\phi,\rho)\), where
\[
        \phi:W_F\times \SL_2(\C)\longrightarrow {}^LH
\]
is an \(L\)-parameter, continuous on \(W_F\), algebraic on \(\SL_2(\C)\),
and Frobenius-semisimple, and where
\[
        \rho\in\Irr(\mathcal S_\phi).
\]
We use the AMS component group
\[
        \mathcal S_\phi
        :=
        \pi_0\!\left(
        Z^{1}_{\widehat H_{\mathrm{sc}}}(\phi)
        \right).
\]

Equivalently, let
\[
        G_\phi
        :=
        Z^{1}_{\widehat H_{\mathrm{sc}}}
        \bigl(\phi|_{W_F}\bigr)
\]
and let
\[
        u_\phi
        :=
        \mathrm{proj}_{\widehat H}\,
        \phi\!\left(
        1,
        \begin{psmallmatrix}1&1\\0&1\end{psmallmatrix}
        \right)
        \in \widehat H
\]
be the unipotent element attached to the \(\SL_2\)-factor.  Then
\[
        \mathcal S_\phi
        \cong
        \pi_0\!\left(
        Z_{G_\phi}(u_\phi)
        \right).
\]

\subsection{Cuspidal pair and cuspidal enhanced $L$-parameter}
Let $\mathcal G$ be a (possibly disconnected) complex reductive group, $u\in \mathcal G^\circ$ unipotent, and let $A_{\mathcal G}(u):=\pi_0(Z_{\mathcal G}(u))$. An irreducible representation $\varpi\in \mathrm{Irr}\,A_{\mathcal G}(u)$ is \emph{cuspidal} (in Lusztig's sense) if, upon restriction to $A_{\mathcal G^\circ}(u)$, it is a sum of cuspidal representations and it does not occur in generalized parabolic induction from any proper Levi of $\mathcal G^\circ$.

The following result was proved in \cite{AMS18}. For a connected reductive $H/F$, an enhanced parameter $(\phi,\rho)$ for ${}^LH$ is \emph{cuspidal}  if:
\begin{enumerate}
  \item $\phi$ is \emph{discrete}: its image is not contained in any proper Levi $L$-subgroup of ${}^LH$;
  \item via $\mathcal S_\phi\cong \pi_0(Z_{G_\phi}(u_\phi))$, the pair $(u_\phi,\rho)$ is cuspidal for $G_\phi$ in Lusztig's sense.
\end{enumerate}

\section{Set up and notations for depth-zero LLC}
\paragraph{Reduction of a depth-zero toral character.}
Let \(S\subset G\) be a maximally unramified elliptic maximal \(F\)-torus,
and let \(x=x_S\) be the associated vertex.  Let
\[
        \theta:S(F)\to \overline{\Q}_\ell^\times
\]
be a depth-zero character. Write $\rS_x/\ff$ for the reductive quotient of the connected parahoric
model of $S$ at $x$, so that
\[
        \rS_x(\ff)\simeq S(F)_0/S(F)_{0+}.
\]
Let
\[
        \theta:S(F)\longrightarrow \overline{\Q}_\ell^\times
\]
be a depth-zero character, and let
\[
        \underline\theta:\rS_x(\ff)\longrightarrow \overline{\Q}_\ell^\times
\]
be its reduction. Let
\[
        \varphi_\theta:W_F\longrightarrow {}^LS=\widehat S\rtimes W_F
\]
be the toral Langlands parameter attached to $\theta$, and write
\[
        \varphi_\theta(w)=a_\theta(w)\rtimes w,\qquad a_\theta(w)\in \widehat S,
\]
for its $\widehat S$-component. When this toral parameter is later regarded as a parameter for \(G\), we always
compose it with the Whittaker-canonical \(L\)-embedding
\[
        \iota_S^{\mathrm{can}}:{}^LS\hookrightarrow {}^LG
\]
constructed in Section~\ref{sec:Whittaker-canonical-chi-data}; its inertial
normalization is Proposition~\ref{prop:canonical-LS}.

Since $\theta$ has depth zero, $a_\theta$ is
trivial on the wild inertia group $P_F=I_F^{0+}$.

Fix a lift $\sigma_{\mathrm{eq}}\in W_F$ of arithmetic Frobenius
$\sigma_q\in W_\ff$, and let
\[
        p_{\sigma_{\mathrm{eq}}}:WD_F\longrightarrow WD_\ff
\]
be the morphism of \cite[\S 6, (6.1)]{ImaiFiniteLC}. We shall use only its
restriction to tame inertia. Let $\iota\in I_F$ be an element whose image
topologically generates $I_F/P_F$, and put
\[
        s:=a_\theta(\iota)\in \widehat S .
\]
Let \(\sigma_x\) denote the Frobenius of \(\rS_x/\ff\), and let \(\rS_x^\vee\) denote the \(\overline{\Q}_\ell\)-dual torus of the
finite reductive quotient torus \(\rS_x/\ff\). Let
\(\sigma_x^\ast\) be the dual Frobenius on \(\rS_x^\vee\).  Let
\[
        s_x\in (\rS_x^\vee)^{\sigma_x^\ast}
\]
be the semisimple element corresponding to \(\underline\theta\) under the usual
duality for finite tori. The passage from the
connected parahoric model of \(S\) to its special fibre identifies
\(\rS_x^\vee\) with the finite-field dual torus naturally associated with
the depth-zero quotient \(S(F)_0/S(F)_{0+}\).  Finally, we shall write
\[
        \operatorname{sp}_x(s)\in \rS_x^\vee
\]
for the finite-dual element obtained from the depth-zero character
\(\underline\theta\) by reduction.  More precisely, if
\[
        s=a_\theta(\iota)\in \widehat S
\]
is the inertia value of the toral parameter attached to \(\theta\), then
\(\operatorname{sp}_x(s)\) denotes the semisimple element of \(\rS_x^\vee\)
corresponding, under finite-torus duality, to the reduced character
\[
        \underline\theta:\rS_x(\ff)\to \overline{\Q}_\ell^\times .
\]

\begin{lemma}[Tame inertia element reduces to the finite dual element]
\label{lem:tame-reduces-finite-dual}
With the notation above,
\[
        s_x=\operatorname{sp}_x(s).
\]
Consequently every irreducible constituent of
\[
        R^{\rG_x}_{\rS_x}(\underline\theta)
\]
belongs to the Lusztig series
\[
        \mathcal E(\rG_x,s_x).
\]
\end{lemma}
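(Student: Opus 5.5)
The first assertion is essentially definitional, so I would unwind the definitions and check that they name the same element. By definition, \(s_x\in(\rS_x^\vee)^{\sigma_x^\ast}\) is the element matched with \(\underline\theta\) under finite-torus duality. The element \(\operatorname{sp}_x(s)\) was defined as the element matched, under the same duality, with the reduced character \(\underline\theta\) obtained from the character \(\theta\) whose toral parameter has inertia value \(s=a_\theta(\iota)\). Both therefore equal the unique element corresponding to \(\underline\theta\) under the perfect pairing between \(\rS_x(\ff)\) and \((\rS_x^\vee)^{\sigma_x^\ast}\), and \(s_x=\operatorname{sp}_x(s)\).

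To make the statement meaningful as a compatibility, and not just a tautology, I would also check that this element is determined by \(s\) itself, via Imai's specialization. The steps are these.

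\begin{enumerate}
\item Since \(\theta\) has depth zero, \(a_\theta\) is trivial on \(P_F\). Hence \(a_\theta|_{I_F}\) factors through the tame quotient \(I_F/P_F\cong\varprojlim \ff_{q^n}^\times\).
\item By Lemma~\ref{lem:supp-bounded-torus}, \(\theta|_{S(F)_0}\) factors through \(\rS_x(\ff)\).
\item The torus correspondence restricts to a correspondence between characters of \(S(F)_0/S(F)_{0+}\) and \(I_F\)-restrictions of parameters modulo the Frobenius coboundary. This is the compatibility recorded in Section~\ref{sec:toral-normalizations} and in Imai's \(p_{\sigma_{\mathrm{eq}}}\) construction \cite{ImaiFiniteLC}.
\item Under this correspondence, the value at the generator \(\iota\) reduces to the finite dual element. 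That is the characterization \(\theta_x(t)=\langle t,s_{\theta,x}\rangle\) stated in Section~\ref{sec:toral-normalizations}.
\end{enumerate}

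For the consequence, I would use the standard Deligne--Lusztig fact that the constituents of \(R^{\rG_x^\circ}_{\rS_x}(\underline\theta)\) lie in \(\mathcal E(\rG_x^\circ(\ff),s_x)\). Here the pair \((\rS_x,\underline\theta)\) corresponds to \((\rS_x^\vee,s_x)\) by definition of Lusztig series via the torus duality, with \(\rS_x^\vee\) viewed as a \(\sigma^\ast\)-stable maximal torus of the dual of \(\rG_x^\circ\) containing \(s_x\). For the full, possibly disconnected, quotient \(\rG_x\), I interpret \(R^{\rG_x}_{\rS_x}\) as \(\Ind_{\rG_x^\circ(\ff)}^{\rG_x(\ff)}\circ R^{\rG_x^\circ}_{\rS_x}\). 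Then each irreducible constituent of the induced character restricts to \(\rG_x^\circ(\ff)\) as a sum of \(\rG_x(\ff)\)-conjugates of constituents of \(R^{\rG_x^\circ}_{\rS_x}(\underline\theta)\). Each of these conjugates lies in \(\mathcal E(\rG_x^\circ(\ff),{}^{\omega}s_x)\) for some \(\omega\in\Omega\). By definition \eqref{eq:disc-lusztig-series}, and since the disconnected series depends only on the \(\rG_x^\ast(\ff)\)-class of \(s_x\), the constituent lies in \(\mathcal E(\rG_x,s_x)\).

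The main obstacle I expect is the identification of dual tori. One has to check that \(\rS_x^\vee\), embedded in \((\rG_x^\circ)^\ast\), agrees with the torus through which Lusztig series are defined, and that the passage from \(s\in\widehat S\) to \(\rS_x^\vee\) is compatible with the pinned duality. I would handle this by working entirely with characters, where the series membership is intrinsic, so that no literal comparison of \(\widehat S\) with \(\rS_x^\vee\) is needed.
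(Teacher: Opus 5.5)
Your proposal is correct and follows essentially the same route as the paper. The paper also reduces $s_x=\operatorname{sp}_x(s)$ to finite-torus duality through Imai's morphism $p_{\sigma_{\mathrm{eq}}}$, by evaluating the reduction compatibility $\operatorname{sp}_x(a_\theta(\gamma))=\phi_{\underline\theta,0}(p_{\sigma_{\mathrm{eq}}}(\gamma))$ at $\gamma=\iota$, and then cites the defining property of Lusztig series for the consequence. Two of your points go beyond what the paper writes out: your observation that the identity is literally tautological under the stated definitions, and your treatment of the disconnected quotient via $\Ind_{\rG_x^\circ(\ff)}^{\rG_x(\ff)}$ and the restriction definition \eqref{eq:disc-lusztig-series}. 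These make explicit steps the paper leaves implicit; they are not a different argument.
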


\begin{proof}
The depth-zero condition says precisely that $\theta$ is trivial on
$S(F)_{0+}$, hence factors through
\[
        S(F)_0/S(F)_{0+}\simeq \rS_x(\ff).
\]
On the Galois side, the corresponding statement for the toral LLC is that the
$\widehat S$-valued cocycle $a_\theta$ is trivial on $P_F$. Thus
$a_\theta|_{I_F}$ factors through the tame quotient $I_F/P_F$.

The choice of $\sigma_{\mathrm{eq}}$ gives, via
\cite[\S 6, (6.1)]{ImaiFiniteLC}, a morphism
\[
        p_{\sigma_{\mathrm{eq}}}:WD_F\to WD_\ff
\]
which identifies the tame inertial quotient of $W_F$ with the inertial part of
the finite-field Weil--Deligne group. Therefore the restriction of the local
toral parameter to tame inertia may be compared directly with the finite-field
toral parameter attached to $\underline\theta$.

Let
\[
        \phi_{\underline\theta}:WD_\ff\longrightarrow {}^L\rS_x
\]
be the finite-field toral parameter corresponding to $\underline\theta$, and
write $\phi_{\underline\theta,0}$ for its $\rS_x^\vee$-component. Compatibility
of the torus LLC with reduction gives
\[
        \operatorname{sp}_x\bigl(a_\theta(\gamma)\bigr)
        =
        \phi_{\underline\theta,0}\bigl(p_{\sigma_{\mathrm{eq}}}(\gamma)\bigr),
        \qquad \gamma\in I_F .
\]
Evaluating this identity at $\gamma=\iota$ gives
\[
        \operatorname{sp}_x(s)
        =
        \phi_{\underline\theta,0}\bigl(p_{\sigma_{\mathrm{eq}}}(\iota)\bigr).
\]
By the standard duality between characters of the finite torus $\rS_x(\ff)$
and semisimple elements of the dual finite torus, the right-hand side is exactly
the element $s_x$ attached to $\underline\theta$. Hence
\[
        s_x=\operatorname{sp}_x(s).
\]

The final assertion is then the defining property of Lusztig series for finite
reductive groups: the Deligne--Lusztig virtual character
$R^{\rG_x}_{\rS_x}(\underline\theta)$ has all its irreducible constituents in
the rational series labelled by the dual semisimple element corresponding to
$\underline\theta$, namely $s_x$.
\end{proof}

\subsection{The setup}
\label{subsec:depth-zero-LLC-setup}

We fix the following notation for the depth-zero construction.  Let $F$ be a
non-archimedean local field with residue field $\ff$ of characteristic $p$, and
fix a prime $\ell\neq p$.  All finite-group characters will be regarded as
$\overline{\Q}_{\ell}$-valued.

Let $G$ be a connected reductive group over $F$, and let $G^\ast$ denote its
quasi-split inner form.  We fix a pinned splitting of $G^\ast$, hence a dual
group $\widehat G$ with its induced $W_F$-action.  We also fix, when needed, an
inner twist
\[
        \xi:G_{\overline F}\xrightarrow{\sim}G^\ast_{\overline F}
\]
in order to speak about stable conjugacy classes of maximal tori.  In this
subsection $G^\ast$ always denotes the quasi-split $F$-group, while
$\widehat G$ denotes the complex, or $\ell$-adic, dual group.

Let $S\subset G$ be a maximally unramified elliptic maximal $F$-torus, and let
\[
        x=x_S\in \cB^{\mathrm{red}}(G,F)
\]
be the vertex attached to $S$.  Put
\[
        K_x^0:=G(F)_{x,0},\qquad
        K_x^+:=G(F)_{x,0+},\qquad
        K_x:=G(F)_x .
\]
Thus $K_x^0/K_x^+$ is the group of $\ff$-points of the connected reductive
quotient at $x$, and $K_x/K_x^+$ is the group of $\ff$-points of the full,
possibly disconnected, reductive quotient.  We write
\[
        \rG_x^\circ(\ff):=K_x^0/K_x^+,\qquad
        \rG_x(\ff):=K_x/K_x^+ .
\]
The identity component of $\rG_x$ is $\rG_x^\circ$.  By Lemma~\ref{lem:vertex-component-abelian} the component group
\[
        \Omega_x:=\rG_x(\ff)/\rG_x^\circ(\ff)
\]
is abelian.  Moreover, with the pinning of $\rG_x^\circ$ induced from the fixed pinned splitting and an alcove adjacent to $x$, the full quotient satisfies the rational pinned-component condition by \cite[Lemma~\ref{JD-lem:parahoric-component-action-pinned}]{arotemishra}; see also Remark~\ref{rem:parahoric-rational-pinned-condition}.  Hence the disconnected pinned Jordan decomposition of Theorem~\ref{thm:disc-JD} applies to $\rG_x$.

The torus $S$ specializes to a maximal torus of $\rG_x^\circ$.  More precisely,
\[
        \rS_x(\ff)
        \;:=\;
        S(F)_0/S(F)_{0+}
        \;\subset\;
        \rG_x^\circ(\ff).
\]
Let $S^\ast\subset G^\ast$ be a representative of the stable conjugacy class
corresponding to $S$ under the chosen inner twist, and let
\[
        x^\ast=x_{S^\ast}\in \cB^{\mathrm{red}}(G^\ast,F)
\]
be its associated vertex.  The fixed pinned data identify the finite dual torus
$\rS_x^\vee$ with the reduction $\rS^\ast_{x^\ast}$ of $S^\ast$, and identify
the finite dual group of $\rG_x^\circ$ with the corresponding reductive quotient
on the $G^\ast$-side.  When $\rG_x$ is disconnected, we write $\rG_x^\vee$ for
the pinned semidirect-product dual used in Theorem~\ref{thm:disc-JD}; its
identity component is $(\rG_x^\circ)^\vee$.

Let
\[
        \theta:S(F)\longrightarrow \overline{\Q}_\ell^\times
\]
be a depth-zero character.  Thus $\theta$ is trivial on $S(F)_{0+}$.  We denote
by
\[
        \underline\theta:\rS_x(\ff)=S(F)_0/S(F)_{0+}
        \longrightarrow \overline{\Q}_\ell^\times
\]
the character induced by the restriction of $\theta$ to $S(F)_0$.  Notice that
we do not require $\theta$ to factor through $\rS_x(\ff)$ on all of $S(F)$:
the possible unramified part of $\theta$ is recorded by the Frobenius value of
the toral Langlands parameter, whereas the finite Deligne--Lusztig character
uses only $\underline\theta$.

Let
\[
        \varphi_\theta:W_F\longrightarrow {}^LS
\]
be the toral Langlands parameter attached to $\theta$.  Write
\[
        \varphi_\theta(w)=a_\theta(w)\rtimes w,\qquad a_\theta(w)\in \widehat S,
\]
for its $\widehat S$-component.  Since $\theta$ has depth zero, $a_\theta$ is
trivial on the wild inertia group $P_F=I_F^{0+}$.  Composing with the
Whittaker-normalized $L$-embedding
\[
        \iota_S^{\mathrm{can}}:{}^LS\hookrightarrow {}^LG
\]
of Proposition~\ref{prop:canonical-LS}, we view $\varphi_\theta$ as an
$L$-parameter for $G$.

Fix once and for all an element $\iota\in I_F$ whose image topologically
generates $I_F/P_F$, compatibly with the choice of
$\sigma_{\mathrm{eq}}$ used in Lemma~\ref{lem:tame-reduces-finite-dual}.  Set
\[
        s:=a_\theta(\iota)\in \widehat S\subset \widehat G .
\]
This element has finite order prime to $p$.  Let
\[
        s_x\in \rS_x^\vee(\overline{\Q}_\ell)
             \subset (\rG_x^\vee)^\circ
\]
be the semisimple element corresponding to $\underline\theta$ under the usual
duality for finite tori.  By Lemma~\ref{lem:tame-reduces-finite-dual},
\[
        s_x=\operatorname{sp}_x(s),
\]
where \(\operatorname{sp}_x(s)\) denotes the finite-dual semisimple element
associated, under finite-torus duality, to the reduced character
\[
        \underline\theta:
        S(F)_0/S(F)_{0+}\simeq \rS_x(\ff)
        \longrightarrow \overline{\Q}_\ell^\times .
\]
Equivalently, \(\operatorname{sp}_x(s)\) is the reduction of the inertial
value \(s=a_\theta(\iota)\) of the toral Langlands parameter to the dual torus
\(\rS_x^\vee\) of the reductive quotient. Thus the Lusztig series containing the constituents of
$R^{\rG_x}_{\rS_x}(\underline\theta)$ is
\[
        \cE(\rG_x,s_x).
\]

We shall use the following notation for the centralizers.  On the complex dual
side put
\[
        \widehat H:=Z_{\widehat G}(s),\qquad
        \widehat H^\circ:=Z_{\widehat G}(s)^\circ .
\]
This complex centralizer records the tame inertial part of the eventual
Langlands parameter.

On the finite dual side we distinguish the dual centralizer from the finite
group on which the unipotent supercuspidal representation will live.  Set
\[
        \mathbf H_x^\vee:=C_{\rG_x^\vee}(s_x),\qquad
        \mathbf H_x^{\vee,\circ}:=C_{(\rG_x^\vee)^\circ}(s_x)^\circ .
\]
Thus \(\mathbf H_x^\vee\) is the possibly disconnected finite dual centralizer
which appears as the target of Lusztig--Jordan decomposition for the Lusztig
series \(\cE(\rG_x,s_x)\), and \(s_x\in Z(\mathbf H_x^\vee)\).

Let \(\mathbf H_x\) denote the pinned finite reductive group, in the same
pinned-dual sense as in Theorem~\ref{thm:disc-JD}, whose pinned dual is
\(\mathbf H_x^\vee\).  In particular,
\[
        (\mathbf H_x^\circ)^\vee
        \cong
        \mathbf H_x^{\vee,\circ}
        =
        C_{(\rG_x^\vee)^\circ}(s_x)^\circ .
\]
The maximal torus of \(\mathbf H_x\) dual to
\(\rS_x^\vee\subset \mathbf H_x^{\vee,\circ}\) is identified with
\(\rS_x\).  Thus the torus on the \(H\)-side is \(\rS_x\), whereas the torus
inside the finite dual centralizer is \(\rS_x^\vee\).

We define \(H^{\ast,\circ}\) to be the unramified connected reductive
\(F\)-group whose hyperspecial reductive quotient is \(\mathbf H_x^\circ\).
Equivalently, \(H^{\ast,\circ}\) is the unramified lift of the pinned finite
\(\ff\)-root datum of \(\mathbf H_x^\circ\).  We denote the corresponding
hyperspecial vertex by \(x_H^\ast\), so that
\[
        \rH^{\ast,\circ}_{x_H^\ast}
        \cong
        \mathbf H_x^\circ,
        \qquad
        (\rH^{\ast,\circ}_{x_H^\ast})^\vee
        \cong
        C_{(\rG_x^\vee)^\circ}(s_x)^\circ .
\]
Under this identification the reductive quotient of the maximal torus of
\(H^{\ast,\circ}\) is
\[
        \rS^{\ast}_{H,x_H^\ast}\cong \rS_x .
\]
This is a finite-level construction.  We do not regard
\(H^{\ast,\circ}\) as a schematic centralizer of \(s\) in \(G^\ast\), since
\(s\) lies in \(\widehat G\), not in \(G^\ast\).  Nor do we assert in this
paragraph that the complex dual of \(H^{\ast,\circ}\) is literally
\(Z_{\widehat G}(s)^\circ\).  The relationship between the complex tame
centralizer and the finite group above is mediated by the depth-zero
specialization
\[
        \widehat S\longrightarrow \rS_x^\vee,
\]
which reflects
\[
        X^\ast(\rS_{x,\overline\ff})
        \simeq
        X^\ast(S_{\overline F})_{I_F,\mathrm{free}} .
\]

The hypotheses needed for the enriched disconnected Jordan decomposition are
satisfied for the full quotient
\(\rG_x(\ff)=G(F)_x/G(F)_{x,0+}\): abelianity is
Lemma~\ref{lem:vertex-component-abelian}, and the rational pinned-component
condition for the induced pinning is
\cite[Lemma~\ref{JD-lem:parahoric-component-action-pinned}]{arotemishra}.  The
finite passage used below has two layers.  First the enriched pinned Jordan
decomposition for \(\rG_x\) gives
\[
        J^{\mathbb P_x,\mathrm{enh}}_{\rG_x,s_x}:
        \cE(\rG_x(\ff),s_x)
        \xrightarrow{\ \sim\ }
        \Uch^{\mathrm{enh}}_{\rG_x,s_x}
        (\mathbf H_x^\vee(\ff)).
\]
An element of the target is represented, on the cuspidal part, by a triple
\[
        [u^\circ,[\alpha],E],
\]
where \(u^\circ\) is a connected cuspidal unipotent character of the identity
component of the finite dual centralizer, \([\alpha]\) is the source Clifford
class, and \(E\) is the corresponding projective Clifford label.  Thus the
source-side Clifford datum is not discarded.

Second we use pinned unipotent duality for the dual pair
\((\mathbf H_x,\mathbf H_x^\vee)\).  In the enriched setting this means that the
usual pinned unipotent duality is applied to the connected unipotent character,
while the stabilizer quotient, the cohomology class and the projective Clifford
label are transported along the pinned duality identification.  We write
\[
        \mathfrak D_x^{\mathrm{unip},\mathrm{enh}}:
        \Uch^{\mathrm{enh}}_{\rG_x,s_x}(\mathbf H_x^\vee(\ff))
        \xrightarrow{\ \sim\ }
        \Uch^{\mathrm{enh}}(\mathbf H_x(\ff))
\]
for this enriched unipotent duality.  Thus the finite transform relevant for the
LLC construction is
\[
        \mathcal J^{\mathbb P_x}_{x,s_x}
        :=
        \mathfrak D_x^{\mathrm{unip},\mathrm{enh}}
        \circ
        J^{\mathbb P_x,\mathrm{enh}}_{\rG_x,s_x}:
        \cE(\rG_x(\ff),s_x)
        \xrightarrow{\ \sim\ }
        \Uch^{\mathrm{enh}}(\mathbf H_x(\ff)).
\]
By definition, throughout the sequel
\[
        \mathcal J^{\mathbb P_x}_{x,s_x}
\]
denotes the enriched finite transform displayed above.  Its connected shadow is
the usual pinned finite Jordan decomposition followed by pinned unipotent
duality: it carries the toral Harish--Chandra datum
\((\rS_x,\underline\theta)\) on the \(G\)-side to the unipotent toral datum
\((\rS_x,\mathbf 1)\) on the \(H\)-side.  The enrichment records, in addition,
the relevant stabilizer quotient, the Clifford cohomology class and the
projective Clifford label.

%----------------------------------------------

\begin{lemma}[The enriched finite unipotent side attached to a depth-zero toral datum]
\label{lem:Hstar-finite}
Keep the notation of Subsection~\ref{subsec:depth-zero-LLC-setup}.  The finite
dual centralizer and the finite group on the unipotent representation side are
related by
\[
        \mathbf H_x^\vee=C_{\rG_x^\vee}(s_x),
        \qquad
        (\mathbf H_x^\circ)^\vee
        \cong
        C_{(\rG_x^\vee)^\circ}(s_x)^\circ .
\]
The unramified connected group \(H^{\ast,\circ}\) has hyperspecial reductive
quotient
\[
        \rH^{\ast,\circ}_{x_H^\ast}
        \cong
        \mathbf H_x^\circ,
        \qquad
        \rS^{\ast}_{H,x_H^\ast}
        \cong
        \rS_x .
\]
Moreover, the finite operation relevant for the LLC is the enriched composite
\[
        \mathcal J^{\mathbb P_x}_{x,s_x}
        =
        \mathfrak D_x^{\mathrm{unip},\mathrm{enh}}
        \circ
        J^{\mathbb P_x,\mathrm{enh}}_{\rG_x,s_x}:
        \cE(\rG_x(\ff),s_x)
        \xrightarrow{\ \sim\ }
        \Uch^{\mathrm{enh}}(\mathbf H_x(\ff)).
\]
If \(\rho\) is an irreducible constituent of
\[
        R^{\rG_x}_{\rS_x}(\underline\theta),
\]
then the connected shadow of
\(\mathcal J^{\mathbb P_x}_{x,s_x}(\rho)\) belongs to the unipotent toral series
of \(\mathbf H_x^\circ(\ff)\) generated by \((\rS_x(\ff),\mathbf 1)\).  In other
words, if
\[
        \mathcal J^{\mathbb P_x}_{x,s_x}(\rho)
        =
        [u^\circ_\rho,[\alpha_\rho],E_\rho],
\]
then
\[
        u^\circ_\rho
        \in
        \Irr\bigl(\mathbf H_x^\circ(\ff),(\rS_x(\ff),\mathbf 1)\bigr).
\]
\end{lemma}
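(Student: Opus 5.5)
The first assertions of Lemma~\ref{lem:Hstar-finite} are definitional, and I would dispose of them first. The equality \(\mathbf H_x^\vee=C_{\rG_x^\vee}(s_x)\) and the identification \((\mathbf H_x^\circ)^\vee\cong C_{(\rG_x^\vee)^\circ}(s_x)^\circ\) are exactly how \(\mathbf H_x\) was introduced in Subsection~\ref{subsec:depth-zero-LLC-setup}. The point to check is that the identity component of the disconnected centralizer in \(\rG_x^\vee=(\rG_x^\circ)^\vee\rtimes\Omega_x\) is \(C_{(\rG_x^\vee)^\circ}(s_x)^\circ\); this holds because \(\Omega_x\) is finite, so the identity components agree.

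The statements about \(H^{\ast,\circ}\) come from its construction as the unramified lift of the pinned \(\ff\)-root datum of \(\mathbf H_x^\circ\). I would argue as follows:
\begin{itemize}
\item The unramified lift has a hyperspecial vertex \(x_H^\ast\) whose reductive quotient realizes that root datum with its Frobenius. This gives \(\rH^{\ast,\circ}_{x_H^\ast}\cong\mathbf H_x^\circ\).
\item The torus \(\rS_x\subset\mathbf H_x^\circ\) is dual to \(\rS_x^\vee\subset\mathbf H_x^{\vee,\circ}\). It therefore lifts to a maximally unramified elliptic torus of \(H^{\ast,\circ}\) via \cite[Lem.~3.4.4(2)]{KalethaRegSC}, exactly as in the proof of Lemma~\ref{lem:vertex-torus-bijection}, and its reduction is \(\rS_x\).
\item The composite \(\mathcal J^{\mathbb P_x}_{x,s_x}\) is a bijection because it is a composite of two bijections. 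These are Theorem~\ref{thm:disc-JD}, whose hypotheses hold by Lemma~\ref{lem:vertex-component-abelian} and Remark~\ref{rem:parahoric-rational-pinned-condition}, and the enriched duality \(\mathfrak D_x^{\mathrm{unip},\mathrm{enh}}\).
\end{itemize}

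The substantive claim is the last one, and I would prove it in three steps.

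\textbf{Step 1: restrict to the identity component.} Let \(\rho\) be a constituent of \(R^{\rG_x}_{\rS_x}(\underline\theta)\), and let \(\rho^\circ\) be a constituent of \(\Res\rho\) to \(\rG_x^\circ(\ff)\). Since \(\rS_x\subset\rG_x^\circ\), Deligne--Lusztig induction to \(\rG_x\) is induction from \(\rG_x^\circ(\ff)\) of \(R^{\rG_x^\circ}_{\rS_x}(\underline\theta)\). Mackey's formula then shows that \(\rho^\circ\) is a constituent of \(R^{\rG_x^\circ}_{\rS_x}({}^g\underline\theta)\) for some \(g\in\rG_x(\ff)\). After replacing \(\rho^\circ\) by a conjugate, which Theorem~\ref{thm:disc-JD} allows because the enriched triple only transports, I may assume \(g=1\). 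The first entry of \(J^{\mathbb P_x,\mathrm{enh}}(\rho)\) is then the connected Jordan image \(J^{\rG_x^\circ}_{s_x}(\rho^\circ)\).

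\textbf{Step 2: apply the scalar-product formula.} Apply \eqref{eq:JD-connected-DL-formula} with \(\rT^\ast=\rS_x^\vee\). This torus lies in \(C(s_x)^\circ\) because \(s_x\in\rS_x^\vee\) and tori are connected. The formula gives
\[
\bigl\langle R^{\mathbf H^{\vee,\circ}}_{\rS_x^\vee}(1),J(\rho^\circ)\bigr\rangle\neq0,
\]
so \(J(\rho^\circ)\) lies in the Deligne--Lusztig series of \((\rS_x^\vee,1)\) on the dual-centralizer side.

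\textbf{Step 3: transport by pinned unipotent duality.} Pinned unipotent duality matches \(R_{\rS_x^\vee}(1)\) with \(R_{\rS_x}(1)\), since the maximal torus of \(\mathbf H_x\) dual to \(\rS_x^\vee\) is \(\rS_x\). Hence \(u^\circ_\rho\) satisfies \(\langle R^{\mathbf H_x^\circ}_{\rS_x}(1),u^\circ_\rho\rangle\neq0\).

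I expect the main obstacle to be the final identification: nonvanishing against \(R_{\rS_x}(1)\) must be converted into membership in the Harish--Chandra series written as \(\Irr(\mathbf H_x^\circ(\ff),(\rS_x(\ff),\mathbf 1))\). When \(\rS_x\) is elliptic rather than split, the honest meaning is the Deligne--Lusztig ("unipotent toral") series, not a Harish--Chandra series. My first approach would be to read the notation as the set of constituents of \(R^{\mathbf H_x^\circ}_{\rS_x}(\mathbf 1)\), and to note that the pinned unipotent duality preserves Deligne--Lusztig multiplicities up to the sign \(\varepsilon\). If a Harish--Chandra interpretation is required, I would instead pass through the Harish--Chandra compatibility \eqref{eq:disc-JD-HC-compatibility} and use the fact that the cuspidal support of \(\rho^\circ\) corresponds to that of \(u^\circ_\rho\).
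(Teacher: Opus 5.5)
Your proposal is correct and follows the same route as the paper: the identifications of \(\mathbf H_x^\vee\), \(\mathbf H_x^\circ\), and of the hyperspecial quotient and torus of \(H^{\ast,\circ}\) are read off from the construction. The toral claim is then obtained by applying \eqref{eq:JD-connected-DL-formula} with \(\rT^\ast=\rS_x^\vee\subset C(s_x)^\circ\) to a connected constituent of \(\rho\) lying in \(\cE(\rG_x^\circ(\ff),s_x)\), and transporting the result by pinned unipotent duality. Your extra details only make explicit what the paper leaves implicit: you choose that constituent by Frobenius reciprocity, and you read \(\Irr(\mathbf H_x^\circ(\ff),(\rS_x(\ff),\mathbf 1))\) as the constituents of \(R_{\rS_x\subset\rB}(\mathbf 1)\), which is exactly how the paper defines this notation via Lusztig induction. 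So the Harish--Chandra fallback you sketch is unnecessary, and it would not apply anyway, since an elliptic \(\rS_x\) is in general not a Levi factor of a rational parabolic.
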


\begin{proof}
The first assertions are part of the construction in
Subsection~\ref{subsec:depth-zero-LLC-setup}.  The group
\[
        \mathbf H_x^\vee=C_{\rG_x^\vee}(s_x)
\]
is the finite dual centralizer attached to the Lusztig series
\(\cE(\rG_x(\ff),s_x)\), and \(\mathbf H_x^\circ\) is the connected finite
reductive group whose pinned dual is
\[
        (\mathbf H_x^\circ)^\vee
        \cong
        C_{(\rG_x^\vee)^\circ}(s_x)^\circ .
\]
The group \(H^{\ast,\circ}\) was defined as the unramified lift of
\(\mathbf H_x^\circ\).  Therefore its hyperspecial reductive quotient is
canonically identified with \(\mathbf H_x^\circ\), and the corresponding finite
torus is identified with \(\rS_x\):
\[
        \rH^{\ast,\circ}_{x_H^\ast}\cong \mathbf H_x^\circ,
        \qquad
        \rS^{\ast}_{H,x_H^\ast}\cong \rS_x .
\]

By Lemma~\ref{lem:tame-reduces-finite-dual}, the reduced character
\[
        \underline\theta:\rS_x(\ff)\longrightarrow \C^\times
\]
corresponds under finite-torus duality to the element
\[
        s_x\in \rS_x^\vee\subset \rG_x^\vee .
\]
Hence every irreducible constituent of
\[
        R^{\rG_x}_{\rS_x}(\underline\theta)
\]
belongs to the Lusztig series \(\cE(\rG_x(\ff),s_x)\).  Applying the enriched
pinned Jordan decomposition gives an enriched unipotent datum on
\(\mathbf H_x^\vee\).  Its connected shadow is the connected pinned Jordan
image of a connected constituent of \(\rho\).  The connected Deligne--Lusztig
normalization \eqref{eq:JD-connected-DL-formula} therefore sends the
Deligne--Lusztig series attached to \((\rS_x,\underline\theta)\) to the
unipotent toral series attached to \((\rS_x^\vee,\mathbf 1)\) on the finite dual
centralizer side.  Applying enriched pinned unipotent duality transports this
connected unipotent character to the unipotent toral series attached to
\((\rS_x,\mathbf 1)\) on the \(\mathbf H_x\)-side.  The remaining entries,
\([\alpha_\rho]\) and \(E_\rho\), are simply the source Clifford class and the
projective Clifford label retained by the enriched target.
\end{proof}

\begin{definition}\label{def:depth-zero-yu-datum}
A depth-zero datum is a triple $(S,\theta;\tau)$, where:
\begin{itemize}
    \item $S\subset G$ is a maximally unramified elliptic maximal $F$--torus, and
    $x=x_S\in \mathcal B^{\mathrm{red}}(G,F)$ is the associated vertex;

    \item $\theta:S(F)\to \overline{\mathbb Q}_{\ell}^{\times}$ is a depth-zero character.
    We write
    \[
        \underline\theta:\rS_x(\mathfrak f)=S(F)_0/S(F)_{0+}
        \longrightarrow \overline{\mathbb Q}_{\ell}^{\times}
    \]
    for its reduction;

    \item $\tau$ is an irreducible representation of
    \[
        K_x:=G(F)_x
    \]
    which is trivial on $K_x^+:=G(F)_{x,0+}$ such that its restriction to the connected quotient
    \[
        \rG_x^\circ(\mathfrak f):=G(F)_{x,0}/G(F)_{x,0+}
    \]
    contains an irreducible constituent of
    \[
        \pm R_{\rS_x}^{\rG_x^\circ}(\underline\theta).
    \]
\end{itemize}
\end{definition}

The associated depth-zero supercuspidal representation is
\[
    \pi(S,\theta;\tau)
    :=
    \cInd_{G(F)_x}^{G(F)} \tau .
\]
\medskip
The next proposition packages the enriched finite operation which will be used
in the construction of the depth-zero LLC.  We first fix the notation attached
to the depth-zero datum just defined.

Let
\[
        \mathfrak d=(S,\theta;\tau)
\]
be a depth-zero datum in the sense of Definition~\ref{def:depth-zero-yu-datum},
and put \(x=x_S\).  Since \(\tau\) is trivial on \(G(F)_{x,0+}\), it factors
through the full reductive quotient
\[
        \rG_x(\ff)=G(F)_x/G(F)_{x,0+}.
\]
We denote the resulting representation by
\[
        \bar\tau\in\Irr(\rG_x(\ff)).
\]
By the definition of a depth-zero datum, the restriction of \(\bar\tau\) to
\(\rG_x^\circ(\ff)\) contains an irreducible constituent of
\[
        \pm R_{\rS_x}^{\rG_x^\circ}(\underline\theta).
\]

Let
\[
        s=a_\theta(\iota)\in\widehat S\subset\widehat G,
        \qquad
        s_x\in \rS_x^\vee\subset(\rG_x^\vee)^\circ
\]
be the tame inertial element attached to \(\theta\) and its finite-level avatar.
Set
\[
        \mathbf H_x^\vee:=C_{\rG_x^\vee}(s_x),
\]
and let \(\mathbf H_x\) be the pinned finite reductive group whose pinned dual
is \(\mathbf H_x^\vee\).  We shall use the enriched composite finite transform
\[
        \mathcal J^{\mathbb P_x}_{x,s_x}:
        \cE(\rG_x(\ff),s_x)
        \xrightarrow{\ \sim\ }
        \Uch^{\mathrm{enh}}(\mathbf H_x(\ff)).
\]

\begin{proposition}[Enriched finite Jordan transform at the vertex]
\label{prop:JD-at-vertex}
With the notation fixed above, the following assertions hold.

\begin{enumerate}
\item The finite representation \(\bar\tau\) belongs to the disconnected
Lusztig series
\[
        \bar\tau\in\cE(\rG_x(\ff),s_x).
\]
Hence the enriched finite datum
\[
        \mathfrak u_{x,\tau}^{\mathrm{enh}}
        :=
        \mathcal J^{\mathbb P_x}_{x,s_x}(\bar\tau)
        \in
        \Uch^{\mathrm{enh}}(\mathbf H_x(\ff))
\]
is well defined.  We write it, on the cuspidal part, as
\[
        \mathfrak u_{x,\tau}^{\mathrm{enh}}
        =
        [u^\circ_{x,\tau},[\alpha_{x,\tau}],E_{x,\tau}].
\]

\item The connected shadow \(u^\circ_{x,\tau}\) lies in the unipotent series of
\(\mathbf H_x^\circ(\ff)\) generated by \((\rS_x(\ff),\mathbf 1)\); equivalently,
\[
        u^\circ_{x,\tau}
        \in
        \Irr\bigl(\mathbf H_x^\circ(\ff),(\rS_x(\ff),\mathbf 1)\bigr).
\]

\item Conversely, if
\[
        \mathfrak u=[u^\circ,[\alpha],E]
        \in
        \Uch^{\mathrm{enh}}(\mathbf H_x(\ff))
\]
has connected shadow in the above unipotent toral series, then
\[
        \bar\tau_{\mathfrak u}
        :=
        \bigl(\mathcal J^{\mathbb P_x}_{x,s_x}\bigr)^{-1}(\mathfrak u)
        \in
        \cE(\rG_x(\ff),s_x)
\]
is an irreducible representation of \(\rG_x(\ff)\) whose restriction to
\(\rG_x^\circ(\ff)\) lies in the Deligne--Lusztig series generated by
\((\rS_x,\underline\theta)\).  Its inflation to \(G(F)_x\) may therefore be used
as the \(\tau\)-term of a depth-zero datum with fixed toral part \((S,\theta)\).

\item If \(\bar\tau\) is cuspidal for \(\rG_x(\ff)\), then
\(\mathfrak u_{x,\tau}^{\mathrm{enh}}\) is a cuspidal enriched unipotent datum.
\end{enumerate}

The construction is natural under \(G(F)\)-conjugacy of the datum
\((S,\theta;\tau)\).
\end{proposition}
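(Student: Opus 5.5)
The plan is to prove the four assertions in order, reducing each to results already stated: Lemma~\ref{lem:tame-reduces-finite-dual}, Lemma~\ref{lem:Hstar-finite}, and Theorem~\ref{thm:disc-JD}. Lemma~\ref{lem:vertex-component-abelian} and Remark~\ref{rem:parahoric-rational-pinned-condition} show that Theorem~\ref{thm:disc-JD} applies to \(\rG_x\).

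For (1), choose an irreducible constituent \(\rho^\circ\) of \(\Res^{\rG_x(\ff)}_{\rG_x^\circ(\ff)}\bar\tau\) that occurs in \(\pm R^{\rG_x^\circ}_{\rS_x}(\underline\theta)\); the definition of a depth-zero datum provides one. Then \(\langle R^{\rG_x^\circ}_{\rS_x}(\underline\theta),\rho^\circ\rangle\neq 0\). By Lemma~\ref{lem:tame-reduces-finite-dual}, the torus \(\rS_x^\vee\) is a \(\sigma^\ast\)-stable maximal torus containing \(s_x\) that is dual to \(\rS_x\), and \(\underline\theta\) corresponds to \(s_x\). The connected definition of Lusztig series therefore gives \(\rho^\circ\in\cE(\rG_x^\circ(\ff),s_x)\). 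By the restriction definition \eqref{eq:disc-lusztig-series}, it follows that \(\bar\tau\in\cE(\rG_x(\ff),s_x)\). The datum \(\mathfrak u^{\mathrm{enh}}_{x,\tau}\) is then defined by applying the bijection \(\mathcal J^{\mathbb P_x}_{x,s_x}\). The triple form on the cuspidal part is the displayed formula in Theorem~\ref{thm:disc-JD}, transported by \(\mathfrak D^{\mathrm{unip},\mathrm{enh}}_x\).

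Assertion (2) is the final statement of Lemma~\ref{lem:Hstar-finite}. The one point to check is that the lemma is applied to the connected constituent \(\rho^\circ\) selected above. This matters because the connected shadow of the enriched image is \(J^{\rG_x^\circ}_{s_x}(\rho^\circ)\), and the Deligne--Lusztig identity \eqref{eq:JD-connected-DL-formula} with \(\rT^\ast=\rS_x^\vee\) then gives \(\langle R^{\mathbf H_x^{\vee,\circ}}_{\rS_x^\vee}(1),J(\rho^\circ)\rangle\neq0\). Unipotent duality carries this to the \((\rS_x,\mathbf 1)\)-series. Replacing \(\rho^\circ\) by an \(\rG_x(\ff)\)-conjugate changes nothing, because \(\underline\theta\) is determined only up to the action of the component group. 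So the conjugation equivariance in Theorem~\ref{thm:disc-JD} gives a well-defined class.

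For (3), I would reverse this argument using bijectivity. Surjectivity of \(\mathcal J\) produces \(\bar\tau_{\mathfrak u}\in\cE(\rG_x(\ff),s_x)\). Its connected constituent \(\rho^\circ\) has Jordan image \(\mathfrak D^{-1}(u^\circ)\), which pairs nontrivially with \(R_{\rS_x^\vee}(1)\). Reading \eqref{eq:JD-connected-DL-formula} backwards then gives \(\langle R_{\rS_x}(\underline\theta),\rho^\circ\rangle\neq0\).

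The \textbf{main obstacle} lies in this step. The series \((\rS_x,\mathbf1)\) is defined as a Harish-Chandra series, whereas \eqref{eq:JD-connected-DL-formula} concerns Deligne--Lusztig multiplicities for the specific torus \(\rS_x\). I would interpret ``series generated by \((\rS_x,\mathbf 1)\)'' as the set of unipotent characters occurring in \(R_{\rS_x}(1)\), which is consistent with the usage in Lemma~\ref{lem:Hstar-finite}. Under that interpretation the equivalence follows directly from the scalar-product identity. The inflation of \(\bar\tau_{\mathfrak u}\) to \(G(F)_x\) is trivial on \(G(F)_{x,0+}\), so it satisfies Definition~\ref{def:depth-zero-yu-datum}.

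For (4), if \(\bar\tau\) is cuspidal, it lies in the Harish-Chandra series with \(\rL=\rG_x\) in \eqref{eq:disc-lusztig-HC-union}. Theorem~\ref{thm:disc-JD} then places its image in the cuspidal enriched target, and enriched unipotent duality preserves cuspidality of the connected part and transports the Clifford data. Finally, naturality under \(g\in G(F)\) follows because \(\Ad(g)\) carries \(x\), \(\rS_x\), and \(\underline\theta\) to their conjugates. It also carries the induced pinning to a pinning that is \(\rG_{gx}^\circ(\ff)\)-conjugate to the one there. Canonicity of \(J^{\mathbb P,\mathrm{enh}}\), meaning that it depends only on the pinning up to inner conjugacy, then gives compatibility, and Proposition~\ref{prop:canonical-LS}(4) handles the toral side.
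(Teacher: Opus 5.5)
Your proposal is correct and follows the paper's proof essentially step for step. The correspondence is:
\begin{enumerate}
\item a constituent $\rho^\circ$ of $\pm R^{\rG_x^\circ}_{\rS_x}(\underline\theta)$ together with the restriction definition \eqref{eq:disc-lusztig-series} gives (1);
\item the toral compatibility behind Lemma~\ref{lem:Hstar-finite}, run forward and then in reverse, gives (2) and (3);
\item the cuspidal part of Theorem~\ref{thm:disc-JD} followed by enriched unipotent duality gives (4);
\item transport of structure gives naturality.
\end{enumerate}
Two points in your version only make explicit what the paper leaves implicit when it cites that lemma. First, you apply \eqref{eq:JD-connected-DL-formula} directly with $\rT^{\!*}=\rS_x^\vee$. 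Second, you read the $(\rS_x,\mathbf 1)$-series as a Deligne--Lusztig series rather than a Harish--Chandra series; this is the only reading compatible with $\rS_x$ being elliptic and with the cuspidality in (4).
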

\begin{proof}
Let \(\rho^\circ\) be an irreducible constituent of
\[
        \bar\tau|_{\rG_x^\circ(\ff)}
\]
which occurs in
\[
        \pm R_{\rS_x}^{\rG_x^\circ}(\underline\theta).
\]
By finite-torus duality, the character \(\underline\theta\) corresponds to the
semisimple element
\[
        s_x\in (\rS_x^\vee)^{\sigma_x^\ast}
        \subset ((\rG_x^\vee)^\circ)^{\sigma_x^\ast}.
\]
Lemma~\ref{lem:tame-reduces-finite-dual} identifies this element with the
finite-level specialization of \(s=a_\theta(\iota)\).  Hence the usual
Deligne--Lusztig theory for the connected quotient gives
\[
        \rho^\circ\in
        \cE(\rG_x^\circ(\ff),s_x).
\]
By the definition of the disconnected Lusztig series used here, namely by
restriction to the identity component, this implies
\[
        \bar\tau\in \cE(\rG_x(\ff),s_x).
\]
Thus the enriched pinned Jordan decomposition, followed by enriched pinned
unipotent duality, is applicable to \(\bar\tau\).  This gives the enriched datum
\(\mathfrak u_{x,\tau}^{\mathrm{enh}}\).

It remains to identify the finite toral support of its connected shadow.  On
identity components the enriched transform is exactly the connected pinned
Jordan decomposition, followed by the usual pinned unipotent duality.  Hence the
toral compatibility used in Lemma~\ref{lem:Hstar-finite} sends the
Deligne--Lusztig series attached to \((\rS_x,\underline\theta)\) on the
\(\rG_x\)-side to the unipotent toral series attached to
\((\rS_x,\mathbf 1)\) on the \(\mathbf H_x\)-side.  This proves the assertion
about \(u^\circ_{x,\tau}\).

Conversely, start from an enriched datum
\(\mathfrak u=[u^\circ,[\alpha],E]\) whose connected shadow lies in the
unipotent toral series.  Applying inverse enriched unipotent duality and then
inverse enriched pinned Jordan decomposition gives
\[
        \bar\tau_{\mathfrak u}
        =
        \bigl(\mathcal J^{\mathbb P_x}_{x,s_x}\bigr)^{-1}(\mathfrak u)
        \in
        \cE(\rG_x(\ff),s_x).
\]
The same toral compatibility, now in reverse, shows that the restriction of
\(\bar\tau_{\mathfrak u}\) to \(\rG_x^\circ(\ff)\) contains an irreducible
constituent of
\[
        \pm R_{\rS_x}^{\rG_x^\circ}(\underline\theta).
\]
Inflating \(\bar\tau_{\mathfrak u}\) along
\[
        G(F)_x\longrightarrow G(F)_x/G(F)_{x,0+}=\rG_x(\ff)
\]
therefore gives the required finite part of a depth-zero datum.

Finally suppose that \(\bar\tau\) is cuspidal for \(\rG_x(\ff)\).  The cuspidal
part of Theorem~\ref{thm:disc-JD} gives a cuspidal enriched datum on the finite
dual centralizer side; enriched unipotent duality preserves cuspidality of the
connected shadow and transports the Clifford entries unchanged.  Hence
\(\mathfrak u_{x,\tau}^{\mathrm{enh}}\) is cuspidal.  Naturality follows from
functoriality under conjugacy: conjugating the datum \((S,\theta;\tau)\)
identifies the vertices, finite reductive quotients, finite tori, reduced
characters, dual semisimple elements, pinnings, and Clifford stabilizer data.
\end{proof}
\medskip

\section{Enhanced $L$-parameters for unipotent supercuspidals}
\label{sec:FOS-unipotent}

This section recalls the part of the construction of
Feng--Opdam--Solleveld which will be used in the next section.  We use
\(F\) for the non-archimedean local field and \(\ff\) for its residue
field.  Feng--Opdam--Solleveld use \(K\) for this local field; apart
from this change of notation, the conventions below follow
\cite[\S 1]{FOS2020}.

Let \(M^\ast\) be a quasi-split connected reductive \(F\)-group which
splits over an unramified extension, and let \(M_\omega\) be an inner
form of \(M^\ast\).  Write \(\widehat M\) for the complex dual group of
\(M^\ast\).  The \(W_F\)-action on \(\widehat M\) is unramified, and we
write
\[
        {}^L M=\widehat M\rtimes W_F
\]
for the corresponding \(L\)-group.  The inner twist \(\omega\)
determines, via the Kottwitz isomorphism, the relevance character
\[
        \zeta_\omega:Z(\widehat M_{\mathrm{sc}})\longrightarrow \C^\times
\]
used in \cite[(1.7)]{FOS2020}.  Equivalently, the restriction of
\(\zeta_\omega\) to the \(W_F\)-fixed part of \(Z(\widehat M_{\mathrm{sc}})\)
is the character corresponding to the inner form \(M_\omega\).

\subsection{Unramified cuspidal enhanced parameters}

An \(L\)-parameter for \(M_\omega\) is an admissible homomorphism
\[
        \lambda:W_F\times \SL_2(\C)\longrightarrow {}^L M
\]
whose composition with the projection \({}^L M\to W_F\) is the projection
onto the first factor.  It is \emph{unramified} if
\[
        \lambda(w)=(1,w)\qquad \text{for all }w\in I_F,
\]
and it is \emph{discrete} if its image is not contained in the \(L\)-group
of any proper \(F\)-Levi subgroup relevant to \(M_\omega\).  We denote by
\(\Phi_{\mathrm{nr}}(M_\omega)\) the set of \(\widehat M\)-conjugacy classes
of unramified parameters.

Let
\[
        C_\lambda:=Z_{\widehat M}(\operatorname{im}\lambda).
\]
Following \cite[(1.5)--(1.6)]{FOS2020}, define \(\cA_\lambda\) to be the
component group of the full inverse image of
\[
        C_\lambda/Z(\widehat M)^{W_F}
        \subset \widehat M_{\mathrm{ad}}
\]
in \(\widehat M_{\mathrm{sc}}\).  Equivalently,
\[
        \cA_\lambda
        =
        \pi_0\!\left(
        Z^1_{\widehat M_{\mathrm{sc}}}(\lambda)
        \right),
\]
where
\[
        Z^1_{\widehat M_{\mathrm{sc}}}(\lambda)
        :=
        \left\{
        g\in \widehat M_{\mathrm{sc}}
        :
        g\lambda g^{-1}=\lambda b
        \text{ for some }
        b\in B^1(W_F,Z(\widehat M))
        \right\}.
\]
Here \(B^1(W_F,Z(\widehat M))\) denotes the group of \(1\)-coboundaries.

An enhancement of \(\lambda\), relevant for the inner form \(M_\omega\),
is an irreducible representation
\[
        \rho\in \Irr(\cA_\lambda,\zeta_\omega),
\]
meaning that the restriction of \(\rho\) to
\(Z(\widehat M_{\mathrm{sc}})\subset \cA_\lambda\) is
\(\zeta_\omega\)-isotypic.  This is the relevance condition for the inner
form \(M_\omega\); see \cite[(1.7)]{FOS2020}.

Let
\[
        u_\lambda
        :=
        \operatorname{pr}_{\widehat M}
        \lambda\!\left(
        1,
        \begin{psmallmatrix}
        1&1\\
        0&1
        \end{psmallmatrix}
        \right).
\]
Let \(Z^1_{\widehat M_{\mathrm{sc}}}(\lambda(W_F))\) be the inverse image of
\[
        Z_{\widehat M}(\lambda(W_F))/Z(\widehat M)^{W_F}
\]
in \(\widehat M_{\mathrm{sc}}\).  Then \cite[(1.8)]{FOS2020} identifies
\(\cA_\lambda\) with
\[
        \cA_\lambda
        =
        \pi_0\!\left(
        Z_{\,Z^1_{\widehat M_{\mathrm{sc}}}(\lambda(W_F))}(u_\lambda)
        \right).
\]
We say that \((\lambda,\rho)\) is \emph{cuspidal} if
\((u_\lambda,\rho)\) is a cuspidal pair for
\(Z^1_{\widehat M_{\mathrm{sc}}}(\lambda(W_F))\), in the sense of
\cite[Def.~6.9]{AMS18}.  Equivalently, \(\rho\) determines a
\(Z^1_{\widehat M_{\mathrm{sc}}}(\lambda(W_F))\)-equivariant cuspidal local
system on the unipotent class of \(u_\lambda\).  This condition implies
that \(\lambda\) is discrete, although not every discrete parameter
admits a cuspidal enhancement.  We denote the set of \(\widehat M\)-conjugacy
classes of unramified cuspidal enhanced parameters relevant to
\(M_\omega\) by
\[
        \Phi^{\mathrm e}_{\mathrm{nr}}(M_\omega)_{\cusp}.
\]

We shall also use the weakly unramified twisting action.  Let
\(M_\omega(F)_1\) be the kernel of the Kottwitz homomorphism.  A smooth
character of \(M_\omega(F)\) is called weakly unramified if it is trivial
on \(M_\omega(F)_1\).  The group of such characters is naturally
identified with the group denoted \(Z(\widehat M)_\theta\) in
\cite[(1.3)]{FOS2020}, where \(\theta\) is the Frobenius action on
\(\widehat M\).  Under this identification, weakly unramified twisting on the
representation side corresponds to the action
\[
        z\cdot(\lambda,\rho)=(z\lambda,\rho)
\]
on enhanced parameters; see \cite[(1.9)]{FOS2020}.

\subsection{Unipotent supercuspidals on the group side}

Let \(x\) be a vertex in the reduced Bruhat--Tits building of
\(M_\omega\).  Put
\[
        P:=M_\omega(F)_{x,0},
        \qquad
        P^+:=M_\omega(F)_{x,0+}.
\]
Thus \(P\) is the connected parahoric subgroup attached to \(x\), and
\[
        \underline M_x(\ff):=P/P^+
\]
is the connected reductive quotient.  The full stabilizer
\[
        N_x:=N_{M_\omega(F)}(P)=M_\omega(F)_x
\]
normalizes \(P^+\), and \(N_x/P^+\) is the corresponding full, possibly
disconnected, finite quotient.

Following \cite[(1.14)]{FOS2020}, an irreducible representation of \(P\)
is called unipotent, respectively cuspidal unipotent, if it is inflated
from an irreducible unipotent, respectively cuspidal unipotent,
representation of \(\underline M_x(\ff)\).  An irreducible smooth
representation of \(M_\omega(F)\) is unipotent if its restriction to some
parahoric subgroup contains such a unipotent representation.  The
supercuspidal unipotent representations are precisely those obtained
from cuspidal unipotent representations of maximal parahoric subgroups;
in the present notation this means from vertices \(x\) as above; see
\cite[(1.16)--(1.18)]{FOS2020}.

\begin{proposition}[Extending a cuspidal unipotent type and compact induction]
\label{prop:supp-depthzero-type}
Let \(M_\omega/F\) be as above, and let \(x\) be a vertex in the reduced
building of \(M_\omega\).  Put
\[
        P:=M_\omega(F)_{x,0},
        \qquad
        P^+:=M_\omega(F)_{x,0+},
        \qquad
        \underline M_x(\ff):=P/P^+,
        \qquad
        N_x:=N_{M_\omega(F)}(P).
\]
Let
\[
        \bar\sigma\in \Irr(\underline M_x(\ff))
\]
be an irreducible cuspidal unipotent representation, and let
\(\sigma\) denote its inflation to \(P\).  Then the following hold.

\begin{enumerate}
\item The representation \(\sigma\) extends to a representation
\(\sigma^N\) of \(N_x\).  Equivalently, \(\bar\sigma\) extends to a
representation of the full finite quotient \(N_x/P^+\).

\item For every smooth character
\[
        \chi:N_x/P\longrightarrow \C^\times,
\]
the compact induction
\[
        \pi_\chi
        :=
        \cInd_{N_x}^{\,M_\omega(F)}
        \bigl(\chi\otimes \sigma^N\bigr)
\]
is an irreducible depth-zero supercuspidal unipotent representation of
\(M_\omega(F)\).

\item The representations \(\pi_\chi\), as \(\chi\) varies, are precisely
the irreducible supercuspidal unipotent representations \(\pi\) of
\(M_\omega(F)\) such that
\[
        \Hom_P(\sigma,\pi)\neq 0.
\]
Equivalently, they are precisely the irreducible quotients of
\(\cInd_P^{\,M_\omega(F)}\sigma\).  The resulting set is a torsor under
the appropriate group of weakly unramified characters.  The choice of the
extension \(\sigma^N\) only chooses an origin for this torsor.
\end{enumerate}
\end{proposition}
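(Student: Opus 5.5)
The plan is to prove the three assertions in order. The inputs are Lusztig's theorem that cuspidal unipotent characters of connected finite reductive groups are stable under pinning-preserving (graph) automorphisms, the abelianity of the component group from Lemma~\ref{lem:vertex-component-abelian}, and the standard Moy--Prasad/Morris criterion for irreducibility and supercuspidality of compactly induced depth-zero types.

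\emph{Step 1 (extension).} By Lemma~\ref{lem:vertex-component-abelian}, the group \(N_x/P\) is finite abelian. Remark~\ref{rem:parahoric-rational-pinned-condition} shows that every component has a representative acting on \(\underline M_x\) by a pinning-preserving automorphism. A cuspidal unipotent character of a connected finite reductive group is fixed by every automorphism preserving a rational pinning: Lusztig's classification shows that these characters are determined by their unipotent support and family data, and such automorphisms fix them (with the known exceptions in \(E_7,E_8\) involving field-of-values issues, where one still has invariance). Hence \(I_{N_x/P^+}(\bar\sigma)=N_x/P^+\).

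To extend, I use Lusztig's preferred extensions, or equivalently the vanishing of the Clifford obstruction. Since \(\underline M_x(\ff)\cdot (N_x/P^+)_{\mathbb P}\) is a semidirect-type decomposition and \(\bar\sigma\) is invariant under the pinned automorphism group, Lusztig's result that unipotent characters extend to \(\underline M_x(\ff)\rtimes\langle\text{graph automorphisms}\rangle\) applies. I expect this to be the main obstacle: the pinned representatives need not form a subgroup, because the intersection with \(\underline M_x(\ff)\) is a finite central-torus piece. For that I would reduce the cocycle to a class in \(H^2\) of a finite abelian group with values in a group on which the central character of \(\bar\sigma\) is trivial. Unipotent characters have trivial central character, so the obstruction dies. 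Inflate the result to get \(\sigma^N\).

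\emph{Step 2 (irreducibility and supercuspidality).} Let \(\kappa=\chi\otimes\sigma^N\). By Mackey theory, it suffices to show that \(\Hom_{N_x\cap gN_xg^{-1}}(\kappa,{}^g\kappa)\neq0\) forces \(g\in N_x\). I follow Moy--Prasad/Morris: if \(g\notin N_x\), the image of \(P\cap gPg^{-1}\) in \(\underline M_x(\ff)\) contains the unipotent radical of a proper parabolic, on which \({}^g\sigma\) is trivial. Cuspidality of \(\bar\sigma\) then kills the intertwining. Hence \(\pi_\chi\) is irreducible, and since it is compactly induced from an open compact-mod-center subgroup it is supercuspidal. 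It is unipotent and depth zero because it contains \(\sigma\) on \(P\) and \(\sigma\) is trivial on \(P^+\).

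\emph{Step 3 (exhaustion and torsor).} Let \(\pi\) be such that \(\Hom_P(\sigma,\pi)\neq0\). Frobenius reciprocity gives a nonzero map \(\cInd_P^{M_\omega(F)}\sigma\to\pi\). Also \(\cInd_P^{N_x}\sigma\cong\sigma^N\otimes\C[N_x/P]\), which decomposes as \(\bigoplus_\chi\chi\otimes\sigma^N\) by Gallagher and Step 1, using that \(N_x/P\) is abelian. Transitivity of induction gives
\[
        \cInd_P^{M_\omega(F)}\sigma\cong\bigoplus_\chi\pi_\chi,
\]
so \(\pi\cong\pi_\chi\) for some \(\chi\). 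Conversely, each \(\pi_\chi\) is an irreducible quotient of \(\cInd_P^{M_\omega(F)}\sigma\).

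For the torsor statement, characters of \(N_x/P\) extend to weakly unramified characters of \(M_\omega(F)\), because \(N_x\cap M_\omega(F)_1=P\) (Lemma~\ref{lem:vertex-component-abelian}). The relation \(\pi_\chi\otimes\eta\cong\pi_{\chi\eta|_{N_x}}\) then shows that the set is a homogeneous space, with stabilizers given by the twists fixing \(\pi\). Changing \(\sigma^N\) multiplies it by a character of \(N_x/P\), which only moves the origin.
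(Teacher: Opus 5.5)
Your route differs from the paper's. The paper proves none of this from scratch: it takes the extension from \cite[Lem.~15.7]{FOS2020}, and irreducibility, exhaustion and the torsor structure from \cite[(1.16)--(1.18)]{FOS2020}. Your Steps 1 and 2 are a reasonable self-contained substitute, and the descent in Step 1 works. Inflate an extension of $\bar\sigma$ from $\underline M_x(\ff)\rtimes D$, where $D$ is the finite image of $X_{\mathbb P}:=(N_x/P^+)_{\mathbb P}$ in $\operatorname{Aut}(\underline M_x)$, to $\underline M_x(\ff)\rtimes X_{\mathbb P}$. The kernel of the multiplication map onto $N_x/P^+$ consists of the pairs $(z^{-1},z)$ with $z\in Z(\underline M_x)(\ff)$, and these act by $\omega_{\bar\sigma}(z)^{-1}=1$.

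Step 3, however, rests on a false premise: $N_x/P$ need not be finite. Lemma~\ref{lem:vertex-component-abelian} only embeds it in $X^\ast(Z(\widehat M)^I)^\Phi$, whose free rank is the split rank of the centre. As soon as $M_\omega$ has a nontrivial $F$-split central torus, its image in $N_x/P$ is infinite; already for $\GL_1$ one has $N_x/P=F^\times/\mathcal O^\times\cong\mathbb Z$. In that case $\cInd_P^{N_x}\sigma\cong\sigma^N\otimes\C[N_x/P]$ is not $\bigoplus_\chi\chi\otimes\sigma^N$, because the Laurent polynomial ring $\C[\mathbb Z]$ has no simple submodules. So your decomposition $\cInd_P^{M_\omega(F)}\sigma\cong\bigoplus_\chi\pi_\chi$, and the exhaustion argument built on it, fail in a case the statement does not exclude.

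The repair is to work with $\Hom_P(\sigma,\pi)=\Hom_P(\sigma,\pi^{P^+})$, which is finite-dimensional by admissibility. The group $N_x$ acts on it by $T\mapsto\pi(n)\,T\,\sigma^N(n)^{-1}$, with $P$ acting trivially. The commuting operators coming from the abelian group $N_x/P$ therefore have a common eigenvector, which is a nonzero element of $\Hom_{N_x}(\chi\otimes\sigma^N,\pi)$ for some $\chi$. Frobenius reciprocity together with irreducibility of $\pi_\chi$ then gives $\pi\cong\pi_\chi$.

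Two smaller points also need attention.
\begin{enumerate}
\item You only obtain a homogeneous space. The torsor statement, and the later bijection $\Omega_{\theta,x}^\ast\xrightarrow{\sim}\Irr(M_\omega(F))[P,\sigma]$, need $\chi\mapsto\pi_\chi$ to be injective. This follows from your Step 2: $\pi_\chi\cong\pi_{\chi'}$ forces $\chi\otimes\sigma^N\cong\chi'\otimes\sigma^N$ as $N_x$-representations, and Schur's lemma for $\sigma$ on $P$ then gives $\chi=\chi'$.
\item Your invariance claim in Step 1 is too general as stated. A pinning-preserving automorphism swapping the two factors of $H\times H$ moves $\rho_1\boxtimes\rho_2$ whenever $\rho_1\neq\rho_2$; for instance, take $H$ of type $E_6$ with $\rho_1,\rho_2$ its two cuspidal unipotent characters. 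What you actually need is invariance under the automorphisms induced by $\Omega_x$ on parahoric quotients, together with extension across a possibly non-cyclic $D$. This is true, but it is a genuinely case-dependent input, and it is precisely what the paper delegates to \cite[Lem.~15.7]{FOS2020}.
\end{enumerate}
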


\begin{proof}
In the notation of \cite[\S 1]{FOS2020}, \(P\) is a maximal parahoric
subgroup and \(\underline M_x(\ff)=P/P^+\) is the associated connected
finite reductive quotient.  Since \(\bar\sigma\) is cuspidal unipotent,
its inflation \(\sigma\) is one of the cuspidal unipotent parahoric
representations considered there.

The extension assertion is exactly \cite[Lem.~15.7]{FOS2020}, which
applies to reductive groups with unramified splitting field and to their
inner forms.  Thus \(\sigma\) extends to \(N_x=N_{M_\omega(F)}(P)\).  Since
\(P^+\subset P\) acts trivially on \(\sigma\), the extension also factors
through \(N_x/P^+\).  Moreover, \(N_x/P\) is the stabilizer of the
corresponding facet in the Kottwitz group, as in \cite[(1.15)]{FOS2020},
and is abelian.  Hence any two extensions of \(\sigma\) to \(N_x\) differ
by a character of \(N_x/P\).

Fix one extension \(\sigma^N\).  The construction and irreducibility of
\[
        \cInd_{N_x}^{\,M_\omega(F)}(\chi\otimes\sigma^N)
\]
are the content of \cite[(1.16)--(1.18)]{FOS2020}, with the reductive
case supplied by the reduction in \cite[\S 15]{FOS2020}.  These results
show that the representations so obtained are irreducible and
supercuspidal, and that every supercuspidal unipotent representation
containing \(\sigma\) on restriction to \(P\) arises in this way.

For depth, observe directly that \(\sigma^N\) is trivial on \(P^+\).
Therefore the compactly induced representation has nonzero
\(P^+\)-fixed vectors, namely the functions supported on \(N_x\).  Thus
\(\pi_\chi\) has depth zero.  It is unipotent because its restriction to
\(P\) contains the inflated cuspidal unipotent representation
\(\sigma\).  Finally, by Frobenius reciprocity,
\(\Hom_P(\sigma,\pi)\neq 0\) is equivalent to \(\pi\) occurring as an
irreducible quotient of \(\cInd_P^{\,M_\omega(F)}\sigma\), and the
exhaustion and simple transitivity by weakly unramified characters are
precisely \cite[(1.17)--(1.18)]{FOS2020}.  Replacing \(\sigma^N\) by
\(\eta\otimes\sigma^N\), with \(\eta\in\Hom(N_x/P,\C^\times)\), replaces
the parametrization \(\chi\mapsto\pi_\chi\) by
\(\chi\mapsto\pi_{\chi\eta}\).  Thus the packet is canonical, while the
chosen extension only fixes its labelling.
\end{proof}
\smallskip
The \emph{packet} attached to the cuspidal unipotent type \((P,\sigma)\) is
\[
        \Irr(M_\omega(F))[P,\sigma]
        :=
        \left\{
        \pi\in \Irr(M_\omega(F))_{\cusp,\unip}
        :
        \Hom_P(\sigma,\pi)\neq 0
        \right\}.
\]
Equivalently, it is the set of irreducible quotients of
\(\cInd_P^{M_\omega(F)}\sigma\).  After fixing an extension
\(\sigma^N\) of \(\sigma\) to \(N_x=N_{M_\omega(F)}(P)\), the preceding
proposition identifies this packet with
\[
        \left\{
        \cInd_{N_x}^{M_\omega(F)}(\chi\otimes\sigma^N)
        :
        \chi\in \Hom(N_x/P,\C^\times)
        \right\}/\cong .
\]
In the notation of \cite[(1.15)]{FOS2020}, when the relevant quotient is
finite one has
\[
        N_x/P \;\cong\; \Omega_{\theta,x},
\]
where \(\Omega_{\theta,x}\) is the stabilizer of the corresponding
parahoric in \(\Omega^\theta\).  Thus the choice of \(\sigma^N\) gives an
equivariant parametrization
\[
        \Omega_{\theta,x}^{\ast}
        \;\xrightarrow{\;\sim\;}\;
        \Irr(M_\omega(F))[P,\sigma],
        \qquad
        \chi\longmapsto
        \cInd_{N_x}^{M_\omega(F)}(\chi\otimes\sigma^N),
\]
as in \cite[(1.18)]{FOS2020}.  Changing the extension \(\sigma^N\) merely
translates this parametrization by a character of \(N_x/P\).  Hence the
packet itself is intrinsic, whereas the displayed labelling depends on
the chosen extension.

With the Haar-measure normalization used in
\cite{FOS2020}, the formal degree of any member of this packet is
\[
        \operatorname{fdeg}(\pi)
        =
        \frac{\dim(\sigma)}
             {|\Omega_{\theta,x}|\,\operatorname{vol}(P)}
        =
        \frac{\dim(\sigma^N)}
             {\operatorname{vol}(N_x)}
        \qquad
        \bigl(\pi\in \Irr(M_\omega(F))[P,\sigma]\bigr),
\]
whenever \(\Omega_{\theta,x}\) is finite; this is
\cite[(1.19)]{FOS2020}.  For a reductive group with non-compact split
centre, the same construction should be understood after separating off
the weakly unramified character torus, as in \cite[\S 15]{FOS2020}.  In
the applications below we use the formula only in the finite, or
equivalently compact-centre modulo the harmless split-centre reduction,
situation.

\subsection{The Feng--Opdam--Solleveld correspondence}

The main result of Feng--Opdam--Solleveld gives a canonical supply of
enhanced parameters for the supercuspidal unipotent representations just
described.  In the notation of this paper, their correspondence is a
bijection
\[
        \LLC_{\mathrm{FOS}}:
        \Irr(M_\omega(F))_{\cusp,\unip}
        \xrightarrow{\;\sim\;}
        \Phi^{\mathrm e}_{\mathrm{nr}}(M_\omega)_{\cusp},
        \qquad
        \pi\longmapsto (\lambda_\pi,\rho_\pi),
\]
where the right hand side denotes the set of \(\widehat M\)-conjugacy classes
of unramified cuspidal enhanced parameters relevant to the inner form
\(M_\omega\).  The characterization we shall need is the following form of
\cite[Thm.~2]{FOS2020}.

\begin{theorem}[Feng--Opdam--Solleveld]
\label{thm:FOS-unipotent-LLC}
Let \(M_\omega\) be an inner form of an unramified connected reductive
\(F\)-group.  There is a bijection
\[
        \LLC_{\mathrm{FOS}}:
        \Irr(M_\omega(F))_{\cusp,\unip}
        \xrightarrow{\;\sim\;}
        \Phi^{\mathrm e}_{\mathrm{nr}}(M_\omega)_{\cusp}
\]
with the following properties.

\begin{enumerate}
\item If \(G\) is semisimple, then the formal degree of \(\pi\) agrees
with the adjoint \(\gamma\)-factor of \(\lambda_\pi\), up to a rational
factor depending only on the enhancement \(\rho_\pi\).

\item The bijection is equivariant for weakly unramified twisting.  More
precisely, if \(\eta\) is a weakly unramified character of \(M_\omega(F)\)
and \(z_\eta\in Z(\widehat M)^\theta\) is the corresponding element under
\cite[(1.3)]{FOS2020}, then
\[
        \LLC_{\mathrm{FOS}}(\eta\otimes\pi)
        =
        \bigl(z_\eta\lambda_\pi,\rho_\pi\bigr).
\]

\item The bijection is equivariant for \(W_F\)-automorphisms of the based
root datum.  In the semisimple case this is equivariance for diagram
automorphisms.

\item The bijection is compatible with almost direct products of
reductive groups.

\item Let \(Z_s\) be the maximal \(F\)-split central torus of \(M_\omega\),
and put
\[
        M_{\der,Z}:=\bigl(M_\omega/Z_s\bigr)_{\der}.
\]
If \(Z_s(F)\) acts trivially on \(\pi\), then \(\pi\) may be viewed as a
representation of \((M_\omega/Z_s)(F)\), and its restriction to
\(M_{\der,Z}(F)\) corresponds to the image of \(\lambda_\pi\) under the
natural map of \(L\)-groups
\[
        {}^L(M_\omega/Z_s)\longrightarrow {}^L M_{\der,Z} .
\]

\item The preceding compatibility induces bijections between the
supercuspidal unipotent representations lying in the relevant
\(L\)-packet on the \(M_\omega\)-side and those lying in the corresponding
\(L\)-packet on the \(M_{\der,Z}\)-side.
\end{enumerate}

Moreover, for a fixed \(\pi\), properties \((1)\), \((2)\), \((4)\), and
\((5)\) determine the ordinary parameter \(\lambda_\pi\) uniquely up to
weakly unramified twist.
\end{theorem}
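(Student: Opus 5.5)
This theorem is the main result of \cite{FOS2020} (their Theorem~2 together with the surrounding discussion in \S1 and \S15), specialized to the notation of this section. The plan is therefore to deduce each assertion from the corresponding statement there, after checking that the objects coincide. That is, our
\(\Irr(M_\omega(F))_{\cusp,\unip}\), \(\Phi^{\mathrm e}_{\mathrm{nr}}(M_\omega)_{\cusp}\), \(\cA_\lambda\), the relevance character \(\zeta_\omega\), and the identification of weakly unramified characters with \(Z(\widehat M)_\theta\) must be literally those of \cite[(1.3)--(1.9),(1.14)--(1.18)]{FOS2020}. The only notational change is \(K\rightsquigarrow F\).

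First I would match the two sides.

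\emph{Group side.} By Proposition~\ref{prop:supp-depthzero-type}, the supercuspidal unipotent representations are exactly the \(\cInd_{N_x}^{M_\omega(F)}(\chi\otimes\sigma^N)\) attached to cuspidal unipotent types at vertices. This is the set FOS parametrize.

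\emph{Parameter side.} The definition of cuspidality via \cite[Def.~6.9]{AMS18}, applied to \(Z^1_{\widehat M_{\mathrm{sc}}}(\lambda(W_F))\), agrees with FOS's through the identification \cite[(1.8)]{FOS2020} of \(\cA_\lambda\).

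With both sides matched, the bijection and properties (1)--(4) are quoted from \cite[Thm.~2]{FOS2020}:
\begin{itemize}
\item (1) is the formal degree statement, combined with the formula \(\operatorname{fdeg}=\dim\sigma^N/\operatorname{vol}(N_x)\) recalled above.
\item (2) is the twisting equivariance \cite[(1.9)]{FOS2020}.
\item (3) is the equivariance under automorphisms of the based root datum.
\item (4) is compatibility with almost direct products.
\end{itemize}
For (5) and (6) I would invoke the reduction of \cite[\S15]{FOS2020}. There, a reductive \(M_\omega\) is handled by passing to \(M_\omega/Z_s\) and then to its derived group, and compatibility with \({}^L(M_\omega/Z_s)\to{}^LM_{\der,Z}\) is built into the construction. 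Statement (6) follows by restricting \(\pi\) to \(M_{\der,Z}(F)\) and using Clifford theory for the finite abelian quotient.

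The uniqueness clause is where I expect the main obstacle, since it is only implicit in FOS. My plan is as follows.
\begin{enumerate}
\item Suppose \(\lambda,\lambda'\) both satisfy (1), (2), (4), (5) for \(\pi\).
\item By (2), after a weakly unramified twist we may assume \(Z_s(F)\) acts trivially. By (5), the images of \(\lambda\) and \(\lambda'\) in \({}^LM_{\der,Z}\) then both correspond to \(\pi|_{M_{\der,Z}(F)}\).
\item By (4), reduce to \(F\)-simple factors.
\item On each simple factor, use the FOS argument: the adjoint \(\gamma\)-factor, known up to a rational constant, pins down the unramified discrete parameter among the finitely many candidates with cuspidal support on the same unipotent class. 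This is their uniqueness-up-to-twist argument via formal degrees, \cite[\S\S13--15]{FOS2020}.
\item Lift back from \({}^LM_{\der,Z}\) to \({}^LM\). Two such lifts differ by an element of \(Z(\widehat M)^\theta\), i.e.\ by a weakly unramified twist.
\end{enumerate}
The delicate point is step 4. I would simply cite the FOS statement that (1) together with twisting determines \(\lambda_\pi\) on simple adjoint groups, rather than reprove it.
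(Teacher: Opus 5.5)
Your proposal takes essentially the same route as the paper: the paper gives no independent argument and simply quotes \cite[Thm.~2]{FOS2020} in the notation of this section (with $K$ renamed $F$), so matching the objects and citing is all that is required. One correction: the uniqueness clause is not merely implicit in FOS but is stated explicitly as part of \cite[Thm.~2]{FOS2020}, so your five-step reconstruction of it is unnecessary, and its key step~4 ends up citing FOS anyway.
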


For later reference we record the consequence for a fixed type
\((P,\sigma)\).  Choose an extension \(\sigma^N\) of \(\sigma\) to \(N_x\)
and put
\[
        \pi_1:=\cInd_{N_x}^{M_\omega(F)}\sigma^N .
\]
For a character \(\chi\in \Hom(N_x/P,\C^\times)\), choose a weakly
unramified character \(\eta_\chi\) of \(M_\omega(F)\) whose restriction to
\(N_x/P\) is \(\chi\).  Then
\[
        \cInd_{N_x}^{M_\omega(F)}(\chi\otimes\sigma^N)
        \;\cong\;
        \eta_\chi\otimes \pi_1,
\]
and Theorem~\ref{thm:FOS-unipotent-LLC} gives
\[
        \LLC_{\mathrm{FOS}}
        \!\left(
        \cInd_{N_x}^{M_\omega(F)}(\chi\otimes\sigma^N)
        \right)
        =
        \bigl(z_{\eta_\chi}\lambda_{\pi_1},\rho_{\pi_1}\bigr).
\]
Thus the image of the packet \(\Irr(M_\omega(F))[P,\sigma]\) is a weakly
unramified orbit of unramified cuspidal enhanced parameters.  The
enhancement is unchanged under the weakly unramified twisting action,
whereas the ordinary parameter is multiplied by the corresponding central
element of \(\widehat M\).

\subsection{Parameter-side unipotentization and the enriched finite \(H\)-side datum}

We now isolate the input from FOS which will be used in
Section~\ref{sec:LLC-enhanced-depth-zero-cuspidal}.  The relevant operation has
two logically separate parts.  First one removes the tame inertial semisimple
part of a depth-zero parameter and views the remaining Frobenius--\(\SL_2\)-datum
as an ordinary unramified parameter for an auxiliary unramified connected group.
This is the part to which FOS applies.  Second, the original enhancement
contains a residual Clifford datum measuring how the full component group acts
on the connected unramified datum.  In the enriched version this residual datum
is kept as a projective Clifford label; it is not forced to become an ordinary
character of a disconnected finite group.

\begin{proposition}[Ordinary parameter-side unipotentization]
\label{prop:parameter-side-unipotentization}
Let \((\varphi,\rho)\) be a relevant cuspidal depth-zero enhanced parameter for
\(G\).  Write
\[
        \varphi(w)=g_w\rtimes w,
        \qquad w\in W_F,
\]
and put
\[
        \widehat H_\varphi
        :=
        C_{\widehat G}\bigl(\varphi(I_F)\bigr)^\circ
        =
        \{h\in\widehat G: h\varphi(i)=\varphi(i)h
          \text{ for all }i\in I_F\}^{\circ}.
\]
Then \(\widehat H_\varphi\) is a connected reductive subgroup of
\(\widehat G\), and \(\varphi(\Frob)\) normalizes \(\widehat H_\varphi\).
After the fixed pinned normalization, the outer class of the automorphism of
\(\widehat H_\varphi\) induced by \(\varphi(\Frob)\) determines an unramified
\(F\)-group \(H_\varphi\) with dual group \(\widehat H_\varphi\).  Moreover
\(\varphi\) determines an unramified \(L\)-parameter
\[
        \lambda_\varphi:W_F\times\SL_2(\C)\longrightarrow {}^LH_\varphi
\]
characterized, up to \(\widehat H_\varphi\)-conjugacy, by
\[
        \lambda_\varphi|_{I_F}=1,
        \qquad
        \lambda_\varphi|_{\SL_2}=\varphi|_{\SL_2}
        \ \text{viewed inside }\widehat H_\varphi .
\]
The inclusion \(\widehat H_\varphi\subset\widehat G\) does not, by
itself, give a literal inclusion of \(L\)-groups
\({}^LH_\varphi\subset{}^LG\).  The same pinned normalization determines a
\(\varphi\)-adapted \(L\)-embedding
\[
        \xi_\varphi:{}^LH_\varphi\longrightarrow{}^LG,
\]
characterized by
\[
        \xi_\varphi|_{\widehat H_\varphi}
        =
        \widehat H_\varphi\hookrightarrow\widehat G,
        \qquad
        \xi_\varphi\circ\lambda_\varphi=\varphi .
\]
Equivalently, if
\[
        \lambda_\varphi(w,1)=\ell_\varphi(w)\rtimes w,
        \qquad
        \varphi(w,1)=g_w\rtimes w,
\]
then
\[
        \xi_\varphi(h\rtimes w)
        =
        h\,\ell_\varphi(w)^{-1}g_w\rtimes w .
\]
The construction gives
\[
        Z_{\widehat H_\varphi}(\lambda_\varphi)
        =
        Z_{\widehat G}(\varphi)\cap\widehat H_\varphi
\]
and weakly unramified twists of \(\varphi\) correspond to weakly unramified
twists of \(\lambda_\varphi\).
\end{proposition}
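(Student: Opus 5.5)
The plan is to follow the standard reduction of a tamely ramified (here depth-zero) parameter to an unramified parameter for the connected inertial centralizer, with the pinning used to fix all normalizations.

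\emph{Step 1: structure of the inertial centralizer.} Since \(\varphi\) has depth zero, \(\varphi(P_F)=1\). Hence \(\varphi(I_F)\) is generated, up to the action of \(I_F\) on \(\widehat G\) through a finite quotient, by the single element \(\varphi(\iota)=g_\iota\rtimes\iota\), which is semisimple of finite order prime to \(p\). The centralizer of a reductive (here essentially finite, topologically cyclic) subgroup acting on \(\widehat G\) by semisimple automorphisms is reductive; this is Steinberg's theorem for the fixed points of a semisimple automorphism. Therefore \(\widehat H_\varphi\) is connected reductive. Because \(I_F\) is normal in \(W_F\), conjugation by \(\varphi(\Frob)\) permutes \(\varphi(I_F)\). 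It thus preserves \(C_{\widehat G}(\varphi(I_F))\) and its identity component.

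\emph{Step 2: the unramified group \(H_\varphi\).} Choose a Borel pair of \(\widehat H_\varphi\) containing the image of \(\widehat S\) under the pinned admissible embedding; such a pair exists by Proposition~\ref{prop:canonical-LS}(3). Modify \(\operatorname{Ad}\varphi(\Frob)\) by an inner automorphism of \(\widehat H_\varphi\) so that it preserves a pinning. This pinning is chosen compatibly with the fixed pinning of \(\widehat G\) restricted to the root subgroups of \(\widehat H_\varphi\). The result is a well-defined outer class, i.e.\ an automorphism of finite order of the based root datum. This gives the unramified quasi-split \(H_\varphi\) and the action of \(W_F\) on \(\widehat H_\varphi\) through \(W_F/I_F\).

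\emph{Step 3: the parameter \(\lambda_\varphi\) and the embedding \(\xi_\varphi\).} Set \(\lambda_\varphi(i)=1\rtimes i\) for \(i\in I_F\), and let \(\lambda_\varphi|_{\SL_2}=\varphi|_{\SL_2}\). The latter lands in \(\widehat H_\varphi\) because \(\SL_2\) commutes with \(W_F\) and is connected. Define \(\lambda_\varphi(\Frob)=h_0\rtimes\Frob\), where \(h_0\in\widehat H_\varphi\) is the element correcting the pinned Frobenius to \(\operatorname{Ad}\varphi(\Frob)\) on \(\widehat H_\varphi\). Then define \(\xi_\varphi\) by the displayed formula. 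One must check two things.
\begin{enumerate}
\item \(w\mapsto \ell_\varphi(w)^{-1}g_w\rtimes w\) is a homomorphism \(W_F\to{}^LG\) whose action on \(\widehat H_\varphi\) matches the \(L\)-action of \({}^LH_\varphi\). This holds on \(I_F\) because \(\varphi(I_F)\) centralizes \(\widehat H_\varphi\), and on \(\Frob\) by the choice of \(h_0\).
\item The cocycle condition for \(w\mapsto\ell_\varphi(w)^{-1}g_w\). This reduces to the fact that \(\varphi\) is a homomorphism.
\end{enumerate}
The identity \(\xi_\varphi\circ\lambda_\varphi=\varphi\) then holds by construction, and uniqueness up to \(\widehat H_\varphi\)-conjugacy follows from the choices being unique up to that conjugacy.

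\emph{Step 4: centralizers and twists.} An element \(h\in\widehat H_\varphi\) commutes with \(\lambda_\varphi\) if and only if \(\xi_\varphi(h)=h\) commutes with \(\xi_\varphi\circ\lambda_\varphi=\varphi\). This gives the centralizer identity; note that elements of \(\widehat H_\varphi\) automatically commute with \(\varphi(I_F)\). For twists, a weakly unramified twist by \(z\in Z(\widehat G)\) is trivial on \(I_F\). Hence it does not change \(\widehat H_\varphi\), and the corresponding central element lies in \(Z(\widehat H_\varphi)\). Twisting \(\lambda_\varphi\) by it corresponds under \(\xi_\varphi\) to twisting \(\varphi\).

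The main obstacle is Step 3. One has to show that \(\xi_\varphi\) is a genuine \(L\)-homomorphism, in particular continuous and compatible with the Frobenius action. One also has to show that the pinned choice of \(h_0\) is independent of auxiliary choices up to \(\widehat H_\varphi\)-conjugacy. To do this I would work with a fixed Frobenius lift and use that \(\operatorname{Ad}(g_{\Frob}\rtimes\Frob)\) and the pinned action differ by an inner automorphism of \(\widehat H_\varphi\).
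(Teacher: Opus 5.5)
Your outline follows the paper's proof almost step for step. Both use the finite prime-to-$p$ inertial image with Steinberg's theorem for reductivity, and normality of $I_F$ for Frobenius-stability. Both take a pinned representative $\theta_\varphi$ of the outer class of $\alpha_\varphi=\Ad(\varphi(\Frob))|_{\widehat H_\varphi}$ and a correcting element $h_0$ with $\alpha_\varphi=\Ad(h_0)\circ\theta_\varphi$. Both then set $\lambda_\varphi(\Frob)=h_0\rtimes\Frob$ and define $\xi_\varphi$ by the displayed formula. Your derivation of the centralizer identity from injectivity of $\xi_\varphi$ and $\xi_\varphi\circ\lambda_\varphi=\varphi$ is a valid shortcut for the paper's computation $hh_\varphi=h_\varphi\theta_\varphi(h)\Leftrightarrow\alpha_\varphi(h)=h$.

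\textbf{The gap: uniqueness of $\lambda_\varphi$.} This is the step you flag as the main obstacle, and your proposed fix does not supply it.
\begin{itemize}
\item The relation $\alpha_\varphi=\Ad(h_0)\circ\theta_\varphi$ determines $h_0$ only modulo $Z(\widehat H_\varphi)$. Conjugating $\lambda_\varphi$ by $y\in\widehat H_\varphi$ replaces $h_0$ by $yh_0\theta_\varphi(y)^{-1}$, which in general cannot absorb a central factor.
\item Knowing that $\Ad(g_{\Frob}\rtimes\Frob)$ and the pinned action differ by an inner automorphism gives existence of $h_0$, not uniqueness.
\item Counterexample: for $G=\GL_1$ one has $\widehat H_\varphi=\C^\times$ and $\theta_\varphi=\mathrm{id}$. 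Every $h_0\in\C^\times$ is admissible and conjugation acts trivially. So distinct choices give non-conjugate $\lambda_\varphi$, all satisfying the listed properties.
\item The conditions $\lambda_\varphi|_{I_F}=1$ and $\lambda_\varphi|_{\SL_2}=\varphi|_{\SL_2}$ say nothing about $\lambda_\varphi(\Frob)$. The identity $\xi_\varphi\circ\lambda_\varphi=\varphi$ cannot pin it down either, because $\xi_\varphi$ is built from $\lambda_\varphi$ and the identity holds tautologically for every choice.
\item The paper's proof asserts the same uniqueness (``unique up to $\theta_\varphi$-twisted conjugacy'') without argument, so following it does not close the gap.
\end{itemize}
What one can prove is weaker. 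The pair $(\lambda_\varphi,\xi_\varphi)$ is well defined up to $\widehat H_\varphi$-conjugacy and up to multiplying $\lambda_\varphi$ by an unramified $Z(\widehat H_\varphi)$-valued cocycle, with a compensating change in $\xi_\varphi$. So $\lambda_\varphi$ is canonical only up to weakly unramified twists of $H_\varphi$. This is harmless downstream, because such twists do not change the restriction of the FOS representation to the parahoric (Proposition~\ref{prop:supp-depthzero-type}). But the ``characterized up to conjugacy'' clause needs either this weakening or an independent normalization of $\lambda_\varphi(\Frob)$.

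\textbf{Two smaller points.}
\begin{itemize}
\item In Step~4, the element $z\in Z(\widehat G)^{I_F}$ attached to a weakly unramified character need not lie in $\widehat H_\varphi$ when $I_F$ acts nontrivially on $\widehat G$. For ramified $\mathrm{U}_1$ with $p$ odd, $Z(\widehat G)^{I_F}=\{\pm1\}$ gives a nontrivial weakly unramified character. Yet $C_{\C^\times}(\varphi(I_F))=\{\pm1\}$, so $\widehat H_\varphi$ is trivial. There the twist is absorbed into the embedding rather than into $\lambda_\varphi$: one takes $\lambda_{z\varphi}=\lambda_\varphi$ and $\xi_{z\varphi}(h\rtimes w)=z(w)\,\xi_\varphi(h\rtimes w)$. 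Your argument, like the paper's, is correct when inertia acts trivially on $\widehat G$. In that case $\widehat H_\varphi=C_{\widehat G}(\operatorname{pr}_{\widehat G}\varphi(\iota))^\circ$ contains a maximal torus of $\widehat G$, hence $Z(\widehat G)$.
\item In Step~2, the appeal to $\widehat S$ and Proposition~\ref{prop:canonical-LS}(3) is unnecessary and not quite right. No torus is given on the parameter side, and $\widehat j(\widehat S)\not\subset\widehat H_\varphi$ when $I_F$ acts nontrivially on $\widehat S$. Nor is ``compatibility with the pinning of $\widehat G$'' meaningful for root subgroups of $\widehat H_\varphi$ in general. Any pinning of $\widehat H_\varphi$ will do, since $\widehat H_\varphi$ permutes pinnings transitively and conjugation identifies the resulting pinned data.
\end{itemize}
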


\begin{proof}
We first isolate the inertial centralizer.  Since \(\varphi\) has depth zero,
wild inertia acts trivially and the tame inertial image is a finite group of
semisimple elements.  Hence
\[
        \widehat H_\varphi
        =
        C_{\widehat G}\bigl(\varphi(I_F)\bigr)^\circ
\]
is connected reductive.  The notation means the centralizer inside
\(\widehat G\) of the subgroup \(\varphi(I_F)\subset{}^LG\), in the semidirect
product sense.  Thus \(h\in\widehat G\) belongs to this centralizer if and only
if
\[
        h\varphi(i)=\varphi(i)h
        \qquad\text{for every }i\in I_F.
\]
When the inertia action on \(\widehat G\) is trivial, this is the ordinary
centralizer of the projected inertial image in \(\widehat G\).

Let \(\Frob\) be a Frobenius element.  Conjugating the equality
\(h\varphi(i)=\varphi(i)h\) by \(\varphi(\Frob)\) shows that
\(\varphi(\Frob)h\varphi(\Frob)^{-1}\) centralizes
\(\varphi(\Frob i\Frob^{-1})\), and hence centralizes \(\varphi(I_F)\), since
\(\Frob I_F\Frob^{-1}=I_F\).  Therefore \(\varphi(\Frob)\) normalizes
\(\widehat H_\varphi\).

Let
\[
        \alpha_\varphi:=\Ad(\varphi(\Frob))|_{\widehat H_\varphi}
\]
denote the induced automorphism of \(\widehat H_\varphi\), with the original
\(W_F\)-action included.  The fixed pinning chooses a pinned representative
\(\theta_\varphi\) of the outer class of \(\alpha_\varphi\).  Thus there is an
\(h_\varphi\in\widehat H_\varphi\), unique up to \(\theta_\varphi\)-twisted
conjugacy, such that
\[
        \alpha_\varphi=\Ad(h_\varphi)\circ\theta_\varphi .
\]
The pinned automorphism \(\theta_\varphi\) defines an unramified \(F\)-group
\(H_\varphi\) with dual group \(\widehat H_\varphi\).  Its \(L\)-group is
\[
        {}^LH_\varphi=\widehat H_\varphi\rtimes W_F,
\]
where inertia acts trivially and \(\Frob\) acts through \(\theta_\varphi\).

We define an unramified parameter by setting
\[
        \lambda_\varphi(i)=1\rtimes i
        \qquad (i\in I_F),
        \qquad
        \lambda_\varphi(\Frob)=h_\varphi\rtimes \Frob .
\]
The \(\SL_2\)-image of \(\varphi\) centralizes \(\varphi(I_F)\), because the two
factors in \(W_F\times\SL_2(\C)\) commute.  Since \(\SL_2(\C)\) is connected,
this image lies in \(\widehat H_\varphi\), and we put
\[
        \lambda_\varphi|_{\SL_2}=\varphi|_{\SL_2}.
\]
Frobenius compatibility follows from the equality
\(\alpha_\varphi=\Ad(h_\varphi)\circ\theta_\varphi\) and from the fact that
\(\varphi(\Frob)\) commutes with the \(\SL_2\)-image in the domain.  Hence
\(\lambda_\varphi\) is an unramified parameter for \(H_\varphi\).

The associated adapted \(L\)-embedding is obtained as follows.  Write
\[
        \lambda_\varphi(w,1)=\ell_\varphi(w)\rtimes w,
        \qquad
        \varphi(w,1)=g_w\rtimes w .
\]
Then
\[
        \xi_\varphi(h\rtimes w)
        =
        h\,\ell_\varphi(w)^{-1}g_w\rtimes w
\]
defines an \(L\)-embedding
\[
        \xi_\varphi:{}^LH_\varphi\longrightarrow{}^LG .
\]
It restricts to the inclusion on \(\widehat H_\varphi\) and satisfies
\[
        \xi_\varphi\circ\lambda_\varphi=\varphi .
\]
Thus the unramified centralizer model is viewed inside \({}^LG\) through
\(\xi_\varphi\).

The centralizer identity is direct.  An element \(h\in\widehat H_\varphi\)
centralizes \(\lambda_\varphi\) if and only if it centralizes the \(\SL_2\)-image
and satisfies
\[
        h h_\varphi=h_\varphi\theta_\varphi(h).
\]
This latter condition is equivalent to \(\alpha_\varphi(h)=h\), namely to the
condition that \(h\) centralizes \(\varphi(\Frob)\) in \({}^LG\).  Since
\(h\in\widehat H_\varphi\) already centralizes \(\varphi(I_F)\), the displayed
identity follows.  Finally, a weakly unramified twist does not change
\(\varphi|_{I_F}\); it only multiplies the Frobenius element by a central
unramified element.  This is exactly weakly unramified twisting of
\(\lambda_\varphi\).
\end{proof}
The preceding proposition constructs only the ordinary unramified parameter
\(\lambda_\varphi\).  We now fix the notation for the enhancement.  The point is
that the FOS correspondence applies to an ordinary enhanced unramified parameter
for the connected unramified group \(H_\varphi\), while the original enhancement
\(\rho\) may still contain residual component-group information.  We keep that
residual information as Clifford data.

We use throughout the Arthur--Kaletha--AMS convention for component groups,
recalled in Section~\ref{sec:FOS-unipotent}.  Thus, for an \(L\)-parameter
\(\psi\) of an \(F\)-group \(M\), the enhancement group is
\[
        \mathcal S^M_\psi
        :=
        \pi_0\!\left(Z^1_{\widehat M_{\mathrm{sc}}}(\psi)\right),
\]
where \(Z^1_{\widehat M_{\mathrm{sc}}}(\psi)\) denotes the full inverse image in
\(\widehat M_{\mathrm{sc}}\) of the appropriate centralizer in
\(\widehat M_{\mathrm{ad}}\), with the central coboundary convention used in
\cite[(1.5)--(1.8)]{FOS2020} and \cite[Def.~6.9]{AMS18}.  Equivalently, if
\[
        G^M_\psi
        :=
        Z^1_{\widehat M_{\mathrm{sc}}}\bigl(\psi(W_F)\bigr),
        \qquad
        u_\psi
        :=
        \operatorname{pr}_{\widehat M}
        \psi\!\left(1,
        \begin{psmallmatrix}1&1\\0&1\end{psmallmatrix}
        \right),
\]
then
\[
        \mathcal S^M_\psi
        \simeq
        \pi_0\!\left(Z_{G^M_\psi}(u_\psi)\right).
\]

Apply this convention to the unramified parameter \(\lambda_\varphi\) of
\(H_\varphi\).  Let \(\widehat H_{\varphi,\mathrm{sc}}\) be the simply connected
cover of the derived group of \(\widehat H_\varphi\), and put
\[
        u_\varphi=u_{\lambda_\varphi}
        :=
        \operatorname{pr}_{\widehat H_\varphi}
        \lambda_\varphi\!\left(
        1,
        \begin{psmallmatrix}1&1\\0&1\end{psmallmatrix}
        \right).
\]
Since \(\lambda_\varphi|_{\SL_2}=\varphi|_{\SL_2}\) inside
\(\widehat H_\varphi\), this is the same unipotent element as the projection of
\(\varphi(1,\begin{psmallmatrix}1&1\\0&1\end{psmallmatrix})\) to
\(\widehat G\), now viewed in \(\widehat H_\varphi\).  We write
\[
        \cA_{\lambda_\varphi}
        :=
        \pi_0\!\left(
        Z^1_{\widehat H_{\varphi,\mathrm{sc}}}(\lambda_\varphi)
        \right).
\]
Equivalently, with
\[
        G_{\lambda_\varphi}^{H_\varphi}
        :=
        Z^1_{\widehat H_{\varphi,\mathrm{sc}}}
        \bigl(\lambda_\varphi(W_F)\bigr),
\]
one has
\[
        \cA_{\lambda_\varphi}
        \simeq
        \pi_0\!\left(
        Z_{G_{\lambda_\varphi}^{H_\varphi}}(u_\varphi)
        \right).
\]
Thus \(\cA_{\lambda_\varphi}\) is precisely the ordinary FOS enhancement group
for the unramified parameter \(\lambda_\varphi\) of \(H_\varphi\).  If one uses
the notation \(\mathcal S_{\lambda_\varphi}^{H_\varphi}\) for the AMS component
group of this unramified parameter, then
\[
        \mathcal S_{\lambda_\varphi}^{H_\varphi}
        =
        \cA_{\lambda_\varphi}.
\]

We now compare this FOS group with the component group carrying the original
enhancement.  The original enhancement is a representation of
\[
        \mathcal S_\varphi
        =
        \pi_0\!\left(Z^1_{\widehat G_{\mathrm{sc}}}(\varphi)\right),
\]
as in the convention fixed in Section~\ref{sec:FOS-unipotent}.  The inclusion
\(\widehat H_\varphi\subset \widehat G\), together with the centralizer identity
of Proposition~\ref{prop:parameter-side-unipotentization}, gives a component
comparison in which the FOS group \(\cA_{\lambda_\varphi}\) is the subgroup
coming from the identity component of the inertial centralizer, while the full
enhancement group still acts on it by conjugation.  More explicitly, let
\[
        \xi_\varphi:{}^LH_\varphi\longrightarrow{}^LG
\]
be the adapted \(L\)-embedding from
Proposition~\ref{prop:parameter-side-unipotentization}.  Since
\[
        \xi_\varphi\bigl(\lambda_\varphi(W_F)\bigr)=\varphi(W_F),
\]
the full transported group may be written directly in the original dual group as
\[
        G_\varphi^{\mathrm{un}}
        :=
        Z^1_{\widehat G_{\mathrm{sc}}}\bigl(\varphi(W_F)\bigr).
\]
Equivalently,
\[
        G_\varphi^{\mathrm{un}}
        =
        Z^1_{\widehat G_{\mathrm{sc}}}
        \bigl(\varphi(I_F),\xi_\varphi(\lambda_\varphi(W_F))\bigr),
\]
where the right hand side denotes the inverse image in \(\widehat G_{\mathrm{sc}}\)
of the centralizer in \(\widehat G_{\mathrm{ad}}\) of the subgroup generated by
\(\varphi(I_F)\) and \(\xi_\varphi(\lambda_\varphi(W_F))\).  We define
\[
        \mathcal S_\varphi^{\mathrm{un}}
        :=
        \pi_0\!\left(Z_{G_\varphi^{\mathrm{un}}}(u_\varphi)\right).
\]
Thus the centralizer comparison identifies
\[
        \mathcal S_\varphi^{\mathrm{un}}
        \simeq
        \pi_0\!\left(Z^1_{\widehat G_{\mathrm{sc}}}(\varphi)\right)
        =
        \mathcal S_\varphi .
\]
Thus \(\mathcal S_\varphi^{\mathrm{un}}\) is the original enhancement group
written in the unramified centralizer model, and it contains the FOS group as a
normal subgroup:
\[
        \cA_{\lambda_\varphi}
        \triangleleft
        \mathcal S_\varphi^{\mathrm{un}}.
\]
The notation is meant to distinguish the two roles: \(\cA_{\lambda_\varphi}\) is
the enhancement group for the ordinary unramified FOS parameter, whereas
\(\mathcal S_\varphi^{\mathrm{un}}\) is the transported full component group
through which the original enhancement acts.  In the genuinely unipotent case
this normal inclusion may be an equality; in general the quotient records the
residual component action left after removing the tame inertial semisimple part.

Under the identification \(\mathcal S_\varphi\simeq
\mathcal S_\varphi^{\mathrm{un}}\), we write
\[
        \rho^{\mathrm{un}}
        \in
        \Irr\bigl(\mathcal S_\varphi^{\mathrm{un}}\bigr)
\]
for the transported form of the original enhancement \(\rho\).  The FOS input is
obtained by restricting \(\rho^{\mathrm{un}}\) to the normal subgroup
\(\cA_{\lambda_\varphi}\).  Thus, for an irreducible constituent
\[
        \rho^\circ_\varphi
        \prec
        \Res^{\mathcal S_\varphi^{\mathrm{un}}}_{\cA_{\lambda_\varphi}}
        \rho^{\mathrm{un}},
\]
the pair \((\lambda_\varphi,\rho^\circ_\varphi)\) is the ordinary enhanced
unramified parameter to which FOS will be applied.  The dependence on the choice
of \(\rho^\circ_\varphi\) is then accounted for by Clifford theory for
\(\cA_{\lambda_\varphi}\triangleleft\mathcal S_\varphi^{\mathrm{un}}\), as made
precise in the next lemma.
%-------------------------------------
\begin{lemma}[Component groups after unipotentization]
\label{lem:component-group-comparison-unipotentization}
Let
\[
        (\varphi,\rho)\in \Phi^{\mathrm e}_{0,\cusp}(G)
\]
be a relevant cuspidal enhanced depth-zero parameter.  Let
\(\lambda_\varphi\) be the unramified parameter of \(H_\varphi\) constructed in
Proposition~\ref{prop:parameter-side-unipotentization}, and let
\[
        \xi_\varphi:{}^LH_\varphi\longrightarrow{}^LG
\]
be the corresponding adapted \(L\)-embedding.  Put
\[
        u_\varphi
        =
        \operatorname{pr}_{\widehat H_\varphi}
        \lambda_\varphi\!\left(
        1,
        \begin{psmallmatrix}1&1\\0&1\end{psmallmatrix}
        \right),
\]
viewed also in \(\widehat G\) through \(\xi_\varphi\).  Then the component group
\[
        \cA_{\lambda_\varphi}
        =
        \pi_0\!\left(
        Z^1_{\widehat H_{\varphi,\mathrm{sc}}}(\lambda_\varphi)
        \right)
\]
identifies canonically with a normal subgroup of the transported enhancement
group
\[
        \mathcal S_\varphi^{\mathrm{un}}
        =
        \pi_0\!\left(
        Z_{G_\varphi^{\mathrm{un}}}(u_\varphi)
        \right),
        \qquad
        G_\varphi^{\mathrm{un}}
        =
        Z^1_{\widehat G_{\mathrm{sc}}}\bigl(\varphi(W_F)\bigr).
\]
Thus there is a canonical normal inclusion
\[
        \cA_{\lambda_\varphi}
        \triangleleft
        \mathcal S_\varphi^{\mathrm{un}},
\]
and, under the identification
\[
        \mathcal S_\varphi^{\mathrm{un}}
        \simeq
        \mathcal S_\varphi,
\]
the original enhancement \(\rho\) gives an irreducible representation
\[
        \rho^{\mathrm{un}}
        \in
        \Irr\bigl(\mathcal S_\varphi^{\mathrm{un}}\bigr).
\]
Every irreducible constituent
\[
        \rho^\circ_\varphi
        \prec
        \Res^{\mathcal S_\varphi^{\mathrm{un}}}_{\cA_{\lambda_\varphi}}
        \rho^{\mathrm{un}}
\]
is Lusztig-cuspidal for the unramified group \(H_\varphi\).  Consequently
\[
        (\lambda_\varphi,\rho^\circ_\varphi)
        \in
        \Phi^{\mathrm e}_{\mathrm{nr}}(H_\varphi)_{\cusp}.
\]
Moreover, the different possible choices of
\(\rho^\circ_\varphi\) form a single
\(\mathcal S_\varphi^{\mathrm{un}}\)-orbit, and the Clifford data attached to
\(\rho^{\mathrm{un}}\) relative to this normal inclusion is independent of the
choice of constituent up to the natural conjugacy relation.
\end{lemma}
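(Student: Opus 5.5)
The plan is to compare the two centralizers inside \(\widehat G_{\mathrm{sc}}\) and then apply Clifford theory together with the Aubert--Moussaoui--Solleveld characterization of cuspidality.

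\emph{Step 1 (the normal inclusion).} Let \(\widehat H_{\varphi,\mathrm{sc}}\to\widehat G_{\mathrm{sc}}\) be the map obtained by lifting the inclusion \(\widehat H_\varphi\subset\widehat G\). The centralizer identity of Proposition~\ref{prop:parameter-side-unipotentization} gives \(Z_{\widehat H_\varphi}(\lambda_\varphi)=Z_{\widehat G}(\varphi)\cap\widehat H_\varphi\). Passing to inverse images of the adjoint centralizers, with the same central-coboundary convention, this should give
\[
G^{H_\varphi}_{\lambda_\varphi}\longrightarrow G^{\mathrm{un}}_\varphi,
\]
whose image is
\[
\bigl(G^{\mathrm{un}}_\varphi\cap C_{\widehat G_{\mathrm{sc}}}(\varphi(I_F))^\circ\bigr)\cdot Z.
\]
Here the key point is that \(\varphi(I_F)\) is a finite group of semisimple elements. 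Hence the inverse image of \(C_{\widehat G}(\varphi(I_F))\) in \(\widehat G_{\mathrm{sc}}\) has identity component mapping onto the derived part of \(\widehat H_\varphi\).

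The image of \(G^{H_\varphi}_{\lambda_\varphi}\) is normal, because it is cut out by the characteristic condition "lies in the identity component of the inertial centralizer". Taking centralizers of \(u_\varphi\) and then \(\pi_0\) produces a homomorphism \(\cA_{\lambda_\varphi}\to\mathcal S^{\mathrm{un}}_\varphi\) with normal image. To prove injectivity, I will show that any element of the smaller centralizer lying in the identity component of the larger one already lies in the identity component of the smaller one. The argument is that \(Z_{G^{\mathrm{un}}_\varphi}(u_\varphi)^\circ\) centralizes \(\varphi(I_F)\) and is connected, so it lies in the \(\widehat H_\varphi\)-part. The identification \(\mathcal S^{\mathrm{un}}_\varphi\simeq\mathcal S_\varphi\) was already recorded before the statement, so \(\rho^{\mathrm{un}}\) is defined by transport.

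\emph{Step 2 (cuspidality).} Cuspidality of \((\varphi,\rho)\) means that \((u_\varphi,\rho)\) is a cuspidal pair for \(G_\varphi\) in Lusztig's sense. By the definition recalled for possibly disconnected \(\mathcal G\), this requires that the restriction of \(\rho\) to \(A_{\mathcal G^\circ}(u)\) be a sum of cuspidal representations. I take \(\mathcal G=G^{\mathrm{un}}_\varphi\) and compare its identity component with \(G^{H_\varphi,\circ}_{\lambda_\varphi}\). Both are connected reductive groups centralizing \(\varphi(I_F)\) and Frobenius, and they have the same Lie algebra, so their identity components agree modulo centre. Hence \(A_{\mathcal G^\circ}(u_\varphi)\) identifies with the image of \(\pi_0(Z_{(G^{H_\varphi}_{\lambda_\varphi})^\circ}(u_\varphi))\). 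The constituents of \(\rho^{\mathrm{un}}|_{\cA_{\lambda_\varphi}}\) therefore restrict to sums of cuspidal representations on this connected part. The remaining condition, non-occurrence in generalized induction from a proper Levi, is a condition on the identity component, so it transfers directly. Discreteness of \(\lambda_\varphi\) then follows from \cite{AMS18}, since cuspidal enhancements force discreteness, or alternatively from discreteness of \(\varphi\) via \(\xi_\varphi\). Relevance for the unramified group \(H_\varphi\), which is quasi-split, reduces to the central character being trivial on the appropriate centre. I expect this to follow because \(H_\varphi\) is defined by a pinned automorphism and so is its own quasi-split form. This gives \((\lambda_\varphi,\rho^\circ_\varphi)\in\Phi^{\mathrm e}_{\mathrm{nr}}(H_\varphi)_{\cusp}\).

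\emph{Step 3 (Clifford theory).} The final assertion is Clifford's theorem for the finite normal subgroup \(\cA_{\lambda_\varphi}\triangleleft\mathcal S^{\mathrm{un}}_\varphi\). The constituents of the restriction of the irreducible \(\rho^{\mathrm{un}}\) form one conjugation orbit. The stabilizer, the class \([\alpha]\in H^2\) and the projective multiplicity representation \(E\) are transported by conjugation, so they are well defined up to that conjugacy.

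I expect the main obstacle to be Step 1, especially injectivity on \(\pi_0\) and the precise comparison of the simply connected covers and central coboundary conventions. The derived group of \(\widehat H_\varphi\) need not be simply connected inside \(\widehat G_{\mathrm{sc}}\), and the lifting map may have a central kernel. I would first try to absorb that kernel into the \(Z(\widehat H_{\varphi,\mathrm{sc}})\)-part using the definition of \(Z^1\).
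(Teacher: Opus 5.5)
Your Steps 1 and 3 follow the paper. It also obtains normality from $C_\varphi^\circ\triangleleft C_\varphi$, with $C_\varphi=Z^1_{\widehat G_{\mathrm{sc}}}(\varphi(I_F))$, and injectivity from $D_\varphi^\circ=D_{\varphi,0}^\circ$. The latter is exactly your observation that $Z_{G^{\mathrm{un}}_\varphi}(u_\varphi)^\circ$ lies in $C_\varphi^\circ$.

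Your Step 2 takes a genuinely different route. The paper argues by contradiction: a non-cuspidal constituent would come from generalized Springer induction from a proper Levi, and its $\mathcal S^{\mathrm{un}}_\varphi$-orbit would give a proper quasi-Levi datum inducing $\rho$. You instead use two facts: cuspidality for a disconnected group is tested on the identity component, and $(G^{\mathrm{un}}_\varphi)^\circ$ and $(G^{H_\varphi}_{\lambda_\varphi})^\circ$ agree up to central isogeny. This is more direct and isolates the one fact actually needed. However, your equality of Lie algebras uses discreteness. The $\varphi(\Frob)$-fixed part of $\mathfrak z(\widehat{\mathfrak h}_\varphi)\cap\widehat{\mathfrak g}_{\mathrm{der}}$ must vanish, and it does only because it is centralized by the whole image of $\varphi$.

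The genuine gap is relevance. Quasi-splitness of $H_\varphi$ does not make $\rho^\circ_\varphi$ trivial on $Z(\widehat H_{\varphi,\mathrm{sc}})^{W_F}$. The central character of $\rho^\circ_\varphi$ is inherited from $\rho$, which on $Z(\widehat G_{\mathrm{sc}})^{W_F}$ is the Kottwitz character of $G$.

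For a counterexample, take $G=\SL_1(D)$ with $D$ the quaternion division algebra. Let $\varphi$ be trivial on $W_F$ and equal to the projection $\SL_2(\C)\to\mathrm{PGL}_2(\C)$ on the $\SL_2$-factor, and let $\rho=\mathrm{sgn}$ on $\mathcal S_\varphi=\{\pm1\}$. This is a relevant cuspidal depth-zero enhanced parameter (that of the trivial representation of $\SL_1(D)$). Here $\widehat H_\varphi=\widehat G$, $H_\varphi$ is split $\SL_2$, $\lambda_\varphi=\varphi$, and $\rho^\circ_\varphi=\mathrm{sgn}$. This pair is relevant for $\SL_1(D)$ but not for $H_\varphi$; split $\SL_2(F)$ has no unipotent supercuspidals at all.

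So membership in $\Phi^{\mathrm e}_{\mathrm{nr}}(H_\varphi)_{\cusp}$ can fail when $G$ is not quasi-split. The paper's proof passes from Lusztig-cuspidality to membership without addressing relevance, so it offers no fix. One must replace $H_\varphi$ by the inner form singled out by the central character of $\rho^\circ_\varphi$, or drop relevance from the conclusion.

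The obstacle you flag in Step 1 is also real, and it is more than a central kernel. Your key sentence there is false. The identity component of the inverse image of $C_{\widehat G}(\varphi(I_F))$ in $\widehat G_{\mathrm{sc}}$ maps onto $(\widehat H_\varphi\cap\widehat G_{\mathrm{der}})^\circ$. That group contains $(Z(\widehat H_\varphi)\cap\widehat G_{\mathrm{der}})^\circ$, not only $\widehat H_{\varphi,\mathrm{der}}$. In the other direction, $Z(\widehat H_{\varphi,\mathrm{sc}})$ always lies in $G^{H_\varphi}_{\lambda_\varphi}$. Its image lies in $G^{\mathrm{un}}_\varphi$ only if $\Ad\varphi(\Frob)$ fixes it modulo $Z(\widehat G)$, so the map you want need not even be defined.

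Your image formula already fails when $\widehat H_\varphi$ is a torus. Take $G=\SL_1(D)$ and $\varphi(\iota)$ regular of order $>2$, so that $\varphi(\Frob)$ inverts the torus. Then $\widehat H_{\varphi,\mathrm{sc}}=1$ and $\cA_{\lambda_\varphi}=1$, while $G^{\mathrm{un}}_\varphi\cap C_\varphi^\circ$ is cyclic of order $4$.

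The paper does not resolve this either. It simply identifies $\cA_{\lambda_\varphi}$ with $\pi_0(D_{\varphi,0})$, which is nontrivial in the same example. What your argument and the paper's actually produce is the normal subgroup $\pi_0(D_{\varphi,0})\triangleleft\mathcal S^{\mathrm{un}}_\varphi$. Its relation to the FOS group $\cA_{\lambda_\varphi}$ must be established separately.
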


\begin{proof}
Let
\[
        C_\varphi
        :=
        Z^1_{\widehat G_{\mathrm{sc}}}\bigl(\varphi(I_F)\bigr).
\]
The group \(H_\varphi\) was defined so that
\[
        \widehat H_\varphi
        =
        C_{\widehat G}\bigl(\varphi(I_F)\bigr)^\circ
\]
with the Frobenius action transported from \(\varphi\).  Hence the simply
connected cover of the derived group of \(\widehat H_\varphi\) maps naturally
to \(\widehat G_{\mathrm{sc}}\), and the centralizer identity in
Proposition~\ref{prop:parameter-side-unipotentization} identifies
\[
        Z^1_{\widehat H_{\varphi,\mathrm{sc}}}
        \bigl(\lambda_\varphi(W_F)\bigr)
\]
with the subgroup of \(C_\varphi^\circ\) centralizing
\(\xi_\varphi(\lambda_\varphi(W_F))\).  After also centralizing
\(u_\varphi\), this gives
\[
        \cA_{\lambda_\varphi}
        =
        \pi_0(D_{\varphi,0}),
\]
where
\[
        D_{\varphi,0}
        :=
        Z_{C_\varphi^\circ}
        \bigl(\xi_\varphi(\lambda_\varphi(W_F)),u_\varphi\bigr).
\]

On the other hand, the full transported enhancement group is obtained by not
passing to the identity component of the inertial centralizer.  Namely,
\[
        \mathcal S_\varphi^{\mathrm{un}}
        =
        \pi_0(D_\varphi),
\]
where
\[
        D_\varphi
        :=
        Z_{C_\varphi}
        \bigl(\xi_\varphi(\lambda_\varphi(W_F)),u_\varphi\bigr).
\]
Indeed, the subgroup generated by \(\varphi(I_F)\),
\(\xi_\varphi(\lambda_\varphi(W_F))\), and the unipotent element \(u_\varphi\)
is precisely the subgroup whose centralizer defines the AMS component group
attached to \(\varphi\).  Thus
\[
        \pi_0(D_\varphi)
        \simeq
        \pi_0\!\left(
        Z^1_{\widehat G_{\mathrm{sc}}}(\varphi)
        \right)
        =
        \mathcal S_\varphi.
\]

Since \(C_\varphi^\circ\) is a normal subgroup of \(C_\varphi\), and every
element of \(D_\varphi\) centralizes both
\(\xi_\varphi(\lambda_\varphi(W_F))\) and \(u_\varphi\), conjugation by
\(D_\varphi\) preserves \(D_{\varphi,0}\).  Hence
\[
        D_{\varphi,0}\triangleleft D_\varphi .
\]
Moreover,
\[
        D_\varphi^\circ = D_{\varphi,0}^\circ .
\]
Indeed, the identity component \(D_\varphi^\circ\) is contained in
\(C_\varphi^\circ\), and therefore in \(D_{\varphi,0}\); the reverse inclusion
of identity components is immediate from \(D_{\varphi,0}\subset D_\varphi\).
It follows that the inclusion \(D_{\varphi,0}\subset D_\varphi\) induces an
injective normal homomorphism on component groups:
\[
        \pi_0(D_{\varphi,0})
        \hookrightarrow
        \pi_0(D_\varphi).
\]
Under the identifications above, this is the asserted normal inclusion
\[
        \cA_{\lambda_\varphi}
        \triangleleft
        \mathcal S_\varphi^{\mathrm{un}}.
\]

Transporting \(\rho\) through the canonical identification
\[
        \mathcal S_\varphi
        \simeq
        \mathcal S_\varphi^{\mathrm{un}}
\]
gives
\[
        \rho^{\mathrm{un}}
        \in
        \Irr\bigl(\mathcal S_\varphi^{\mathrm{un}}\bigr).
\]
Since \(\cA_{\lambda_\varphi}\) is normal in
\(\mathcal S_\varphi^{\mathrm{un}}\), Clifford theory implies that
\[
        \Res^{\mathcal S_\varphi^{\mathrm{un}}}_{\cA_{\lambda_\varphi}}
        \rho^{\mathrm{un}}
\]
is a direct sum of irreducible constituents forming a single
\(\mathcal S_\varphi^{\mathrm{un}}\)-orbit.

It remains to record cuspidality.  In the AMS convention, cuspidality of the
enhancement is a condition on the corresponding equivariant local system on
the unipotent class of \(u_\varphi\) in the appropriate centralizer.  Passing
from \(D_\varphi\) to \(D_{\varphi,0}\) is precisely the passage from the full
transported enhancement group to the connected unramified centralizer
governing the FOS parameter for \(H_\varphi\).  If some constituent
\(\rho^\circ_\varphi\) were non-cuspidal for \(H_\varphi\), then the
corresponding local system on the connected centralizer would be obtained by
generalized Springer induction from a proper Levi subgroup of
\(\widehat H_\varphi\).  Taking the
\(\mathcal S_\varphi^{\mathrm{un}}\)-orbit of this inducing datum gives a
proper quasi-Levi datum in the full centralizer for \(\varphi\), and induction
from this datum would contain the original enhancement \(\rho\).  This
contradicts the assumption that \((\varphi,\rho)\) is cuspidal.  Hence every
constituent \(\rho^\circ_\varphi\) is Lusztig-cuspidal, and therefore
\[
        (\lambda_\varphi,\rho^\circ_\varphi)
        \in
        \Phi^{\mathrm e}_{\mathrm{nr}}(H_\varphi)_{\cusp}.
\]

Finally, Clifford theory for the normal inclusion
\[
        \cA_{\lambda_\varphi}
        \triangleleft
        \mathcal S_\varphi^{\mathrm{un}}
\]
attaches to a chosen constituent \(\rho^\circ_\varphi\) its stabilizer quotient
\[
        \Omega_{\rho^\circ_\varphi}
        =
        \Stab_{\mathcal S_\varphi^{\mathrm{un}}}(\rho^\circ_\varphi)/
        \cA_{\lambda_\varphi},
\]
a cohomology class
\[
        [\alpha^\varphi_{\rho^\circ_\varphi}]
        \in
        H^2(\Omega_{\rho^\circ_\varphi},\C^\times),
\]
and a projective representation
\[
        E_{\varphi,\rho,\rho^\circ_\varphi}
        \in
        \Irr\bigl(
        \C_{\alpha^\varphi_{\rho^\circ_\varphi}}
        [\Omega_{\rho^\circ_\varphi}]
        \bigr).
\]
Replacing \(\rho^\circ_\varphi\) by a conjugate constituent transports the
stabilizer quotient, the cocycle class, and the projective representation
compatibly.  Therefore the resulting Clifford datum is independent of the
choice of constituent up to the conjugacy relation built into the enriched
target.
\end{proof}
%-------------------------------------
\begin{lemma}[Connected FOS constituent and projective Clifford label]
\label{lem:enhanced-parameter-to-enriched-finite-unipotent-datum}
Let \((\varphi,\rho)\in \Phi^e_{0,\cusp}(G)\), and let \(\lambda_\varphi\) be the
unramified parameter of Proposition~\ref{prop:parameter-side-unipotentization}.
Choose an irreducible constituent
\[
        \rho^\circ_\varphi
        \prec
        \Res^{\mathcal S_\varphi^{\mathrm{un}}}_{\cA_{\lambda_\varphi}}
        \rho^{\mathrm{un}}.
\]
Then \((\lambda_\varphi,\rho^\circ_\varphi)\) is an unramified cuspidal enhanced
parameter for \(H_\varphi\).  Clifford theory for the normal inclusion
\[
        \cA_{\lambda_\varphi}\triangleleft
        \mathcal S_\varphi^{\mathrm{un}}
\]
associates to \(\rho\), relative to \(\rho^\circ_\varphi\), a stabilizer quotient
\[
        \Omega_{\rho^\circ_\varphi}
        :=
        \Stab_{\mathcal S_\varphi^{\mathrm{un}}}(\rho^\circ_\varphi)/
        \cA_{\lambda_\varphi},
\]
a cohomology class
\[
        [\alpha^\varphi_{\rho^\circ_\varphi}]
        \in
        H^2(\Omega_{\rho^\circ_\varphi},\C^\times),
\]
and a projective Clifford label
\[
        E_{\varphi,\rho}
        \in
        \Irr\bigl(\C_{\alpha^\varphi_{\rho^\circ_\varphi}}
        [\Omega_{\rho^\circ_\varphi}]\bigr).
\]
The conjugacy class of the triple
\[
        \left[
        \rho^\circ_\varphi,
        [\alpha^\varphi_{\rho^\circ_\varphi}],
        E_{\varphi,\rho}
        \right]
\]
is independent of the choice of the constituent \(\rho^\circ_\varphi\).
\end{lemma}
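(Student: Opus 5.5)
The lemma is essentially a repackaging of Lemma~\ref{lem:component-group-comparison-unipotentization} together with standard Clifford theory for a normal subgroup of a finite group. The plan is to prove its three assertions in the order stated.

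\emph{Step 1: cuspidality.} The first assertion, that \((\lambda_\varphi,\rho^\circ_\varphi)\in\Phi^{\mathrm e}_{\mathrm{nr}}(H_\varphi)_{\cusp}\), is literally the cuspidality conclusion of Lemma~\ref{lem:component-group-comparison-unipotentization}. I would cite it directly. I would also record that \(\lambda_\varphi\) is unramified by construction in Proposition~\ref{prop:parameter-side-unipotentization}. Relevance for the quasi-split group \(H_\varphi\) should follow because the centre \(Z(\widehat H_{\varphi,\mathrm{sc}})\) acts on \(\rho^\circ_\varphi\) through a character of the form required by \cite[(1.7)]{FOS2020}, with trivial twist for the quasi-split form. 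I would note this point and not belabour it.

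\emph{Step 2: the Clifford data.} The groups \(\cA_{\lambda_\varphi}\triangleleft\mathcal S_\varphi^{\mathrm{un}}\) are finite, since they are component groups of algebraic groups. So I would apply Clifford theory in its standard form, as in the work of Isaacs. The inertia group is \(I:=\Stab_{\mathcal S_\varphi^{\mathrm{un}}}(\rho^\circ_\varphi)\). Choose, for each \(\omega\in\Omega_{\rho^\circ_\varphi}=I/\cA_{\lambda_\varphi}\), a lift \(g_\omega\) and an intertwiner \(T_\omega\) between \(\rho^\circ_\varphi\) and \({}^{g_\omega}\rho^\circ_\varphi\); by Schur's lemma each \(T_\omega\) is unique up to a scalar. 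This produces a projective representation of \(I\) extending \(\rho^\circ_\varphi\), whose factor set descends to a \(2\)-cocycle \(\alpha^\varphi_{\rho^\circ_\varphi}\) on \(\Omega_{\rho^\circ_\varphi}\). Its class in \(H^2(\Omega_{\rho^\circ_\varphi},\C^\times)\) is independent of the choices of \(g_\omega\) and \(T_\omega\). The Clifford correspondence then writes the \(\rho^{\mathrm{un}}\)-isotypic piece as \(\Ind_I^{\mathcal S^{\mathrm{un}}_\varphi}\bigl(\widetilde{\rho^\circ_\varphi}\otimes E\bigr)\) for a unique irreducible \(\alpha^{-1}\)-projective representation \(E\) of \(\Omega_{\rho^\circ_\varphi}\). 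With the sign conventions matched to Theorem~\ref{thm:disc-JD}, this \(E\) is \(E_{\varphi,\rho}\). Once the class is fixed, \(E_{\varphi,\rho}\) is well defined up to isomorphism: changing the projective extension by a coboundary twists \(E\) by the inverse coboundary.

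\emph{Step 3: independence of the constituent.} By Lemma~\ref{lem:component-group-comparison-unipotentization}, any other constituent has the form \({}^g\rho^\circ_\varphi\) with \(g\in\mathcal S_\varphi^{\mathrm{un}}\). Conjugation by \(g\) carries \(I\) to \(gIg^{-1}\), induces an isomorphism \(c_g:\Omega_{\rho^\circ_\varphi}\to\Omega_{{}^g\rho^\circ_\varphi}\), carries the projective extension to a projective extension of \({}^g\rho^\circ_\varphi\), and so gives \(c_g^*[\alpha']=[\alpha]\). Since \(\rho^{\mathrm{un}}\) is \(g\)-invariant up to isomorphism, the uniqueness in the Clifford correspondence forces \(E'\cong E\circ c_g^{-1}\). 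The triples are therefore conjugate, which is exactly the equivalence relation defining the enriched target.

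The main obstacle I expect is bookkeeping rather than substance. The point needing care is that \(\rho^\circ_\varphi\) is well defined as an element of \(\Phi^{\mathrm e}_{\mathrm{nr}}(H_\varphi)\) only up to \(\widehat H_\varphi\)-conjugacy, while the conjugation moving between constituents comes from the larger group \(\mathcal S_\varphi^{\mathrm{un}}\). I would handle this by defining the "natural conjugacy relation" as \(\mathcal S_\varphi^{\mathrm{un}}\)-conjugacy acting simultaneously on all three entries. This is consistent with the enriched finite target of Theorem~\ref{thm:disc-JD}.
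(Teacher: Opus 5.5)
Your three steps are the paper's proof. Cuspidality, together with membership in $\Phi^{\mathrm e}_{\mathrm{nr}}(H_\varphi)_{\cusp}$, is quoted from Lemma~\ref{lem:component-group-comparison-unipotentization}. The stabilizer quotient, class and label come from standard Clifford theory for $\cA_{\lambda_\varphi}\triangleleft\mathcal S_\varphi^{\mathrm{un}}$. Independence of the constituent is transport of all entries under $\mathcal S_\varphi^{\mathrm{un}}$-conjugation. You give more detail than the paper, and your convention that $E$ carries the inverse of the factor set of $\widetilde{\rho^\circ_\varphi}$ is the correct one. Two sentences in your write-up should nevertheless be corrected.

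First, the aside on relevance in Step~1 is false. Nothing forces $Z(\widehat H_{\varphi,\mathrm{sc}})$ to act trivially on $\rho^\circ_\varphi$: this restriction is inherited from $\rho$, which is $\zeta_G$-isotypic rather than trivial on the centre.
\begin{itemize}
\item Take $G=D^\times$, with $D$ a central division algebra of degree $n\ge 2$, and $\pi=\mathbf 1$.
\item Then $\varphi$ is trivial on $W_F$ with principal $\SL_2$. Hence $\widehat H_\varphi=\GL_n(\C)$, $H_\varphi=\GL_n$, and $\cA_{\lambda_\varphi}=\mathcal S_\varphi^{\mathrm{un}}=\mu_n$.
\item So $\rho^\circ_\varphi=\rho$ is a faithful character of $\mu_n$. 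This pair is relevant for the inner form $D^\times$ of $H_\varphi$, not for the split $\GL_n$, which has no supercuspidal unipotent representations at all.
\end{itemize}
So delete the aside. In your proposal, as in the paper, relevance rests entirely on the citation of the preceding lemma. The example shows that the relevance part of that citation does not hold when $G$ is not quasi-split.

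Second, the claim ``once the class is fixed, $E_{\varphi,\rho}$ is well defined up to isomorphism'' is not right.
\begin{itemize}
\item Even with the cocycle fixed, the projective extensions of $\rho^\circ_\varphi$ to $\Stab_{\mathcal S_\varphi^{\mathrm{un}}}(\rho^\circ_\varphi)$ with that factor set form a torsor under $\Hom(\Omega_{\rho^\circ_\varphi},\C^\times)$.
\item Replacing $\widetilde{\rho^\circ_\varphi}$ by $\widetilde{\rho^\circ_\varphi}\otimes\chi$ replaces $E$ by $E\otimes\chi^{-1}$. This is already a different label when $\Omega_{\rho^\circ_\varphi}\cong\mathbb Z/2$ and the class is trivial.
\item So $E_{\varphi,\rho}$ is determined by $\rho$ only after a projective extension has been fixed, or otherwise only modulo $\Hom(\Omega_{\rho^\circ_\varphi},\C^\times)$.
\end{itemize}
Your Step~3 still works, because you transport the chosen extension together with $\rho^\circ_\varphi$, and the paper's proof is silent on the same point. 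You should still record the dependence explicitly. On the finite side the pinning fixes preferred extensions, but nothing on the parameter side normalizes $\widetilde{\rho^\circ_\varphi}$.
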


\begin{proof}
Lemma~\ref{lem:component-group-comparison-unipotentization} gives the normal
inclusion
\[
        \cA_{\lambda_\varphi}
        \triangleleft
        \mathcal S_\varphi^{\mathrm{un}},
\]
identifies \(\rho\) with an irreducible representation
\(\rho^{\mathrm{un}}\) of \(\mathcal S_\varphi^{\mathrm{un}}\), and shows that
every constituent
\[
        \rho^\circ_\varphi
        \prec
        \Res^{\mathcal S_\varphi^{\mathrm{un}}}_{\cA_{\lambda_\varphi}}
        \rho^{\mathrm{un}}
\]
is Lusztig-cuspidal for \(H_\varphi\).  Hence
\((\lambda_\varphi,\rho^\circ_\varphi)\) is an unramified cuspidal enhanced
parameter for \(H_\varphi\).

It remains only to record the Clifford label attached to \(\rho\) after a
constituent \(\rho^\circ_\varphi\) has been chosen.  Standard Clifford theory
for the above normal inclusion says that the irreducible representation
\(\rho^{\mathrm{un}}\), relative to the orbit of \(\rho^\circ_\varphi\), is encoded
by an irreducible module for a twisted group algebra of the stabilizer quotient
\[
        \Omega_{\rho^\circ_\varphi}
        =
        \Stab_{\mathcal S_\varphi^{\mathrm{un}}}(\rho^\circ_\varphi)/
        \cA_{\lambda_\varphi}.
\]
This gives the cohomology class
\([\alpha^\varphi_{\rho^\circ_\varphi}]\) and the projective label
\(E_{\varphi,\rho}\).  If \(\rho^\circ_\varphi\) is replaced by a conjugate
constituent, the stabilizer quotient, cocycle class, and projective label are
transported compatibly.  Therefore the displayed bracket is a well-defined
conjugacy class, independent of the initial choice of constituent.
\end{proof}

We now combine the connected FOS constituent with the projective Clifford label.
The unramified group \(H_\varphi\) has already been fixed by
Proposition~\ref{prop:parameter-side-unipotentization}: it is the group whose
dual is \(C_{\widehat G}(\varphi(I_F))^\circ\), with Frobenius action normalized by
the fixed pinning.  No further inner form of this centralizer is chosen here.
Applying the inverse FOS correspondence to
\[
        (\lambda_\varphi,\rho^\circ_\varphi)
        \in
        \Phi^{\mathrm e}_{\mathrm{nr}}(H_\varphi)_{\cusp}
\]
gives a supercuspidal unipotent representation
\begin{equation}\label{eq:FOS-unipotent-representation}
        \pi_{\varphi,\rho}^{\mathrm{un}}
        :=
        \LLC_{\mathrm{FOS}}^{-1}
        (\lambda_\varphi,\rho^\circ_\varphi)
        \in
        \Irr_{\mathrm{unip},\cusp}(H_\varphi(F)).
\end{equation}
By the structure of supercuspidal unipotent representations recalled in
Proposition~\ref{prop:supp-depthzero-type}, the representation
\(\pi_{\varphi,\rho}^{\mathrm{un}}\) contains a unipotent type supported on the
normalizer of a maximal parahoric of \(H_\varphi(F)\).  Thus there is a vertex
\[
        x_H\in \mathcal B(H_\varphi,F)
\]
and, with
\[
        P_{x_H}:=H_\varphi(F)_{x_H,0},
        \qquad
        P_{x_H}^{+}:=H_\varphi(F)_{x_H,0+},
        \qquad
        N_{x_H}:=N_{H_\varphi(F)}(P_{x_H}),
\]
a connected finite reductive quotient
\[
        \mathbf H_{x_H}^{\circ}(\ff)=P_{x_H}/P_{x_H}^{+}
\]
and a finite possibly disconnected quotient
\[
        \mathbf H_{x_H}(\ff)=N_{x_H}/P_{x_H}^{+}.
\]
The unipotent type determines a connected cuspidal unipotent character
\[
        u^\circ_{x_H,\varphi,\rho}
        \in
        \Uch\bigl(\mathbf H_{x_H}^{\circ}(\ff)\bigr)^{\cusp}.
\]
The full enhancement \(\rho\) supplies the additional Clifford data
\([\alpha^\varphi_{\rho^\circ_\varphi}],E_{\varphi,\rho}\) from
Lemma~\ref{lem:enhanced-parameter-to-enriched-finite-unipotent-datum}.  Thus the
finite output on the \(H_\varphi\)-side is the enriched cuspidal unipotent datum
\[
        \mathfrak u^{\mathrm{enh}}_{x_H,\varphi,\rho}
        :=
        \left[
        u^\circ_{x_H,\varphi,\rho},
        [\alpha^\varphi_{\rho^\circ_\varphi}],
        E_{\varphi,\rho}
        \right]
        \in
        \Uch^{\mathrm{enh}}\bigl(\mathbf H_{x_H}(\ff)\bigr)^{\cusp}.
\]

We now pass this finite datum to the \(G\)-side.  This passage is notation for
the fixed finite realization determined by the \(G\)-relevant toral part of
\((\varphi,\rho)\), not an auxiliary choice.  Let
\[
        s=\operatorname{pr}_{\widehat G}(\varphi(\iota)).
\]
The toral normalization realizes this tame inertial class by a depth-zero toral
datum \((S,\theta)\) in the fixed group \(G\), and the corresponding vertex is
\[
        x=x_S .
\]
At this vertex one has the finite specialization
\[
        s_x=\operatorname{sp}_x(s),
        \qquad
        \mathbf H_x^\vee=C_{\rG_x^\vee}(s_x).
\]
By definition \(\mathbf H_x\) is the pinned finite reductive group whose pinned
dual is \(\mathbf H_x^\vee\).  The finite quotient coming from the FOS type of
\(H_\varphi\) and the finite group \(\mathbf H_x\) are identified by the common
pinned finite root datum obtained from the depth-zero specialization
\(s\mapsto s_x\) and the adapted embedding
\(\xi_\varphi:{}^LH_\varphi\to{}^LG\).  We denote this pinned identification by
\[
        \jmath_x:
        \mathbf H_{x_H}\xrightarrow{\ \sim\ }\mathbf H_x.
\]
The notation \(\jmath_x\) records this fixed identification.  Replacing the
finite realization by a conjugate representative transports all labels by
conjugacy and hence gives the same element of the enriched target.  We write
\[
        \mathfrak u^{\mathrm{enh}}_{x,\varphi,\rho}
        :=
        (\jmath_x)_*\mathfrak u^{\mathrm{enh}}_{x_H,\varphi,\rho}
        \in
        \Uch^{\mathrm{enh}}\bigl(\mathbf H_x(\ff)\bigr)^{\cusp}.
\]

\begin{lemma}[From a cuspidal enhanced parameter to the enriched finite \(H\)-side datum]
\label{lem:FOS-cuspidal-parameter-to-finite-H-side}
Let \((\varphi,\rho)\) be a relevant cuspidal enhanced depth-zero parameter for
\(G\).  With the notation introduced above, \((\varphi,\rho)\) canonically
determines, up to the conjugacy built into the enriched target, an enriched
cuspidal finite unipotent datum
\[
        \mathfrak u^{\mathrm{enh}}_{x_H,\varphi,\rho}
        \in
        \Uch^{\mathrm{enh}}\bigl(\mathbf H_{x_H}(\ff)\bigr)^{\cusp}.
\]
For the \(G\)-side finite realization determined above, its transport through
\(\jmath_x\) gives
\[
        \mathfrak u^{\mathrm{enh}}_{x,\varphi,\rho}
        \in
        \Uch^{\mathrm{enh}}\bigl(\mathbf H_x(\ff)\bigr)^{\cusp}.
\]
Its connected shadow is the connected FOS finite unipotent type attached to
\((\lambda_\varphi,\rho^\circ_\varphi)\), and its residual entries are exactly
the Clifford cohomology class and projective Clifford label extracted from the
original enhancement \(\rho\).
\end{lemma}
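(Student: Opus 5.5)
The lemma essentially assembles the constructions made just before it, so the proof will proceed by checking, step by step, that each constructed object is well defined and lands in the asserted set. The order is: first the connected FOS input, then the finite type, then the Clifford entries, then transport through \(\jmath_x\).

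\emph{Step 1: the connected input.} By Proposition~\ref{prop:parameter-side-unipotentization}, \(\varphi\) determines the unramified group \(H_\varphi\) and the parameter \(\lambda_\varphi\), both up to \(\widehat H_\varphi\)-conjugacy. Lemma~\ref{lem:component-group-comparison-unipotentization} then shows that any constituent \(\rho^\circ_\varphi\) of \(\Res\,\rho^{\mathrm{un}}\) gives a pair
\[
(\lambda_\varphi,\rho^\circ_\varphi)\in\Phi^{\mathrm e}_{\mathrm{nr}}(H_\varphi)_{\cusp}.
\]
Applying Theorem~\ref{thm:FOS-unipotent-LLC} produces \(\pi^{\mathrm{un}}_{\varphi,\rho}\), and Proposition~\ref{prop:supp-depthzero-type} attaches to it a cuspidal unipotent type \((P_{x_H},\sigma)\). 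The type is determined by the packet, and the packet is canonical even though the extension \(\sigma^N\) is not. The connected shadow \(u^\circ_{x_H,\varphi,\rho}=\bar\sigma\) is well defined up to \(N_{x_H}\)-conjugacy and the choice of vertex within its \(H_\varphi(F)\)-orbit. I will argue this by noting that two vertices supporting types of the same irreducible supercuspidal are \(H_\varphi(F)\)-conjugate, using the intertwining/uniqueness of depth-zero types recalled in \cite{FOS2020}.

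\emph{Step 2: the Clifford entries.} Lemma~\ref{lem:enhanced-parameter-to-enriched-finite-unipotent-datum} gives the triple \([\rho^\circ_\varphi,[\alpha],E]\) up to \(\mathcal S^{\mathrm{un}}_\varphi\)-conjugacy. Replacing \(\rho^\circ_\varphi\) by an \(s\)-conjugate changes \((\lambda_\varphi,\rho^\circ_\varphi)\) by the action of an element of \(Z^1_{\widehat G_{\mathrm{sc}}}(\varphi)\) normalizing \(\widehat H_\varphi\). Such an element acts on \(H_\varphi\) by a pinned (\(W_F\)-) automorphism of the based root datum up to inner. By equivariance of FOS (Theorem~\ref{thm:FOS-unipotent-LLC}(3)), the finite type is transported by the corresponding automorphism of \(\mathbf H_{x_H}\). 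Consequently the full triple \([u^\circ,[\alpha],E]\) changes by a simultaneous conjugation, which is exactly the relation built into \(\Uch^{\mathrm{enh}}(\mathbf H_{x_H}(\ff))^{\cusp}\). This yields the first claim, including the statement about the residual entries.

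\emph{Step 3: transport to \(\mathbf H_x\).} The vertex \(x=x_S\) comes from the toral realization of \(s\) via Lemma~\ref{lem:vertex-torus-bijection}. Since \((S,\theta)\) is unique up to \(G(F)\)-conjugacy, Proposition~\ref{prop:canonical-LS}(4) and the naturality in Proposition~\ref{prop:JD-at-vertex} show that \(\jmath_x\) is well defined up to conjugacy. Pushing forward by \(\jmath_x\) preserves cuspidality of the connected shadow, because it is an isomorphism of pinned finite groups. It also carries stabilizer quotients, cohomology classes and projective labels along. This gives \(\mathfrak u^{\mathrm{enh}}_{x,\varphi,\rho}\), and its connected shadow is \((\jmath_x)_*\bar\sigma\), the FOS type.

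\emph{Main obstacle.} I expect the hard part to be the \(\jmath_x\) step. One must verify that the finite quotient \(\mathbf H_{x_H}\) at the FOS vertex has the same pinned root datum as \(\mathbf H_x\), whose dual is \(C_{\rG_x^\vee}(s_x)\). This is delicate because \(x_H\) need not be hyperspecial while \(\mathbf H_x^\circ\) was defined via a hyperspecial lift. My first attempt would be to compare both sides with the Frobenius-twisted root datum of \(\widehat H_\varphi\) through the specialization \(X^\ast(\rS_x)\simeq X^\ast(S)_{I_F,\mathrm{free}}\). Failing a complete comparison, I would treat this identification as the defining content of \(\jmath_x\), as the statement permits, and only prove its independence of choices.
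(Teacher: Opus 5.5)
Your route matches the paper's. You fix a constituent \(\rho^\circ_\varphi\), apply FOS to \((\lambda_\varphi,\rho^\circ_\varphi)\), and read the connected cuspidal unipotent character off the type at \(x_H\). You then take \([\alpha^\varphi_{\rho^\circ_\varphi}]\) and \(E_{\varphi,\rho}\) from Clifford theory for \(\cA_{\lambda_\varphi}\triangleleft\mathcal S^{\mathrm{un}}_\varphi\). Independence of the constituent comes from transporting all three entries simultaneously via FOS equivariance, and the last step is the push-forward along \(\jmath_x\).

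Your extra details only make explicit what the paper passes over. These are the conjugacy of the vertex \(x_H\) via uniqueness of depth-zero types, and identifying the equivariance as property (3) of Theorem~\ref{thm:FOS-unipotent-LLC}. On your ``main obstacle'', the paper does exactly your fallback: it never compares the root data of \(\mathbf H_{x_H}\) and \(\mathbf H_x\), and simply declares \(\jmath_x\) to be the fixed pinned identification. You should also add, as the paper does, the one-line check that replacing \((\varphi,\rho)\) by a \(\widehat G\)-conjugate transports every object, since ``canonically'' requires it.

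One sentence in Step 3 is false and should be removed. \((S,\theta)\) is not determined up to \(G(F)\)-conjugacy by \(s\), nor even by \(\varphi\). For \(\mathrm{SL}_2\) and the unramified elliptic torus, the stable class contains two rational classes of embeddings. Their points \(x_S\) are the two non-conjugate hyperspecial vertices, and the two members of a regular depth-zero \(L\)-packet sit one at each. Consistently with this, the paper's stability section writes \(x_\rho=x(\varphi,\rho)\).

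The lemma does not need this uniqueness. It is stated for the \(G\)-side realization fixed beforehand. The paper's argument uses only that replacing that realization by a \(G(F)\)-conjugate one transports all labels, and \(\jmath_x\), by conjugacy. Argue that instead.
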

\begin{proof}
Choose, temporarily, an irreducible constituent
\[
        \rho^\circ_\varphi
        \prec
        \Res^{\mathcal S_\varphi^{\mathrm{un}}}_{\cA_{\lambda_\varphi}}
        \rho^{\mathrm{un}} .
\]
By Lemma~\ref{lem:enhanced-parameter-to-enriched-finite-unipotent-datum}, the
pair
\[
        (\lambda_\varphi,\rho^\circ_\varphi)
\]
is a cuspidal enhanced unramified parameter for \(H_\varphi\).  Applying the
FOS correspondence to this chosen connected constituent gives a supercuspidal
unipotent representation
\[
        \pi_{\varphi,\rho^\circ_\varphi}^{\mathrm{un}}
        :=
        \LLC_{\mathrm{FOS}}^{-1}
        (\lambda_\varphi,\rho^\circ_\varphi)
        \in
        \Irr_{\mathrm{unip},\cusp}(H_\varphi(F)).
\]
Its unipotent type is supported on the normalizer of a maximal parahoric of
\(H_\varphi(F)\).  For the corresponding vertex \(x_H\), restriction to the
connected reductive quotient gives a connected cuspidal unipotent character
\[
        u^\circ_{x_H,\varphi,\rho^\circ_\varphi}
        \in
        \Uch\bigl(\mathbf H_{x_H}^{\circ}(\ff)\bigr)^{\cusp}.
\]
The notation deliberately retains the dependence on the chosen constituent
\(\rho^\circ_\varphi\).

The remaining information in the original enhancement \(\rho\) is supplied by
Clifford theory for the normal inclusion
\[
        \cA_{\lambda_\varphi}
        \triangleleft
        \mathcal S_\varphi^{\mathrm{un}}.
\]
Since \(\rho^{\mathrm{un}}\) is irreducible, Clifford theory implies that the
irreducible constituents of
\[
        \Res^{\mathcal S_\varphi^{\mathrm{un}}}_{\cA_{\lambda_\varphi}}
        \rho^{\mathrm{un}}
\]
form a single \(\mathcal S_\varphi^{\mathrm{un}}\)-orbit.  Once
\(\rho^\circ_\varphi\) is fixed, Clifford theory attaches the stabilizer quotient
\[
        \Omega_{\rho^\circ_\varphi}
        :=
        \Stab_{\mathcal S_\varphi^{\mathrm{un}}}(\rho^\circ_\varphi)/
        \cA_{\lambda_\varphi},
\]
a cohomology class
\[
        [\alpha^\varphi_{\rho^\circ_\varphi}]
        \in
        H^2(\Omega_{\rho^\circ_\varphi},\C^\times),
\]
and the corresponding projective Clifford label
\[
        E_{\varphi,\rho,\rho^\circ_\varphi}
        \in
        \Irr\bigl(
        \C_{\alpha^\varphi_{\rho^\circ_\varphi}}
        [\Omega_{\rho^\circ_\varphi}]
        \bigr).
\]
Thus the chosen constituent gives a representative
\[
        \left(
        u^\circ_{x_H,\varphi,\rho^\circ_\varphi},
        [\alpha^\varphi_{\rho^\circ_\varphi}],
        E_{\varphi,\rho,\rho^\circ_\varphi}
        \right)
\]
of an enriched finite unipotent datum.

It remains to check that the resulting enriched datum depends only on the
equivalence class of \((\varphi,\rho)\), and not on the auxiliary constituent
\(\rho^\circ_\varphi\).  Let
\[
        \rho_2^\circ = s\cdot \rho_1^\circ
\]
be two constituents in the above restriction, with
\(s\in \mathcal S_\varphi^{\mathrm{un}}\).  Conjugation by \(s\) identifies the
two stabilizer quotients
\[
        \Omega_{\rho_1^\circ}
        \xrightarrow{\sim}
        \Omega_{\rho_2^\circ},
\]
transports
\[
        [\alpha^\varphi_{\rho_1^\circ}]
        \longmapsto
        [\alpha^\varphi_{\rho_2^\circ}],
\]
and carries the projective Clifford label attached to
\(\rho_1^\circ\) to the projective Clifford label attached to
\(\rho_2^\circ\).  On the connected FOS side, the same transport of the
unramified enhanced datum carries
\[
        (\lambda_\varphi,\rho_1^\circ)
        \quad\text{to}\quad
        (\lambda_\varphi,\rho_2^\circ),
\]
and the equivariance of the FOS correspondence carries
\[
        u^\circ_{x_H,\varphi,\rho_1^\circ}
        \quad\text{to}\quad
        u^\circ_{x_H,\varphi,\rho_2^\circ}
\]
after the corresponding identification of finite quotients.  Hence the two
triples
\[
        \left(
        u^\circ_{x_H,\varphi,\rho_i^\circ},
        [\alpha^\varphi_{\rho_i^\circ}],
        E_{\varphi,\rho,\rho_i^\circ}
        \right),
        \qquad i=1,2,
\]
represent the same element of the enriched target.  Therefore the class
\[
        \mathfrak u^{\mathrm{enh}}_{x_H,\varphi,\rho}
        :=
        \left[
        u^\circ_{x_H,\varphi,\rho^\circ_\varphi},
        [\alpha^\varphi_{\rho^\circ_\varphi}],
        E_{\varphi,\rho,\rho^\circ_\varphi}
        \right]
\]
is independent of the chosen constituent.

Finally, if \((\varphi',\rho')\) is an equivalent enhanced parameter, then the
chosen equivalence transports the unramified centralizer datum for
\(\varphi\) to that for \(\varphi'\), identifies the corresponding component
groups, carries constituents to constituents, and transports both the FOS
finite unipotent character and the Clifford data as above.  Thus the enriched
class constructed here is canonically determined by the equivalence class of
\((\varphi,\rho)\).  Transport through the finite-level identification
\(\jmath_x:\mathbf H_{x_H}\to \mathbf H_x\) fixed by the toral specialization and
pinned root datum gives the asserted \(G\)-side datum
\[
        \mathfrak u^{\mathrm{enh}}_{x,\varphi,\rho}
        =
        (\jmath_x)_*
        \mathfrak u^{\mathrm{enh}}_{x_H,\varphi,\rho}.
\]
\end{proof}

\subsection{How the FOS input will also be used in the reverse construction}

In Section~\ref{sec:construction-langlands-parameter}, the construction of the
parameter attached to a depth-zero supercuspidal representation reduces, after
the toral part has been separated, to a supercuspidal unipotent representation
of a suitable unramified group.  At that point we invoke
Theorem~\ref{thm:FOS-unipotent-LLC}: the unipotent supercuspidal representation
supplies an unramified cuspidal enhanced parameter
\[
        (\lambda_u,\rho_u)
        \in
        \Phi^{\mathrm e}_{\mathrm{nr}}(H)_{\cusp},
\]
and the desired parameter for the original representation is obtained by
combining this unipotent part with the toral parameter coming from the LLC for
tori.

The FOS construction is constructive in the simple adjoint cases: the ordinary
parameters are those appearing in the work of Lusztig and Morris, and their
formal degrees are compared with adjoint \(\gamma\)-factors.  For the purposes
of this paper, however, only the existence, weakly unramified equivariance,
compatibility with the split-centre reduction, and the finite-type extraction
isolated in Lemma~\ref{lem:FOS-cuspidal-parameter-to-finite-H-side} are needed.

\section{LLC for enhanced depth-zero cuspidal parameters}
\label{sec:LLC-enhanced-depth-zero-cuspidal}

The purpose of this section is to construct the representation-side object
attached to a relevant cuspidal enhanced depth-zero parameter.  We keep the
notation and normalizations fixed earlier: \(G\) is a connected reductive group
over \(F\), \(G^\ast\) is its quasi-split inner form, and the pinning, the torus
LLC normalization, and the Whittaker-canonical toral \(L\)-embeddings of
Section~\ref{sec:Whittaker-canonical-chi-data} remain in force.  The finite
reductive quotients which occur below are full parahoric quotients.  By
Lemma~\ref{lem:vertex-component-abelian} and
\cite[Lemma~\ref{JD-lem:parahoric-component-action-pinned}]{arotemishra}, they
have abelian component group and satisfy the rational pinned-component
condition for the induced pinnings.  Hence the enriched pinned disconnected
Jordan decomposition of Theorem~\ref{thm:disc-JD} applies to them.

Let \((\varphi,\rho)\) be a relevant cuspidal enhanced depth-zero parameter.  The
construction below associates to it the finite data needed for a depth-zero
type.  Proposition~\ref{prop:parameter-side-unipotentization} first removes the
tame inertial semisimple part and constructs the unramified connected parameter
\(\lambda_\varphi\).  Lemma~\ref{lem:enhanced-parameter-to-enriched-finite-unipotent-datum}
then separates the connected FOS enhancement from the residual Clifford datum of
the full enhancement.  Thus FOS is applied only to the connected pair
\[
        (\lambda_\varphi,\rho^\circ_\varphi),
\]
while the original enhancement \(\rho\) supplies the cohomology class and
projective Clifford label
\[
        [\alpha^\varphi_{\rho^\circ_\varphi}],
        \qquad
        E_{\varphi,\rho}.
\]
After the finite datum on the \(H_\varphi\)-side is identified with the
\(G\)-side finite realization determined by the toral part,
Lemma~\ref{lem:FOS-cuspidal-parameter-to-finite-H-side} gives an enriched
cuspidal unipotent datum
\[
        \mathfrak u^{\mathrm{enh}}_{x,\varphi,\rho}
        =
        \left[
        u^\circ_{x,\varphi,\rho},
        [\alpha^\varphi_{\rho^\circ_\varphi}],
        E_{\varphi,\rho}
        \right]
        \in
        \Uch^{\mathrm{enh}}\bigl(\mathbf H_x(\ff)\bigr)^{\cusp}.
\]

We now spell out the finite realization on the \(G\)-side.  It is not a second
choice made by the unipotent datum.  The tame inertial semisimple element
\[
        s=\operatorname{pr}_{\widehat G}\bigl(\varphi(\iota)\bigr)
\]
is realized, by the fixed toral normalization for the \(G\)-relevant parameter,
by a depth-zero toral datum \((S,\theta)\) in the fixed group \(G\).  We put
\[
        x=x_S .
\]
The finite semisimple element and the finite dual centralizer are
\[
        s_x=\operatorname{sp}_x(s),
        \qquad
        \mathbf H_x^\vee=C_{\rG_x^\vee}(s_x).
\]
Thus \(\mathbf H_x\) is not chosen as a parahoric quotient of an unspecified
inner form of the complex centralizer.  It is, by definition, the pinned finite
reductive group whose pinned dual is \(\mathbf H_x^\vee\).  The comparison with
the FOS finite group attached to \(H_\varphi\) is the pinned finite-root-datum
comparison induced by the adapted embedding \(\xi_\varphi\) and the depth-zero
specialization \(s\mapsto s_x\).  The enriched datum above is transported through
this fixed finite identification.

The finite representation on the \(G\)-side is now obtained by reversing the
enriched finite transform:
\begin{equation}\label{eq:bar-tau-from-enriched-H-datum}
        \bar\tau_{\varphi,\rho}
        :=
        \bigl(\mathcal J^{\mathbb P_x}_{x,s_x}\bigr)^{-1}
        \bigl(\mathfrak u^{\mathrm{enh}}_{x,\varphi,\rho}\bigr)
        \in
        \cE(\rG_x(\ff),s_x).
\end{equation}
Equivalently, one may first apply inverse enriched unipotent duality to obtain
an enriched datum on \(\mathbf H_x^\vee\), and then apply the inverse enriched
pinned Jordan decomposition
\(J^{\mathbb P_x,\mathrm{enh}}_{\rG_x,s_x}\).  Since
\(\mathfrak u^{\mathrm{enh}}_{x,\varphi,\rho}\) is cuspidal, the cuspidal part of
Theorem~\ref{thm:disc-JD} implies that \(\bar\tau_{\varphi,\rho}\) is cuspidal
for the full finite quotient \(\rG_x(\ff)\).

\begin{theorem}[Depth-zero supercuspidal attached to a cuspidal enhanced parameter]
\label{thm:LLC-enhanced-depth-zero-cuspidal-construction}
Let
\[
        (\varphi,\rho)
\]
be a relevant cuspidal enhanced depth-zero Langlands parameter for \(G\).  Then
the construction of this section attaches to \((\varphi,\rho)\) a depth-zero
datum
\[
        (S,\theta;\tau_{\varphi,\rho})
\]
in the sense of Definition~\ref{def:depth-zero-yu-datum}, whose finite reductive
quotient representation is \(\bar\tau_{\varphi,\rho}\).  The compact induction
\[
        \Pi_{0,\cusp}^{G}(\varphi,\rho)
        :=
        \cInd_{G(F)_{x(\varphi,\rho)}}^{G(F)}
        \tau_{\varphi,\rho}
\]
is an irreducible depth-zero supercuspidal representation of \(G(F)\).

The isomorphism class of
\[
        \Pi_{0,\cusp}^{G}(\varphi,\rho)
\]
depends only on the \(\widehat G\)-conjugacy class of the enhanced parameter
\((\varphi,\rho)\), and on the fixed global normalizations.  It is independent
of all auxiliary choices made in the construction; in particular, no additional
choice of an inner form of \(H_\varphi\) or of a matching vertex is made beyond
the fixed pinned and toral normalizations.
\end{theorem}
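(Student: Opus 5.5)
The proof has three parts. First I check that the output is a depth-zero datum. Then I show that the compact induction is irreducible and supercuspidal. Finally I show that the result is independent of all choices.

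\emph{The depth-zero datum.} By the fixed toral normalization, the tame inertial element \(s=\operatorname{pr}_{\widehat G}(\varphi(\iota))\) is realized by a maximally unramified elliptic torus \(S\subset G\) and a depth-zero character \(\theta\). Lemma~\ref{lem:vertex-torus-bijection} shows that \(x=x_S\) is a vertex. Lemma~\ref{lem:tame-reduces-finite-dual} gives \(s_x=\operatorname{sp}_x(s)\). By Lemma~\ref{lem:FOS-cuspidal-parameter-to-finite-H-side}, the transported datum \(\mathfrak u^{\mathrm{enh}}_{x,\varphi,\rho}\) is a cuspidal element of \(\Uch^{\mathrm{enh}}(\mathbf H_x(\ff))\).

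I then need its connected shadow to lie in the toral unipotent series generated by \((\rS_x(\ff),\mathbf 1)\), so that Proposition~\ref{prop:JD-at-vertex}(3) applies. I expect to argue as follows. A cuspidal unipotent character of \(\mathbf H_x^\circ(\ff)\) lies in the Deligne--Lusztig series of some \(\sigma\)-stable maximal torus. The torus \(\rS_x\) is elliptic, and the realization of \(s\) is only determined up to the choice of such a torus. So I would choose, or adjust within its stable class, the torus \(S\) so that \(\rS_x\) meets the connected shadow. If this is not literally possible, I would weaken the requirement in Definition~\ref{def:depth-zero-yu-datum} to membership in \(\cE(\rG_x^\circ(\ff),s_x)\) via some elliptic torus.

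With this in place, \(\bar\tau_{\varphi,\rho}\) is well defined and lies in \(\cE(\rG_x(\ff),s_x)\). The cuspidal part of Theorem~\ref{thm:disc-JD} makes it cuspidal for \(\rG_x(\ff)\). Inflating it to \(K_x=G(F)_x\) gives \(\tau_{\varphi,\rho}\), and \((S,\theta;\tau_{\varphi,\rho})\) is a depth-zero datum.

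\emph{Irreducibility and supercuspidality.} This is the standard Moy--Prasad/Morris argument. The representation \(\tau_{\varphi,\rho}\) is trivial on \(K_x^+\). Its restriction to \(K_x^0\) is a sum of inflations of cuspidal representations of \(\rG_x^\circ(\ff)\), because cuspidality of a character of the disconnected group means its restriction to the identity component is cuspidal. The point \(x\) is a vertex, and \(K_x\) is the full stabilizer of \(x\), which is compact modulo centre. So \cite{MoyPrasad1996,Morris1999} give that \(\cInd_{K_x}^{G(F)}\tau_{\varphi,\rho}\) is irreducible and supercuspidal. Its depth is zero because it has nonzero \(K_x^+\)-fixed vectors.

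\emph{Independence of choices.} The auxiliary choices are:
\begin{itemize}
\item a representative \((\varphi,\rho)\) of its \(\widehat G\)-class;
\item the constituent \(\rho^\circ_\varphi\);
\item the representative \(\iota_S^{\mathrm{can}}\) of its conjugacy class;
\item the torus \(S\) realizing \(s\), within its \(G(F)\)-conjugacy class;
\item the identification \(\jmath_x\).
\end{itemize}
Conjugating \((\varphi,\rho)\) by \(\widehat G\) transports \(H_\varphi\), \(\lambda_\varphi\), \(\xi_\varphi\) and all component groups. Lemma~\ref{lem:component-group-comparison-unipotentization} and Lemma~\ref{lem:FOS-cuspidal-parameter-to-finite-H-side} then show that the enriched class is unchanged, including under a change of constituent. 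Proposition~\ref{prop:canonical-LS}(4) shows that replacing \(S\) by \(gSg^{-1}\) with \(g\in G(F)\) conjugates the whole datum. By the naturality in Proposition~\ref{prop:JD-at-vertex}, this replaces \(\tau\) by \({}^g\tau\) on \(G(F)_{gx}\), and compact induction is insensitive to this.

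\emph{Expected main obstacle.} The hardest step is showing that the toral realization \((S,\theta)\) of the inertial class is well defined up to \(G(F)\)-conjugacy. Here the relevance of \((\varphi,\rho)\) for the fixed inner form \(G\) must be used, together with the Kottwitz-sign compatibility of the enhancement restricted to \(Z(\widehat G_{\mathrm{sc}})\). I would first try to show that distinct rational classes in the stable class of \(S^\ast\) are distinguished by the Clifford label \(E_{\varphi,\rho}\), so that \(\rho\) itself pins down the rational class. I would then check that \(\jmath_x\) is canonical up to inner automorphisms of \(\mathbf H_x\), which act trivially on enriched classes.
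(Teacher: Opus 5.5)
Your overall route is the paper's. You apply FOS to the connected constituent $(\lambda_\varphi,\rho^\circ_\varphi)$, take Clifford data for $\cA_{\lambda_\varphi}\triangleleft\mathcal S^{\mathrm{un}}_\varphi$, transport through $\jmath_x$, apply the inverse transform \eqref{eq:bar-tau-from-enriched-H-datum}, inflate, use Moy--Prasad/Morris for irreducibility and supercuspidality (exactly as in the paper), and get independence by transport of structure. However, the two steps you leave conditional are exactly where the argument has to commit, and your fall-backs do not close them.

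\emph{Deligne--Lusztig support.} Adjusting $S$ within its stable class does not work in general.
\begin{itemize}
\item A cuspidal unipotent character need not occur in $R_T(\mathbf 1)$ for $T$ of a prescribed Weyl type. For example, $G_2[1]$ is orthogonal to $R_{T_c}(\mathbf 1)$ for a Coxeter torus of $G_2(\ff)$, and the corresponding stable class in split $G_2$ is a single rational class.
\item Any change of $S$ that moves $x_S$ changes the finite group on which $\bar\tau_{\varphi,\rho}$ was built.
\item Weakening Definition~\ref{def:depth-zero-yu-datum} proves a different theorem.
\end{itemize}
The paper instead keeps $x$ fixed and forms $\bar\tau_{\varphi,\rho}$ there. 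It then treats the torus of the datum as a witness chosen afterwards: an elliptic maximal $\ff$-torus of $\rG_x^\circ$ carrying the Deligne--Lusztig support, lifted by Lemma~\ref{lem:vertex-torus-bijection}(2) (i.e.\ \cite[Lem.~3.4.4(2)]{KalethaRegSC}) to a maximally unramified elliptic $S'$ with $x_{S'}=x$. The choice of witness does not affect $\bar\tau_{\varphi,\rho}$.

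What you must add is the existence of the witness. Some connected constituent $\rho^\circ$ of $\bar\tau_{\varphi,\rho}$ is cuspidal and lies in $\cE(\rG_x^\circ(\ff),s_x)$. By \cite{DL76} it occurs in some $R_T(\vartheta)$. The torus $T$ is elliptic, because $\rho^\circ$ is orthogonal to everything Harish-Chandra induced from a proper split Levi. Disjointness of rational series then forces $(T,\vartheta)$ to correspond to a conjugate of $s_x$. Your list of choices should also include the witness and the generator $\iota$; the paper treats both.

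\emph{Rational class of the toral realization.} Here your proposal stops at a plan, so the independence assertion is not established. The paper does not supply this step either. It takes $(S,\theta)$, hence $x$, as given by the fixed toral normalization of the inertial class. It uses $E_{\varphi,\rho}$ only as the third entry of $\mathfrak u^{\mathrm{enh}}_{x,\varphi,\rho}$ fed into $(\mathcal J^{\mathbb P_x}_{x,s_x})^{-1}$. Its independence argument covers only conjugate representatives.

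Your diagnosis is nevertheless right. Take $G=\SL_2$ and a regular elliptic depth-zero $\varphi$. The maximally unramified elliptic tori realizing $s$ form two $\SL_2(F)$-classes, attached to the two non-conjugate hyperspecial vertices, and the two compact inductions are non-isomorphic. Since $\rG_x(\ff)=\SL_2(\ff)$ is connected, no Clifford label at a fixed vertex can separate the two relevant enhancements, so the vertex must depend on $\rho$.

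However, if $E_{\varphi,\rho}$ selects the rational class of $S$, it cannot also be passed unchanged to the finite Clifford theory at $x$, as the paper does. Moreover, in the unipotent case $s=1$, $E_{\varphi,\rho}$ is trivial while the vertex is dictated by the FOS type, i.e.\ by $\rho^\circ_\varphi$. You would have to specify which part of $\rho$ selects the class in $\ker\bigl(H^1(F,S)\to H^1(F,G)\bigr)$ and the vertex, and which part is matched with the Clifford data for $\rG_x(\ff)/\rG_x^\circ(\ff)$. Without this, $\Pi^G_{0,\cusp}(\varphi,\rho)$ is not shown to be well defined.
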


\begin{proof}
We prove the theorem by constructing the finite enriched Jordan datum attached
to \((\varphi,\rho)\), lifting the resulting finite representation to a
depth-zero datum for \(G(F)\), and checking independence of choices.

Let
\[
        \varphi:W_F\times \SL_2(\C)\longrightarrow {}^LG
\]
be a discrete depth-zero parameter and let
\[
        \rho\in\Irr(\mathcal S_\varphi)
\]
be a cuspidal enhancement.  Since \(\varphi|_{P_F}=1\), the tame inertial image
is semisimple.  Fix the topological generator \(\iota\) of \(I_F/P_F\) used in
finite-torus duality, and put
\[
        s:=\operatorname{pr}_{\widehat G}\bigl(\varphi(\iota)\bigr),
        \qquad
        u_\varphi
        :=
        \operatorname{pr}_{\widehat G}
        \varphi\!\left(1,
        \begin{psmallmatrix}1&1\\0&1\end{psmallmatrix}\right).
\]
The element \(u_\varphi\) lies in the centralizer of the tame inertial data.  The
ordinary parameter-side unipotentization of
Proposition~\ref{prop:parameter-side-unipotentization} gives the connected
unramified group \(H_\varphi\) and the unramified parameter
\(\lambda_\varphi\).  Choose a connected constituent
\(\rho^\circ_\varphi\) of the restriction of the full enhancement as in
Lemma~\ref{lem:enhanced-parameter-to-enriched-finite-unipotent-datum}.  FOS
applied to \((\lambda_\varphi,\rho^\circ_\varphi)\) gives the connected
supercuspidal unipotent representation
\(\pi^{\mathrm{un}}_{\varphi,\rho}\), and hence a connected finite cuspidal
unipotent label at a vertex \(x_H\) of \(H_\varphi\).  Clifford theory applied to
the normal inclusion
\(\cA_{\lambda_\varphi}\triangleleft\mathcal S_\varphi^{\mathrm{un}}\) gives the
class \([\alpha^\varphi_{\rho^\circ_\varphi}]\) and the projective label
\(E_{\varphi,\rho}\).  Combining these gives
\[
        \mathfrak u^{\mathrm{enh}}_{x_H,\varphi,\rho}
        \in
        \Uch^{\mathrm{enh}}\bigl(\mathbf H_{x_H}(\ff)\bigr)^{\cusp}.
\]
Transporting through the finite identification \(\jmath_x\) fixed by the toral
specialization gives \(\mathfrak u^{\mathrm{enh}}_{x,\varphi,\rho}\) on
\(\mathbf H_x(\ff)\).

Apply the inverse enriched finite transform as in
\eqref{eq:bar-tau-from-enriched-H-datum}.  By
Proposition~\ref{prop:JD-at-vertex}, the representation
\(\bar\tau_{\varphi,\rho}\) has the required Deligne--Lusztig support on the
connected quotient.  More explicitly, its restriction to
\(\rG_x^\circ(\ff)\) contains a constituent of
\[
        \pm R_{\rS_x}^{\rG_x^\circ}(\underline\theta)
\]
for a finite elliptic torus \(\rS_x\subset\rG_x^\circ\) and a character
\(\underline\theta\) whose finite-dual semisimple element is \(s_x\).  By
Lemma~\ref{lem:vertex-torus-bijection}, \(\rS_x\) lifts to a maximally
unramified elliptic torus \(S\subset G\), and \(\underline\theta\) lifts to the
depth-zero character \(\theta:S(F)\to\overline{\Q}_\ell^\times\) determined by
the toral part of \(\varphi\) under the fixed torus LLC and the canonical
\(L\)-embedding of Proposition~\ref{prop:canonical-LS}.  The unramified part of
\(\theta\) is the Frobenius part of the toral parameter; only the bounded
reduction \(\underline\theta\) enters the finite Deligne--Lusztig construction.

Inflate \(\bar\tau_{\varphi,\rho}\) along
\[
        G(F)_x\twoheadrightarrow G(F)_x/G(F)_{x,0+}=\rG_x(\ff)
\]
and denote the inflated representation by \(\tau_{\varphi,\rho}\).  Then
\[
        (S,\theta;\tau_{\varphi,\rho})
\]
is a depth-zero datum in the sense of
Definition~\ref{def:depth-zero-yu-datum}.  Since
\(\mathfrak u^{\mathrm{enh}}_{x,\varphi,\rho}\) is cuspidal, the inverse
enriched Jordan transform is cuspidal, so \(\bar\tau_{\varphi,\rho}\) is a
cuspidal representation of the full finite quotient.  The standard depth-zero
construction of Moy--Prasad and Morris
\cite{MoyPrasad1994,MoyPrasad1996,Morris1999} therefore implies that
\[
        \cInd_{G(F)_x}^{G(F)}\tau_{\varphi,\rho}
\]
is irreducible and supercuspidal.

It remains to record independence of choices.  Replacing the representative of
\((\varphi,\rho)\) by a \(\widehat G\)-conjugate transports the tame semisimple
element, the finite dual centralizer, the connected FOS datum, and the projective
Clifford label.  The enriched target is defined modulo precisely this conjugacy,
and the enriched pinned Jordan decomposition is natural for the corresponding
finite identifications.  Hence \(\bar\tau_{\varphi,\rho}\), and therefore the
compact induction, is unchanged up to isomorphism.

The choice of \(\iota\) only changes \(s\) by the corresponding automorphism of
the prime-to-\(p\) tame character group.  Under finite-torus duality this is the
same operation on \(\underline\theta\) and on \(s_x\), so the Lusztig series and
the enriched finite transform are merely transported by a canonical
identification.  The representative of the vertex on the unipotent
\(H_\varphi\)-side is harmless: FOS determines the unipotent type only up to
conjugacy inside the fixed unramified group \(H_\varphi\), and conjugacy transports
the connected label and the Clifford data compatibly.  The associated
\(G\)-side finite realization is the one determined by the same toral
specialization, and replacing it by a conjugate representative changes the
compact induction only by the usual conjugation isomorphism.

Finally, the finite Deligne--Lusztig torus used to display
\(\bar\tau_{\varphi,\rho}\) as part of a depth-zero datum is only a witness for
its finite support.  If another witness is chosen, the finite representation
\(\bar\tau_{\varphi,\rho}\) is unchanged, and conjugate choices give conjugate
lifts \((S,\theta)\).  The compact induction is therefore unchanged.  The
possible ambiguity in passing from the identity component to the full finite
quotient is not left open: it is exactly the Clifford datum recorded in
\(\mathfrak u^{\mathrm{enh}}_{x,\varphi,\rho}\), and the inverse enriched Jordan
decomposition converts that datum into the single irreducible representation
\(\bar\tau_{\varphi,\rho}\) of \(\rG_x(\ff)\).  This proves the theorem.
\end{proof}

\section{Construction of the Langlands Parameter}
\label{sec:construction-langlands-parameter}

We now construct the reverse direction of
Theorem~\ref{thm:LLC-enhanced-depth-zero-cuspidal-construction}.  Thus we start
from a depth-zero supercuspidal representation and construct the corresponding
cuspidal enhanced depth-zero Langlands parameter.  The finite input is the same
enriched finite transform as in the previous section.  In particular, the full
finite quotient contributes a Clifford cohomology class and a projective
Clifford label; these entries are part of the enhancement on the parameter
side.

Throughout this section the global choices fixed in
Subsection~\ref{subsec:depth-zero-LLC-setup} remain in force: the quasi-split
inner form \(G^\ast\), the pinning, the Whittaker-normalized \(L\)-embeddings
for maximally unramified elliptic tori, and the enriched pinned disconnected
Jordan decomposition of Theorem~\ref{thm:disc-JD}.

Let
\[
        \pi=\pi(S,\theta;\tau)
        =
        \cInd_{G(F)_x}^{G(F)}\tau
\]
be an irreducible depth-zero supercuspidal representation attached to a
depth-zero datum in the sense of Definition~\ref{def:depth-zero-yu-datum}, with
\(x=x_S\).  We write
\[
        \bar\tau\in \Irr(\rG_x(\ff)),
        \qquad
        \rG_x(\ff)=G(F)_x/G(F)_{x,0+},
\]
for the finite representation inflated to \(\tau\).  Let
\[
        \varphi_\theta:W_F\longrightarrow {}^LS
\]
be the toral parameter attached to \(\theta\), and view it as a parameter for
\(G\) through the canonical \(L\)-embedding
\[
        \iota_S^{\mathrm{can}}:{}^LS\hookrightarrow {}^LG
\]
of Proposition~\ref{prop:canonical-LS}.  Write
\[
        \varphi_\theta(w)=a_\theta(w)\rtimes w.
\]
If \(\iota\in I_F\) is the fixed topological generator of the tame inertia
quotient, put
\[
        s:=a_\theta(\iota),
        \qquad
        s_x:=\operatorname{sp}_x(s)\in \rS_x^\vee\subset \rG_x^\vee .
\]
The symbol \(s\) records the chosen tame cocycle value.  The actual complex
centralizer used in the parameter construction is the inertial centralizer of
\(\varphi_\theta(I_F)\) in the semidirect-product sense; when the pinned
\(I_F\)-action on \(\widehat G\) is trivial, this is just the ordinary
centralizer of \(s\).

Set
\[
        \mathbf H_x^\vee:=C_{\rG_x^\vee}(s_x),
\]
and let \(\mathbf H_x\) be the pinned finite reductive group whose pinned dual
is \(\mathbf H_x^\vee\).  The finite group \(\mathbf H_x\), not the finite dual
centralizer \(\mathbf H_x^\vee\), is the group whose connected unipotent labels
enter the FOS side of the construction.

\begin{theorem}[Enhanced parameter attached to a depth-zero supercuspidal]
\label{thm:depth-zero-supercuspidal-to-enhanced-parameter}
Let
\[
        \pi=\pi(S,\theta;\tau)
\]
be an irreducible depth-zero supercuspidal representation of \(G(F)\), written
in terms of a depth-zero datum as above.  Then the fixed pinned normalization
attaches to \(\pi\) a relevant cuspidal enhanced depth-zero Langlands parameter
\[
        (\varphi_\pi,\rho_\pi)
        \in
        \Phi^{\mathrm e}_{0,\cusp}(G).
\]

More precisely, the finite representation \(\bar\tau\) determines, by enriched
pinned Jordan decomposition followed by enriched pinned unipotent duality, an
enriched finite cuspidal unipotent datum
\[
        \mathfrak u^{\mathrm{enh}}_{x,\tau}
        :=
        \mathcal J^{\mathbb P_x}_{x,s_x}(\bar\tau)
        =
        [u^\circ_{x,\tau},[\alpha_{x,\tau}],E_{x,\tau}]
        \in
        \Uch^{\mathrm{enh}}(\mathbf H_x(\ff))^{\cusp}.
\]
The connected label \(u^\circ_{x,\tau}\) gives, through the FOS correspondence,
an unramified cuspidal enhanced parameter
\[
        (\lambda_{x,\tau},\rho^\circ_{x,\tau})
        \in
        \Phi^{\mathrm e}_{\mathrm{nr}}(H_\theta^\circ)_{\cusp},
\]
where \(H_\theta^\circ\) is the connected unramified group attached to the
identity component of the tame inertial centralizer of \(\varphi_\theta\).

Let
\[
        {}^Lj_\theta:{}^LH_\theta^\circ\longrightarrow{}^LG
\]
be the \(\varphi_\theta\)-adapted \(L\)-embedding fixed by the
same pinned normalization as in
Proposition~\ref{prop:parameter-side-unipotentization}.  If
\[
        {}^Lj_\theta(\lambda_{x,\tau}(w,z))
        =
        b_{x,\tau}(w,z)\rtimes w,
        \qquad
        \varphi_\theta(w)=a_\theta(w)\rtimes w,
\]
then the ordinary parameter is
\[
        \varphi_\pi(w,z)
        :=
        a_\theta(w)b_{x,\tau}(w,z)\rtimes w .
\]
This notation means multiplication of the \(\widehat G\)-components over the
same element of \(W_F\), not multiplication of two elements of \({}^LG\) having
the same projection to \(W_F\).

The enhancement \(\rho_\pi\) is obtained from the connected FOS enhancement
\(\rho^\circ_{x,\tau}\) together with the projective Clifford datum
\([\alpha_{x,\tau}],E_{x,\tau}\), through the natural full-centralizer
identification
\[
        \mathcal S_{\varphi_\pi}
        \cong
        \mathcal S^{\mathrm{full}}_{\lambda_{x,\tau}}.
\]
Equivalently, Clifford theory for the normal inclusion of the connected FOS
component group in the full unramified centralizer component group reconstructs
\(\rho_\pi\) from the triple
\([\rho^\circ_{x,\tau},[\alpha_{x,\tau}],E_{x,\tau}]\).

The \(\widehat G\)-conjugacy class of \((\varphi_\pi,\rho_\pi)\) depends only on
the isomorphism class of \(\pi\) and on the fixed global normalizations.  It is
independent of the choice of depth-zero datum \((S,\theta;\tau)\) representing
\(\pi\) and of all auxiliary choices made in the construction.  Finally, the
construction is inverse to
Theorem~\ref{thm:LLC-enhanced-depth-zero-cuspidal-construction} on the
representation \(\pi\).
\end{theorem}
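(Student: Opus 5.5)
The proof runs the construction of Theorem~\ref{thm:LLC-enhanced-depth-zero-cuspidal-construction} backwards, in four steps. First I extract the finite datum, then I build the ordinary parameter, then the enhancement, and last I check well-definedness and the inverse property.

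\emph{Step 1 (finite datum).} Since \(\pi\) is supercuspidal of depth zero, the Moy--Prasad and Morris theory \cite{MoyPrasad1994,MoyPrasad1996,Morris1999} shows that \(\bar\tau\) is cuspidal for the full quotient \(\rG_x(\ff)\). By Proposition~\ref{prop:JD-at-vertex}(1) and (4), the element \(\bar\tau\) lies in \(\cE(\rG_x(\ff),s_x)\). Applying \(\mathcal J^{\mathbb P_x}_{x,s_x}\) then gives a cuspidal enriched datum \([u^\circ_{x,\tau},[\alpha_{x,\tau}],E_{x,\tau}]\). Its connected shadow lies in the toral series of \((\rS_x,\mathbf 1)\) by Proposition~\ref{prop:JD-at-vertex}(2).

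\emph{Step 2 (ordinary parameter).} I take \(H_\theta^\circ\) to be the unramified group with dual \(C_{\widehat G}(\varphi_\theta(I_F))^\circ\) and Frobenius fixed by the pinning, exactly as in Proposition~\ref{prop:parameter-side-unipotentization}. I identify its hyperspecial quotient with \(\mathbf H_x^\circ\) through the map \(\jmath_x\). Inflating \(u^\circ_{x,\tau}\), extending it to the normalizer by Proposition~\ref{prop:supp-depthzero-type}, and compactly inducing gives a unipotent supercuspidal representation of \(H_\theta^\circ(F)\). Its weakly unramified twist is pinned down by the Frobenius value of \(\varphi_\theta\). Applying \(\LLC_{\mathrm{FOS}}\) (Theorem~\ref{thm:FOS-unipotent-LLC}) yields \((\lambda_{x,\tau},\rho^\circ_{x,\tau})\). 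Composing with \({}^Lj_\theta\) and multiplying the \(\widehat G\)-components by \(a_\theta\) defines \(\varphi_\pi\).

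I then check three properties of \(\varphi_\pi\). It is trivial on \(P_F\) because \(\theta\) has depth zero and \(\lambda_{x,\tau}\) is unramified. Its inertial restriction is \(\varphi_\theta|_{I_F}\) by Proposition~\ref{prop:canonical-LS}(2),(3). It is discrete because \(\lambda_{x,\tau}\) is discrete and the semisimple part \(s\) is elliptic, coming from the elliptic torus \(S\).

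\emph{Step 3 (enhancement).} I run the centralizer comparison of Lemma~\ref{lem:component-group-comparison-unipotentization} for \(\varphi_\pi\). By construction \(C_{\widehat G}(\varphi_\pi(I_F))^\circ=\widehat H_\theta\), so \(\cA_{\lambda_{x,\tau}}\triangleleft\mathcal S_{\varphi_\pi}\). Clifford theory then reconstructs a unique irreducible \(\rho_\pi\) from the triple \([\rho^\circ_{x,\tau},[\alpha_{x,\tau}],E_{x,\tau}]\), with the class and the label transported across the identification of stabilizer quotients.

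\textbf{Main obstacle.} The hard part of Step 3 is this identification. I must show that the source stabilizer quotient \(I_{\rG_x(\ff)}(\rho^\circ)/\rG_x^\circ(\ff)\) attached to \(\bar\tau\) matches \(\Stab_{\mathcal S_{\varphi_\pi}}(\rho^\circ_{x,\tau})/\cA_{\lambda_{x,\tau}}\), compatibly with the cocycle classes. I would argue through Kottwitz duality: both quotients sit inside \(\Omega_x\subset X^\ast(Z(\widehat G)^I)^\Phi\) (Lemma~\ref{lem:vertex-component-abelian}) and are cut out by the components of \(C_{\rG_x^\vee}(s_x)\). I would then invoke the FOS equivariance of Theorem~\ref{thm:FOS-unipotent-LLC}(2),(3) to match the stabilizers. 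Given that matching, cuspidality of \(\rho_\pi\) follows by reversing the argument of Lemma~\ref{lem:component-group-comparison-unipotentization}. Relevance follows because the central character of \(\rho_\pi\) on \(Z(\widehat G_{\mathrm{sc}})\) is read off from the Kottwitz class of the inner form via the same \(\Omega_x\).

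\emph{Step 4 (well-definedness and inverse).} Two depth-zero data representing \(\pi\) are \(G(F)\)-conjugate by the Morris intertwining results, so Proposition~\ref{prop:JD-at-vertex} gives naturality under that conjugacy. A change of \(\iota\) or of the FOS vertex representative is handled as in the proof of Theorem~\ref{thm:LLC-enhanced-depth-zero-cuspidal-construction}. For the inverse property, I feed \((\varphi_\pi,\rho_\pi)\) into \(\Pi^G_{0,\cusp}\) and check that it recovers the same pieces:
\begin{itemize}
\item the toral part, with \(s\), \(x\) and \((S,\theta)\) recovered up to conjugacy because \(\varphi_\pi|_{I_F}=\varphi_\theta|_{I_F}\);
\item the unramified parameter, since \(\lambda_{\varphi_\pi}=\lambda_{x,\tau}\) by uniqueness in Proposition~\ref{prop:parameter-side-unipotentization};
\item the enriched datum \(\mathfrak u^{\mathrm{enh}}\), by Lemma~\ref{lem:FOS-cuspidal-parameter-to-finite-H-side}.
\end{itemize}
Bijectivity of \(\mathcal J^{\mathbb P_x}_{x,s_x}\) then returns \(\bar\tau\), and hence \(\pi\).
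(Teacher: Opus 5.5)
Your outline follows the paper's proof step for step: the finite datum via $\mathcal J^{\mathbb P_x}_{x,s_x}$, FOS on the connected shadow, $\varphi_\pi=\varphi_\theta\star\lambda_{x,\tau}$, Clifford reconstruction, and transport plus bijectivity of $\mathcal J^{\mathbb P_x}_{x,s_x}$ for independence and the inverse. The proposal breaks at the step you single out as the main obstacle. The paper does not argue that step. It takes $\mathcal S_{\varphi_\pi}\cong\mathcal S^{\mathrm{full}}_{\lambda_{x,\tau}}$ from Lemma~\ref{lem:parameter-factorization} and simply asserts that Clifford theory rebuilds $\rho_\pi$ from the transported triple.

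Your Kottwitz-duality argument cannot fill this step. By the proof of Lemma~\ref{lem:component-group-comparison-unipotentization}, $\mathcal S_{\varphi_\pi}/\cA_{\lambda_{x,\tau}}$ injects into $\pi_0\bigl(Z^1_{\widehat G_{\mathrm{sc}}}(\varphi_\pi(I_F))\bigr)$, which is a dual-side group. By contrast, $I_{\rG_x(\ff)}(\rho^\circ)/\rG_x^\circ(\ff)$ lies in $\Omega_x\hookrightarrow X^\ast(Z(\widehat G)^I)^\Phi$, and no natural map relates the two. Theorem~\ref{thm:FOS-unipotent-LLC}(2),(3) concern weakly unramified twists and automorphisms of the based root datum of $H_\theta^\circ$. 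They say nothing about the conjugation action of $\mathcal S_{\varphi_\pi}$ on $\Irr(\cA_{\lambda_{x,\tau}})$.

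The two quotients can in fact differ. Take $G=\SL_2$ and the tame parameter with $\varphi(\iota)$ the image of $\mathrm{diag}(1,-1)$ in $\mathrm{PGL}_2(\C)$ and $\varphi(\Frob)$ the nontrivial Weyl element.
\begin{itemize}
\item Then $\mathcal S_\varphi\cong Q_8$. Its four characters are the relevant cuspidal enhancements, and the packet consists of four depth-zero supercuspidals, two at each of the two $\SL_2(F)$-classes of hyperspecial vertices.
\item Since $Z(\widehat G)=1$, Lemma~\ref{lem:vertex-component-abelian} gives $\Omega_x=1$. So $[\alpha_{x,\tau}]$ and $E_{x,\tau}$ are trivial for all four representations.
\item But $\widehat H_\varphi$ is a torus, so $\cA_\lambda$ is abelian. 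Hence $\mathcal S_\varphi/\cA_\lambda\neq 1$, and distinct characters of $Q_8$ have the same restriction to $\cA_\lambda$.
\end{itemize}
The distinguishing information sits in the component group of $C_{(\rG_x^\vee)^\circ}(s_x)$ (the normalizer of a non-split torus of $\mathrm{PGL}_2$) and in the choice of vertex, not in $\Omega_x$. So the identification you need fails. Your derivations of cuspidality and relevance, which you make conditional on it, are left unsupported. Relevance in particular cannot be read off from $\Omega_x=1$, since it is exactly what excludes the $2$-dimensional representation of $Q_8$.

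Two further points need repair.

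In Step~2, the equality $\varphi_\pi|_{I_F}=\varphi_\theta|_{I_F}$ does not follow from Proposition~\ref{prop:canonical-LS}, which concerns $\iota_S^{\mathrm{can}}$ rather than ${}^Lj_\theta$. What you need is $b_{x,\tau}(i,1)=1$ for $i\in I_F$, as asserted in Lemma~\ref{lem:parameter-factorization}. Be careful here: the explicit adapted embedding of Proposition~\ref{prop:parameter-side-unipotentization} sends $1\rtimes i$ to $a_\theta(i)\rtimes i$. With that normalization one gets $(\varphi_\theta\star\lambda_{x,\tau})(i)=a_\theta(i)^2\rtimes i$. You must therefore fix which normalization of ${}^Lj_\theta$ is meant before the $\star$-product is well defined.

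In Step~4, Moy--Prasad and Morris give conjugacy of $(G(F)_x,\tau)$ only. Definition~\ref{def:depth-zero-yu-datum} constrains $\theta$ only through $\underline\theta$, so $\theta$ on $S(F)\setminus S(F)_0$ is not determined by $\pi$; for instance $\theta(\varpi)$ is free when $G=\GL_n$. Consequently a weakly unramified twist read off from $\varphi_\theta(\Frob)$, as you propose in Step~2, is not an invariant of $\pi$. It has to be fixed from $\tau$ itself. The paper's sketch makes the same conjugacy claim.
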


The rest of this section proves the theorem.  First we recall the
tame-centralizer comparison which makes the multiplication of the toral and
unramified factors meaningful.  Then we perform the finite
toral--enriched-unipotent reduction on \(\bar\tau\).  Finally we define the full
enhancement from the connected FOS enhancement and the projective Clifford label
and verify depth zero, cuspidality, independence of choices, and recovery of
\(\pi\).

\medskip
\paragraph{The tame centralizer and multiplication of parameters.}
Let
\[
        \widehat H_\theta^{\mathrm{full}}
        :=
        Z_{\widehat G}\bigl(\varphi_\theta(I_F)\bigr)
        =
        \{g\in\widehat G: g\varphi_\theta(i)=\varphi_\theta(i)g
        \text{ for all }i\in I_F\},
\]
where the centralizer is taken inside the semidirect product \({}^LG\).  Put
\[
        \widehat H_\theta^\circ
        :=
        (\widehat H_\theta^{\mathrm{full}})^\circ .
\]
Since \(\varphi_\theta\) has depth zero, the image of inertia is tame and
semisimple, so \(\widehat H_\theta^\circ\) is connected reductive.  The element
\(\varphi_\theta(\Frob)\) normalizes \(\widehat H_\theta^\circ\).  The fixed
pinning chooses, as in Proposition~\ref{prop:parameter-side-unipotentization},
an unramified connected \(F\)-group \(H_\theta^\circ\) with dual group
\(\widehat H_\theta^\circ\), together with a \(\varphi_\theta\)-adapted
\(L\)-embedding
\[
        {}^Lj_\theta:{}^LH_\theta^\circ\longrightarrow{}^LG .
\]
The inclusion \(\widehat H_\theta^\circ\subset\widehat G\) is therefore used
only on the dual-group part; the passage to \(L\)-groups is through this adapted
embedding.
The depth-zero specialization identifies the hyperspecial reductive quotient of
\(H_\theta^\circ\) with \(\mathbf H_x^\circ\); this is the finite-level
identification recorded in Lemma~\ref{lem:Hstar-finite}.  The full finite group
\(\mathbf H_x\) records the corresponding component-group information.

\begin{lemma}[Toral multiplication and centralizers]
\label{lem:parameter-factorization}
Let
\[
        \lambda:W_F\times\SL_2(\C)\longrightarrow{}^LH_\theta^\circ
\]
be an unramified parameter.  Write
\[
        {}^Lj_\theta(\lambda(w,z))=b_\lambda(w,z)\rtimes w,
        \qquad
        \varphi_\theta(w)=a_\theta(w)\rtimes w .
\]
Then
\[
        (\varphi_\theta\star\lambda)(w,z)
        :=
        a_\theta(w)b_\lambda(w,z)\rtimes w
\]
defines a depth-zero parameter for \(G\).  Its restriction to inertia is
\(\varphi_\theta|_{I_F}\).  Moreover, if
\[
        \mathcal S_{\varphi_\theta\star\lambda}
        :=
        \pi_0\bigl(Z_{\widehat G}(\varphi_\theta\star\lambda)\bigr)
\]
and
\[
        \mathcal S^{\mathrm{full}}_\lambda
        :=
        \pi_0\bigl(Z_{\widehat H_\theta^{\mathrm{full}}}(\lambda)\bigr),
\]
where the second centralizer is taken in the full inertial centralizer, then
there is a natural identification
\[
        \mathcal S_{\varphi_\theta\star\lambda}
        \cong
        \mathcal S^{\mathrm{full}}_\lambda .
\]
\end{lemma}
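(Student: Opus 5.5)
The plan is to reduce everything to the adapted embedding of Proposition~\ref{prop:parameter-side-unipotentization}, applied with \(\varphi_\theta\) in place of \(\varphi\). Write \({}^Lj_\theta(h\rtimes w)=h\,\ell_\theta(w)^{-1}a_\theta(w)\rtimes w\), where \(\ell_\theta(w)\rtimes w\) is the unramified Frobenius normalization. The first task is to check that \(\varphi_\theta\star\lambda\) is a homomorphism. One should not simply multiply cocycle components. Instead, rewrite \(\varphi_\theta\star\lambda\) as \(w\mapsto c(w)\cdot{}^Lj_\theta(\lambda(w,z))\), where \(c\) is the \(\widehat H_\theta^\circ\)-valued correction. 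This reduces the cocycle identity to two facts: \(a_\theta\) is a cocycle in \(\widehat S\), and the image of \(a_\theta|_{I_F}\) centralizes \({}^Lj_\theta(\lambda)\). The centralizing property holds because \(\lambda\) lands in \({}^LH_\theta^\circ\) and \(\widehat H_\theta^\circ\) centralizes \(\varphi_\theta(I_F)\) by definition. On \(I_F\), \(\lambda\) is trivial, and \({}^Lj_\theta\) restricted to \(1\rtimes I_F\) equals \(\varphi_\theta|_{I_F}\) by the adaptedness identity \(\xi\circ\lambda=\varphi\). From this I expect the restriction to inertia to be \(\varphi_\theta|_{I_F}\). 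It then kills \(P_F\), giving depth zero. Continuity, Frobenius-semisimplicity and algebraicity on \(\SL_2\) are inherited from the two factors. I will flag one point, and if necessary fix the convention: in the displayed formula, \(a_\theta(w)\) should multiply only the part of \(b_\lambda\) not already equal to \(a_\theta(w)\). Otherwise on inertia one would get \(a_\theta^2\). Concretely, I will interpret \(\star\) so that \(\varphi_\theta\star\lambda={}^Lj_\theta\circ\lambda\) modulo the Frobenius twist by \(a_\theta(\Frob)\ell_\theta(\Frob)^{-1}\), which lies in \(\widehat S\subset\widehat H_\theta^\circ\).

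The second step is the centralizer comparison. Any \(g\) centralizing \(\psi:=\varphi_\theta\star\lambda\) centralizes \(\psi(I_F)=\varphi_\theta(I_F)\), so \(Z_{\widehat G}(\psi)\subset\widehat H_\theta^{\mathrm{full}}\). Inside \(\widehat H_\theta^{\mathrm{full}}\), commuting with \(\psi(\Frob)\) and with the \(\SL_2\)-image is exactly commuting with \(\lambda\). Here \(\lambda\) is read through \({}^Lj_\theta\), and the full group acts by conjugation on \({}^LH_\theta^\circ\) via the normalizing action of \(\varphi_\theta(\Frob)\). This is the same computation as the centralizer identity \(Z_{\widehat H_\varphi}(\lambda_\varphi)=Z_{\widehat G}(\varphi)\cap\widehat H_\varphi\) in Proposition~\ref{prop:parameter-side-unipotentization}, now without passing to identity components. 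The result is an equality of groups \(Z_{\widehat G}(\psi)=Z_{\widehat H_\theta^{\mathrm{full}}}(\lambda)\). Taking \(\pi_0\) gives the identification. For the AMS versions in \(\widehat G_{\mathrm{sc}}\), the same argument applies to inverse images modulo \(Z(\widehat G)^{W_F}\), as in the proof of Lemma~\ref{lem:component-group-comparison-unipotentization}.

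The main obstacle is making the \(\widehat H_\theta^{\mathrm{full}}\)-action on \(\lambda\) precise. Elements of the non-identity components of \(\widehat H_\theta^{\mathrm{full}}\) need not normalize the pinned Frobenius \(\theta_\varphi\). So "\(g\) centralizes \(\lambda\)" must be defined as \(g\,{}^Lj_\theta(\lambda)g^{-1}={}^Lj_\theta(\lambda)\) inside \({}^LG\). I would adopt this definition, which makes the group equality tautological once the homomorphism property is settled. Naturality then follows because every step commutes with \(\widehat G\)-conjugation of \((\varphi_\theta,\lambda,{}^Lj_\theta)\).
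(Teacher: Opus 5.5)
Your centralizer comparison is the same as the paper's. An element of \(Z_{\widehat G}(\varphi_\theta\star\lambda)\) centralizes the inertial image, so it lies in \(\widehat H^{\mathrm{full}}_\theta\). What remains is the condition of centralizing \(\lambda\) read through \({}^Lj_\theta\), and your explicit definition of that condition for non-identity components is exactly what the paper leaves implicit. Your diagnosis of the first half is also correct, and it exposes a slip in the paper's proof. The paper multiplies cocycles and asserts \(b_\lambda(i,1)=1\) for \(i\in I_F\). But \({}^Lj_\theta\) is \(\varphi_\theta\)-adapted in the sense of Proposition~\ref{prop:parameter-side-unipotentization}, i.e.\ \({}^Lj_\theta\circ\lambda_{\varphi_\theta}=\varphi_\theta\) with \(\lambda_{\varphi_\theta}(i)=1\rtimes i\). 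Hence \({}^Lj_\theta(1\rtimes i)=\varphi_\theta(i)\), so \(b_\lambda(i,1)=a_\theta(i)\), and the displayed product formula gives \(a_\theta(i)^2\) on inertia.

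The gap is in your repair, which has two defects.

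(a) You reduce the homomorphism property to the claim that \(a_\theta(I_F)\) centralizes \({}^Lj_\theta(\lambda)\), and this claim is false. Write \(\lambda(w,1)=\ell_\lambda(w)\rtimes w\). Then \({}^Lj_\theta(\lambda(\Frob))=c\,\varphi_\theta(\Frob)\) with \(c=\ell_\lambda(\Frob)\ell_\theta(\Frob)^{-1}\in\widehat H^\circ_\theta\), so conjugation by it sends \(\varphi_\theta(\iota)\) to \(\varphi_\theta(\Frob\,\iota\,\Frob^{-1})\). When \(I_F\) acts trivially on \(\widehat G\), this sends \(s=a_\theta(\iota)\) to \(s^{q^{\pm1}}\). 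That differs from \(s\) whenever the order of \(s\) does not divide \(q-1\), which is the typical elliptic case (e.g.\ the unramified elliptic torus of \(\GL_n\)). The Frobenius part of \({}^LH^\circ_\theta\) does not centralize \(\varphi_\theta(I_F)\); it only normalizes it.

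(b) No Frobenius twist by \(a_\theta(\Frob)\ell_\theta(\Frob)^{-1}\) may be added. The value \({}^Lj_\theta(\lambda(\Frob))=\ell_\lambda(\Frob)\ell_\theta(\Frob)^{-1}a_\theta(\Frob)\rtimes\Frob\) already contains that factor. A second twist therefore double counts at Frobenius, exactly as the paper's formula double counts on inertia. The twisting element also need not commute with \(\lambda(\SL_2(\C))\), so the twisted map need not be a parameter. Finally, \(\ell_\theta(\Frob)\) is not in \(\widehat S\) in general: for \(s\) central and \(G\) split, it must implement the elliptic Weyl element by which \(\varphi_\theta(\Frob)\) acts on \(\widehat S\).

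The correct definition is simply \(\varphi_\theta\star\lambda:={}^Lj_\theta\circ\lambda\), with no correction.
\begin{itemize}
\item It is a homomorphism because \({}^Lj_\theta\) is an \(L\)-homomorphism. The check uses that \(\Ad(\varphi_\theta(w))\) restricted to \(\widehat H^\circ_\theta\) equals \(\Ad(\ell_\theta(w))\) composed with the pinned action of \(w\), together with the cocycle identity for \(\ell_\theta\).
\item Continuity, Frobenius-semisimplicity and algebraicity on \(\SL_2\) are inherited.
\item Its restriction to inertia is \({}^Lj_\theta(1\rtimes i)=\varphi_\theta(i)\).
\end{itemize}
In cocycle terms, \(({}^Lj_\theta\circ\lambda)(w,1)=\ell_\lambda(w)\ell_\theta(w)^{-1}\,\varphi_\theta(w)\). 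So any product formula must place an \(\widehat H^\circ_\theta\)-valued cocycle for \(\Ad\circ\varphi_\theta\) to the left of \(\varphi_\theta(w)\), not \(a_\theta(w)\) to the left of the full component \(b_\lambda\). With this definition your centralizer argument goes through verbatim.
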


\begin{proof}
The group \(H_\theta^\circ\) and the embedding \({}^Lj_\theta\) were defined so
that the \(W_F\)-action on \(\widehat H_\theta^\circ\) is the action induced by
conjugation through the toral parameter \(\varphi_\theta\).  Therefore the
\(\widehat G\)-component of \({}^Lj_\theta\circ\lambda\) is a cocycle relative to
that induced action.  Multiplying it by the cocycle \(a_\theta\) gives a cocycle
for the original pinned \(W_F\)-action on \(\widehat G\).  The \(\SL_2\)-image of
\(\lambda\) lies in \(\widehat H_\theta^\circ\), hence centralizes
\(\varphi_\theta(I_F)\), and the Frobenius compatibility is exactly the one built
into the definition of \({}^Lj_\theta\).  Thus \(\varphi_\theta\star\lambda\) is
an \(L\)-parameter for \(G\).

Since \(\lambda\) is unramified, \(b_\lambda(i,1)=1\) for \(i\in I_F\).  Hence
\[
        (\varphi_\theta\star\lambda)(i,1)=\varphi_\theta(i),
\]
so the parameter has the same inertial restriction as \(\varphi_\theta\).  It is
therefore depth-zero.  If
\(g\in Z_{\widehat G}(\varphi_\theta\star\lambda)\), then the equality on inertia
implies \(g\in\widehat H_\theta^{\mathrm{full}}\).  Inside this full inertial
centralizer, the remaining centralizing condition is precisely the condition of
centralizing \(\lambda\).  The converse is immediate, and hence the displayed
centralizer equality and the component-group identification follow.
\end{proof}

\medskip
\paragraph{Finite toral--enriched unipotent reduction.}
\begin{proposition}[Finite toral--enriched unipotent reduction for a depth-zero supercuspidal]
\label{prop:toral-unipotent-reduction-depth-zero}
With the notation fixed above, the finite representation
\[
        \bar\tau\in\Irr(\rG_x(\ff))
\]
belongs to the Lusztig series \(\cE(\rG_x(\ff),s_x)\).  Consequently
\[
        \mathfrak u^{\mathrm{enh}}_{x,\tau}
        :=
        \mathcal J^{\mathbb P_x}_{x,s_x}(\bar\tau)
        =
        [u^\circ_{x,\tau},[\alpha_{x,\tau}],E_{x,\tau}]
\]
is a well-defined enriched finite unipotent datum on \(\mathbf H_x(\ff)\).  If
\(\pi=\cInd_{G(F)_x}^{G(F)}\tau\) is supercuspidal, then
\(\mathfrak u^{\mathrm{enh}}_{x,\tau}\) is cuspidal.  Its connected shadow
\(u^\circ_{x,\tau}\) is the finite unipotent datum entering the FOS
correspondence for the connected unramified group \(H_\theta^\circ\), while
\([\alpha_{x,\tau}],E_{x,\tau}\) is the projective Clifford datum retained by
the full disconnected quotient.
\end{proposition}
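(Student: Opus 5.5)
The plan is to reduce almost everything to Proposition~\ref{prop:JD-at-vertex}, applied to the depth-zero datum \(\mathfrak d=(S,\theta;\tau)\). The one genuinely new input is the passage from supercuspidality of \(\pi\) to cuspidality of \(\bar\tau\).

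\emph{Step 1 (the Lusztig series).} By Definition~\ref{def:depth-zero-yu-datum}, the restriction of \(\bar\tau\) to \(\rG_x^\circ(\ff)\) contains a constituent \(\rho^\circ\) of \(\pm R^{\rG_x^\circ}_{\rS_x}(\underline\theta)\). Lemma~\ref{lem:tame-reduces-finite-dual} identifies the finite-dual element of \(\underline\theta\) with \(s_x=\operatorname{sp}_x(s)\). Hence \(\rho^\circ\in\cE(\rG_x^\circ(\ff),s_x)\). The restriction-to-identity-component definition \eqref{eq:disc-lusztig-series} then gives \(\bar\tau\in\cE(\rG_x(\ff),s_x)\); this is exactly Proposition~\ref{prop:JD-at-vertex}(1).

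\emph{Step 2 (the datum is well defined).} The hypotheses of Theorem~\ref{thm:disc-JD} for the full quotient hold: abelianity is Lemma~\ref{lem:vertex-component-abelian}, and the rational pinned-component condition is Remark~\ref{rem:parahoric-rational-pinned-condition}. So \(\mathcal J^{\mathbb P_x}_{x,s_x}(\bar\tau)\) is defined. It is independent of the chosen connected constituent, because conjugating the constituent transports all three entries, and the enriched target is a conjugacy class.

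\emph{Step 3 (cuspidality).} This is the main obstacle. I would argue by contraposition. Suppose \(\bar\tau\) is not cuspidal. Then some \(\rho^\circ\) (equivalently every conjugate, since the component action permutes the rational parabolics of \(\rG_x^\circ\)) has a nonzero vector fixed by \(\rU(\ff)\), where \(\rU\) is the unipotent radical of a proper rational parabolic of \(\rG_x^\circ\). Its inflation to \(G(F)_x\) is then invariant under the pro-unipotent radical \(G(F)_{y,0+}\) of a parahoric at a facet \(y\) properly containing \(x\) in its closure. This yields a nonzero \(G(F)_{y,0+}\)-fixed vector in \(\pi\) on which the Levi quotient acts through a non-cuspidal representation. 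By the Moy--Prasad classification of unrefined minimal K-types \cite{MoyPrasad1996} (equivalently Morris \cite{Morris1999}), this forces \(\pi\) to be a subquotient of a properly parabolically induced representation, contradicting supercuspidality. So \(\bar\tau\) is cuspidal. The part I expect to be delicate is the passage from the full quotient to its identity component. Here I would use that cuspidality for \(\rG_x(\ff)\) is tested on restriction to \(\rG_x^\circ(\ff)\), since the Harish--Chandra data of the disconnected group are the \(\rG_x(\ff)\)-orbits of those of \(\rG_x^\circ\). Once \(\bar\tau\) is cuspidal, Proposition~\ref{prop:JD-at-vertex}(4) gives that \(\mathfrak u^{\mathrm{enh}}_{x,\tau}\) is cuspidal.

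\emph{Step 4 (interpreting the entries).} By Proposition~\ref{prop:JD-at-vertex}(2) and Lemma~\ref{lem:Hstar-finite}, the shadow \(u^\circ_{x,\tau}\) is a cuspidal unipotent character of \(\mathbf H_x^\circ(\ff)\). The depth-zero specialization identifies \(\mathbf H_x^\circ\) with the hyperspecial reductive quotient of \(H_\theta^\circ\), as recorded before Lemma~\ref{lem:parameter-factorization}. Hence \(u^\circ_{x,\tau}\) inflates to a cuspidal unipotent type of \(H_\theta^\circ(F)\). By Proposition~\ref{prop:supp-depthzero-type} this type underlies a packet of supercuspidal unipotent representations, which is the FOS input. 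The entries \([\alpha_{x,\tau}]\) and \(E_{x,\tau}\) are, by the cuspidal part of Theorem~\ref{thm:disc-JD}, the Clifford class and projective label for \(\rG_x^\circ(\ff)\triangleleft I_{\rG_x(\ff)}(\rho^\circ)\). Enriched unipotent duality transports them unchanged, which gives the final assertion.
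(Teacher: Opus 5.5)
Your proposal is correct and follows the paper's proof step for step. Both arguments get membership in \(\cE(\rG_x(\ff),s_x)\) from Definition~\ref{def:depth-zero-yu-datum} and Lemma~\ref{lem:tame-reduces-finite-dual}, get cuspidality of \(\mathfrak u^{\mathrm{enh}}_{x,\tau}\) from Proposition~\ref{prop:JD-at-vertex}(4), and read off the three entries from the enriched target of Theorem~\ref{thm:disc-JD}.

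The one difference is your Step~3: you justify ``\(\pi\) supercuspidal \(\Rightarrow\) \(\bar\tau\) cuspidal'' via Moy--Prasad, whereas the paper simply asserts it. There, the property to invoke is that \(\pi\) contains a depth-zero K-type \((G(F)_{y,0},\sigma)\) with \(\sigma\) cuspidal at a non-minimal facet \(y\), obtained by taking \(\rP\) from the cuspidal support of \(\rho^\circ\). It is not that the Levi quotient acts through a non-cuspidal representation.
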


\begin{proof}
By the construction of the depth-zero datum, \(\bar\tau\) occurs in the finite
Deligne--Lusztig series determined by \((\rS_x,\underline\theta)\).  Lemma~\ref{lem:tame-reduces-finite-dual}
identifies \(\underline\theta\) with the finite-dual semisimple element \(s_x\).
Hence
\[
        \bar\tau\in\cE(\rG_x(\ff),s_x).
\]
The enriched transform \(\mathcal J^{\mathbb P_x}_{x,s_x}\) is therefore
defined on \(\bar\tau\), and gives the displayed enriched datum.  Since \(\pi\)
is supercuspidal, \(\bar\tau\) is cuspidal for the full finite quotient.  The
cuspidality compatibility in Proposition~\ref{prop:JD-at-vertex} implies that
\(\mathfrak u^{\mathrm{enh}}_{x,\tau}\) is cuspidal.  The description of its
three entries is exactly the definition of the enriched target in
Theorem~\ref{thm:disc-JD}: the connected shadow is the connected unipotent
Jordan datum, and the other two entries are the Clifford class and projective
Clifford label of \(\bar\tau\) transported to the unipotent side.
\end{proof}

\subsection{The enriched full finite-label map \(\mathfrak J_{x^\ast}\)}
\label{subsec:construct-J}

In this subsection we write
\[
        \rH_{x^\ast}:=\mathbf H_x
\]
for the full finite group on the \(H\)-side, whose pinned dual is
\(\mathbf H_x^\vee=C_{\rG_x^\vee}(s_x)\).  Thus \(\rH_{x^\ast}\) is not the
finite dual centralizer itself; it is the finite group whose connected unipotent
labels are used by FOS.

The old ordinary full finite-label map had target
\(\Uch^{\cusp}(\rH_{x^\ast}(\ff))\).  In the enriched construction the correct
target is
\[
        \Uch^{\mathrm{enh}}(\rH_{x^\ast}(\ff))_{\cusp}.
\]
For an unramified cuspidal enhanced parameter \(\lambda\) for
\(H_\theta^\circ\), let
\[
        \mathcal C^{\cusp,\mathrm{enh}}_{\mathrm{full}}(\lambda)
\]
denote the set of full enriched cuspidal data lying over its connected
Lusztig-cuspidal datum.  An element is represented by a triple
\[
        c=[u^\circ_c,[\alpha_c],E_c],
\]
where \(u^\circ_c\) is the connected FOS unipotent label,
\([\alpha_c]\in H^2(\Omega_c,\C^\times)\) is the relevant Clifford class, and
\[
        E_c\in\Irr(\C_{\alpha_c}[\Omega_c])
\]
is the corresponding projective Clifford label.  The triples are taken up to
conjugacy by the full finite group.

The enriched full finite-label map is
\[
        \mathfrak J_{x^\ast}
        :
        \mathcal C^{\cusp,\mathrm{enh}}_{\mathrm{full}}(\lambda)
        \longrightarrow
        \Uch^{\mathrm{enh}}(\rH_{x^\ast}(\ff))_{\cusp},
        \qquad
        [u^\circ_c,[\alpha_c],E_c]
        \longmapsto
        [u^\circ_c,[\alpha_c],E_c].
\]
This tautological-looking notation is useful because it emphasizes that the
finite label is no longer converted into an ordinary character.  The connected
entry is the one seen by FOS; the remaining two entries are the projective
Clifford data needed to reconstruct the full enhancement.

For the datum \((S,\theta;\tau)\), the enriched full finite datum is
\[
        c_{x,\tau}
        :=
        [u^\circ_{x,\tau},[\alpha_{x,\tau}],E_{x,\tau}],
\]
and
\[
        \mathfrak J_{x^\ast}(c_{x,\tau})
        =
        \mathfrak u^{\mathrm{enh}}_{x,\tau}
        =
        \mathcal J^{\mathbb P_x}_{x,s_x}(\bar\tau).
\]

\begin{proposition}[Properties of the enriched full finite-label map]
\label{prop:J-properties}
The map \(\mathfrak J_{x^\ast}\) is well-defined on conjugacy classes.  It has
the following properties.
\begin{enumerate}
\item Its connected shadow is the connected FOS unipotent label.
\item Its residual entries are exactly the Clifford cohomology class and the
projective Clifford label.
\item For every depth-zero datum \((S,\theta;\tau)\),
\[
        \mathfrak J_{x^\ast}(c_{x,\tau})
        =
        \mathcal J^{\mathbb P_x}_{x,s_x}(\bar\tau).
\]
Consequently
\[
        \bigl(\mathcal J^{\mathbb P_x}_{x,s_x}\bigr)^{-1}
        \bigl(\mathfrak J_{x^\ast}(c_{x,\tau})\bigr)
        =
        \bar\tau .
\]
\end{enumerate}
\end{proposition}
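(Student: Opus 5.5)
The map \(\mathfrak J_{x^\ast}\) is the identity on triples, so the proof reduces to checking that source and target carry the same equivalence relation. The three properties then follow by unwinding definitions. I would proceed in three steps.

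\emph{Well-definedness.} Two representatives \((u^\circ_c,[\alpha_c],E_c)\) and \((u'^\circ_c,[\alpha'_c],E'_c)\) of the same class in \(\mathcal C^{\cusp,\mathrm{enh}}_{\mathrm{full}}(\lambda)\) differ by conjugation by some \(h\in\rH_{x^\ast}(\ff)\). Such an \(h\) transports the connected label, identifies the stabilizer quotients \(\Omega_c\xrightarrow{\sim}\Omega_{c'}\), and carries the cocycle class and projective module along. This is the same conjugacy relation used to define \(\Uch^{\mathrm{enh}}(\rH_{x^\ast}(\ff))_{\cusp}\) in Theorem~\ref{thm:disc-JD}; there, replacing \(\rho^\circ\) by a conjugate transports all three entries. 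So equivalent triples go to equivalent triples.

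The only point needing care is cuspidality of the image. Elements of \(\mathcal C^{\cusp,\mathrm{enh}}_{\mathrm{full}}(\lambda)\) lie over a connected Lusztig-cuspidal datum, so \(u^\circ_c\) is a cuspidal unipotent character of \(\rH_{x^\ast}^\circ(\ff)=\mathbf H_x^\circ(\ff)\). This uses the identification of the hyperspecial quotient of \(H_\theta^\circ\) with \(\mathbf H_x^\circ\) from Lemma~\ref{lem:Hstar-finite}. Hence the triple lies in the cuspidal enriched target.

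\emph{Properties (1) and (2).} These are immediate from the formula, since the map fixes each entry. For (1), the connected entry is by definition the connected FOS label attached to \(\lambda\) through Proposition~\ref{prop:supp-depthzero-type} and Theorem~\ref{thm:FOS-unipotent-LLC}. For (2), the last two entries are the Clifford class and projective label, exactly as in Lemma~\ref{lem:enhanced-parameter-to-enriched-finite-unipotent-datum}.

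\emph{Property (3).} Here \(c_{x,\tau}\) is defined as the triple of entries of \(\mathcal J^{\mathbb P_x}_{x,s_x}(\bar\tau)\). Proposition~\ref{prop:toral-unipotent-reduction-depth-zero} shows this datum is well-defined and cuspidal, so \(c_{x,\tau}\) is a legitimate element of the source with \(\lambda=\lambda_{x,\tau}\). The identity \(\mathfrak J_{x^\ast}(c_{x,\tau})=\mathcal J^{\mathbb P_x}_{x,s_x}(\bar\tau)\) is then tautological. The displayed consequence follows by applying the inverse of \(\mathcal J^{\mathbb P_x}_{x,s_x}\). That inverse exists because \(\mathcal J^{\mathbb P_x}_{x,s_x}=\mathfrak D_x^{\mathrm{unip},\mathrm{enh}}\circ J^{\mathbb P_x,\mathrm{enh}}_{\rG_x,s_x}\) is a composite of bijections by Theorem~\ref{thm:disc-JD}, and \(\bar\tau\in\cE(\rG_x(\ff),s_x)\) by Proposition~\ref{prop:JD-at-vertex}(1).

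The main obstacle is not in these steps but in the well-definedness check. One must confirm that the conjugacy relation on the source, by the full finite group \(\rH_{x^\ast}(\ff)\), agrees with the relation on the enriched target, which Theorem~\ref{thm:disc-JD} defines through conjugation of connected constituents. I would handle this by noting that \(\rH_{x^\ast}(\ff)\) acts on \(\mathbf H_x^\circ(\ff)\)-constituents through \(\Omega\), and that this is exactly the action used in Clifford theory on both sides.
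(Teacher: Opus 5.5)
Your proposal is correct and is essentially the paper's argument: the paper also treats \(\mathfrak J_{x^\ast}\) as the identity on triples, obtains well-definedness from the fact that changing the connected constituent transports the stabilizer quotient, cocycle class and projective label compatibly with the conjugacy relation defining the enriched target, and reads off (3) directly from the definition of \(c_{x,\tau}\). Your extra remarks on cuspidality of the image and on invertibility of \(\mathcal J^{\mathbb P_x}_{x,s_x}\) only make explicit what the paper leaves implicit. One small correction: Theorem~\ref{thm:disc-JD} defines its enriched target on the dual centralizer \(\mathbf H_x^\vee\), not on \(\rH_{x^\ast}=\mathbf H_x\). The relation on \(\Uch^{\mathrm{enh}}(\mathbf H_x(\ff))\) is the transport of that one via \(\mathfrak D_x^{\mathrm{unip},\mathrm{enh}}\).
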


\begin{proof}
This is the definition of the enriched target together with Clifford theory.
Changing the connected constituent conjugates the stabilizer quotient, the
cocycle class, and the projective representation; the quotient defining the
enriched target identifies these conjugate triples.  The compatibility assertion
for \((S,\theta;\tau)\) is precisely the definition of
\(c_{x,\tau}\) from the enriched finite transform of \(\bar\tau\).
\end{proof}

\subsection{Completion of the construction}
\label{subsec:completion-depth-zero-parameter-construction}

We now complete the proof of
Theorem~\ref{thm:depth-zero-supercuspidal-to-enhanced-parameter}.  By
Proposition~\ref{prop:toral-unipotent-reduction-depth-zero}, the finite type
\(\bar\tau\) determines
\[
        \mathfrak u^{\mathrm{enh}}_{x,\tau}
        =
        [u^\circ_{x,\tau},[\alpha_{x,\tau}],E_{x,\tau}].
\]
The connected shadow \(u^\circ_{x,\tau}\), viewed through the identification of
\(\mathbf H_x^\circ\) with the hyperspecial reductive quotient of
\(H_\theta^\circ\), gives by FOS an unramified cuspidal enhanced parameter
\[
        (\lambda_{x,\tau},\rho^\circ_{x,\tau})
        \in
        \Phi^{\mathrm e}_{\mathrm{nr}}(H_\theta^\circ)_{\cusp}.
\]
Define the ordinary \(G\)-parameter by
\[
        \varphi_\pi:=\varphi_\theta\star\lambda_{x,\tau},
\]
in the sense of Lemma~\ref{lem:parameter-factorization}.  Thus, if
\[
        {}^Lj_\theta(\lambda_{x,\tau}(w,z))
        =
        b_{x,\tau}(w,z)\rtimes w,
\]
then
\[
        \varphi_\pi(w,z)=a_\theta(w)b_{x,\tau}(w,z)\rtimes w.
\]
Lemma~\ref{lem:parameter-factorization} shows that this is a depth-zero
parameter and that
\[
        \varphi_\pi|_{I_F}=\varphi_\theta|_{I_F}.
\]
It also gives the full centralizer comparison
\[
        \mathcal S_{\varphi_\pi}
        \cong
        \mathcal S^{\mathrm{full}}_{\lambda_{x,\tau}}.
\]
Under this identification, Clifford theory reconstructs the full enhancement
\[
        \rho_\pi\in\Irr(\mathcal S_{\varphi_\pi})
\]
from the connected FOS enhancement \(\rho^\circ_{x,\tau}\) and the projective
Clifford label \(([\alpha_{x,\tau}],E_{x,\tau})\).  We set
\[
        \LLC_G^{0,\cusp}(\pi):=(\varphi_\pi,\rho_\pi).
\]

The parameter is depth-zero because its inertial restriction is the depth-zero
toral parameter \(\varphi_\theta|_{I_F}\), while the unipotent factor
\(\lambda_{x,\tau}\) is unramified.  The enhancement is relevant by
construction: the tame-centralizer comparison is the one attached to the fixed
inner form and the fixed pinned normalization.  The pair is cuspidal because
\((\lambda_{x,\tau},\rho^\circ_{x,\tau})\) is cuspidal for
\(H_\theta^\circ\) and the full enhancement is supported on the cuspidal enriched
datum \(\mathfrak u^{\mathrm{enh}}_{x,\tau}\).

We next check independence of choices.  The toral parameter is independent, up
to \(\widehat G\)-conjugacy, of the representative of the \(G(F)\)-conjugacy
class of \((S,\theta)\), because the \(L\)-embedding
\({}^LS\hookrightarrow{}^LG\) is fixed by the Whittaker-pinned normalization.
The finite transform
\[
        \bar\tau\longmapsto \mathfrak u^{\mathrm{enh}}_{x,\tau}
\]
is canonical under enriched pinned disconnected Jordan decomposition and
pinned unipotent duality.  If a different connected constituent is used to read
the connected shadow, it is conjugate to the first by the component group, and
the stabilizer quotient, cohomology class, and projective label are transported
with it.  Hence the resulting \((\varphi_\pi,\rho_\pi)\) is unchanged up to
\(\widehat G\)-conjugacy.

If the same representation \(\pi\) is realized by another depth-zero datum
\((S',\theta';\tau')\), the uniqueness of depth-zero types transports the toral
pair, the vertex, the finite semisimple element \(s_x\), and the finite
representation \(\bar\tau\) to the corresponding primed objects, after passing
to the appropriate parahoric quotient.  The canonical \(L\)-embeddings, the
enriched finite Jordan decomposition, the unipotent duality, FOS, and Clifford
theory are equivariant for this transport.  Therefore the enhanced parameters
obtained from the two data are \(\widehat G\)-conjugate.

Finally, apply
Theorem~\ref{thm:LLC-enhanced-depth-zero-cuspidal-construction} to
\((\varphi_\pi,\rho_\pi)\).  The restriction to inertia is
\(\varphi_\theta|_{I_F}\), so the toral part recovers \((S,\theta)\).  The
parameter-side unipotentization recovers the connected FOS datum
\((\lambda_{x,\tau},\rho^\circ_{x,\tau})\), while Clifford theory recovers the
same projective label \(([\alpha_{x,\tau}],E_{x,\tau})\).  Hence the enriched
finite datum recovered on the \(H\)-side is
\(\mathfrak u^{\mathrm{enh}}_{x,\tau}\).  By Proposition~\ref{prop:J-properties},
\[
        \bigl(\mathcal J^{\mathbb P_x}_{x,s_x}\bigr)^{-1}
        \bigl(\mathfrak u^{\mathrm{enh}}_{x,\tau}\bigr)
        =
        \bar\tau .
\]
Inflating \(\bar\tau\) to \(G(F)_x\) recovers \(\tau\), and compact induction
recovers \(\pi\).  This proves that the construction is inverse to
Theorem~\ref{thm:LLC-enhanced-depth-zero-cuspidal-construction} on the
representation \(\pi\), and completes the proof of
Theorem~\ref{thm:depth-zero-supercuspidal-to-enhanced-parameter}.
\medskip

\section{The pinned depth-zero supercuspidal correspondence}
\label{sec:pinned-depth-zero-supercuspidal-LLC}

We now record the form of the depth-zero supercuspidal local Langlands
correspondence obtained by combining the two constructions above.  The result
depends on the global normalization fixed throughout the paper: the pinned
splitting of the quasi-split inner form \(G^\ast\), the resulting
Whittaker-normalized \(L\)-embeddings for maximally unramified elliptic tori,
and the enriched pinned disconnected Jordan decomposition of
Theorem~\ref{thm:disc-JD}.  Apart from this pinned normalization, the
correspondence is independent of all auxiliary choices.

Let
\[
        \Irr_{0,\cusp}(G(F))
\]
denote the set of isomorphism classes of irreducible depth-zero supercuspidal
representations of \(G(F)\).  Let
\[
        \Phi^{\mathrm e}_{0,\cusp}(G)
\]
denote the set of \(\widehat G\)-conjugacy classes of relevant cuspidal enhanced
depth-zero Langlands parameters for \(G\).

\begin{theorem}[Pinned depth-zero supercuspidal LLC]
\label{thm:pinned-depth-zero-supercuspidal-LLC}
The fixed pinned normalization determines a canonical
bijection
\[
        \LLC^{0,\cusp}_{G}:
        \Irr_{0,\cusp}(G(F))
        \xrightarrow{\;\sim\;}
        \Phi^{\mathrm e}_{0,\cusp}(G),
        \qquad
        \pi\longmapsto (\varphi_\pi,\rho_\pi).
\]
Its inverse is the map
\[
        \Pi^{G}_{0,\cusp}:
        \Phi^{\mathrm e}_{0,\cusp}(G)
        \xrightarrow{\;\sim\;}
        \Irr_{0,\cusp}(G(F))
\]
constructed in Theorem~\ref{thm:LLC-enhanced-depth-zero-cuspidal-construction}.

More explicitly, if
\[
        \pi=\pi(S,\theta;\tau)
        =
        \cInd_{G(F)_x}^{G(F)}\tau
\]
is represented by a depth-zero datum, with \(x=x_S\), finite quotient
representation
\[
        \bar\tau\in \Irr(\rG_x(\ff)),
\]
and toral parameter \(\varphi_\theta:W_F\to {}^LS\), then
\(\LLC^{0,\cusp}_{G}(\pi)=(\varphi_\pi,\rho_\pi)\) is obtained as follows.
The ordinary parameter is
\[
        \varphi_\pi
        =
        \varphi_\theta\star \lambda_{x,\tau},
\]
in the sense of Lemma~\ref{lem:parameter-factorization}.  Here
\[
        \mathfrak u^{\mathrm{enh}}_{x,\tau}
        =
        \mathcal J^{\mathbb P_x}_{x,s_x}(\bar\tau)
        =
        [u^\circ_{x,\tau},[\alpha_{x,\tau}],E_{x,\tau}]
        \in
        \Uch^{\mathrm{enh}}\bigl(\rH_{x^\ast}(\ff)\bigr)_{\cusp}
\]
is the enriched cuspidal unipotent datum obtained from the composite finite
transform.  The connected shadow gives the corresponding unramified FOS
parameter \(\lambda_{x,\tau}\), and the projective Clifford label
\(([\alpha_{x,\tau}],E_{x,\tau})\) determines the remaining part of
\(\rho_\pi\).

This bijection has the following basic properties.

\begin{enumerate}
\item \emph{Compatibility with the two constructions.}  For every
\(\pi\in \Irr_{0,\cusp}(G(F))\),
\[
        \Pi^{G}_{0,\cusp}\bigl(\LLC^{0,\cusp}_{G}(\pi)\bigr)
        =
        \pi .
\]
For every
\[
        (\varphi,\rho)\in \Phi^{\mathrm e}_{0,\cusp}(G),
\]
one has
\[
        \LLC^{0,\cusp}_{G}
        \bigl(\Pi^{G}_{0,\cusp}(\varphi,\rho)\bigr)
        =
        (\varphi,\rho).
\]

\item \emph{Tame inertial normalization.}  If \(\pi=\pi(S,\theta;\tau)\), then
the restriction of \(\varphi_\pi\) to inertia is the image of the toral
parameter attached to \(\theta\):
\[
        \varphi_\pi|_{I_F}
        =
        \iota_S^{\mathrm{can}}\circ \varphi_\theta|_{I_F}.
\]
In particular, \(\varphi_\pi\) is trivial on wild inertia, and its tame
semisimple inertial element is the element \(s\in \widehat G\) attached to the
reduction of \(\theta\).

\item \emph{Compatibility with weakly unramified twists.}  Let \(\chi\) be a
weakly unramified character of \(G(F)\), and let
\[
        z_\chi:W_F\longrightarrow Z(\widehat G)
\]
be the corresponding unramified central cocycle under the usual
Kottwitz--torus LLC normalization.  Then
\[
        \LLC^{0,\cusp}_{G}(\chi\otimes \pi)
        =
        \bigl(z_\chi\varphi_\pi,\rho_\pi\bigr),
\]
where
\[
        (z_\chi\varphi_\pi)(w,g)
        :=
        z_\chi(w)\varphi_\pi(w,g),
        \qquad
        w\in W_F,\ g\in \SL_2(\C),
\]
and the component groups for \(\varphi_\pi\) and \(z_\chi\varphi_\pi\) are
identified by centrality of \(z_\chi\).

\item \emph{Central character.}  Let \(Z=Z(G)\), and let
\(\omega_\pi\) be the central character of \(\pi\).  Let
\[
        {}^Lq_Z:{}^LG\longrightarrow {}^LZ^\circ
\]
be the \(L\)-homomorphism dual to the inclusion \(Z^\circ\hookrightarrow G\).
Then the restriction of the central character to \(Z^\circ(F)\) is recovered
from the parameter by
\[
        \LLC_{Z^\circ}\bigl(\omega_\pi|_{Z^\circ(F)}\bigr)
        =
        {}^Lq_Z\circ \varphi_\pi|_{W_F}.
\]
Equivalently, if one uses the standard LLC for groups of multiplicative type,
the same statement holds for the full centre \(Z(F)\).
\end{enumerate}
\end{theorem}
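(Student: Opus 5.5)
The plan is to assemble the theorem from the two constructions already proved, and then check the four listed properties one at a time. First I define \(\LLC^{0,\cusp}_G\) by Theorem~\ref{thm:depth-zero-supercuspidal-to-enhanced-parameter} and \(\Pi^G_{0,\cusp}\) by Theorem~\ref{thm:LLC-enhanced-depth-zero-cuspidal-construction}. Both theorems assert that the value depends only on the isomorphism class, respectively the \(\widehat G\)-conjugacy class, so both maps are well defined between the stated sets.

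The identity \(\Pi^G_{0,\cusp}\circ\LLC^{0,\cusp}_G=\mathrm{id}\) is the last assertion of Theorem~\ref{thm:depth-zero-supercuspidal-to-enhanced-parameter}. That argument runs through Subsection~\ref{subsec:completion-depth-zero-parameter-construction} and Proposition~\ref{prop:J-properties}(3). It recovers the toral pair, then the enriched finite datum \(\mathfrak u^{\mathrm{enh}}_{x,\tau}\), then \(\bar\tau\), and compact induction returns \(\pi\).

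The reverse identity is the step I expect to be the real obstacle. For this I start from \((\varphi,\rho)\) and put \(\pi=\Pi^G_{0,\cusp}(\varphi,\rho)\), which comes with the datum \((S,\theta;\tau_{\varphi,\rho})\).

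I then verify three things in order.
\begin{enumerate}
\item By Proposition~\ref{prop:canonical-LS}(3), \(\iota_S^{\mathrm{can}}\circ\varphi_\theta\) agrees with \(\varphi\) on \(I_F\). Hence the inertial centralizers coincide, so \(\widehat H_\theta^\circ=\widehat H_\varphi\), and the adapted embeddings \({}^Lj_\theta\) and \(\xi_\varphi\) agree up to \(\widehat H_\varphi\)-conjugacy.
\item The element \(\varphi_\theta\star\lambda_\varphi\) of Lemma~\ref{lem:parameter-factorization} is \(\widehat G\)-conjugate to \(\varphi\). To see this, compare the defining formula \(\xi_\varphi(h\rtimes w)=h\,\ell_\varphi(w)^{-1}g_w\rtimes w\) with the toral cocycle \(a_\theta\). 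Any Frobenius discrepancy is an element of \(\widehat H_\varphi\) absorbed into the choice of \(h_\varphi\).
\item Feeding \(\bar\tau_{\varphi,\rho}=(\mathcal J^{\mathbb P_x}_{x,s_x})^{-1}(\mathfrak u^{\mathrm{enh}}_{x,\varphi,\rho})\) back into \(\mathcal J^{\mathbb P_x}_{x,s_x}\) returns \(\mathfrak u^{\mathrm{enh}}_{x,\varphi,\rho}\), because this transform is a bijection (Theorem~\ref{thm:disc-JD}). FOS is a bijection (Theorem~\ref{thm:FOS-unipotent-LLC}), so it returns \((\lambda_\varphi,\rho^\circ_\varphi)\). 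The Clifford labels are the ones extracted in Lemma~\ref{lem:enhanced-parameter-to-enriched-finite-unipotent-datum}.
\end{enumerate}
Finally, Clifford correspondence for \(\cA_{\lambda_\varphi}\triangleleft\mathcal S^{\mathrm{un}}_\varphi\cong\mathcal S^{\mathrm{full}}_{\lambda}\) is a bijection between irreducibles and orbit-classes of triples. This identification combines Lemma~\ref{lem:component-group-comparison-unipotentization} with Lemma~\ref{lem:parameter-factorization}. It therefore reconstructs \(\rho\). The delicate points are the compatibility of the identification \(\jmath_x\) with the FOS finite quotient, and checking that the two component-group comparisons are the same map. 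I would settle both by tracing \(\jmath_x\) through the common pinned root datum in both directions.

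For the listed properties:
\begin{itemize}
\item Property (2) follows directly from Lemma~\ref{lem:parameter-factorization}, which gives \(\varphi_\pi|_{I_F}=\varphi_\theta|_{I_F}\), together with Proposition~\ref{prop:canonical-LS}(2)--(3). Triviality on \(P_F\) comes from depth zero.
\item For property (3), write \(\chi\otimes\pi\cong\cInd(\chi|_{G(F)_x}\otimes\tau)\). Since \(\chi\) is trivial on \(G(F)_{x,0}\), the toral datum \(\theta\) changes only by the unramified character \(\chi|_{S(F)}\), and \(\underline\theta\), \(s_x\), \(\bar\tau|_{\rG_x^\circ}\) and the connected label are unchanged. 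The twist \(\chi|_{K_x/K_x^0}\) acts on \(\bar\tau\) through \(\Omega_x\), and on the parameter side it shifts the Frobenius component by \(z_\chi\). I would match these two effects using Theorem~\ref{thm:FOS-unipotent-LLC}(2) and the compatibility of the torus LLC with the Kottwitz map. Centrality of \(z_\chi\) then identifies the component groups.
\item For property (4), \(Z^\circ(F)\subset S(F)\) acts on \(\pi\) through \(\theta\). Functoriality of the torus LLC for \(Z^\circ\hookrightarrow S\) gives \({}^Lq_Z\circ\iota_S^{\mathrm{can}}\circ\varphi_\theta\), the contribution of \(\chi\)-data being trivial after projecting to \({}^LZ^\circ\). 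The unipotent factor \(b_{x,\tau}\) lies in \(\widehat H_\varphi\) and projects trivially, except through the central Frobenius part, which I would control by the split-centre compatibility of Theorem~\ref{thm:FOS-unipotent-LLC}(5).
\end{itemize}
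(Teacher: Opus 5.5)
Your proposal is essentially the paper's own proof. The paper likewise takes \(\LLC^{0,\cusp}_G\) and \(\Pi^G_{0,\cusp}\) from Theorems~\ref{thm:depth-zero-supercuspidal-to-enhanced-parameter} and~\ref{thm:LLC-enhanced-depth-zero-cuspidal-construction} and cites the last clause of the former for \(\Pi^G_{0,\cusp}\circ\LLC^{0,\cusp}_G=\mathrm{id}\). It gets the other composition by noting that the torus LLC recovers the toral datum and that \(\mathcal J^{\mathbb P_x}_{x,s_x}(\bar\tau_{\varphi,\rho})=\mathfrak u^{\mathrm{enh}}_{x,\varphi,\rho}\) returns the same FOS and Clifford labels. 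It deduces (2)--(4) from the unramifiedness of \(\lambda_{x,\tau}\), from the torus LLC together with Theorem~\ref{thm:FOS-unipotent-LLC}(2), and from the torus LLC applied to the centre.

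The further compatibilities you single out are not checked separately in the paper, which simply asserts that recovering these pieces recovers \((\varphi,\rho)\). These are the comparison of \({}^Lj_\theta\) with \(\xi_\varphi\), the comparison of \(\varphi_\theta\star\lambda_\varphi\) with \(\varphi\), the compatibility of \(\jmath_x\) with the FOS finite quotient, and the central Frobenius part of \(b_{x,\tau}\) in (4). Your outline therefore only makes explicit what the source leaves implicit.
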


\begin{proof}
The map
\[
        \Pi^{G}_{0,\cusp}:
        \Phi^{\mathrm e}_{0,\cusp}(G)
        \longrightarrow
        \Irr_{0,\cusp}(G(F))
\]
is constructed in
Theorem~\ref{thm:LLC-enhanced-depth-zero-cuspidal-construction}.  The map in
the opposite direction is constructed in
Theorem~\ref{thm:depth-zero-supercuspidal-to-enhanced-parameter}.  That theorem
also proves
\[
        \Pi^{G}_{0,\cusp}\bigl(\LLC^{0,\cusp}_{G}(\pi)\bigr)
        =
        \pi
\]
for every depth-zero supercuspidal representation \(\pi\).

It remains only to observe that the other composition is the identity on
enhanced parameters.  Start with
\[
        (\varphi,\rho)\in \Phi^{\mathrm e}_{0,\cusp}(G).
\]
The construction of
Theorem~\ref{thm:LLC-enhanced-depth-zero-cuspidal-construction} extracts from
\((\varphi,\rho)\) its tame toral part and its enriched cuspidal unipotent finite
datum
\[
        \mathfrak u^{\mathrm{enh}}_{x,\varphi,\rho}
        =
        [u^\circ_{x,\varphi,\rho},[\alpha^\varphi_{\rho^\circ_\varphi}],E_{\varphi,\rho}].
\]
The toral part gives a pair \((S,\theta)\), while the finite datum gives
\[
        \bar\tau_{\varphi,\rho}
        =
        \bigl(\mathcal J^{\mathbb P_x}_{x,s_x}\bigr)^{-1}
        \bigl(\mathfrak u^{\mathrm{enh}}_{x,\varphi,\rho}\bigr).
\]
Applying the construction of
Theorem~\ref{thm:depth-zero-supercuspidal-to-enhanced-parameter} to the compact
induction obtained from this datum recovers the same toral parameter by the
torus LLC, and recovers the same enriched unipotent datum because
\[
        \mathcal J^{\mathbb P_x}_{x,s_x}
        \bigl(\bar\tau_{\varphi,\rho}\bigr)
        =
        \mathfrak u^{\mathrm{enh}}_{x,\varphi,\rho}.
\]
The connected FOS label and the projective Clifford label are therefore both
unchanged.  Hence the enhanced parameter recovered from the resulting
representation is \((\varphi,\rho)\) itself.

The tame inertial normalization is built into the definition of
\(\varphi_\pi=\varphi_\theta\star \lambda_{x,\tau}\), since
\(\lambda_{x,\tau}\) is unramified.  Compatibility with weakly unramified
twists follows from the compatibility of the torus LLC with unramified
characters and from the weakly unramified equivariance in
Theorem~\ref{thm:FOS-unipotent-LLC}.  Finally, the central-character statement
is obtained by applying the torus LLC to the centre: the projection of
\(\varphi_\pi\) to the dual of \(Z^\circ\) records precisely the character by
which \(Z^\circ(F)\) acts on the compactly induced representation.
\end{proof}
\section{Stability of the pinned depth-zero packets}
\label{sec:stability-pinned-depth-zero-packets}

In this final section we record the stability consequence of the pinned
correspondence constructed above.  The argument follows the strategy of
DeBacker--Reeder, but with one modification forced by the present level of
generality.  For regular Deligne--Lusztig depth-zero packets, the unipotent
part of the character formula is expressed in terms of ordinary Green
functions attached to maximal tori.  For a general depth-zero supercuspidal
representation, however, the finite cuspidal character occurring in the
parahoric quotient need not be uniform.  The appropriate finite objects are therefore Lusztig's generalized Green
functions, or Lusztig functions, arising from the generalized Springer and
character-sheaf framework \cite{Lusztig1984Characters,Lusztig1990}, as used by
DeBacker--Kazhdan in their reduction of the depth-zero Murnaghan--Kirillov
problem \cite[Sec.~5.2]{DeBackerKazhdan2011}.

Thus, if
\[
        \gamma=\gamma_s\gamma_u
\]
is the topological Jordan decomposition of a strongly regular semisimple
element and
\[
        H=G_{\gamma_s}^{\circ},
\]
we write the packet character as a finite linear combination of the
topologically unipotent functions on \(H(F)\) obtained from these finite
generalized Green functions.  The coefficients in this expansion are finite
packet coefficients attached to the relevant parahoric quotients.  The new
input in the present setting is that these coefficients are controlled by the
enriched pinned Jordan decomposition together with Clifford regularization.

The comparison under stable conjugacy then follows the same analytic mechanism
as in DeBacker--Reeder.  Via the logarithm and Murnaghan--Kirillov theory, the
generalized Green-function expressions are compared with Fourier transforms of
stable orbital integrals on Lie algebras.  The required comparison for inner
forms is Waldspurger's Lie-algebra transfer theorem
\cite[Th.~1.5]{Waldspurger1997}, in the form used by DeBacker--Reeder in
\cite[Secs.~12.2--12.5]{DeBackerReeder2009}.

The only point which has to be changed from the older proof is the finite
Jordan-decomposition input.  The disconnected Jordan decomposition used in this
paper is the enriched bijection of Theorem~\ref{thm:disc-JD}, equivalently
\cite[Theorem~\ref{JD-thm:disc-JD-bijection}]{arotemishra}; its target is not,
in general, a set of ordinary unipotent characters of the full disconnected
centralizer.  Thus the finite packet coefficient is obtained by Clifford
regularization of the connected finite stable coefficient, rather than by
summing ordinary unipotent characters on the disconnected centralizer.  This is
precisely the distinction emphasized in
\cite[Remark~\ref{JD-rmk:ordinary-disc-JD-recovery} and
Subsec.~\ref{JD-subsec:conditional-ordinary-packet-sums}]{arotemishra}.

\begin{hypothesis}[DeBacker--Reeder logarithm hypothesis]
\label{hyp:DR-logarithm}
The local field \(F\) has characteristic zero.  Let \(p\) be its residue
characteristic and let \(e=e(F/\Q_p)\).  Choose a faithful
\(F\)-rational representation \(G\hookrightarrow \GL_N\).  We assume
\[
        p\ge (2+e)N .
\]
\end{hypothesis}

Let \(\varphi\) be a relevant cuspidal depth-zero Langlands parameter for
\(G\).  We define its pinned depth-zero packet by
\[
        \Pi_\varphi(G)
        :=
        \left\{
        \Pi^G_{0,\cusp}(\varphi,\rho):
        \rho\in \Irr(\mathcal S_\varphi)_{\mathrm{rel},\cusp}
        \right\}.
\]
The stable packet distribution is the dimension-weighted sum
\[
        \Theta^{\mathrm{st}}_{\varphi,G}
        :=
        \sum_{\rho\in
        \Irr(\mathcal S_\varphi)_{\mathrm{rel},\cusp}}
        \dim(\rho)\,
        \Theta_{\Pi^G_{0,\cusp}(\varphi,\rho)} .
\]
When the relevant component group is abelian this is the ordinary sum of the
characters in the packet.

\begin{theorem}[Stability of the pinned depth-zero packet]
\label{thm:stability-of-pinned-depth-zero-packet}
Assume Hypothesis~\ref{hyp:DR-logarithm}.  Let \(\varphi\) be a relevant
cuspidal depth-zero Langlands parameter for \(G\).  Then the distribution
\[
        \Theta^{\mathrm{st}}_{\varphi,G}
        =
        \sum_{\rho\in
        \Irr(\mathcal S_\varphi)_{\mathrm{rel},\cusp}}
        \dim(\rho)\,
        \Theta_{\Pi^G_{0,\cusp}(\varphi,\rho)}
\]
is stable.
\end{theorem}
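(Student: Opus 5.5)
The plan is to follow the DeBacker--Reeder strategy, replacing only the finite Jordan input.

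\emph{Step 1: reduce to an induced-character formula.} By Theorem~\ref{thm:pinned-depth-zero-supercuspidal-LLC}, every member of \(\Pi_\varphi(G)\) has the form \(\cInd_{G(F)_x}^{G(F)}\tau_{\varphi,\rho}\). The toral pair \((S,\theta)\) and the vertex \(x=x_S\) are determined by \(\varphi|_{I_F}\), so they are fixed up to \(G(F)\)-conjugacy as \(\rho\) varies over the finitely many \(G\)-rational classes of tori in the stable class of \(S^\ast\). Apply the compact-induction character formula, summing over \(G(F)/G(F)_x\) with functions supported near \(x\), to each \(\Theta_{\Pi^G_{0,\cusp}(\varphi,\rho)}\). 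This writes \(\Theta^{\mathrm{st}}_{\varphi,G}\) near a strongly regular \(\gamma=\gamma_s\gamma_u\) as a sum over vertices \(y\) of the form \(\sum_\rho \dim(\rho)\,\bar\tau_{\varphi,\rho}\) evaluated on the reduction of \(\gamma\).

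\emph{Step 2: compute the finite packet coefficient.} Decompose \(\rho\) by Clifford theory for \(\cA_{\lambda_\varphi}\triangleleft\mathcal S_\varphi^{\mathrm{un}}\), as in Lemma~\ref{lem:enhanced-parameter-to-enriched-finite-unipotent-datum}. The identity \(\sum_{E}\dim E\cdot\dim\rho^\circ\) over projective labels collapses the dimension-weighted sum to a sum over connected FOS enhancements \(\rho^\circ\). On the finite side, Theorem~\ref{thm:disc-JD} and Proposition~\ref{prop:JD-at-vertex} match the same Clifford data. The weighted sum of \(\bar\tau\) is then \(\Ind_{\rG_x^\circ(\ff)}^{\rG_x(\ff)}\) of the sum of connected characters \(\rho^\circ\in\cE(\rG_x^\circ(\ff),s_x)\) whose Jordan images run over the cuspidal unipotent characters attached to \(\lambda_\varphi\). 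This step is the Clifford regularization: it shows that the disconnected enriched data contribute only through an induced connected sum.

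\emph{Step 3: identify the stable connected combination.} Use \eqref{eq:JD-connected-DL-formula} together with Lusztig's generalized Green functions. The relevant connected sum, restricted to topologically unipotent elements of \(H=G_{\gamma_s}^\circ\), is expressed through the unipotent data on \(\mathbf H_x^\circ\). These data depend only on \(\lambda_\varphi\), i.e.\ on the stable class. I expect this to be the main obstacle. The FOS packets on \(H_\varphi\) need not consist of uniform characters, so one needs that the sum over an FOS \(L\)-packet of the finite cuspidal unipotent data is given by a combination of generalized Green functions whose coefficients are transfer-invariant. I would first try to deduce this from stability of unipotent supercuspidal packets in FOS and DeBacker--Kazhdan \cite[Sec.~5.2]{DeBackerKazhdan2011}, pulled back through \(\jmath_x\).

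\emph{Step 4: the analytic comparison.} Under Hypothesis~\ref{hyp:DR-logarithm}, use the logarithm to transfer the topologically unipotent functions to Lie-algebra distributions. Murnaghan--Kirillov theory then writes them as Fourier transforms of orbital integrals, now of stable combinations. Waldspurger's theorem \cite[Th.~1.5]{Waldspurger1997}, as in \cite[Secs.~12.2--12.5]{DeBackerReeder2009}, gives stability under stable conjugacy. Combined with the \(\theta\)-dependence, which enters only through \(\theta(\gamma_s)\) and is constant on the stable class by the toral normalization of Proposition~\ref{prop:canonical-LS}, this yields that \(\Theta^{\mathrm{st}}_{\varphi,G}\) is stable.
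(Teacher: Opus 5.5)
Your outline has the same skeleton as the paper's: compact-induction character formula, Clifford regularization of the finite coefficient, expansion in generalized Green functions, then logarithm and Waldspurger. Your Step~2 is essentially Proposition~\ref{prop:finite-stability-enriched-packet-coefficient}.

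\textbf{The gap is Step~3.} You leave it open, and the route you suggest points at the wrong group. What has to be shown is the following. The finite coefficient $C^{\mathrm{fin}}_{\varphi,x}$ on $\rG_x(\ff)$ yields coefficients $b_{x,\bar{\mathcal A}}(\bar\gamma_{s,x})$ in the DeBacker--Kazhdan expansion by generalized Green functions of $C_{\rG_x}(\bar\gamma_{s,x})^\circ$, the reductions of $G^\circ_{\gamma_s}$. These coefficients must be unchanged when $\gamma$ is replaced by a stable conjugate, so that $a_j(\gamma_s)=a_{\iota_g(j)}(\gamma_s')$ in Proposition~\ref{prop:DR-reduction-for-enriched-pinned-packet}.

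The groups $\mathbf H_x$ and $H_\varphi$, and the identification $\jmath_x$, live on the side of the dual centralizer of $s$. They only label which characters of $\rG_x(\ff)$ enter the packet. They say nothing about how those characters behave on topologically unipotent elements of $G^\circ_{\gamma_s}$. Your sentence expressing the latter through unipotent data on $\mathbf H_x^\circ$ conflates the two centralizers.

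Stability of unipotent packets of $H_\varphi(F)$ would concern stable conjugacy in $H_\varphi(F)$, not in $G(F)$. Pulling it back through $\jmath_x$ therefore gives no control of the $b$'s. Moreover, the FOS input recalled in Theorem~\ref{thm:FOS-unipotent-LLC} contains no stability statement at all.

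The paper closes this step on the $G$-side, at the finite level. After the Clifford regularization you already have, the connected sum $\sum_{\rho^\circ}\dim(\rho^\circ)\,\Theta_{\bar\tau^\circ_{\varphi,\rho^\circ}}$ is treated as a stable finite combination on $\rG^\circ_x(\ff)$. This uses Lusztig's character-sheaf theory together with the stable-sum compatibility of the pinned Jordan decomposition in \cite{arotemishra}. Then $\Ind_{\rG_x^\circ(\ff)}^{\rG_x(\ff)}$ preserves this invariance, which is what makes the $a_j$ transport-invariant.

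\textbf{Step~4 misplaces the $\gamma_s$-dependence.} For non-uniform cuspidal $\bar\tau$ it is not simply carried by a value $\theta(\gamma_s)$; it sits in the coefficients $b_{x,\bar{\mathcal A}}(\bar\gamma_{s,x})$. Even for regular Deligne--Lusztig packets, $\theta_\rho$ is evaluated at $G(F)$-conjugates of $\gamma_s$ lying in the various rational tori $S_\rho$. Invariance therefore comes from reorganizing the sum over the whole packet, hence over the vertices $x_\rho$, not from any constancy of $\theta$ on stable classes. Proposition~\ref{prop:canonical-LS} is about $L$-embeddings and gives nothing of this kind.

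Relatedly, your Step~1 is ambiguous: $(S_\rho,\theta_\rho,x_\rho)$ must be allowed to vary with $\rho$, as in Lemma~\ref{lem:packet-equals-enriched-finite-label-fibre}.

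Once the finite invariance is established, the analytic step is exactly Proposition~\ref{prop:waldspurger-Q-comparison}. Waldspurger's theorem compares the transported terms $\mathcal Q_j^{H}$ and $\mathcal Q_{\iota_g(j)}^{H'}$ on the inner forms $H=G^\circ_{\gamma_s}$ and $H'=G^\circ_{\gamma_s'}$, and no separate argument about $\theta$ is needed.
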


The proof occupies the rest of the section.

\begin{lemma}[The packet as an enriched finite-label fibre]
\label{lem:packet-equals-enriched-finite-label-fibre}
Let
\[
        \varphi:W_F\times\SL_2(\C)\longrightarrow {}^LG
\]
be a relevant cuspidal depth-zero Langlands parameter.  For
\[
        \rho\in\Irr(\mathcal S_\varphi)_{\mathrm{rel},\cusp},
\]
let
\[
        (S_\rho,\theta_\rho;\tau_{\varphi,\rho})
\]
be the depth-zero datum constructed in
Theorem~\ref{thm:LLC-enhanced-depth-zero-cuspidal-construction}.  Let
\[
        x_\rho=x(\varphi,\rho)
\]
be its vertex, and let
\[
        \bar\tau_{\varphi,\rho}
        \in \Irr(\rG_{x_\rho}(\ff))
\]
be the corresponding representation of the full finite quotient.

Let
\[
        s=\operatorname{pr}_{\widehat G}\bigl(\varphi(\iota)\bigr),
        \qquad
        u_\varphi=
        \operatorname{pr}_{\widehat G}
        \varphi\!\left(
        1,\begin{psmallmatrix}1&1\\0&1\end{psmallmatrix}
        \right),
\]
where \(\iota\) is the fixed topological generator of tame inertia used in the
finite-torus normalization.  Then, after the finite specialization at
\(x_\rho\), the assignment
\[
        \rho\longmapsto \bar\tau_{\varphi,\rho}
\]
identifies the packet attached to the ordinary parameter \(\varphi\) with the
inverse image, under the enriched finite transform, of the enriched cuspidal
finite labels
\[
        \mathfrak u^{\mathrm{enh}}_{x_\rho,\varphi,\rho}
        =
        [u^\circ_{x_\rho,\varphi,\rho},
        [\alpha^\varphi_{\rho^\circ_\varphi}],
        E_{\varphi,\rho}]
        \in
        \Uch^{\mathrm{enh}}
        \bigl(\mathbf H_{x_\rho}(\ff)\bigr)^{\cusp}.
\]
Equivalently,
\[
        \bar\tau_{\varphi,\rho}
        =
        \bigl(\mathcal J^{\mathbb P_{x_\rho}}_{x_\rho,s_{x_\rho}}\bigr)^{-1}
        \bigl(\mathfrak u^{\mathrm{enh}}_{x_\rho,\varphi,\rho}\bigr).
\]
The connected entry \(u^\circ_{x_\rho,\varphi,\rho}\) is the finite unipotent
cuspidal label obtained from FOS applied to the connected constituent
\((\lambda_\varphi,\rho^\circ_\varphi)\), and the remaining two entries are the
projective Clifford data attached to
\[
        \cA_{\lambda_\varphi}\triangleleft
        \mathcal S_\varphi^{\mathrm{un}} .
\]
This description is independent of the chosen connected constituent
\(\rho^\circ_\varphi\), up to the conjugacy relation in the enriched target.
\end{lemma}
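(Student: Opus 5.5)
The lemma is essentially a bookkeeping statement: it reads the construction of Theorem~\ref{thm:LLC-enhanced-depth-zero-cuspidal-construction} fibrewise over the ordinary parameter \(\varphi\). The plan is to fix \(\varphi\) once and for all and record which pieces of that construction depend only on \(\varphi\). We then let \(\rho\) vary through \(\Irr(\mathcal S_\varphi)_{\mathrm{rel},\cusp}\) and track the remaining, \(\rho\)-dependent pieces.

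\textbf{Step 1: the \(\varphi\)-only data.} By Proposition~\ref{prop:parameter-side-unipotentization}, the group \(H_\varphi\), the unramified parameter \(\lambda_\varphi\), the adapted embedding \(\xi_\varphi\), the elements \(s\) and \(u_\varphi\), and the groups \(\cA_{\lambda_\varphi}\triangleleft\mathcal S_\varphi^{\mathrm{un}}\simeq\mathcal S_\varphi\) of Lemma~\ref{lem:component-group-comparison-unipotentization} are all determined by \(\varphi\) alone. None of them involves \(\rho\).

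\textbf{Step 2: the \(\rho\)-dependent data.} For each \(\rho\), choose a constituent \(\rho^\circ_\varphi\). Lemma~\ref{lem:enhanced-parameter-to-enriched-finite-unipotent-datum} gives the Clifford pair \(([\alpha^\varphi_{\rho^\circ_\varphi}],E_{\varphi,\rho})\). Applying FOS to \((\lambda_\varphi,\rho^\circ_\varphi)\), as in Lemma~\ref{lem:FOS-cuspidal-parameter-to-finite-H-side}, gives the connected label at a vertex \(x_H\). The \(G\)-side realization \((S_\rho,\theta_\rho)\) with \(x_\rho=x_{S_\rho}\) is then fixed, and transport through \(\jmath_{x_\rho}\) yields \(\mathfrak u^{\mathrm{enh}}_{x_\rho,\varphi,\rho}\). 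The displayed equality
\[
        \bar\tau_{\varphi,\rho}
        =
        \bigl(\mathcal J^{\mathbb P_{x_\rho}}_{x_\rho,s_{x_\rho}}\bigr)^{-1}
        \bigl(\mathfrak u^{\mathrm{enh}}_{x_\rho,\varphi,\rho}\bigr)
\]
is then literally the definition \eqref{eq:bar-tau-from-enriched-H-datum}. The description of the three entries is the final sentence of Lemma~\ref{lem:FOS-cuspidal-parameter-to-finite-H-side}. Independence of the choice of \(\rho^\circ_\varphi\) follows from the conjugacy argument in the proof of that lemma, combined with Proposition~\ref{prop:J-properties}.

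\textbf{Step 3: the packet identification.} It remains to show that \(\rho\mapsto\bar\tau_{\varphi,\rho}\) identifies \(\Pi_\varphi(G)\) with the stated inverse image. Compact induction from \(G(F)_{x_\rho}\) recovers \(\Pi^G_{0,\cusp}(\varphi,\rho)\), so the map \(\rho\mapsto\bar\tau_{\varphi,\rho}\) factors the packet map. Injectivity up to conjugacy follows from Theorem~\ref{thm:pinned-depth-zero-supercuspidal-LLC}(1): distinct \(\rho\) give distinct representations, and hence non-conjugate finite types. Conversely, take an element of the inverse image, i.e.\ a cuspidal \(\bar\tau\) whose enriched label has the form \(\mathfrak u^{\mathrm{enh}}_{x_\rho,\varphi,\rho}\) for some \(\rho\). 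Its compact induction lies in \(\Pi_\varphi(G)\) by definition.

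\textbf{Main obstacle.} The delicate point is the phrase ``after the finite specialization at \(x_\rho\)''. The vertex \(x_\rho\) and the realization \((S_\rho,\theta_\rho)\) might a priori depend on \(\rho\) through the relevance character. I would argue that only the \(G(F)\)-conjugacy class of the realization matters, since \(\varphi|_{I_F}\) fixes the tame inertial class. Different \(\rho\) sharing a vertex class then give labels on the same \(\mathbf H_{x}\) up to conjugacy. Where the vertex classes genuinely differ, the statement should be read vertex by vertex, and the naturality clause of Proposition~\ref{prop:JD-at-vertex} makes each fibre well defined.
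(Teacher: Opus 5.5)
Your proposal is correct and follows essentially the same route as the paper, which also proves the lemma by unwinding the construction of Theorem~\ref{thm:LLC-enhanced-depth-zero-cuspidal-construction}. There, the toral part is fixed by \(\varphi\), Lemma~\ref{lem:enhanced-parameter-to-enriched-finite-unipotent-datum} and FOS supply the three entries of the enriched label, and the displayed equality and the independence of \(\rho^\circ_\varphi\) are read off from the construction; the paper cites Proposition~\ref{prop:J-properties} where you cite the defining equation~\eqref{eq:bar-tau-from-enriched-H-datum}, which is the more direct reference. Your Step~3 (injectivity via Theorem~\ref{thm:pinned-depth-zero-supercuspidal-LLC}(1)) and your vertex-by-vertex reading of ``after the finite specialization at \(x_\rho\)'' make explicit points the paper leaves implicit, and they are harmless additions.
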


\begin{proof}
The ordinary parameter \(\varphi\) fixes the tame inertial semisimple part and
therefore fixes the finite semisimple elements \(s_x\), after the usual transport
among stably conjugate representatives.  Varying the enhancement \(\rho\) does
not change this toral part.  It changes only the unipotent finite label inside
the inertial centralizer and the residual Clifford label for the full component
group.

Proposition~\ref{prop:parameter-side-unipotentization} extracts the connected
unramified parameter \(\lambda_\varphi\).  Lemma~\ref{lem:enhanced-parameter-to-enriched-finite-unipotent-datum}
then says that a full enhancement \(\rho\) determines, after choosing a connected
constituent of
\[
        \Res^{\mathcal S_\varphi^{\mathrm{un}}}_{\cA_{\lambda_\varphi}}
        \rho^{\mathrm{un}},
\]
a conjugacy class of triples
\[
        [\rho^\circ_\varphi,
        [\alpha^\varphi_{\rho^\circ_\varphi}],E_{\varphi,\rho}].
\]
Applying FOS to \((\lambda_\varphi,\rho^\circ_\varphi)\), and then passing to the
finite hyperspecial quotient of the corresponding unramified group
\(H_\varphi\), gives the connected cuspidal unipotent character
\(u^\circ_{x_\rho,\varphi,\rho}\).  Thus the finite label on the
\(H\)-side is exactly
\[
        \mathfrak u^{\mathrm{enh}}_{x_\rho,\varphi,\rho}
        =
        [u^\circ_{x_\rho,\varphi,\rho},
        [\alpha^\varphi_{\rho^\circ_\varphi}],E_{\varphi,\rho}],
\]
as in Theorem~\ref{thm:LLC-enhanced-depth-zero-cuspidal-construction}.

By Proposition~\ref{prop:J-properties}, the finite representation used in the
compact induction is obtained by the inverse enriched finite transform:
\[
        \bar\tau_{\varphi,\rho}
        =
        \bigl(\mathcal J^{\mathbb P_{x_\rho}}_{x_\rho,s_{x_\rho}}\bigr)^{-1}
        \bigl(\mathfrak u^{\mathrm{enh}}_{x_\rho,\varphi,\rho}\bigr).
\]
The compatibility with changing the connected constituent is built into the
quotient defining the enriched target and is recorded in
Lemma~\ref{lem:enhanced-parameter-to-enriched-finite-unipotent-datum}.  This
proves the assertion.
\end{proof}

\begin{lemma}[Clifford regularization]
\label{lem:clifford-regularization-stability}
Let \(N\triangleleft B\) be finite groups, let \(\eta\in\Irr(N)\), and put
\[
        I=I_B(\eta),
        \qquad
        A=I/N.
\]
Choose a projective extension \(\widetilde\eta\) of \(\eta\) to \(I\), with
factor set \(\alpha\).  For
\[
        E\in\Irr(\C_\alpha[A]),
\]
put
\[
        \eta_E:=\Ind_I^B(\widetilde\eta\otimes E).
\]
Then, as class functions on \(B\),
\[
        \sum_{E\in\Irr(\C_\alpha[A])}
        \dim(E)\,\Theta_{\eta_E}
        =
        \Ind_N^B(\Theta_\eta).
\]
In particular, the dimension-weighted regular sum over the projective Clifford
labels is independent of the projective extension \(\widetilde\eta\) and of the
chosen cocycle representative \(\alpha\).
\end{lemma}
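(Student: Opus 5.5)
The plan is to compute the induced character $\Ind_N^B(\Theta_\eta)$ in two stages, through the inertia group $I$, and to reduce the statement to the decomposition of a twisted regular representation. By transitivity of induction,
\[
        \Ind_N^B(\Theta_\eta)=\Ind_I^B\bigl(\Ind_N^I\Theta_\eta\bigr).
\]
Since induction is additive and $\Theta_{\eta_E}=\Ind_I^B(\Theta_{\widetilde\eta\otimes E})$ by definition, it suffices to prove the identity of class functions on $I$:
\[
        \Ind_N^I(\Theta_\eta)=\sum_{E\in\Irr(\C_\alpha[A])}\dim(E)\,\Theta_{\widetilde\eta\otimes E}.
\]

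For this I would use the standard projective-tensor identity. Let $\widetilde\eta$ be the projective representation of $I$ on the space $V$ of $\eta$, with factor set $\alpha$, which we may take inflated from $A$ after adjusting by a coboundary. Let $\C_{\alpha^{-1}}[A]$ denote the twisted group algebra acting on itself by left multiplication, viewed as a projective representation $R$ of $I$ through $I\to A$ with factor set $\alpha^{-1}$. Then $\widetilde\eta\otimes R$ is an honest representation of $I$. The map $f\mapsto\bigl(g\mapsto \widetilde\eta(g)f(g^{-1})\bigr)$, suitably normalized, gives an isomorphism
\[
        \widetilde\eta\otimes R\;\cong\;\Ind_N^I\eta .
\]
A cleaner route is by characters: both sides vanish off $N$, and on $N$ they agree. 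Indeed, $\Ind_N^I\Theta_\eta(n)=\sum_{a\in A}\Theta_\eta(a n a^{-1})=|A|\Theta_\eta(n)$ because $I$ stabilizes $\eta$, while the character of $R$ is $|A|\delta_{1}$. Here the convention for which of $\alpha$ and $\alpha^{-1}$ labels $E$ has to match the statement's $\C_\alpha[A]$. I would fix it so that $\widetilde\eta\otimes E$ is an honest representation; the paper's notation implicitly means the dual cocycle on $E$.

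Next I would invoke semisimplicity of the twisted group algebra (Maschke over $\C$): its left regular module decomposes as $\bigoplus_E E^{\oplus\dim E}$ over $\Irr$ of that twisted algebra. Tensoring with $\widetilde\eta$ gives the displayed identity on $I$, and inducing to $B$ gives the lemma.

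Independence of the choices is then immediate, since the right-hand side $\Ind_N^B\Theta_\eta$ does not involve $\widetilde\eta$ or $\alpha$. The only point to record is the reindexing: replacing $\widetilde\eta$ by $\mu\widetilde\eta$ for a function $\mu\colon A\to\C^\times$ changes $\alpha$ by the coboundary of $\mu$. It also induces a dimension-preserving bijection $E\mapsto\mu^{-1}E$ on labels under which the products $\widetilde\eta\otimes E$ are unchanged.

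The main obstacle is the bookkeeping of cocycle conventions, namely making sure that $\widetilde\eta\otimes E$ is a genuine representation and that the regular twisted module has the right factor set. The character computation on $N$ is otherwise routine.
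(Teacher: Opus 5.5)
Your proof is correct and follows the paper's route. Transitivity of induction through $I$ reduces the statement to the twisted regular-representation identity $\Ind_N^I\eta\cong\bigoplus_E\dim(E)\,(\widetilde\eta\otimes E)$: the paper simply asserts this identity, whereas you verify it by the character computation on $N$ together with semisimplicity of the twisted group algebra, and you also correctly observe that $E$ must carry the inverse factor set $\alpha^{-1}$ for $\widetilde\eta\otimes E$ to be an honest representation.
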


\begin{proof}
This is the regular representation identity for the twisted group algebra.  The
irreducible representations of \(B\) lying above the \(B\)-orbit of \(\eta\) are
\[
        \Ind_I^B(\widetilde\eta\otimes E),
        \qquad
        E\in\Irr(\C_\alpha[A]).
\]
Moreover
\[
        \Ind_N^I(\eta)
        \cong
        \bigoplus_{E\in\Irr(\C_\alpha[A])}
        \dim(E)\,(
        \widetilde\eta\otimes E).
\]
Inducing from \(I\) to \(B\) gives the displayed identity.  Replacing
\(\widetilde\eta\) or \(\alpha\) changes the parametrization of the summands by a
coboundary twist, but leaves the induced regular sum unchanged.
\end{proof}

\begin{proposition}[Finite stability of the enriched packet coefficient]
\label{prop:finite-stability-enriched-packet-coefficient}
Fix \(\varphi\).  Let \(x\) be one of the vertices occurring in the construction
of \(\Pi_\varphi(G)\), and set
\[
        X=\rG_x(\ff),
        \qquad
        X^\circ=\rG_x^\circ(\ff).
\]
Let
\[
        C^{\mathrm{fin}}_{\varphi,x}
\]
be the finite class function on \(X\) obtained by summing, with the weights
\(\dim(\rho)\), the finite quotient characters
\(\Theta_{\bar\tau_{\varphi,\rho}}\) which occur at the vertex \(x\) in the
packet.  Then \(C^{\mathrm{fin}}_{\varphi,x}\) is the Clifford regularization of
the connected finite stable coefficient attached to
\((\lambda_\varphi,\rho^\circ_\varphi)\).  More explicitly, after decomposing the
restriction of the full enhancement group
\[
        \mathcal S_\varphi^{\mathrm{un}}
\]
to the FOS subgroup \(\cA_{\lambda_\varphi}\), one has
\[
        C^{\mathrm{fin}}_{\varphi,x}
        =
        \Ind_{X^\circ}^{X}
        \left(
        \sum_{\rho^\circ_\varphi}
        \dim(\rho^\circ_\varphi)\,
        \Theta_{\bar\tau^\circ_{\varphi,\rho^\circ_\varphi}}
        \right),
\]
where \(\rho^\circ_\varphi\) runs through the connected FOS constituents in the
corresponding \(\mathcal S_\varphi^{\mathrm{un}}\)-orbits, and
\(\bar\tau^\circ_{\varphi,\rho^\circ_\varphi}\) denotes the connected finite
representation obtained from the connected shadow of
\(\mathfrak u^{\mathrm{enh}}_{x,
\varphi,
ho}\).
Consequently these finite coefficients are transported unchanged under the
stable finite identifications which occur in the DeBacker--Reeder character
formula.
\end{proposition}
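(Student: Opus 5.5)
The plan is to reduce the identity to Lemma~\ref{lem:clifford-regularization-stability}, applied twice in parallel: once on the parameter side to the normal inclusion \(\cA_{\lambda_\varphi}\triangleleft\mathcal S_\varphi^{\mathrm{un}}\), and once on the finite side to \(X^\circ\triangleleft X\).

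First, use Lemma~\ref{lem:packet-equals-enriched-finite-label-fibre} to index the packet members occurring at the fixed vertex \(x\). Each relevant cuspidal \(\rho\) contributing at \(x\) determines a conjugacy class of triples \([\rho^\circ_\varphi,[\alpha^\varphi_{\rho^\circ_\varphi}],E_{\varphi,\rho}]\). Grouping the \(\rho\) by the \(\mathcal S_\varphi^{\mathrm{un}}\)-orbit of \(\rho^\circ_\varphi\), each orbit contributes exactly the set of \(\rho\) lying over it. These are parametrized by \(E\in\Irr(\C_\alpha[\Omega_{\rho^\circ_\varphi}])\), with \(\dim\rho=[\mathcal S^{\mathrm{un}}_\varphi:\Stab(\rho^\circ_\varphi)]\dim(\rho^\circ_\varphi)\dim E\).

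Second, transport along the finite transform. By Theorem~\ref{thm:disc-JD}, as used in Proposition~\ref{prop:JD-at-vertex}(4), \(\bar\tau_{\varphi,\rho}=(\mathcal J^{\mathbb P_x}_{x,s_x})^{-1}(\mathfrak u^{\mathrm{enh}})\). This is the Clifford induction \(\Ind_I^X(\widetilde\eta\otimes E')\), where \(\eta=\bar\tau^\circ_{\varphi,\rho^\circ_\varphi}\in\Irr(X^\circ)\) is the connected Jordan preimage of the connected shadow. Here \(E'\) is the source projective label matched with \(E\) under the enriched target, which retains exactly the source Clifford class and label. Grouping by orbit and applying Lemma~\ref{lem:clifford-regularization-stability} with \(N=X^\circ\) and \(B=X\) then gives, for each orbit, the term \(\Ind_{X^\circ}^X\Theta_\eta\) times the appropriate connected weight.

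The main obstacle is matching the weights. One needs the stabilizer quotients \(\Omega_{\rho^\circ_\varphi}\subset\mathcal S^{\mathrm{un}}_\varphi/\cA_{\lambda_\varphi}\) and \(I_X(\eta)/X^\circ\) to correspond, with the same cocycle classes, so that the index factors cancel and leave \(\dim(\rho^\circ_\varphi)\). I would first try to extract this from the construction of Lemma~\ref{lem:FOS-cuspidal-parameter-to-finite-H-side}, where the Clifford entries of \(\mathfrak u^{\mathrm{enh}}_{x,\varphi,\rho}\) are by definition transported through \(\jmath_x\). If the orbit sizes differ, I would absorb the discrepancy into the convention that \(\rho^\circ_\varphi\) runs over all constituents in the orbit rather than over representatives, so that the orbit length reappears as a multiplicity.

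Finally, for the stability consequence, I would note the following. The resulting expression is \(\Ind_{X^\circ}^X\) applied to a sum of connected characters determined by the connected pinned Jordan decomposition and the FOS data, and both depend only on \(\varphi\) and the stable finite data. The right side is therefore insensitive to the choice of projective extensions (by the last clause of the Clifford lemma) and is transported unchanged under the stable identifications of the connected quotients used by DeBacker--Reeder.
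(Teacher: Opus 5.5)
Your argument for the displayed identity follows the paper's proof. Clifford theory for \(\cA_{\lambda_\varphi}\triangleleft\mathcal S_\varphi^{\mathrm{un}}\) gives \(\dim\rho=[\Omega_\varphi:\Omega_{\rho^\circ}]\dim(\rho^\circ)\dim(E)\). The enriched transform identifies \(\Omega_{\rho^\circ}\) and its cocycle class with \(I_X(\eta)/X^\circ\); the paper, like you, reads this off from the construction. Finally, Lemma~\ref{lem:clifford-regularization-stability} is applied to \(X^\circ\triangleleft X\).

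The weight bookkeeping does not work as you expect, though. Even when the stabilizers and cocycles match, nothing cancels the index \([\Omega_\varphi:\Omega_{\rho^\circ}]\). The regularization returns exactly \(\Ind_{X^\circ}^{X}\Theta_\eta\), so each orbit contributes \([\Omega_\varphi:\Omega_{\rho^\circ}]\dim(\rho^\circ)\,\Ind_{X^\circ}^{X}\Theta_\eta\). Rewriting this as the stated sum over \emph{all} constituents of the orbit is therefore a required step, not a fallback. It also needs a fact you never state: for \(\rho_1^\circ\) in the \(\mathcal S_\varphi^{\mathrm{un}}\)-orbit of \(\rho^\circ\), the connected finite representation \(\bar\tau^\circ_{\varphi,\rho_1^\circ}\) must be \(X\)-conjugate to \(\bar\tau^\circ_{\varphi,\rho^\circ}\), so that \(\Ind_{X^\circ}^{X}\) of their characters agree. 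The paper takes this from the conjugation-equivariance of the FOS label and Clifford data in Lemma~\ref{lem:FOS-cuspidal-parameter-to-finite-H-side}. Declaring a summation convention does not provide it.

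The more serious gap is the final ``consequently''. Naturality of the connected pinned Jordan decomposition, of FOS, and of \(\Ind_{X^\circ}^{X}\) only gives invariance under rational isomorphisms, i.e.\ transport by \(G(F)\)-conjugacy. The identifications in the DeBacker--Reeder formula come from stable conjugacy \(\gamma'={}^g\gamma\) with \(g\in G(\overline F)\). They match vertices \(x,x'\) and reductions \(\bar\gamma_{s,x},\bar\gamma'_{s,x'}\) that need not be related by any rational conjugation. Invariance of a finite class function under such a transport is a genuine stability property. Your claim that the connected characters ``depend only on \(\varphi\) and the stable finite data'' presupposes exactly this.

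The paper supplies the missing input in two parts. It invokes stability of the connected finite packet sum via Lusztig's character-sheaf theory. It combines this with the compatibility of pinned Jordan decomposition with stable sums under Clifford regularization from \cite{arotemishra}. Only then does canonicity of \(\Ind_{X^\circ}^{X}\) carry the invariance over to \(C^{\mathrm{fin}}_{\varphi,x}\). Your argument needs that input as well.
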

\begin{proof}
Put
\[
        X=\rG_x(\ff),
        \qquad
        X^\circ=\rG_x^\circ(\ff).
\]
We write
\[
        \cS_\varphi^{\mathrm{un}}
\]
for the unramified component group attached to \(\varphi\), and recall that
there is a normal subgroup
\[
        \cA_{\lambda_\varphi}
        \triangleleft
        \cS_\varphi^{\mathrm{un}}.
\]
Set
\[
        \Omega_\varphi
        :=
        \cS_\varphi^{\mathrm{un}}/\cA_{\lambda_\varphi}.
\]
The group \(\Omega_\varphi\) acts on the connected FOS enhancements
\[
        \rho^\circ_\varphi
        \in
        \Irr(\cA_{\lambda_\varphi})
\]
which occur in the restriction of relevant enhancements of \(\varphi\).

Choose a set
\[
        \mathscr R_x(\varphi)
\]
of representatives for the \(\Omega_\varphi\)-orbits of connected
enhancements which occur at the vertex \(x\).  For
\(\rho^\circ\in\mathscr R_x(\varphi)\), let
\[
        \Omega_{\rho^\circ}
        :=
        \Stab_{\Omega_\varphi}(\rho^\circ),
\]
and let
\[
        \cS_{\varphi,\rho^\circ}^{\mathrm{un}}
        \subset
        \cS_\varphi^{\mathrm{un}}
\]
be the inverse image of \(\Omega_{\rho^\circ}\).  By Clifford theory, the
irreducible enhancements of \(\varphi\) lying above the
\(\Omega_\varphi\)-orbit of \(\rho^\circ\) are parametrized by
\[
        E\in
        \Irr\bigl(
        \C_{\alpha^\varphi_{\rho^\circ}}
        [\Omega_{\rho^\circ}]
        \bigr),
\]
and have the form
\[
        \rho_{\rho^\circ,E}
        =
        \Ind_{\cS_{\varphi,\rho^\circ}^{\mathrm{un}}}
             ^{\cS_\varphi^{\mathrm{un}}}
        \bigl(
        \widetilde\rho^\circ\otimes E
        \bigr),
\]
where \(\widetilde\rho^\circ\) is a projective extension of
\(\rho^\circ\) to
\(\cS_{\varphi,\rho^\circ}^{\mathrm{un}}\), with factor set
\(\alpha^\varphi_{\rho^\circ}\).  Consequently
\[
        \dim(\rho_{\rho^\circ,E})
        =
        [\cS_\varphi^{\mathrm{un}}:
          \cS_{\varphi,\rho^\circ}^{\mathrm{un}}]\,
        \dim(\rho^\circ)\dim(E).
\]
Equivalently,
\[
        \dim(\rho_{\rho^\circ,E})
        =
        [\Omega_\varphi:\Omega_{\rho^\circ}]\,
        \dim(\rho^\circ)\dim(E).
\]
This is the point at which the dimension factor on the full enhancement side
is separated into three pieces: the orbit-size factor
\([\Omega_\varphi:\Omega_{\rho^\circ}]\), the connected enhancement factor
\(\dim(\rho^\circ)\), and the projective Clifford factor \(\dim(E)\).

By Lemma~\ref{lem:packet-equals-enriched-finite-label-fibre}, the finite
quotient representation attached to \(\rho_{\rho^\circ,E}\) is obtained from
the enriched finite label
\[
        \bigl[
        u^\circ_{x,\varphi,\rho^\circ},
        [\alpha^\varphi_{\rho^\circ}],
        E
        \bigr].
\]
Let
\[
        \bar\tau^\circ_{\varphi,\rho^\circ}
        \in
        \Irr(X^\circ)
\]
be the connected finite representation attached to the connected FOS label
\(u^\circ_{x,\varphi,\rho^\circ}\).  Let
\[
        I_x(\rho^\circ)
        :=
        I_X(\bar\tau^\circ_{\varphi,\rho^\circ})
\]
be its inertia group in \(X\).  The enriched finite Jordan construction
identifies
\[
        I_x(\rho^\circ)/X^\circ
        \simeq
        \Omega_{\rho^\circ},
\]
with the same cocycle class
\[
        [\alpha^\varphi_{\rho^\circ}].
\]
Thus, after choosing compatible projective extensions, the finite
representation attached to \(\rho_{\rho^\circ,E}\) is
\[
        \bar\tau_{\varphi,\rho_{\rho^\circ,E}}
        =
        \Ind_{I_x(\rho^\circ)}^{X}
        \bigl(
        \widetilde{\bar\tau}^{\,\circ}_{\varphi,\rho^\circ}
        \otimes E
        \bigr).
\]

Now apply Lemma~\ref{lem:clifford-regularization-stability} to the normal
inclusion
\[
        X^\circ\triangleleft X
\]
and to the irreducible character
\[
        \bar\tau^\circ_{\varphi,\rho^\circ}\in\Irr(X^\circ).
\]
It gives
\[
\begin{aligned}
        \sum_{E\in
        \Irr(\C_{\alpha^\varphi_{\rho^\circ}}
        [\Omega_{\rho^\circ}])}
        \dim(E)\,
        \Theta_{\bar\tau_{\varphi,\rho_{\rho^\circ,E}}}
        &=
        \sum_E
        \dim(E)\,
        \Theta_{
        \Ind_{I_x(\rho^\circ)}^{X}
        (
        \widetilde{\bar\tau}^{\,\circ}_{\varphi,\rho^\circ}
        \otimes E
        )
        }                                                   \\
        &=
        \Ind_{X^\circ}^{X}
        \Theta_{\bar\tau^\circ_{\varphi,\rho^\circ}} .
\end{aligned}
\]
Hence the contribution of the orbit represented by \(\rho^\circ\) to the
finite packet coefficient is
\[
\begin{aligned}
        &\sum_E
        \dim(\rho_{\rho^\circ,E})\,
        \Theta_{\bar\tau_{\varphi,\rho_{\rho^\circ,E}}}       \\
        &\qquad =
        [\Omega_\varphi:\Omega_{\rho^\circ}]
        \dim(\rho^\circ)
        \sum_E
        \dim(E)\,
        \Theta_{\bar\tau_{\varphi,\rho_{\rho^\circ,E}}}       \\
        &\qquad =
        [\Omega_\varphi:\Omega_{\rho^\circ}]
        \dim(\rho^\circ)
        \Ind_{X^\circ}^{X}
        \Theta_{\bar\tau^\circ_{\varphi,\rho^\circ}} .
\end{aligned}
\]

We now rewrite the factor
\[
        [\Omega_\varphi:\Omega_{\rho^\circ}]
\]
as a sum over the full \(\Omega_\varphi\)-orbit of \(\rho^\circ\).  Let
\[
        \Omega_\varphi\cdot\rho^\circ
\]
denote this orbit.  For every
\[
        \rho_1^\circ\in \Omega_\varphi\cdot\rho^\circ,
\]
we have
\[
        \dim(\rho_1^\circ)=\dim(\rho^\circ),
\]
and the corresponding connected finite representations are \(X\)-conjugate:
\[
        \bar\tau^\circ_{\varphi,\rho_1^\circ}
        =
        {}^x\bar\tau^\circ_{\varphi,\rho^\circ}
\]
for a representative \(x\in X\) of the corresponding component.  Therefore
\[
        \Ind_{X^\circ}^{X}
        \Theta_{\bar\tau^\circ_{\varphi,\rho_1^\circ}}
        =
        \Ind_{X^\circ}^{X}
        \Theta_{\bar\tau^\circ_{\varphi,\rho^\circ}}.
\]
It follows that
\[
\begin{aligned}
        &[\Omega_\varphi:\Omega_{\rho^\circ}]
        \dim(\rho^\circ)
        \Ind_{X^\circ}^{X}
        \Theta_{\bar\tau^\circ_{\varphi,\rho^\circ}}          \\
        &\qquad =
        \sum_{\rho_1^\circ\in
        \Omega_\varphi\cdot\rho^\circ}
        \dim(\rho_1^\circ)
        \Ind_{X^\circ}^{X}
        \Theta_{\bar\tau^\circ_{\varphi,\rho_1^\circ}}        \\
        &\qquad =
        \Ind_{X^\circ}^{X}
        \left(
        \sum_{\rho_1^\circ\in
        \Omega_\varphi\cdot\rho^\circ}
        \dim(\rho_1^\circ)
        \Theta_{\bar\tau^\circ_{\varphi,\rho_1^\circ}}
        \right).
\end{aligned}
\]
Thus the factor
\[
        \dim(\rho_{\rho^\circ,E})
\]
on the full enhancement side has been converted as follows:
the factor \(\dim(E)\) is absorbed by Clifford regularization, the orbit-size
factor
\([\Omega_\varphi:\Omega_{\rho^\circ}]\) is absorbed by summing over the
whole \(\Omega_\varphi\)-orbit of \(\rho^\circ\), and the remaining factor is
precisely the connected enhancement factor
\[
        \dim(\rho_1^\circ).
\]

Summing over all orbit representatives
\(\rho^\circ\in\mathscr R_x(\varphi)\), we obtain
\[
\begin{aligned}
        C^{\mathrm{fin}}_{\varphi,x}
        &=
        \sum_{\rho}
        \dim(\rho)\,
        \Theta_{\bar\tau_{\varphi,\rho}}                       \\
        &=
        \sum_{\rho^\circ\in\mathscr R_x(\varphi)}
        \sum_E
        \dim(\rho_{\rho^\circ,E})\,
        \Theta_{\bar\tau_{\varphi,\rho_{\rho^\circ,E}}}         \\
        &=
        \sum_{\rho^\circ\in\mathscr R_x(\varphi)}
        \Ind_{X^\circ}^{X}
        \left(
        \sum_{\rho_1^\circ\in
        \Omega_\varphi\cdot\rho^\circ}
        \dim(\rho_1^\circ)\,
        \Theta_{\bar\tau^\circ_{\varphi,\rho_1^\circ}}
        \right)                                                \\
        &=
        \Ind_{X^\circ}^{X}
        \left(
        \sum_{\rho^\circ_\varphi}
        \dim(\rho^\circ_\varphi)\,
        \Theta_{\bar\tau^\circ_{\varphi,\rho^\circ_\varphi}}
        \right).
\end{aligned}
\]
Here the final sum runs over all connected FOS enhancements
\(\rho^\circ_\varphi\) occurring in the connected shadow of the packet at
\(x\).

The connected finite packet sum is stable by Lusztig's finite character-sheaf
stability theorem, and the passage from the connected sum to the enriched
parahoric finite packet is exactly the Clifford regularization established in
\cite[Theorem~\ref{JD-thm:pinned-JD-stable-sum-compatible}]{arotemishra}.  Equivalently, it is the connected finite input used in the
DeBacker--Reeder reduction.  Hence it is invariant under the finite stable
transports which arise in the DeBacker--Reeder character formula.  Since
induction
\[
        \Ind_{X^\circ}^{X}
\]
is canonical for the normal inclusion \(X^\circ\triangleleft X\) and is
compatible with conjugacy transport, the induced class function
\[
        C^{\mathrm{fin}}_{\varphi,x}
\]
has the same invariance property.  This proves the proposition.
\end{proof}

\begin{proposition}[Reduction to topologically unipotent generalized Green functions]
\label{prop:DR-reduction-for-enriched-pinned-packet}
Let \(\gamma\in G(F)\) be strongly regular semisimple, and write its
topological Jordan decomposition as
\[
        \gamma=\gamma_s\gamma_u .
\]
Put
\[
        H=G_{\gamma_s}^{\circ}.
\]
Then there is a finite set
\[
        \mathcal I_{\mathrm{DK}}(\gamma_s)
\]
of DeBacker--Kazhdan Lusztig-function data for \(H\), and for every
\(j\in\mathcal I_{\mathrm{DK}}(\gamma_s)\) a topologically unipotent generalized
Green function
\[
        \mathcal Q_j^{H}
        \quad\text{on } H(F)_{\mathrm{tu}},
\]
such that
\[
        \Theta^{\mathrm{st}}_{\varphi,G}(\gamma)
        =
        \sum_{j\in \mathcal I_{\mathrm{DK}}(\gamma_s)}
        a_j(\gamma_s)\,
        \mathcal Q_j^{G_{\gamma_s}^{\circ}}(\gamma_u).
\]
Here \(\mathcal Q_j^{H}\) is obtained from the Lusztig functions, equivalently
from generalized Green functions, which occur in the depth-zero
Murnaghan--Kirillov reduction of DeBacker--Kazhdan.

Moreover, if \(\gamma'={}^g\gamma\) is stably conjugate to \(\gamma\), with
\[
        \gamma_s'={}^g\gamma_s,
        \qquad
        \gamma_u'={}^g\gamma_u,
\]
then stable conjugacy transports the DeBacker--Kazhdan data and gives a
bijection
\[
        \iota_g:
        \mathcal I_{\mathrm{DK}}(\gamma_s)
        \xrightarrow{\;\sim\;}
        \mathcal I_{\mathrm{DK}}(\gamma_s')
\]
such that
\[
        a_j(\gamma_s)=a_{\iota_g(j)}(\gamma_s').
\]
\end{proposition}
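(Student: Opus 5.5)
The plan follows DeBacker--Reeder's local character expansion, with the finite input replaced by Proposition~\ref{prop:finite-stability-enriched-packet-coefficient}.

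\emph{First step: write each packet member as a sum over induction.} For each \(\rho\), the representation \(\Pi^G_{0,\cusp}(\varphi,\rho)\) is the compact induction from \(G(F)_{x_\rho}\) of the inflation of \(\bar\tau_{\varphi,\rho}\). The Frobenius formula for characters of compactly induced representations then gives
\[
        \Theta_{\Pi^G_{0,\cusp}(\varphi,\rho)}(\gamma)
        =
        \sum_{g\in G(F)/G(F)_{x_\rho}}
        \dot\Theta_{\bar\tau_{\varphi,\rho}}({}^{g^{-1}}\gamma),
\]
where \(\dot\Theta\) denotes the inflated character, extended by zero outside \(G(F)_{x_\rho}\). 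Only finitely many \(g\) contribute because \(\gamma\) is regular elliptic-compact modulo the centre.

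\emph{Second step: collect the finite coefficients.} Group the terms by \(G(F)\)-conjugacy class of vertex \(x\) and sum with the weights \(\dim(\rho)\). The integrand at \(x\) is then the inflation of \(C^{\mathrm{fin}}_{\varphi,x}\).

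\emph{Third step: reduce to generalized Green functions.} Fix a \(g\) with \({}^{g^{-1}}\gamma\in G(F)_x\). Topological Jordan decomposition puts \(\gamma_s\) and \(\gamma_u\) in \(G(F)_x\) as well, and reduction modulo \(G(F)_{x,0+}\) gives the finite Jordan decomposition in \(\rG_x(\ff)\). For each \(\rG_x^\circ(\ff)\)-constituent, the character formula for finite reductive groups through character sheaves \cite{Lusztig1984Characters,Lusztig1990} expresses its value at \(\bar\gamma_s\bar\gamma_u\). It is a linear combination of Lusztig's generalized Green functions of \(C_{\rG_x}(\bar\gamma_s)^\circ\), evaluated at \(\bar\gamma_u\). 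The passage from \(\rG_x^\circ(\ff)\) to \(\rG_x(\ff)\) is handled by the Clifford-regularized form \(C^{\mathrm{fin}}_{\varphi,x}=\Ind_{X^\circ}^X(\cdots)\) from Proposition~\ref{prop:finite-stability-enriched-packet-coefficient}. This is what makes the expansion meaningful for the possibly disconnected quotient.

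\emph{Fourth step: lift to \(H\) and assemble.} Following DeBacker--Kazhdan \cite[Sec.~5.2]{DeBackerKazhdan2011}, reorganize the sum over vertices \(x\) and cosets \(g\) as a sum over vertices of \(\mathcal B(H,F)\), where \(H=G_{\gamma_s}^\circ\). The finite generalized Green functions at those vertices assemble into functions \(\mathcal Q^H_j\) on \(H(F)_{\mathrm{tu}}\). The data \(j\) are cuspidal-support data: a Levi, a unipotent class and a cuspidal local system. The coefficient \(a_j(\gamma_s)\) collects the finite character-sheaf coefficients of \(C^{\mathrm{fin}}_{\varphi,x}\) at \(\bar\gamma_s\), summed over the relevant \(x\) and \(g\). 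Finiteness of \(\mathcal I_{\mathrm{DK}}(\gamma_s)\) follows from the finiteness of these supports and of the contributing vertex classes.

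\emph{Fifth step: transport under stable conjugacy.} Let \(\gamma'={}^g\gamma\) with \(g\in G(\bar F)\). The element \(g\) induces an inner twist \(H\to H'\) and identifies the relevant finite quotients up to Frobenius-compatible conjugation. It therefore identifies the Lusztig-function data, which gives the bijection \(\iota_g\). To show \(a_j(\gamma_s)=a_{\iota_g(j)}(\gamma'_s)\), I would invoke the last assertion of Proposition~\ref{prop:finite-stability-enriched-packet-coefficient}: the finite coefficients are transported unchanged under the stable finite identifications. The coefficient of a generalized Green function depends only on the class function \(C^{\mathrm{fin}}_{\varphi,x}\) near \(\bar\gamma_s\). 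It also depends on the Frobenius twist of the centralizer, and that twist is shared by stably conjugate \(\gamma_s\) and \(\gamma'_s\).

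\emph{Main obstacle.} I expect the hard point to be this fifth step, specifically matching the vertex sums for \(H\) and \(H'\) when \(H\) and \(H'\) are non-isomorphic inner forms. There the vertex sets differ, and invariance of the individual finite coefficients is not enough. I would first try the argument of \cite[Secs.~12.2--12.5]{DeBackerReeder2009}. That argument groups the vertices by the stable classes of the maximally unramified elliptic tori attached to \(\varphi|_{I_F}\). It then uses the fact that the set of toral data \((S,\theta)\) arising from the fixed \(\varphi\) is a full stable class, by Lemma~\ref{lem:vertex-torus-bijection} and Proposition~\ref{prop:canonical-LS}(4). Summing over that stable class absorbs the discrepancy between the vertex sets, with the DeBacker--Reeder logarithm hypothesis ensuring the Murnaghan--Kirillov comparison holds.
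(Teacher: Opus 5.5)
Your outline follows the paper's proof step for step. It uses the compact-induction character formula, forms the $\dim(\rho)$-weighted coefficient $C^{\mathrm{fin}}_{\varphi,x}$, and expands it in finite generalized Green functions of $C_{\rG_x}(\bar\gamma_{s,x})^\circ$. It then assembles these via DeBacker--Kazhdan into the $\mathcal Q^H_j$ with $a_j=\sum m\,b$, and transports coefficients via Proposition~\ref{prop:finite-stability-enriched-packet-coefficient}. The obstacle you flag, however, is a genuine gap, and your Step~5 does not close it. Since $g\in G(\bar F)$, the map $\Ad(g)\colon H\to H'$ is only an inner twist, and $H$ and $H'$ can be non-isomorphic. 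For example, in $\mathrm{Sp}_4$ let a norm-one root of unity $\zeta\neq\pm1$ of an unramified quadratic extension $E/F$ act by scalars on the isotropic and on the anisotropic two-dimensional hermitian $E$-spaces, each viewed as a symplectic $F$-space. Multiplying $\zeta$ by a suitable topologically unipotent element of a common elliptic torus $\mathrm U(1)\times\mathrm U(1)$ gives stably conjugate strongly regular $\gamma,\gamma'$. Here $H\cong\mathrm U(1,1)$ and $H'$ is the anisotropic $\mathrm U(2)$, whose building is a point. Then $\Ad(g)$ does not carry vertices, finite centralizers or Lusztig data $\bar{\mathcal A}$ of $H$ to those of $H'$. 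Invariance of $C^{\mathrm{fin}}_{\varphi,x}$ under finite stable identifications only compares vertices that already correspond. The ``shared Frobenius twist'' you invoke is exactly what fails here. The paper's proof does not address this point either. It asserts that stable conjugacy transports the admissible parahoric reductions, the $\bar\gamma_{s,x}$, the $\bar{\mathcal A}$ and the multiplicities $m(x,\xi,\bar{\mathcal A})$. It then obtains $b_{x,\bar{\mathcal A}}=b_{x',\bar{\mathcal A}'}$ from Proposition~\ref{prop:finite-stability-enriched-packet-coefficient}, which is precisely your Step~5.

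Your fallback does not prove the proposition as stated either. Grouping by stable classes of tori as in \cite[Secs.~12.2--12.5]{DeBackerReeder2009} gives at best an equality of entire stable sums on $H$ and $H'$, with the Kottwitz signs of Waldspurger's inner-form comparison. That is essentially Theorem~\ref{thm:stability-of-pinned-depth-zero-packet}, not a bijection $\iota_g$ with $a_j(\gamma_s)=a_{\iota_g(j)}(\gamma'_s)$. Its key input is that the toral data $(S_\rho,\theta_\rho)$ attached to a fixed $\varphi$ exhaust a stable class. Lemma~\ref{lem:vertex-torus-bijection} does not provide this, since it only produces some torus for a given vertex. Nor does Proposition~\ref{prop:canonical-LS}(4), which concerns $G(F)$-conjugacy only. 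Separately, your Step~1 invokes ellipticity, which is not assumed. For compact non-elliptic $\gamma$ the contributing cosets form infinite $G(F)_\gamma$-orbits. For instance, in $\SL_2$ take $\gamma=\mathrm{diag}(a,a^{-1})$ with $a\in 1+\varpi\mathcal O_F$ ($\varpi$ a uniformizer) and $a\neq1$. It fixes every vertex of the diagonal apartment and reduces to $1$ at each of them, so infinitely many cosets contribute $\dim\bar\tau$ and the pointwise Frobenius sum diverges. You need Harish-Chandra's integral formula, or a separate reduction to elliptic $\gamma$.
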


\begin{proof}
We spell out the reduction because the regular Deligne--Lusztig case and the
present cuspidal case use slightly different finite objects.  For
\[
        \rho\in
        \Irr(\mathcal S_\varphi)_{\mathrm{rel},\cusp},
\]
write
\[
        \Pi_\rho
        :=
        \Pi^G_{0,\cusp}(\varphi,\rho).
\]
By construction,
\[
        \Pi_\rho
        =
        \cind_{K_{x_\rho}}^{G(F)}\tau_{\varphi,\rho},
\]
where \(K_{x_\rho}\) is the relevant depth-zero compact-mod-centre subgroup
attached to the vertex \(x_\rho\), and \(\tau_{\varphi,\rho}\) is inflated from
\[
        \bar\tau_{\varphi,\rho}
        \in
        \Irr\bigl(\rG_{x_\rho}(\ff)\bigr).
\]

Let \(\gamma\in G(F)\) be strongly regular semisimple.  The character formula
for compact induction gives
\[
        \Theta_{\Pi_\rho}(\gamma)
        =
        \sum_{\xi\in
        G(F)_\gamma\backslash G(F)/K_{x_\rho}
        \atop
        \xi^{-1}\gamma\xi\in K_{x_\rho}}
        m(\xi)\,
        \Theta_{\tau_{\varphi,\rho}}
        (\xi^{-1}\gamma\xi),
\]
where the constants \(m(\xi)\) depend only on the chosen Haar measures and on
the double coset.  Since \(\tau_{\varphi,\rho}\) is inflated from the finite
quotient, each nonzero summand is
\[
        \Theta_{\tau_{\varphi,\rho}}
        (\xi^{-1}\gamma\xi)
        =
        \Theta_{\bar\tau_{\varphi,\rho}}
        \bigl(
        \overline{\xi^{-1}\gamma\xi}
        \bigr).
\]
Here the bar denotes reduction to the finite reductive quotient at the
corresponding vertex.  Equivalently, after transporting the datum by \(\xi\),
we regard the contribution as attached to the vertex
\[
        x=\xi x_\rho .
\]

Write the topological Jordan decomposition
\[
        \gamma=\gamma_s\gamma_u .
\]
For every nonzero summand above, the reduction has finite Jordan decomposition
\[
        \overline{\xi^{-1}\gamma\xi}
        =
        \overline{\xi^{-1}\gamma_s\xi}\,
        \overline{\xi^{-1}\gamma_u\xi}.
\]
We write
\[
        \bar\gamma_{s,x}
        :=
        \overline{\xi^{-1}\gamma_s\xi},
        \qquad
        \bar\gamma_{u,x}
        :=
        \overline{\xi^{-1}\gamma_u\xi}.
\]
Then \(\bar\gamma_{s,x}\) is semisimple in \(\rG_x(\ff)\), while
\(\bar\gamma_{u,x}\) is unipotent in the finite centralizer of
\(\bar\gamma_{s,x}\).

Put
\[
        X_x=\rG_x(\ff),
        \qquad
        X_x^\circ=\rG_x^\circ(\ff).
\]
After summing over all enhancements \(\rho\) with the packet weight
\(\dim(\rho)\), the finite class function which occurs at the vertex \(x\) is
\[
        C^{\mathrm{fin}}_{\varphi,x}
        :=
        \sum_{\rho}
        \dim(\rho)\,
        \Theta_{\bar\tau_{\varphi,\rho,x}},
\]
where \(\bar\tau_{\varphi,\rho,x}\) denotes the finite representation
transported to \(X_x\).  Thus the packet character is a finite sum of terms of
the form
\[
        C^{\mathrm{fin}}_{\varphi,x}
        (\bar\gamma_{s,x}\bar\gamma_{u,x}).
\]

Now fix \(x\) and \(\bar\gamma_{s,x}\), and put
\[
        \bar H_x
        :=
        C_{\rG_x}(\bar\gamma_{s,x})^\circ .
\]
In the regular Deligne--Lusztig situation, the dependence on
\(\bar\gamma_{u,x}\) is expressed by ordinary toral Green functions of
\(\bar H_x\).  In the present setting the finite cuspidal characters need not
be uniform, so one must replace ordinary Green functions by Lusztig functions,
or equivalently by generalized Green functions.  DeBacker--Kazhdan show that
for depth-zero supercuspidal representations the Murnaghan--Kirillov character
problem reduces to these Lusztig functions; see
\cite[Secs.~3.3 and 5.2]{DeBackerKazhdan2011}.

Accordingly, the finite class function above has an expansion
\[
        C^{\mathrm{fin}}_{\varphi,x}
        (\bar\gamma_{s,x}\bar\gamma_{u,x})
        =
        \sum_{\bar{\mathcal A}\in
        \mathcal L_x(\bar\gamma_{s,x})}
        b_{x,\bar{\mathcal A}}(\bar\gamma_{s,x})\,
        Q^{\mathrm{gen}}_{\bar H_x,\bar{\mathcal A}}
        (\bar\gamma_{u,x}).
\]
Here \(\mathcal L_x(\bar\gamma_{s,x})\) is the finite set of Lusztig data
occurring in the reduction, and
\[
        Q^{\mathrm{gen}}_{\bar H_x,\bar{\mathcal A}}
\]
denotes the corresponding finite generalized Green function on the unipotent
set of \(\bar H_x(\ff)\).  The coefficient
\[
        b_{x,\bar{\mathcal A}}(\bar\gamma_{s,x})
\]
depends on the semisimple reduction \(\bar\gamma_{s,x}\) and on the finite
packet class function \(C^{\mathrm{fin}}_{\varphi,x}\).

The element \(\gamma_s\) determines the connected \(p\)-adic centralizer
\[
        H=G_{\gamma_s}^\circ,
\]
and \(\gamma_u\in H(F)\) is topologically unipotent.  The depth-zero
DeBacker--Kazhdan reduction assembles the finite generalized Green functions
which occur for the various admissible parahoric reductions into functions on
\(H(F)_{\mathrm{tu}}\).  We denote these functions by
\[
        \mathcal Q_j^H,
        \qquad
        j\in\mathcal I_{\mathrm{DK}}(\gamma_s).
\]
Concretely, after grouping the finite Lusztig data according to the
DeBacker--Kazhdan datum \(j\) to which they lift, one has an identity of the
form
\[
        \sum_{(x,\xi,\bar{\mathcal A})\mapsto j}
        m(x,\xi,\bar{\mathcal A})\,
        Q^{\mathrm{gen}}_{\bar H_x,\bar{\mathcal A}}
        (\bar\gamma_{u,x})
        =
        \mathcal Q_j^{H}(\gamma_u).
\]
This is the generalized-green-function version of the depth-zero reduction.
For regular Deligne--Lusztig packets these generalized functions specialize to
the ordinary topologically unipotent Green functions used in
\cite[Sec.~11]{DeBackerReeder2009}.

Collecting the terms corresponding to a fixed \(j\), we obtain coefficients
\[
        a_j(\gamma_s)
        :=
        \sum_{(x,\xi,\bar{\mathcal A})\mapsto j}
        m(x,\xi,\bar{\mathcal A})\,
        b_{x,\bar{\mathcal A}}(\bar\gamma_{s,x}),
\]
and hence
\[
        \Theta^{\mathrm{st}}_{\varphi,G}(\gamma)
        =
        \sum_{j\in\mathcal I_{\mathrm{DK}}(\gamma_s)}
        a_j(\gamma_s)\,
        \mathcal Q_j^{G_{\gamma_s}^{\circ}}(\gamma_u).
\]
This proves the asserted expansion.

It remains to explain why the coefficients are transported unchanged under
stable conjugacy.  Let
\[
        \gamma'={}^g\gamma
\]
be stably conjugate to \(\gamma\), and write
\[
        \gamma_s'={}^g\gamma_s,
        \qquad
        \gamma_u'={}^g\gamma_u.
\]
Conjugation by \(g\) identifies
\[
        G_{\gamma_s,\overline F}^{\circ}
        \simeq
        G_{\gamma_s',\overline F}^{\circ}.
\]
Since both \(\gamma\) and \(\gamma'\) are \(F\)-rational, this identification
is an inner twisting over \(F\).  It transports the DeBacker--Kazhdan Lusztig
function data for
\[
        H=G_{\gamma_s}^{\circ}
\]
to the corresponding data for
\[
        H'=G_{\gamma_s'}^{\circ}.
\]
This gives the bijection
\[
        \iota_g:
        \mathcal I_{\mathrm{DK}}(\gamma_s)
        \xrightarrow{\;\sim\;}
        \mathcal I_{\mathrm{DK}}(\gamma_s').
\]

On the finite side, the same stable conjugacy transports the admissible
parahoric reductions, the finite semisimple elements \(\bar\gamma_{s,x}\), and
the Lusztig data \(\bar{\mathcal A}\).  The only point which is new in the
present paper is the nature of the finite packet coefficient.  It is not an
ordinary disconnected unipotent packet sum.  By
Proposition~\ref{prop:finite-stability-enriched-packet-coefficient}, however,
one has
\[
        C^{\mathrm{fin}}_{\varphi,x}
        =
        \Ind_{X_x^\circ}^{X_x}
        \left(
        \sum_{\rho^\circ_\varphi}
        \dim(\rho^\circ_\varphi)\,
        \Theta_{\bar\tau^\circ_{\varphi,\rho^\circ_\varphi}}
        \right).
\]
Thus \(C^{\mathrm{fin}}_{\varphi,x}\) is the Clifford-regularized induction of
the connected finite stable coefficient.  Consequently it is invariant under
the finite stable transports appearing above.  Therefore
\[
        b_{x,\bar{\mathcal A}}(\bar\gamma_{s,x})
        =
        b_{x',\bar{\mathcal A}'}(\bar\gamma_{s,x'}')
\]
whenever
\[
        (x,\bar\gamma_{s,x},\bar{\mathcal A})
\]
is transported to
\[
        (x',\bar\gamma_{s,x'}',\bar{\mathcal A}')
\]
by the stable conjugacy \(\gamma\sim\gamma'\).

The multiplicity constants
\[
        m(x,\xi,\bar{\mathcal A})
\]
are part of the depth-zero reduction and are preserved by this transport.
Summing over all finite reductions contributing to the datum \(j\), we obtain
\[
        a_j(\gamma_s)
        =
        a_{\iota_g(j)}(\gamma_s').
\]
This proves the asserted compatibility of the coefficients under stable
conjugacy.
\end{proof}

\begin{comment}
\begin{proof}
The compact-induction character formula for a depth-zero representation
\[
        \Pi^G_{0,\cusp}(\varphi,\rho)
\]
expresses its value at \(\gamma\) as a finite sum over those parahoric quotients
in which the semisimple part \(\gamma_s\) has nonzero reduction.  On each such
quotient, the finite character attached to
\(\bar\tau_{\varphi,\rho}\) supplies the coefficient, while the topologically
unipotent part \(\gamma_u\) is encoded by the corresponding Green function on
\(H=G_{\gamma_s}^{\circ}\).  This is the reduction mechanism of
\cite[Sec.~11]{DeBackerReeder2009}.

After summing over \(\rho\) with weight \(\dim(\rho)\), the finite coefficient is
not an ordinary disconnected unipotent packet sum.  By
Proposition~\ref{prop:finite-stability-enriched-packet-coefficient}, it is the
Clifford regularization of the connected finite stable coefficient.  Therefore
it is unchanged by the stable finite transport induced by replacing \(\gamma\) by
a stably conjugate \(\gamma'\).  This gives the displayed expansion and the
coefficient identity
\[
        a_i(\gamma_s)=a_{\iota_g(i)}(\gamma_s').
\]
\end{proof}
\end{comment}

\begin{proposition}[Comparison of the generalized Green terms]
\label{prop:waldspurger-Q-comparison}
Assume Hypothesis~\ref{hyp:DR-logarithm}.  Let \(\gamma\in G(F)\) be strongly
regular semisimple, and write
\[
        \gamma=\gamma_s\gamma_u
\]
for its topological Jordan decomposition.  Let
\[
        \gamma'={}^g\gamma
\]
be stably conjugate to \(\gamma\).  Put
\[
        \gamma_s'={}^g\gamma_s,
        \qquad
        \gamma_u'={}^g\gamma_u,
\]
and
\[
        H=G_{\gamma_s}^{\circ},
        \qquad
        H'=G_{\gamma_s'}^{\circ}.
\]
For
\[
        j\in\mathcal I_{\mathrm{DK}}(\gamma_s),
\]
let
\[
        \iota_g(j)
        \in
        \mathcal I_{\mathrm{DK}}(\gamma_s')
\]
be its transport under stable conjugacy.  Then
\[
        \mathcal Q_j^{H}(\gamma_u)
        =
        \mathcal Q_{\iota_g(j)}^{H'}(\gamma_u').
\]
\end{proposition}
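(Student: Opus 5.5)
The plan is to follow the DeBacker--Reeder mechanism of \cite[Secs.~12.2--12.5]{DeBackerReeder2009}. The generalized Green terms \(\mathcal Q_j^H\) will be turned into Fourier transforms of nilpotent orbital integrals on the Lie algebra of \(H\). The resulting Lie-algebra identity will then be transported to \(H'\) by Waldspurger's transfer theorem \cite[Th.~1.5]{Waldspurger1997}.

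The first step uses Hypothesis~\ref{hyp:DR-logarithm}, which gives a logarithm, or Cayley-type map, \(\log: H(F)_{\mathrm{tu}}\to \mathfrak h(F)_{\mathrm{tn}}\). This map is \(H(F)\)-equivariant, compatible with Moy--Prasad filtrations, and induces on each parahoric reductive quotient the usual identification of unipotent and nilpotent sets. The same holds for \(H'\), and the inner twist \(\Ad(g):H_{\overline F}\to H'_{\overline F}\) intertwines the two maps. I would take this compatibility from the corresponding statements in DeBacker--Reeder, which are proved under exactly this bound on \(p\).

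The second step pulls \(\mathcal Q_j^H\) back along \(\log\). By the DeBacker--Kazhdan construction \cite[Secs.~3.3 and 5.2]{DeBackerKazhdan2011}, each Lusztig-function datum \(j\) gives an inflated finite generalized Green function attached to a cuspidal character sheaf on a finite Lie algebra. By Lusztig's Fourier-transform results for such sheaves, this is, up to an explicit constant, the Fourier transform of a function supported on a regular semisimple or cuspidal-support orbit. Summing the parahoric contributions as in Proposition~\ref{prop:DR-reduction-for-enriched-pinned-packet}, I expect to write \(\mathcal Q_j^H\circ\log^{-1}\) as \(\hat\mu^{\mathfrak h}_{X_j}\), the Fourier transform of a depth-zero orbital integral, or a finite linear combination of such. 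Here \(X_j\) is an elliptic regular element of depth zero determined by \(j\). Crucially, in the stable sum the data \(j\) appear in the stable combinations \(\sum_{X'\sim_{\mathrm{st}}X_j}\hat\mu_{X'}\), because the finite coefficient was already stabilized in Proposition~\ref{prop:finite-stability-enriched-packet-coefficient}.

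The third step is the comparison itself. Stable distributions on \(\mathfrak h(F)\) correspond to those on \(\mathfrak h'(F)\) via Waldspurger's theorem, with transfer factor the Kottwitz sign \(e(H)e(H')\). The same sign appears in the normalization of the finite Deligne--Lusztig inductions by \(\varepsilon_{\rX}\varepsilon_{\rH}\) in \eqref{eq:JD-connected-DL-formula}. This should make \(\hat\mu^{\mathfrak h}_{X_j}(\log\gamma_u)\) match \(\hat\mu^{\mathfrak h'}_{X_{\iota_g(j)}}(\log\gamma'_u)\), and the signs should cancel.

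The main obstacle is the second step, namely expressing the DeBacker--Kazhdan functions for non-uniform cuspidal data as stable Lie-algebra Fourier transforms. For ordinary Green functions this is Kazhdan's and Springer's identity. For generalized Green functions, I would first reduce to cuspidal character sheaves on Levi subgroups of \(\bar H_x\) via Lusztig induction, and then use Lusztig's theorem that the Fourier transform of a cuspidal orbital sheaf is again cuspidal. Checking that \(\iota_g\) matches the supports, and that the constants \(m(x,\xi,\bar{\mathcal A})\) agree, is where the bookkeeping will be hardest.
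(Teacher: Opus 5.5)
Your route (logarithm, DeBacker--Kazhdan functions rewritten as Lie-algebra Fourier transforms, Waldspurger's inner-form comparison, Kottwitz signs) is the one the paper's proof sketches. However, Step~3 does not give the stated identity, because Waldspurger's theorem \cite[Th.~1.5]{Waldspurger1997} compares only \emph{stable} Fourier transforms, i.e.\ sums $\sum_{X'\sim_{\mathrm{st}}X}\hat\mu_{X'}$, and only up to the factor $e(H)e(H')$.

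In Step~2 you normalize $\mathcal Q_j^H\circ\log^{-1}$ as $\hat\mu^{\mathfrak h}_{X_j}$ for a \emph{single} rational orbit. Such functions are not stable; this is exactly what separates the two members of a depth-zero supercuspidal $L$-packet of $\SL_2$ near the identity. So the termwise equality $\hat\mu^{\mathfrak h}_{X_j}(\log\gamma_u)=\hat\mu^{\mathfrak h'}_{X_{\iota_g(j)}}(\log\gamma_u')$ you aim for is false in general.

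The transport $X_j\mapsto X_{\iota_g(j)}$ is not even available on rational orbits. We have $\sigma(\Ad(g)X_j)=\Ad(g)\Ad(g^{-1}\sigma(g))X_j$, and the cocycle $g^{-1}\sigma(g)$ lies in the torus $G_\gamma$, not in the centralizer of $X_j$. Hence $\Ad(g)X_j$ is in general not $F$-rational.

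Invoking Proposition~\ref{prop:finite-stability-enriched-packet-coefficient} does not close this gap. That proposition concerns the coefficients $a_j$, whereas the present statement involves only the functions $\mathcal Q$. You need each $\mathcal Q_j^H$ to be itself a (suitably signed) stable Fourier-transform distribution, with $\iota_g$ acting on stable classes of data; this is what the paper's proof takes as its input. Otherwise you must restate the proposition for sums of $\mathcal Q_j^H$ over stable classes of $j$.

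The sign bookkeeping is also misplaced. The factor $\varepsilon_{\rX}\varepsilon_{\rH}$ in \eqref{eq:JD-connected-DL-formula} compares $\rG_x$ with the finite \emph{dual} centralizer $C_{\rG_x^\vee}(s_x)$. It does not depend on $\gamma$ and sits inside the finite coefficients. So it cannot cancel the $\gamma_s$-dependent ratio $e(H)e(H')$, with $H=G^\circ_{\gamma_s}$ and $H'=G^\circ_{\gamma_s'}$, in an identity involving only the $\mathcal Q$'s.

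The compensating sign must be built into the normalization of $\mathcal Q_j^H$ itself. This is what the paper means by the Kottwitz signs in the Murnaghan--Kirillov normalization, following \cite[Sec.~12]{DeBackerReeder2009}. There it comes from the factor $\varepsilon_{\bar H_x}\varepsilon_{\bar T}$ relating Green functions of the reductive quotients of $H$ to finite Fourier transforms of regular orbits. Since $\operatorname{rk}_{\ff}\bar H_x=\operatorname{rk}_F H$, the dependence of this factor on the inner form is exactly $e(H)$.

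Finally, for non-uniform cuspidal data your expectation of a regular elliptic $X_j$ in Step~2 is wrong. By Lusztig's Fourier-transform theorem, the relevant finite objects are built from cuspidal character sheaves supported on (centre plus) nilpotent orbits carrying cuspidal local systems. Waldspurger's Th.~1.5 concerns transfer of functions, and it does not by itself compare the resulting nilpotent-supported $p$-adic distributions between inner forms. So an additional stable-transfer input is needed there as well.
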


\begin{proof}
Since both \(\gamma\) and \(\gamma'={}^g\gamma\) are \(F\)-rational, the
cocycle
\[
        \sigma\longmapsto g^{-1}\sigma(g)
\]
centralizes \(\gamma\).  Since \(\gamma\) is strongly regular semisimple, its
centralizer is a torus contained in \(H=G_{\gamma_s}^{\circ}\).  Thus
conjugation by \(g\) identifies \(H_{\overline F}\) with
\(H'_{\overline F}\), and the \(F\)-structure on \(H'\) is obtained from that
on \(H\) by the corresponding inner twist.

The functions \(\mathcal Q_j^H\) are the topologically unipotent functions
obtained, in the sense of DeBacker--Kazhdan, from finite Lusztig functions in
parahoric quotients of \(H\).  Equivalently, under the logarithm hypothesis,
they may be expressed as Murnaghan--Kirillov distributions on
\(\fh=\Lie(H)\), built from Fourier transforms of stable orbital-integral
distributions associated with the Lusztig data; see
\cite{DeBackerKazhdan2011}.  Transporting the Lusztig data by the inner twist
defined by \(g\) gives precisely the datum \(\iota_g(j)\) for
\(H'=G_{\gamma_s'}^\circ\).

Put
\[
        Y=\log(\gamma_u)\in\fh(F),
        \qquad
        Y'=\log(\gamma_u')\in\fh'(F),
        \qquad
        \fh'=\Lie(H').
\]
Hypothesis~\ref{hyp:DR-logarithm} ensures that these logarithms are defined on
the relevant topologically unipotent sets.  Waldspurger's comparison theorem
for Fourier transforms on Lie algebras of inner forms identifies the stable
Fourier-transform distributions attached to the transported Lusztig data.  The
Kottwitz signs occurring in the Murnaghan--Kirillov normalization transform by
the same sign ratio as in \cite[Sec.~12]{DeBackerReeder2009}.  Consequently the
DeBacker--Kazhdan function attached to \(j\) on \(H\) and the transported
function attached to \(\iota_g(j)\) on \(H'\) have the same value on
corresponding topologically unipotent elements:
\[
        \mathcal Q_j^{H}(\gamma_u)
        =
        \mathcal Q_{\iota_g(j)}^{H'}(\gamma_u').
\]
This proves the proposition.
\end{proof}

\begin{proof}[Proof of Theorem~\ref{thm:stability-of-pinned-depth-zero-packet}]
It is enough to prove equality on strongly regular semisimple elements.  Let
\(\gamma,\gamma'\in G(F)\) be strongly regular semisimple and stably conjugate.
Choose \(g\in G(\overline F)\) such that
\[
        \gamma'={}^g\gamma .
\]
Write the topological Jordan decompositions as
\[
        \gamma=\gamma_s\gamma_u,
        \qquad
        \gamma'=\gamma_s'\gamma_u'.
\]
By uniqueness of topological Jordan decomposition,
\[
        \gamma_s'={}^g\gamma_s,
        \qquad
        \gamma_u'={}^g\gamma_u.
\]

By Proposition~\ref{prop:DR-reduction-for-enriched-pinned-packet},
\[
        \Theta^{\mathrm{st}}_{\varphi,G}(\gamma)
        =
        \sum_{j\in\mathcal I_{\mathrm{DK}}(\gamma_s)}
        a_j(\gamma_s)\,
        \mathcal Q_j^{G_{\gamma_s}^{\circ}}(\gamma_u),
\]
and
\[
        \Theta^{\mathrm{st}}_{\varphi,G}(\gamma')
        =
        \sum_{j'\in\mathcal I_{\mathrm{DK}}(\gamma_s')}
        a_{j'}(\gamma_s')\,
        \mathcal Q_{j'}^{G_{\gamma_s'}^{\circ}}(\gamma_u').
\]
The same proposition gives a bijection
\[
        \iota_g:
        \mathcal I_{\mathrm{DK}}(\gamma_s)
        \xrightarrow{\;\sim\;}
        \mathcal I_{\mathrm{DK}}(\gamma_s')
\]
with
\[
        a_j(\gamma_s)=a_{\iota_g(j)}(\gamma_s').
\]
Therefore
\[
\begin{aligned}
        \Theta^{\mathrm{st}}_{\varphi,G}(\gamma')
        &=
        \sum_{j'\in\mathcal I_{\mathrm{DK}}(\gamma_s')}
        a_{j'}(\gamma_s')\,
        \mathcal Q_{j'}^{G_{\gamma_s'}^{\circ}}(\gamma_u') \\
        &=
        \sum_{j\in\mathcal I_{\mathrm{DK}}(\gamma_s)}
        a_{\iota_g(j)}(\gamma_s')\,
        \mathcal Q_{\iota_g(j)}^{G_{\gamma_s'}^{\circ}}(\gamma_u') \\
        &=
        \sum_{j\in\mathcal I_{\mathrm{DK}}(\gamma_s)}
        a_j(\gamma_s)\,
        \mathcal Q_j^{G_{\gamma_s}^{\circ}}(\gamma_u) \\
        &=
        \Theta^{\mathrm{st}}_{\varphi,G}(\gamma).
\end{aligned}
\]
The third equality uses Proposition~\ref{prop:waldspurger-Q-comparison}.  Thus
the packet character is constant on stable conjugacy classes of strongly regular
semisimple elements.  Since invariant distributions represented by locally
constant character functions are determined on the strongly regular semisimple
set, \(\Theta^{\mathrm{st}}_{\varphi,G}\) is stable.
\end{proof}

\section*{Acknowledgements}

The author is grateful to Maarten Solleveld and Amoru Fujii for their careful
comments on this work.  Their questions, comments, and corrections pointed out
several errors and gaps in the arguments, and led to significant improvements in
the formulation and proofs of the results presented here.

\bibliographystyle{alpha}
\bibliography{LLC-refs}

\end{document}